\newcommand{\XA}{{\mbox{\large{\wedn{a}}}}}
\newcommand{\XB}{{\mbox{\large{\wedn{b}}}}}
\newcommand{\XC}{{\mbox{\large{\wedn{c}}}}\,}
\newcommand{\XX}{\underline{\mbox{\large{\wedn{x}}}}\,}
\newcommand{\XY}{\underline{\mbox{\large{\wedn{y}}}}\,}
\newcommand{\XT}{\underline{\mbox{\large{\wedn{t}}}}\,}
\title{Local Density of Solutions to Fractional Equations\thanks{Supported by
the Australian Research Council Discovery Project 170104880 NEW ``Nonlocal
Equations at Work'',
the DECRA Project
DE180100957 ``PDEs, free boundaries
and applications'' and the Fulbright Foundation.
The authors are members of INdAM/GNAMPA.}}
\author{Alessandro Carbotti\thanks{Dipartimento di Matematica
e Fisica, Universit\`a del Salento,
Via Per Arnesano, 73100 Lecce, Italy. {\tt alessandro.carbotti@unisalento.it}}, Serena Dipierro\thanks{Department
of Mathematics and Statistics,
University of Western Australia,
35 Stirling Highway,
Crawley WA 6009, Australia. {\tt serena.dipierro@uwa.edu.au} },
and
Enrico Valdinoci\thanks{Department of Mathematics and Statistics,
University of Western Australia,
35 Stirling Highway,
Crawley WA 6009, Australia. {\tt enrico.valdinoci@uwa.edu.au} }}
\newtheorem{theorem}{Theorem}[chapter]
\newtheorem{remark}[theorem]{Remark}
\newtheorem{lemma}[theorem]{Lemma}
\newtheorem{proposition}[theorem]{Proposition}
\newtheorem{example}[theorem]{Example}
\newtheorem{corollary}[theorem]{Corollary}
\numberwithin{equation}{chapter}
\begin{document}
\maketitle\tableofcontents

\chapter*{Preface}

The study
of nonlocal operators of fractional type possesses a long tradition, 
motivated both by mathematical curiosity and by real world applications.
Though this line of research presents some similarities and analogies with
the study of operators of integer order, it also presents a number
of remarkable differences, one of the greatest being the recently discovered
phenomenon that {\em
all functions are (locally) fractionally
harmonic (up to a small error)}. This feature is quite
surprising, since it is in sharp contrast with the case of classical
harmonic functions, and it reveals a genuinely nonlocal peculiarity.
\medskip

More precisely, it has been proved in~\cite{MR3626547} that
given any $C^k$-function~$f$ in a bounded domain~$\Omega$
and given any~$\epsilon>0$, there exists a function~$f_\epsilon$ which
is fractionally harmonic in~$\Omega$ and such that the $C^k$-distance in~$\Omega$
between~$f$ and~$f_\epsilon$ is less than~$\epsilon$.

\begin{figure}[h]
\centering
\includegraphics[width=9.5 cm]{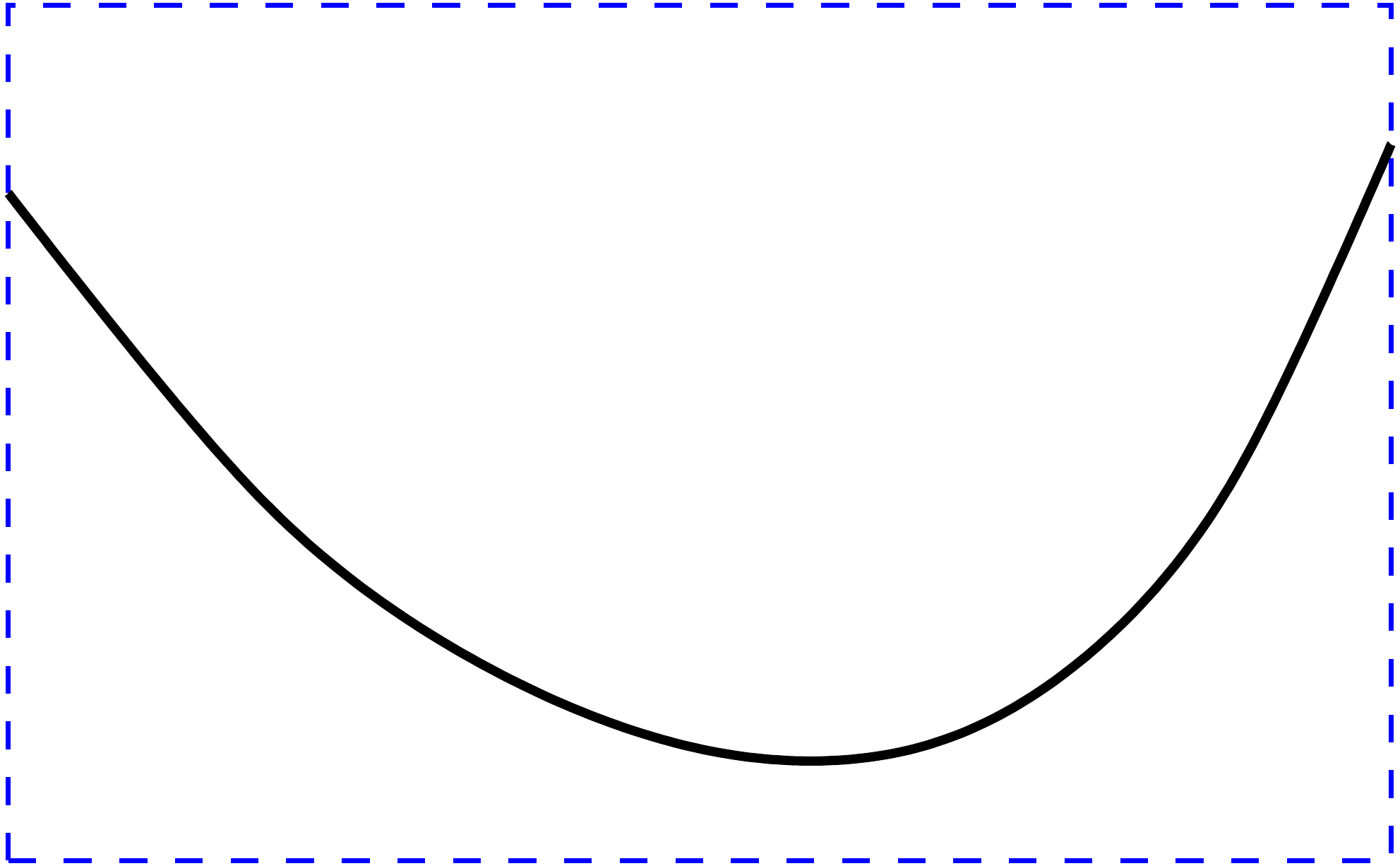}
\caption{\footnotesize\it All functions are fractional harmonic, at different scales (scale of the original function).}
\label{FIGSCAL}
\end{figure}

Interestingly, this kind of results can be also applied at any scale,
as shown in Figures~\ref{FIGSCAL}, \ref{FIGSCAL2} and~\ref{FIGSCAL3}.
Roughly speaking, given {\em any} function, without any special geometric
prescription, in a given bounded domain (as in Figure~\ref{FIGSCAL}), one can ``complete''
the function outside the domain in such a way that the resulting object
is fractionally harmonic. That is, one can endow the function given in the bounded domain
with a number of suitable oscillations outside the domain in order
to make an integro-differential
operator of fractional type vanish. This idea is depicted in Figure~\ref{FIGSCAL2}.
As a matter of fact, Figure~\ref{FIGSCAL2} must be considered just a ``qualitative''
picture of this method, and does not have any demand of being ``realistic''.
On the other hand, even if Figure~\ref{FIGSCAL2} did not provide a correct
fractional harmonic extension of the given function outside the given domain,
the result can be repeated at a larger scale, as in Figure~\ref{FIGSCAL3},
adding further remote oscillations in order to obtain a fractional harmonic function.
\begin{figure}[h]
\centering
\includegraphics[width=9.5 cm]{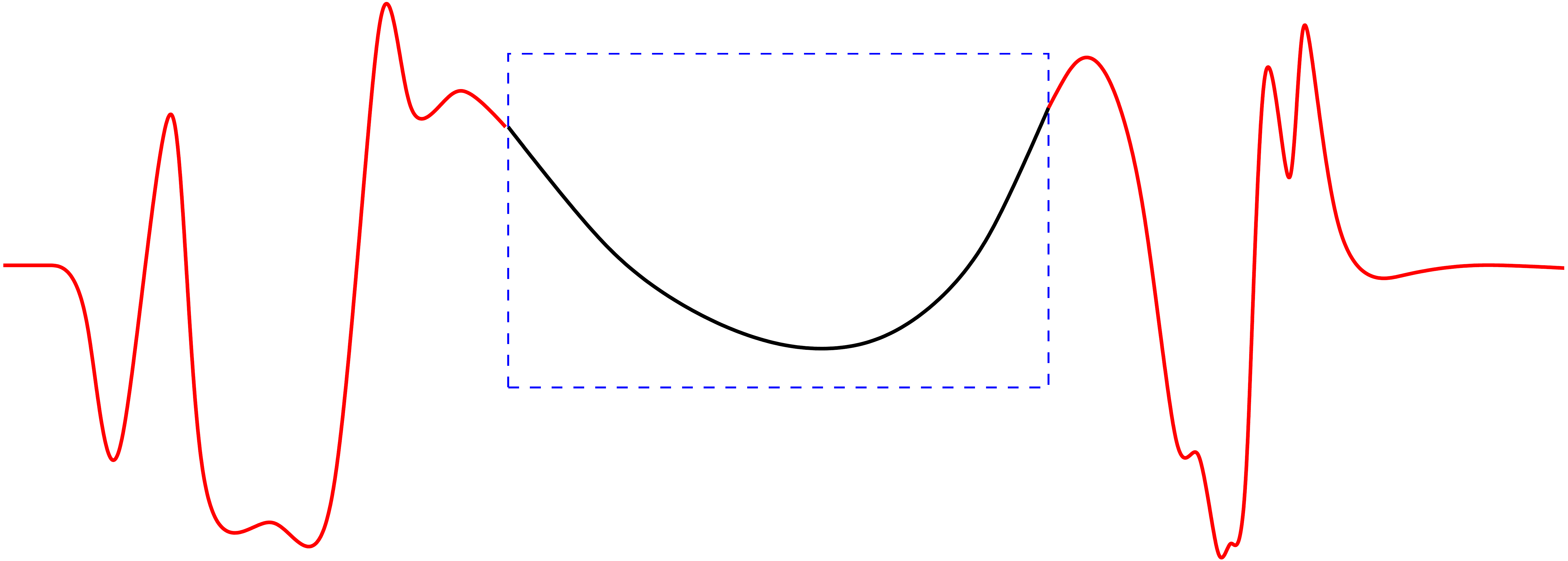}
\caption{\footnotesize\it All functions are fractional harmonic, at different scales
(``first''
scale of exterior oscillations).}
\label{FIGSCAL2}
\end{figure}

In this sense, this type of results really says that whatever graph we draw on a sheet of paper,
it is fractionally harmonic (more rigorously, it can be shadowed 
with an arbitrary precision by another graph, which can be appropriately continued
outside the sheet of paper in a way which makes it fractionally harmonic).\medskip

\begin{figure}[h]
\centering
\includegraphics[width=9.5 cm]{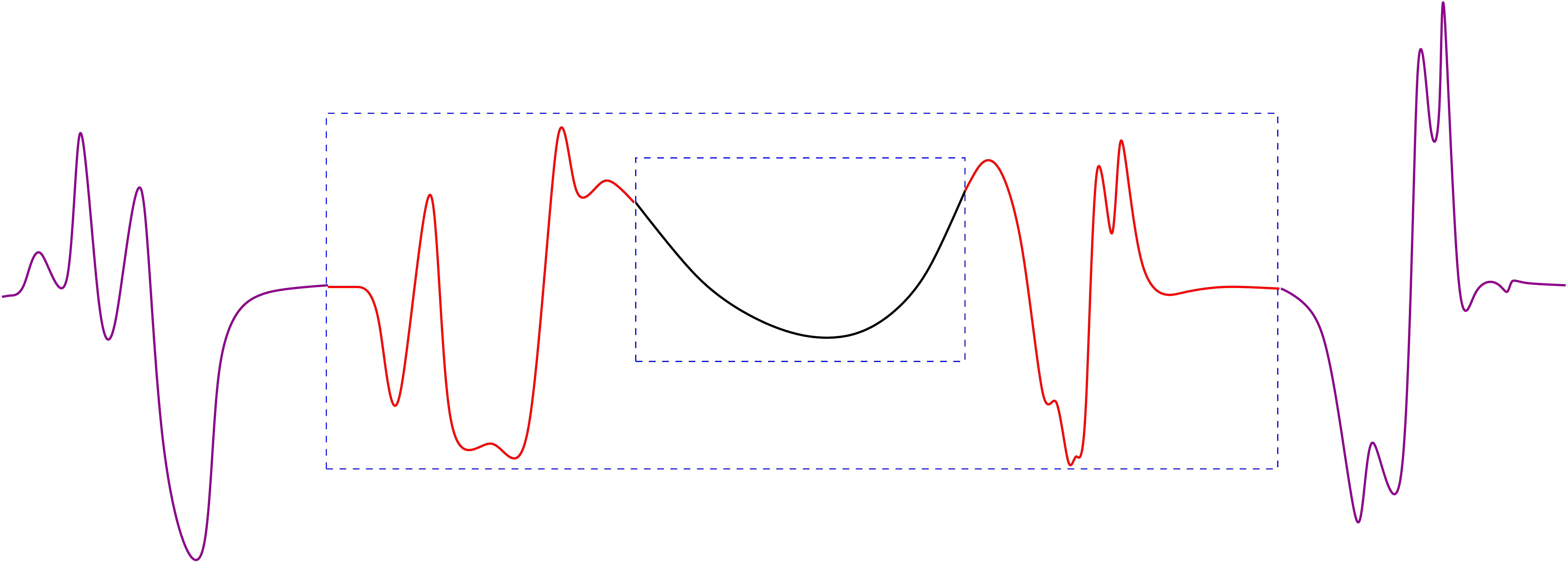}
\caption{\footnotesize\it All functions are fractional harmonic, at different scales
(``second''
scale of exterior oscillations).}
\label{FIGSCAL3}
\end{figure}

This book contains a {\em new result}
in this line of investigation, stating that {\em every
function lies in the kernel of every linear equation involving some fractional operator,
up to a small error}.
That is,
{\em any given function can be smoothly approximated by
functions lying in the kernel of a linear operator
involving at least one fractional component}.
The setting in which this result holds true is very general, since it takes into account
anomalous diffusion, with possible
fractional components
in both space and time. The operators taken into account comprise
the case of
the sum of classical and fractional Laplacians, possibly of different
orders, in the space variables, and classical or fractional derivatives
in the time variables.
Namely, the equation can be of any order, it does not need any
structure (it needs no ellipticity or parabolicity conditions), and the fractional
behaviour is either in time or space, or in both.\medskip

In a sense, this type of approximation results
reveals the true power of fractional equations, independently on the structural
``details'' of the single equation under consideration,
and
shows that {\em space-fractional and time-fractional
equations exhibit a variety of solutions which is much richer and more
abundant than in the case of classical diffusion}.
\medskip

Though space- and time-fractional diffusions can be seen
as related aspects of nonlocal phenomena, they arise in different
contexts and present important structural differences.
The paradigmatic example of space-fractional diffusion is embodied by the fractional Laplacian,
that is a fractional root of the classical Laplace operator. This
setting often surfaces from stochastic processes presenting jumps
and it exhibits the classical spatial symmetries such as invariance under
translations and rotations, plus a scale invariance of the integral
kernel defining the operator. Differently from this,
time-fractional diffusion is typically related to memory effects,
therefore it distinguishes very strongly between the ``past''
and the ``future'', and the arrow of time plays a major role (in particular,
since the past influences the future, but not viceversa, time-fractional
diffusion does not possess the same type of symmetries of the space-fractional one).
In these pages, we will be able to consider operators which arise
as superpositions of both space- and time-fractional diffusion, possibly taking
into account classical derivatives as well (the cases of diffusion which
is fractional just in either space or time are comprised as special situation
of our general framework). Interestingly, we will also consider fractional
operators of any order, showing, in a sense, that some properties
related to fractional diffusion persist also when higher order operators
come into play,
differently from what happens in the classical case, in which the theory
available for the Laplacian operator presents significant differences with respect
to the case of polyharmonic operators.\medskip

To achieve the original result presented here, we
develop a broad theory of some fundamental facts about space- and time-fractional
equations.
Some of these additional results were known in the literature,
at least in some particular cases, but some other are new and interesting
in themselves, and, in developing these
auxiliary theories,
this monograph presents a completely self-contained
approach to a number of basic questions, such as:
\begin{itemize}
\item Boundary behaviour for the time-fractional eigenfunctions,
\item Boundary behaviour for the time-fractional harmonic functions, 
\item Green representation formulas,
\item Existence and regularity for the first eigenfunction of the (possibly higher order)
fractional Laplacian,
\item Boundary asymptotics of the first eigenfunctions of the (possibly higher order)
fractional Laplacian,
\item Boundary behaviour of (possibly higher order) fractional harmonic functions.
\end{itemize}
We now dive into the technical details of this matter.

\chapter{Introduction: why fractional derivatives?}\label{WHY}

The goal of this introductory chapter is to provide a series
of simple examples in which fractional diffusion and fractional derivatives
naturally arise and give a glimpse on how
to use analytical methods to attack simple problems
arising in concrete situations. Some of the examples that we present are original,
some are modifications of known ones, all will be treated
in a {\em fully elementary} way that requires basically no
main prerequisites.
Other very interesting
motivations can be already found in the specialized literature, see e.g.~\cite{comb, MR1658022, MR2218073, MR2584076, MR2639369, MR2676137, MR3089369, claudia, MR3557159, 2017arXiv171203347G}
(also, in Chapter~\ref{DUEE} we will recall some other, somehow more advanced,
applications).

Some disclaimers here are mandatory. First of all, the motivations
that we provide do not aim at being fully exhaustive, since
the number of possible applications of fractional calculus are so abundant
that it is virtually impossible to discuss them all in one shot. Moreover
(differently from the rest of this monograph) while providing these motivations
we do not aim at the maximal mathematical rigor (e.g. all functions
will be implicitly assumed to be smooth and suitably decay at infinity, limits will freely taken and interchanged, etc.),
but rather at showing natural contests in which fractional objects appear
in an almost unavoidable way.

\begin{figure}[h]
\centering
\includegraphics[width=9.5 cm]{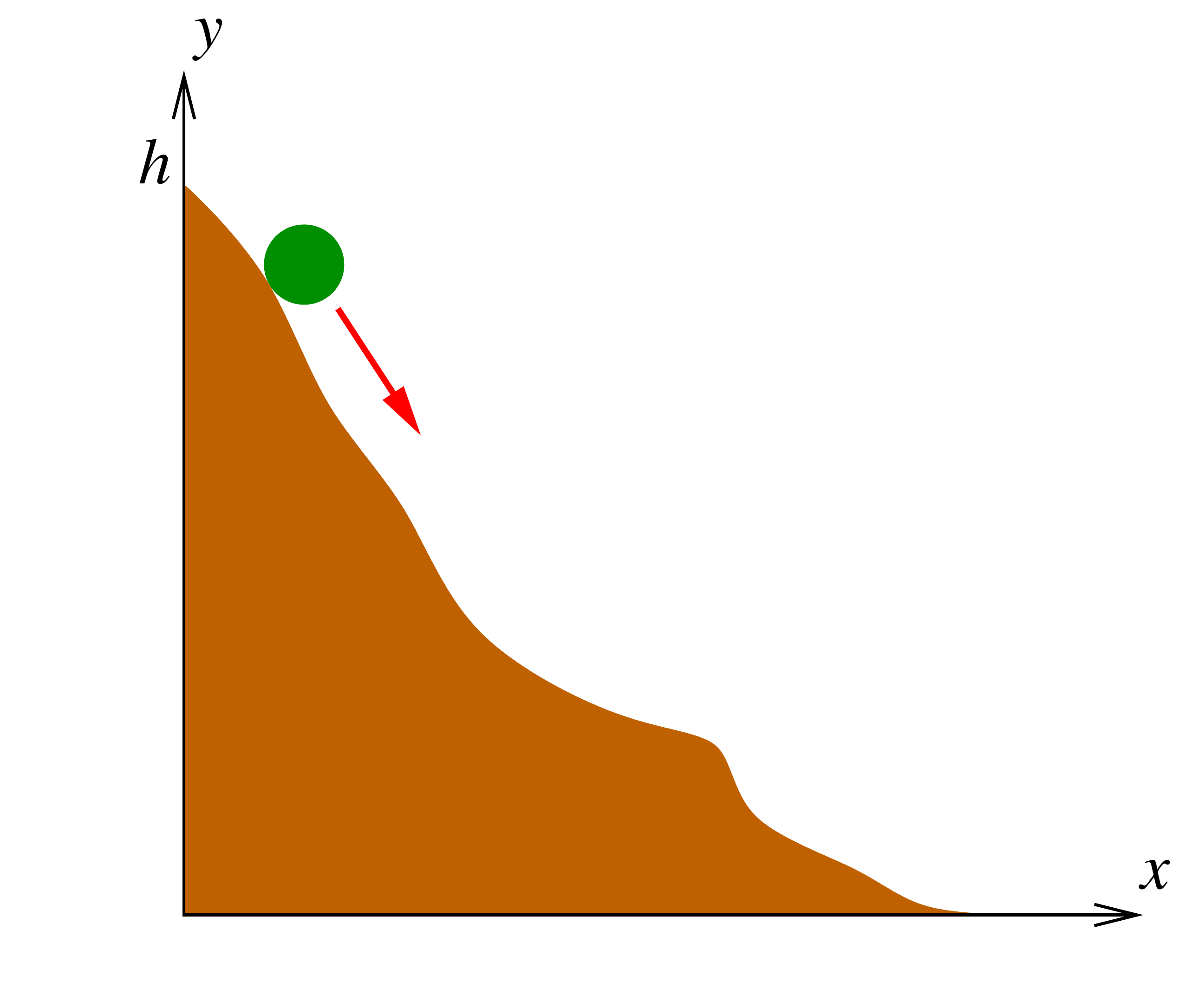}
\caption{\footnotesize\it A material point sliding down.}
\label{FIGSLi}
\end{figure}

\begin{example}[Sliding time and tautochrone problem\index{problem!tautochrone}]\label{TAUr}
{\rm Let us consider a point mass subject to gravity, sliding down on a curve
without friction. We suppose that the particle starts its motion
with zero velocity at height~$h$ and it reaches its minimal position
at height~$0$ at time~$T(h)$. Our objective is to describe~$T(h)$
in terms of the shape of the slide. To this end, see Figure~\ref{FIGSLi},
we follow an approach introduced by N. H. Abel (see pages~11-27 
in~\cite{MR1191901})
and use coordinates $(x,y)\in{\mathbb{R}}^2$ to describe the slide as a function
in the vertical variable, namely~$x=f(y)$. It is also convenient
to introduce the arclength parameter
\begin{equation}\label{VEL0}\phi(y):=\sqrt{|f'(y)|^2+1}\end{equation}
and to describe the position of the particle by the notation~$p(t):=(f(y(t)),y(t))$.
The velocity of the particle is therefore
\begin{equation}\label{VEL} v(t):=\dot p(t)=(f'(y(t),1)\,\dot y(t).\end{equation}
By construction, we know that~$y(0)=h$ and~$y(T(h))=0$, and
moreover~$v(0)=0$. Accordingly, by the Energy Conservation Principle,
for all~$t\in[0,T(h)]$,
$$ \frac{m |v(t)|^2}{2}+mg\,y(t)=\frac{m |v(0)|^2}{2}+mg\,y(0)=mgh,$$
where~$m$ is the mass of the particle and~$g$ is the gravity acceleration.
As a consequence, simplifying~$m$ and recalling~\eqref{VEL0} and~\eqref{VEL}
(and that the particle is sliding downwards),
$$ -\phi(y(t))\,\dot y(t)=\sqrt{|f'(y(t))|^2+1}\,|\dot y(t)|=|v(t)|=\sqrt{2g(h-y(t))}.
$$
Hence, separating the variables,
$$ T(h)=-\int_0^{T(h)}\frac{\phi(y(t))\,\dot y(t)}{\sqrt{2g(h-y(t))}}\,dt=
-\int_h^{0}\frac{\phi(y)}{\sqrt{2g(h-y)}}\,dy,
$$
that is
\begin{equation}\label{8sEQ}
T(h)=\int^h_{0}\frac{\phi(y)}{\sqrt{2g(h-y)}}\,dy.
\end{equation}
We observe that~$\phi(y)\ge1$, thanks to~\eqref{VEL0}, therefore~\eqref{8sEQ}
gives that
\begin{equation}\label{Th1} T(h)\ge\sqrt{\frac{2h}{g}},\end{equation}
which corresponds to the free fall, in which~$f$ is constant and the particle
drops down vertically.

Interestingly, the relation in~\eqref{8sEQ} can be seen as a fractional equation.
For instance, if we exploit the Caputo notation of fractional derivative
of order~$1/2$, as it will be discussed in detail in the forthcoming
formula~\eqref{defcap}, one can write~\eqref{8sEQ} in the fractional form
\begin{equation}\label{FRAcha} T(h) = \sqrt{\frac{\pi}{2g}}\,D_{h,0}^{1/2}\Phi(h),
\end{equation}
where~$\Phi$ is a primitive of~$\phi$, say
$$ \Phi(H):=\int_0^H\phi(y)\,dy.$$
It is instructive to solve the relation~\eqref{8sEQ} by obtaining explicitly~$\phi$
in terms of~$T(h)$. Of course, fractional calculus, operator algebra
and the theory of Volterra-type integral equations provide general
tools to deal with equations such as the one in~\eqref{FRAcha}, but for the scopes
of these pages, we try to perform our analysis using only elementary
computations. To this end, it is convenient to take advantage of
the natural scaling of the problem and convolve~\eqref{8sEQ}
against the kernel~$\frac{1}{\sqrt{h}}$. In this way, we obtain that
\begin{equation}\label{BRAbxdis}
\begin{split}
\int_0^H\frac{T(h)}{\sqrt{H-h}}\,dh\,&=\int_0^H\left[
\int^h_{0}\frac{\phi(y)}{\sqrt{2g(h-y)}}\,dy\right]\frac{dh}{\sqrt{H-h}}\\
&=\frac{1}{\sqrt{2g}}\int_0^H\phi(y)\,\left[
\int_y^H\frac{dh}{\sqrt{(h-y)(H-h)}}\right]\,dy.
\end{split}
\end{equation}
Using the change of variable~$\eta:=\frac{h-y}{H-y}$ we see that
$$ \int_y^H\frac{dh}{\sqrt{(h-y)(H-h)}}=\int_0^1\frac{d\eta}{\sqrt{\eta(1-\eta)}}=\pi,
$$
and hence~\eqref{BRAbxdis} becomes
\begin{equation}\label{BRAbxdis:2}
\int_0^H\frac{T(h)}{\sqrt{H-h}}\,dh=\frac{\pi}{\sqrt{2g}}\int_0^H\phi(y)\,dy=
\frac{\pi}{\sqrt{2g}}\Phi(H).\end{equation}
The main application of this formula consists in a quick solution of
the {\em tautochrone problem}, that is the determination
of the shape of the slide for which the sliding time~$T(h)$ is constant
(and therefore independent on the initial height~$h$). In this case,
we set~$T(h)=T$ and then~\eqref{BRAbxdis:2} gives that
$$ 2T\sqrt{H}=\frac{\pi}{\sqrt{2g}}\Phi(H),$$
and thus, differentiating in~$H$,
$$ \frac{T}{\sqrt{H}}=\frac{\pi}{\sqrt{2g}}\phi(H),$$
which, recalling \eqref{VEL0}, leads to
\begin{equation}\label{Cyah892}
|f'(y)|^2={\phi^2(y)-1}={\frac{2gT^2}{\pi^2\,y}-1}={\frac{2r-y}{y}},
\end{equation}
with~$r:=\frac{gT^2}{\pi^2}$, which is the equation of the {\em cycloid}\index{cycloid}.

Another interesting application of~\eqref{BRAbxdis:2}
surfaces when the sliding time~$T(h)$ behaves like a square root, say
\begin{equation}\label{Th2} T(h)=\kappa\sqrt{h},\end{equation}
for some~$\kappa>0$. In this case, formula~\eqref{BRAbxdis:2}
gives that
$$ \frac{\kappa\pi H}2=\kappa\int_0^H\sqrt{\frac{h}{H-h}}\,dh=\frac{\pi}{\sqrt{2g}}\Phi(H),$$
that says that
$$ \phi(H)=\Phi'(H)=\sqrt{\frac{g}{2}}\,\kappa,$$
and then, by~\eqref{VEL0},
$$ |f'(y)|^2=\phi^2(y)-1=\frac{g}{2}\,\kappa^2-1,$$
which is constant (and well-posed when~$\kappa\ge\sqrt{\frac2g}$,
coherently with~\eqref{Th1}). In this case~$f$ is linear
and therefore the slide corresponds to the classical 
{\em inclined plane}.
}\end{example}

\begin{example}[Sliding time and brachistochrone problem\index{problem!brachistochrone}]\label{TAUr2} {\rm Formula~\eqref{8sEQ}, as
well as its explicit fractional formulation in~\eqref{FRAcha}, is also useful to
solve the {\em brachistochrone problem}, that is detecting the curve
of fastest descent between two given points. In this setting,
the mathematical formulation of Example~\ref{TAUr} can be modified as follows.
The initial height is fixed, hence we can assume that~$h>0$ is a given parameter.
Instead, we can optimize the shape of the slide as given by the function~$f$.
To this end, it is convenient to write~$\phi=\phi_f$ in~\eqref{VEL0}, in order
to stress its dependence on~$f$. Similarly, the fall time in~\eqref{8sEQ}
and~\eqref{FRAcha} can be denoted by~$T(f)$ to emphasize its dependence on~$f$.
In this way, we write~\eqref{8sEQ} as
\begin{equation}\label{LIgh1}
T(f)=\int^h_{0}\frac{\phi_f(y)}{\sqrt{2g(h-y)}}\,dy.
\end{equation}
Now, given~$\epsilon\in(-1,1)$, we consider a perturbation~$\eta\in C^\infty_0([0,h])$
of an optimal function~$f$.
That is, given~$\epsilon\in(0,1)$, we define
$$ f_\epsilon(y):=f(y)+\epsilon \eta(y).$$
Since~$f_\epsilon(0)=f(0)$ and~$f_\epsilon(h)=f(h)$, we have that the endpoints
of the slide described by~$f_\epsilon$ are the same as the ones
described by~$f$ and therefore the minimality of~$f$ gives that
\begin{equation}\label{LIgh2}
T(f_\epsilon)=T(f)+o(\epsilon).
\end{equation}
In addition, by~\eqref{VEL0},
\begin{eqnarray*}&& \phi_{f_\epsilon}(y)=\sqrt{|f'_\epsilon(y)|^2+1}=
\sqrt{|f'(y)+\epsilon \eta'(y)|^2+1}\\&&\qquad=
\sqrt{|f'(y)|^2+2\epsilon f'(y)\eta'(y)+o(\epsilon)+1}\\&&\qquad=
\sqrt{|f'(y)|^2+1}+\frac{\epsilon f'(y)\eta'(y)}{\sqrt{|f'(y)|^2+1}}+o(\epsilon)\\
&&\qquad=\phi_f(y)+\frac{\epsilon f'(y)\eta'(y)}{\sqrt{|f'(y)|^2+1}}+o(\epsilon).
\end{eqnarray*}
Therefore, in light of~\eqref{LIgh1} and~\eqref{LIgh2},
\begin{eqnarray*}
o(\epsilon)&=&
T(f_\epsilon)-T(f)\\
&=& \int^h_{0}\frac{\phi_{f_\epsilon}(y)-\phi_f(y)}{\sqrt{2g(h-y)}}\,dy\\
&=& \epsilon\int^h_{0}\frac{ f'(y)\eta'(y)}{\sqrt{2g(h-y)(|f'(y)|^2+1)}}\,dy+o(\epsilon),
\end{eqnarray*}
and consequently
$$ \int^h_{0}\frac{ f'(y)\eta'(y)}{\sqrt{2g(h-y)(|f'(y)|^2+1)}}\,dy=0.$$
Accordingly, since~$\eta$ is an arbitrary compactly supported perturbation,
we obtain the optimality condition
\begin{equation*}
\frac{d}{dy}\left( \frac{ f'(y)}{\sqrt{2g(h-y)(|f'(y)|^2+1)}}\right)=0
\end{equation*}
and then
\begin{equation*}
\frac{ f'(y)}{\sqrt{2g(h-y)(|f'(y)|^2+1)}}=-c,
\end{equation*}
for some~$c>0$.

This gives that
\begin{equation}\label{fpri} |f'(y)|^2=\frac{2c^2g(h-y)}{1-2c^2g(h-y)}.\end{equation}
It is now expedient to consider a suitable translation of the slide, by considering
the rigid motion described by the function
$$ \zeta(y):=\frac{y-1+2c^2gh}{2c^2 g}$$
and we define
$$ \tilde f(y):=2c^2 g f(\zeta(y)).$$
We point out that~$\zeta'(y)=\frac1{2c^2g}$ and
$$ \frac{2c^2g(h-\zeta(y))}{1-2c^2g(h-\zeta(y))}=
\frac{2c^2g h-2c^2g\zeta(y)}{1-2c^2gh+2c^2g\zeta(y)}=
\frac{2c^2g h-(y-1+2c^2gh)}{1-2c^2gh+(y-1+2c^2gh)}=\frac{1-y}{y}.$$
Consequently, by~\eqref{fpri},
$$ |\tilde f'(y)|^2= (2c^2 g)^2 (\zeta'(y))^2 |f'(\zeta(y))|^2
=\frac{2c^2g(h-\zeta(y))}{1-2c^2g(h-\zeta(y))}=\frac{1-y}{y},$$
which is again an equation describing a {\em cycloid} (compare with~\eqref{Cyah892}).}\end{example}

Some additional comments about Examples~\ref{TAUr} and~\ref{TAUr2}.
The tautochrone problem was first solved by
Christiaan Huygens 
in his book {\em
Horologium Oscillatorium: sive de motu pendulorum ad horologia aptato demonstrationes
geometricae} in 1673.
Interestingly, the tautochrone problem
is also alluded in a famous passage from the 1851 novel {\em Moby Dick} by Herman Melville.

As for the brachistochrone problem,
Galileo was probably the first to
take it into account in~1638
in Theorem~XXII and Proposition~XXXVI of his book {\em Discorsi e Dimostrazioni Matematiche
intorno a due nuove scienze}, in which he seemed to
argue that the fastest motion from one end to the other does not take
place along the shortest line but along a circular arc
(now we know that ``circular arc'' should have been replaced
by ``cycloid'' to make the statement fully correct, and in fact the name
of cycloid is likely to go back to Galileo in its meaning of ``resembling a circle'').

The brachistochrone problem was then
posed explicitly in~1696
by Johann Bernoulli
in {\em Acta Eruditorum}, Although probably
knowing how to solve it himself, Johann Bernoulli
challenged all others to solve it, addressing his community with a rather bombastic
dare, such as {\em I, Johann Bernoulli, address the most brilliant
mathematicians in the world. Nothing is more attractive to intelligent
people than an honest, challenging problem, whose possible solution will
bestow fame and remain as a lasting monument.}

The coincidence of the solutions of the tautochrone and the brachistochrone
problems appears to be a surprising mathematical fact and to reveal
deep laws of physics: in the words of
Johann Bernoulli, {\em
Nature always tends to act in the simplest way, and so it here lets one curve
serve two different functions}.

For a detailed historical introduction to these problems, see e.g.~\cite{MR1281370}
and the references therein.

\begin{example}[Thermal insulation problems, Dirichlet-to-Neumann problems\index{Dirichlet-to-Neumann},
and the fractional Laplacian]\label{74747TGSCO}
{\rm Let us consider a room whose perimeter is given by walls of two different types.
One kind of walls is such that we can fix the temperature there,
i.e. by some heaters or coolers endowed with a thermostat.
The other kind of walls is made by insulating material which prevents the
heat flow to go through. The question that we want to address is:
\begin{equation}\label{QUE1}
{\mbox{{\em what is
the temperature at the insulating kind of walls?}}}
\end{equation}
We will see that the question in~\eqref{QUE1} can be conveniently set into a natural
fractional Laplacian framework. As a matter of fact,
To formalize this question, and address it at least in its simplest
possible formulation, let us consider the case in which the room
is modeled by the half-space~${\mathbb{R}}^{n+1}_+:={\mathbb{R}}^n\times(0,+\infty)$
(while the rooms in the real life are considered to be three-dimensional,
hence $n$ would be equal to~$2$ in this model,
we can also take into account the case of a general~$n$ in this discussion).
The walls of this room are given by~${\mathbb{R}}^n\times\{0\}$.
We suppose that the insulating material is placed in a nice bounded
domain~$\Omega\subset{\mathbb{R}}^n\times\{0\}$
and the temperature is prescribed at the remaining part of the walls~$({\mathbb{R}}^n\times\{0\}
)\setminus\Omega$, see Figure~\ref{2234CxC6783C}.

\begin{figure}[h]
\centering
\includegraphics[width=8.5 cm]{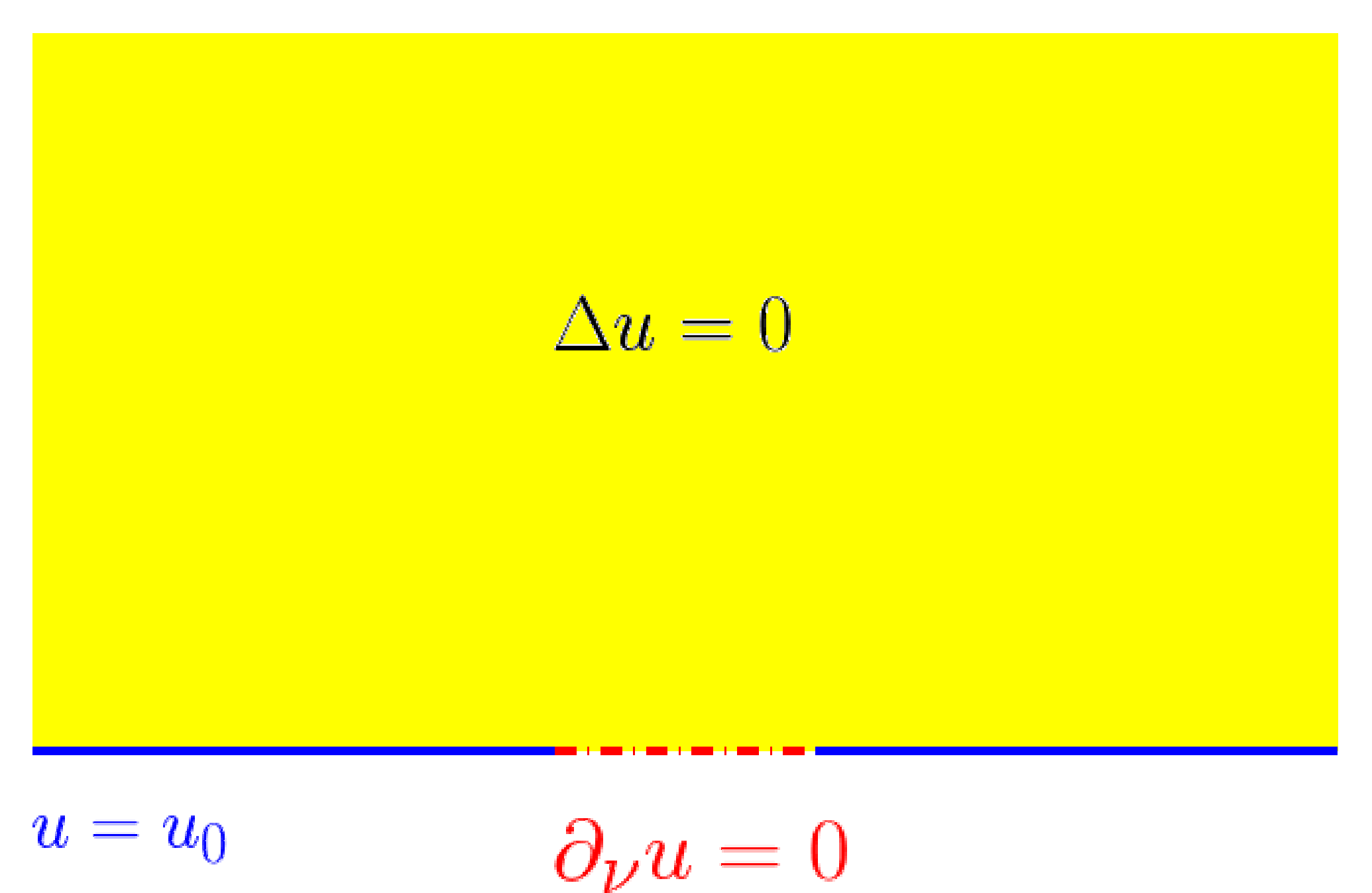}
\caption{\footnotesize\it 
The thermal insulation problem in Example \ref{74747TGSCO}.}
\label{2234CxC6783C}
\end{figure}

The temperature of the room at the point~$x=(x_1,\dots,x_{n+1})\in{\mathbb{R}}^n\times[0,+\infty)$
will be described by a function~$u=u(x)$. At the equilibrium, no heat flow occurs inside the room.
Taking the classical ansatz that the heat flow is produced by the gradient of the temperature,
since no heat sources are placed inside the room,
we obtain that for any ball~$B\Subset{\mathbb{R}}^{n+1}_+$ the heat
flux through the boundary of~$B$ is necessarily zero and therefore, by the Divergence Theorem,
$$ 0=\int_B {\rm div}(\nabla u)=\int_B \Delta u,$$
which gives that~$\Delta u=0$ in~${\mathbb{R}}^{n+1}_+$.

Complementing this equation with the prescriptions along the walls, we thereby obtain that
the room temperature~$u$ satisfies
\begin{equation}\label{UNja}
\begin{cases}
\Delta u(x)=0& {\mbox{ for all $x\in{\mathbb{R}}^{n+1}_+$,}}\\
\partial_{x_{n+1}}u(x)=0& {\mbox{ for all $x\in\Omega$,}}\\
u(x)=u_0(x_1,\dots,x_n)& {\mbox{ for all $x\in({\mathbb{R}}^{n}\times\{0\})\setminus\Omega$.}}\\
\end{cases}
\end{equation}
As a matter of fact, the setting in~\eqref{UNja} lacks uniqueness, since
if~$u$ is a solution of~\eqref{UNja}, then so are
$$ u(x)+x_{n+1},\qquad u(x)+x_1 x_{n+1},\qquad u(x)+e^{x_1}\sin x_{n+1} ,$$
and so on. Hence, to avoid uniqueness issues, we implicitly assume that the solution
of~\eqref{UNja} is constructed by energy minimization:
in this way, the strict convexity
of the energy functional
$$\int_{{\mathbb{R}}^{n+1}_+}|\nabla u(x)|^2\,dx$$
guarantees that the solution is unique.

The question in~\eqref{QUE1} is therefore reduced to find the value of~$u$
in~$\Omega$. To do so, one can observe that the model links the Neumann and the Dirichlet
boundary data of an elliptic problem: namely,
given on the boundary the homogeneous Neumann datum in $\Omega$ and
the (possibly inhomogeneous) Dirichlet datum in~$({\mathbb{R}}^{n}\times\{0\})\setminus\Omega$,
one can consider the (minimal energy) harmonic function satisfying these conditions
and then calculate its Dirichlet datum in~$\Omega$ to give an answer to~\eqref{QUE1}.

Computationally, it is convenient to observe that equation~\eqref{UNja} is linear
and therefore can be efficiently solved by Fourier transform\index{transform!Fourier}.
Indeed, we write~$\hat u=\hat u(\xi,x_{n+1})$ as the Fourier transform of~$u$
in the variables~$(x_1,\dots,x_n)$, that is, up to normalizing constants that
we neglect for the sake of simplicity, for any~$\xi=(\xi_1,\dots,\xi_n)\in{\mathbb{R}}^n$
and any~$x_{n+1}>0$ we define
$$ \hat u(\xi,x_{n+1}):=\int_{ {\mathbb{R}}^n } u(x_1,\dots,x_n,x_{n+1})
\exp\left(- i\sum_{j=1}^n x_j\xi_j\right)\,dx_1\dots dx_n.$$
Hence, integrating by parts, one sees that, for all~$k\in\{1,\dots,n\}$,
\begin{eqnarray*}
&&\int_{ {\mathbb{R}}^n } \partial_{x_k} u(x_1,\dots,x_n,x_{n+1})
\exp\left( -i\sum_{j=1}^n x_j\xi_j\right)\,dx_1\dots dx_n\\&=&-i\xi_k
\int_{ {\mathbb{R}}^n } u(x_1,\dots,x_n,x_{n+1})
\exp\left(- i\sum_{j=1}^n x_j\xi_j\right)\,dx_1\dots dx_n\\&=&-i\xi_k\, \hat u(\xi,x_{n+1})
.\end{eqnarray*}
Iterating this argument, one obtains that, for all~$k\in\{1,\dots,n\}$,
\begin{equation}\label{a78sddxvvvu}
\begin{split}
&\int_{ {\mathbb{R}}^n } \partial_{x_k}^2 u(x_1,\dots,x_n,x_{n+1})
\exp\left( -i\sum_{j=1}^n x_j\xi_j\right)\,dx_1\dots dx_n\\&\qquad=(-
i\xi_k)^2 \hat u(\xi,x_{n+1})=-\xi_k^2\,
\hat u(\xi,x_{n+1}),\end{split}\end{equation}
and hence, summing over~$k$ and taking into account also the derivatives in~$x_{n+1}$,
\begin{eqnarray*}
0&=&\widehat{\Delta u}(\xi,x_{n+1})\\&=& \sum_{k=1}^{n+1}
\int_{ {\mathbb{R}}^n } \partial_{x_k}^2 u(x_1,\dots,x_n,x_{n+1})
\exp\left( -i\sum_{j=1}^n x_j\xi_j\right)\,dx_1\dots dx_n
\\&=&- \sum_{k=1}^n\xi_k^2
\hat u(\xi,x_{n+1})+
\int_{ {\mathbb{R}}^n } \partial_{x_{n+1}}^2 u(x_1,\dots,x_n,x_{n+1})
\exp\left( -i\sum_{j=1}^n x_j\xi_j\right)\,dx_1\dots dx_n
\\ &=& -|\xi|^2 \hat u(\xi,x_{n+1})+\partial^2_{x_{n+1}}
\hat u(\xi,x_{n+1}).
\end{eqnarray*}
Consequently, for any~$x_{n+1}>0$,
\begin{equation*}
\begin{split}
-\widehat{ \partial_{x_{n+1}} u(\cdot,0)}
&=-\partial_{x_{n+1}} \hat u(\xi,0)
\\ &=-\partial_{x_{n+1}} \hat u(\xi,x_{n+1})+\int_0^{x_{n+1}}
\partial^2_{x_{n+1}}
\hat u(\xi,y)\,dy\\
&=-\partial_{x_{n+1}} \hat u(\xi,x_{n+1})+\int_0^{x_{n+1}}
|\xi|^2 \hat u(\xi,y)\,dy.
\end{split}
\end{equation*}
This equation has solution
\begin{equation*}
\hat u(\xi,x_{n+1})=\hat u_0(\xi) \,e^{-|\xi|\,x_{n+1}},\end{equation*}
and hence
\begin{equation}\label{FT:X} \partial_{x_{n+1}}\hat u(\xi,x_{n+1})=-|\xi|\,\hat u_0(\xi) \,e^{-|\xi|\,x_{n+1}}=
-|\xi|\,\hat u(\xi,x_{n+1}).\end{equation}
Combining this with the homogeneous Neumann condition in~\eqref{UNja} we obtain
that, if~${\mathcal{F}}^{-1}$
denotes the anti-Fourier transform, then
\begin{equation}\label{7ajsAHHBAB}
{\mathcal{F}}^{-1}\Big(|\xi|\,\hat u(\cdot,0)\Big)
=0\qquad{\mbox{ in }}\Omega.
\end{equation}
It is convenient to write this using the fractional Laplace formulation.
As a matter of fact, for every~$s\in[0,1]$ and a (sufficiently smooth and decaying)
function~$w:{\mathbb{R}}^n\to{\mathbb{R}}^n$, one can define
\begin{equation}\label{7A-DE}
(-\Delta)^s w:= {\mathcal{F}}^{-1} \Big(|\xi|^{2s} \hat w\Big).
\end{equation}
We observe that when~$s=1$ this definition, up to normalization constants,
gives the classical operator~$-\Delta$, thanks to~\eqref{a78sddxvvvu}.
By a direct computation (see e.g. Proposition~3.3 in~\cite{MR2944369})
one also sees that for every~$s\in(0,1)$
the operator in~\eqref{7A-DE}
can be written in integral form as
\begin{equation}\label{7A-DE2}
(-\Delta)^s w(x)=\int_{{\mathbb{R}}^n}\frac{2w(x)-w(x+y)-w(x-y)}{|y|^{n+2s}}\,dy.
\end{equation}
Comparing~\eqref{7ajsAHHBAB} with~\eqref{7A-DE}, we obtain that
equation~\eqref{UNja} can be written as a fractional equation for a function of~$n$
variables (rather than a classical reaction-diffusion equation for 
a function of~$n+1$
variables): indeed, if we set~$v(x_1,\dots,x_n):=u(x_1,\dots,x_n,0)$,
then
\begin{equation}\label{QUE1BB}
\begin{cases}
\sqrt{-\Delta} \,v =0 &{\mbox{ in $\Omega_0$}},\\
v =u_0 &{\mbox{ in ${\mathbb{R}}^n\setminus\Omega_0$}},
\end{cases} 
\end{equation}
where~$\Omega_0\subset{\mathbb{R}}^n$ is such that~$\Omega=\Omega_0\times\{0\}$.

The solution of the question posed in~\eqref{QUE1} can then
be obtained by considering the values of the solution~$v$ of~\eqref{QUE1BB}
in~$\Omega_0$

The equivalence between~\eqref{UNja} and~\eqref{QUE1BB} (which can also
be extended and generalized in many forms)
is often quite useful since it allows one to connect classical reaction-diffusion
equations and fractional equations and permits the methods typical of one research context
to be applicable to the other.
}\end{example}

\begin{example}[The thin obstacle problem\index{thin obstacle}]\label{THIN}
{\rm The classical obstacle problem considers an elastic membrane possibly subject to an external
force field
which is constrained above an obstacle.
The elasticity of the membrane makes its graph to be a
supersolution in the whole domain and a solution wherever it does not touch the obstacle.
For instance, if the vertical force field is denoted by~$h:{\mathbb{R}}^n\to{\mathbb{R}}$
and the obstacle is given by the subgraph of a function~$\varphi:{\mathbb{R}}^n\to{\mathbb{R}}$,
considering a global problem for the sake of simplicity,
the discussion above formalizes in the system of equations
\begin{equation*}
\begin{cases}
\Delta u\le h & {\mbox{ in }}{\mathbb{R}}^n,\\
u\ge\varphi & {\mbox{ in }}{\mathbb{R}}^n,\\
\Delta u=0 & {\mbox{ in }}{\mathbb{R}}^n\cap\{u>\varphi\}.
\end{cases}
\end{equation*}
As a variation of this problem, one can consider the case in which
the obstacle is ``thin'', i.e. it is supported along a manifold of smaller dimension -- concretely,
in our case, of codimension~$1$.
For concreteness, one can consider the case in which the obstacle
is supported on the hyperplane~$\{x_n=0\}$. In this case,
one considers the subgraph in~$\{x_n=0\}$ of a function~$\varphi:{\mathbb{R}}^{n-1}
\to{\mathbb{R}}$ and requires the solution to lie above it, see Figure~\ref{TH}.

\begin{figure}[h]
\centering
\includegraphics[width=7.8 cm]{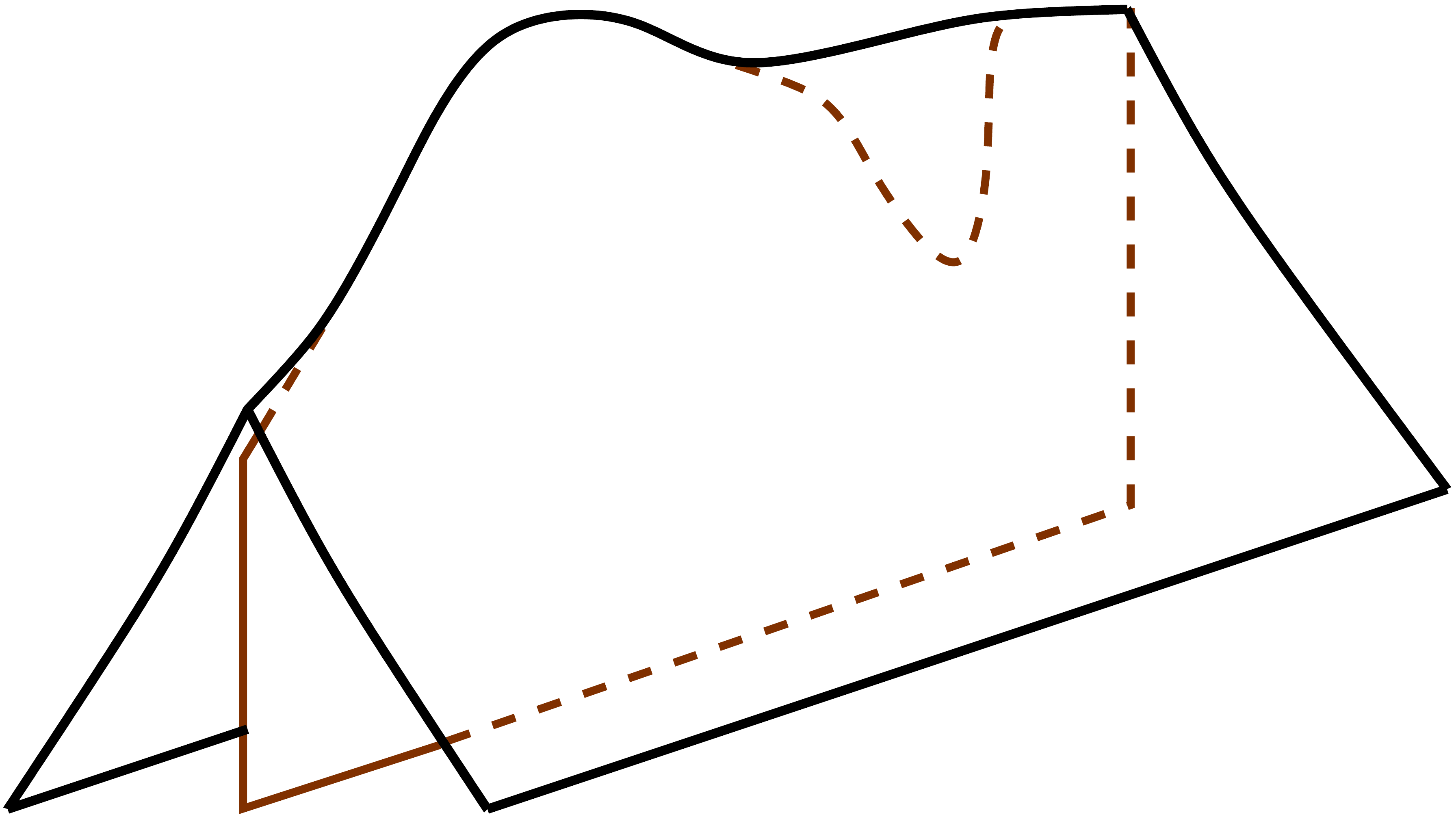}
\caption{\footnotesize\it 
The thin obstacle problem.}
\label{TH}
\end{figure}

Combining the thin
obstacle constrain with the elasticity of the membrane, we model this problem
by the system of equations
\begin{equation}\label{CO:AK}
\begin{cases}
\Delta u\le h & {\mbox{ in }}{\mathbb{R}}^n,\\
u\ge\varphi & {\mbox{ in }}\{x_n=0\},\\
\Delta u=0 & {\mbox{ in }}{\mathbb{R}}^n\setminus (\{x_n=0\}\cap\{u=\varphi\}).
\end{cases}
\end{equation}
For concreteness, we will take~$h:=0$ from now on
(the general case can be reduced to this by subtracting a particular solution). 
Also, given the structure of \eqref{CO:AK},
we
will focus on the case of even solutions with respect to the hyperplane~$\{x_n=0\}$,
namely
$$u(x_1,\dots,x_{n-1},-x_n)=u(x_1,\dots,x_{n-1},x_n).$$
We observe that, if~$\psi\in C^\infty_0({\mathbb{R}}^n,\,[0,+\infty))$, then
\begin{eqnarray*}
\int_{ {\mathbb{R}}^n } \nabla u(x)\cdot\nabla\psi(x)\,dx&=&
\int_{ \{x_n>0\} } \nabla u(x)\cdot\nabla\psi(x)\,dx+
\int_{ \{x_n<0\} } \nabla u(x)\cdot\nabla\psi(x)\,dx\\
&=&
\int_{ \{x_n>0\} } {\rm div}\,\big(\psi(x)\,\nabla u(x)\big)\,dx+
\int_{ \{x_n<0\} } {\rm div}\,\big(\psi(x)\,\nabla u(x)\big)\,dx
\\ &=&-2\int_{ \{x_n=0\} } \psi(x)\,\frac{\partial u}{\partial x_n}(x_1,\dots,x_{n-1},0^+)
\,d{\mathcal{H}}^{n-1}(x),
\end{eqnarray*}
where~${\mathcal{H}}^{n-1}$ denotes the standard
Hausdorff measure of codimension~$1$.
Therefore, the condition~$\Delta u\le0$ in~\eqref{CO:AK}
is distributionally equivalent to
$$ \frac{\partial u}{\partial x_n}(x_1,\dots,x_{n-1},0^+)\le0.$$
Similarly, given any point~$p\in \{x_n=0\}\cap\{u>\varphi\}$,
one can consider functions~$\psi\in C^\infty_0(B_\rho(p))$,
with~$\rho>0$ sufficiently small such that~$B_\rho(p)\subseteq\{u>\varphi\}$,
and thus find that
$$ \frac{\partial u}{\partial x_n}(x_1,\dots,x_{n-1},0^+)=0\qquad{\mbox{in}}\qquad
\{x_n=0\}\cap\{u>\varphi\}.$$
Hence, dropping the notation~$0^+$ for the sake of brevity,
one can write~\eqref{CO:AK} in the form
\begin{equation}\label{CO:AK2}
\begin{cases}
\Delta u=0 & {\mbox{ in }}{\mathbb{R}}^n\setminus (\{x_n=0\}\cap\{u=\varphi\}),\\
u\ge\varphi & {\mbox{ in }}\{x_n=0\},\\
\displaystyle\frac{\partial u}{\partial x_n}\le0& {\mbox{ in }}\{x_n=0\},\\
\displaystyle\frac{\partial u}{\partial x_n}=0& {\mbox{ in }}\{x_n=0\}\cap\{u>\varphi\}.
\end{cases}
\end{equation}
Interestingly, equation~\eqref{CO:AK2} can be written in a fractional Laplacian form.
Indeed, by using the Fourier transform in the variables~$(x_1,\dots,x_{n-1})$
(see e.g.~\eqref{FT:X}
and~\eqref{7A-DE}), we know that, up to normalization constants,
$$ \frac{\partial u}{\partial x_n}(x_1,\dots,x_{n-1},0^+)=-{\mathcal{F}}^{-1}
\left(
|(\xi_1,\dots,\xi_{n-1})| \hat u(\xi_1,\dots,\xi_{n-1},0)\right)=-\sqrt{-\Delta} u(x_1,\dots,x_{n-1},0).$$
Consequently, writing~$v:=u(x_1,\dots,x_{n-1},0)$, we can interpret~\eqref{CO:AK2}
as a fractional equation on~${\mathbb{R}}^{n-1}$, namely
\begin{equation*}
\begin{cases}
v\ge\varphi & {\mbox{ in }}{\mathbb{R}}^{n-1},\\
\sqrt{-\Delta}v\ge0& {\mbox{ in }}{\mathbb{R}}^{n-1},\\
\sqrt{-\Delta}v=0& {\mbox{ in }}\{v>\varphi\}.
\end{cases}
\end{equation*}
We do not go into the details of the classical and recent
developments of the mathematical theory of the thin obstacle problem
and of the many topics related to it:
for this, see e.g.~\cites{MR2962060, MR3709717}.}\end{example}

\begin{example}[The Signorini problem\index{problem!Signorini}]
{\rm In 1959, Antonio Signorini posed an engineering
problem consisting in finding the equilibrium configuration of
an elastic body, resting on a 
rigid frictionless
surface (see~\cite{MR0118021}).
Gaetano Fichera provided a rigorous mathematical framework
in which the problem is well-posed
in 1963, see~\cite{MR0176661}. Interestingly, this solution
was found just a few weeks before Signorini's death,
whose last words were spent to celebrate this discovery
as his {\em greatest contentment}.
The historical description of these moments is commemorated
in the survey
{\em La nascita della teoria delle disequazioni variazionali ricordata dopo trent'anni},
of the 1995 {\em Atti dei Convegni Lincei}.

Here, we recall a simplified version of the problem, its relation with
the thin obstacle problem, and its link with the fractional Laplace operator.

To this end, we introduce some notation from linear elasticity.
Namely, given a material body~$A\subset{\mathbb{R}}^n$ at rest
one describes its equilibrium configuration
under suitable forces by~$B=y(A)$, where~$y:{\mathbb{R}}^n\to{\mathbb{R}}^n$
is a (suitably regular and invertible) map (or, equivalently, one can consider~$B$ the
set at rest and~$A$ the equilibrium configuration in the ``real world'', up
to replacing~$y$ with its inverse).

In this setting, the displacement vector is defined by
\begin{equation}\label{Upda}
U(x):=y(x)-x.\end{equation}
We denote the components of~$U$ by~$U^{(1)},\dots,U^{(n)}$.
The ansatz of the linear elasticity theory is that, as a consequence
of Hooke's Law\index{law!Hooke's}, the infinitesimal elastic energy is proportional to the symmetrized
gradient\index{symmetrized gradient} of~$U$. That is, for every~$i,j\in\{1,\dots,n\}$, one defines the strain
tensor\index{strain tensor}
\begin{equation}\label{CALD} ({\mathcal{D}}U)_{ij}:=\frac{\partial{U^{(i)}}}{\partial x_j}+
\frac{\partial{U^{(j)}}}{\partial x_i}\end{equation}
and sets
$$ |{\mathcal{D}}U|:=\sum_{i,j=1}^n|({\mathcal{D}}U)_{ij}|^2=
\sum_{i,j=1}^n\left(
\frac{\partial{U^{(i)}}}{\partial x_j}+\frac{\partial{U^{(j)}}}{\partial x_i}\right)^2.$$
With this notation, the elastic component of the energy is
\begin{equation}\label{781:19haj} {\mathcal{E}}(U):=\frac12
\int_{A} |{\mathcal{D}}U(x)|^2\,dx.\end{equation}
The differential operator governing the elastostatic equations
(known in the literature as Navier-Cauchy equations, at least in the special
form of them that we take into account in our simplified approach)
are obtained from the first variation of the elastic energy functional in~\eqref{781:19haj}.
Since
\begin{eqnarray*}&&\frac{
|{\mathcal{D}}(U+\epsilon\Phi)|^2 -|{\mathcal{D}}U|^2}{2}\\&=&\frac12
\sum_{i,j=1}^n\left(
\frac{\partial{U^{(i)}}}{\partial x_j}+\frac{\partial{U^{(j)}}}{\partial x_i}+\epsilon
\left(\frac{\partial{\Phi^{(i)}}}{\partial x_j}+
\frac{\partial{\Phi^{(j)}}}{\partial x_i}\right)\right)^2 -\frac{|{\mathcal{D}}U|^2}2\\&=&
\epsilon\sum_{i,j=1}^n
\left(
\frac{\partial{U^{(i)}}}{\partial x_j}+\frac{\partial{U^{(j)}}}{\partial x_i}\right)
\left(\frac{\partial{\Phi^{(i)}}}{\partial x_j}+
\frac{\partial{\Phi^{(j)}}}{\partial x_i}\right)+o(\epsilon)\\&=&
\epsilon\sum_{i,j=1}^n
\left(
\frac{\partial{U^{(i)}}}{\partial x_j}
\frac{\partial{\Phi^{(i)}}}{\partial x_j}+
\frac{\partial{U^{(j)}}}{\partial x_i}
\frac{\partial{\Phi^{(i)}}}{\partial x_j}+
\frac{\partial{U^{(i)}}}{\partial x_j}
\frac{\partial{\Phi^{(j)}}}{\partial x_i}+
\frac{\partial{U^{(j)}}}{\partial x_i}
\frac{\partial{\Phi^{(j)}}}{\partial x_i}
\right)+o(\epsilon)\\
&=&
\epsilon\sum_{i,j=1}^n
\left(
\frac{\partial{U^{(i)}}}{\partial x_j}
\frac{\partial{\Phi^{(i)}}}{\partial x_j}+
\frac{\partial{U^{(j)}}}{\partial x_i}
\frac{\partial{\Phi^{(i)}}}{\partial x_j}+
\frac{\partial{U^{(j)}}}{\partial x_i}
\frac{\partial{\Phi^{(i)}}}{\partial x_j}+
\frac{\partial{U^{(i)}}}{\partial x_j}
\frac{\partial{\Phi^{(i)}}}{\partial x_j}
\right)+o(\epsilon)\\&=&
2\epsilon\sum_{i,j=1}^n
\left(
\frac{\partial{U^{(i)}}}{\partial x_j}+
\frac{\partial{U^{(j)}}}{\partial x_i}\right)\frac{\partial{\Phi^{(i)}}}{\partial x_j}
+o(\epsilon),
\end{eqnarray*}
it follows from~\eqref{781:19haj} that, for all~$\Phi\in C^\infty_0(A,{\mathbb{R}}^n)$,
\begin{equation}\label{DElap}
\begin{split}
\langle D{\mathcal{E}}(U),\Phi\rangle\,&=
2\sum_{i,j=1}^n\int_A
\left(
\frac{\partial{U^{(i)}}}{\partial x_j}(x)+
\frac{\partial{U^{(j)}}}{\partial x_i}(x)\right)\frac{\partial{\Phi^{(i)}}}{\partial x_j}(x)\,dx\\
&=
-2\sum_{i,j=1}^n\int_A\frac{\partial}{\partial x_j}
\left(
\frac{\partial{U^{(i)}}}{\partial x_j}(x)+
\frac{\partial{U^{(j)}}}{\partial x_i}(x)\right)\;\Phi^{(i)}(x)\,dx\\
&=
-2\sum_{i=1}^n\int_A
\left(
\Delta{U^{(i)}}(x)+
\frac{\partial}{\partial x_i}{{\rm div }}\,U(x)\right)\;\Phi^{(i)}(x)\,dx.
\end{split}
\end{equation}
One can also take into account the effect of a force field~$f=(f^{(1)},\dots,f^{(n)}):
{\mathbb{R}}^n\to
{\mathbb{R}}^n$ which is acting on the material body. If we think that the map~$y$
is the outcome of the deformation produced by the force field,
one can consider the infinitesimal work associated to this force
as given approximatively, for small displacements,
by the quantity
$$ f(x)\cdot (y(x)-x)\,dx=f(x)\cdot U(x)\,dx,$$
where the setting in~\eqref{Upda} has been exploited.
We obtain in this way a potential energy of the
form
$$ {\mathcal{P}}(U):=\int_A f(x)\cdot U(x)\,dx.$$
We see that
\begin{equation}\label{ed:AK} \langle\nabla{\mathcal{P}}(U),\Phi\rangle=\int_A
f(x)\cdot\Phi(x)\,dx,\end{equation}
for all $\Phi\in C^\infty_0(A,{\mathbb{R}}^n)$.

{F}rom this and~\eqref{DElap}, we obtain the elastic equation
\begin{equation}\label{ELAEQ}
\begin{split}&
\Delta{U^{(i)}}(x)+
\frac{\partial}{\partial x_i}{{\rm div }}\,U(x)
=\frac12\,f^{(i)}(x),\\&{\mbox{for all $i\in\{1,\dots,n\}$ and all~$x\in A$.}}\end{split}
\end{equation}
We also assume that~$y(\partial A)=\partial B$ and that~$B$
rests on a rigid surface, say a frictionless table.
In this case, we can write that~$A\subset\{x_n\ge0\}$
and~$B\subset\{ y_n\ge0\}$, that is~$y^{(n)}(x)>0$
for all~$x\in A$, and therefore~$U^{(n)}(x)+x_n>0$ for all~$x\in A$.

The contact set between~$B$ and the table is described by the points lying in~$T:=(\partial B)
\cap\{y_n=0\}$, and the contact set between~$A$ and the table is described by
the points lying in~$S:=(\partial A)
\cap\{x_n=0\}$. We describe~$S$ by distinguishing two classes of points,
namely the points~$S_1$
which do not leave the contact set,
and the points~$S_2$
which leave the contact set, namely
\begin{equation}\label{S1S2s}
\begin{split}
S_1\,&:=\{ x\in S {\mbox{ s.t. }} y^{(n)}(x)=0\}\\
&=\{ x\in S {\mbox{ s.t. }} U^{(n)}(x)+x_n=0\}\\
&=\{ x\in S {\mbox{ s.t. }} U^{(n)}(x)=0\}\\
{\mbox{and }}\qquad
S_2\,&:=\{ x\in S {\mbox{ s.t. }} y^{(n)}(x)>0\}\\
&=\{ x\in S {\mbox{ s.t. }} U^{(n)}(x)+x_n>0\}\\
&=\{ x\in S {\mbox{ s.t. }} U^{(n)}(x)>0\}.
\end{split}\end{equation}
We take into account the effect of a surface tension, acting by means
of a force field~$g:{\mathbb{R}}^n\to{\mathbb{R}}^n$.
In this case, the infinitesimal work, for small displacements, is approximately given by
$$ g(x)\cdot (y(x)-x)\,d{\mathcal{H}}^{n-1}(x)=g(x)\cdot U(x)\,d{\mathcal{H}}^{n-1}(x).$$
Therefore, we describe the surface tension energy
effect by an energy functional of the form
$$ {\mathcal{S}}(U):=
\int_{\partial A} g(x)\cdot U(x)\,d{\mathcal{H}}^{n-1}(x).
$$
We stress that if~$x_0\in S_2$ there exist~$\rho>0$ and~$\epsilon_0>0$ such that for all~$\epsilon
\in[-\epsilon_0,\epsilon_0]$
and~$\Phi\in C^\infty_0( B_\rho(x_0),{\mathbb{R}}^n)$
the perturbation~$U+\epsilon \Phi$ is admissible,
in the sense that it maps~$A$ into~$y(A)\subset\{y_n\ge0\}$.

On the other hand, if~$\Phi\in C^\infty_0( {\mathbb{R}}^n,{\mathbb{R}}^n)$
and~$\Phi^{(n)}\ge0$ we have that
the perturbation~$U+\epsilon \Phi$ is admissible for all~$\epsilon\ge0$,
since, for every~$x\in A$,
$$ 0\le y^{(n)}(x)=U^{(n)}(x)+x_n\le U^{(n)}(x)+\epsilon \Phi^{(n)}(x)+x_n.$$
Furthermore,
$$ \langle D{\mathcal{S}}(U),\Phi\rangle
=\int_{\partial A} g(x)\cdot\Phi(x)\,d{\mathcal{H}}^{n-1}(x),$$
for all~$\Phi\in C^\infty_0({\mathbb{R}}^n,{\mathbb{R}}^n)$.
Therefore, recalling~\eqref{DElap}, \eqref{ed:AK}
and~\eqref{ELAEQ}, it follows that
the variation of the full energy with respect to a boundary
perturbation~$\Phi\in C^\infty_0({\mathbb{R}}^n,{\mathbb{R}}^n)$
is equal to
\begin{eqnarray*}
&&
2\sum_{i,j=1}^n\int_A
\left(
\frac{\partial{U^{(i)}}}{\partial x_j}(x)+
\frac{\partial{U^{(j)}}}{\partial x_i}(x)\right)\frac{\partial{\Phi^{(i)}}}{\partial x_j}(x)\,dx
\\ &&\qquad+
\int_A
f(x)\cdot\Phi(x)\,dx+
\int_{\partial A} g(x)\cdot\Phi(x)\,d{\mathcal{H}}^{n-1}(x)\\
&=&
2\sum_{i,j=1}^n\int_A \frac{\partial}{\partial x_j}\left(
\left(
\frac{\partial{U^{(i)}}}{\partial x_j}(x)+
\frac{\partial{U^{(j)}}}{\partial x_i}(x)\right)\,\Phi^{(i)}(x)\right)\,dx
\\ &&\qquad+
\int_{\partial A} g(x)\cdot\Phi(x)\,d{\mathcal{H}}^{n-1}(x)
\\ &=&
2\sum_{i=1}^n\int_A {\rm div}\,\left( \Phi^{(i)}(x)\,
\left(
\nabla U^{(i)}(x)+
\frac{\partial{U}}{\partial x_i}(x)\right)\right)\,dx
\\ &&\qquad+
\int_{\partial A} g(x)\cdot\Phi(x)\,d{\mathcal{H}}^{n-1}(x)\\
\\ &=&
2\sum_{i=1}^n\int_{\partial A} \Phi^{(i)}(x)\,
\left(
\nabla U^{(i)}(x)+
\frac{\partial{U}}{\partial x_i}(x)\right)\cdot\nu(x)\,d{\mathcal{H}}^{n-1}(x)
\\ &&\qquad+\sum_{i=1}^n
\int_{\partial A} g^{(i)}(x)\;\Phi^{(i)}(x)\,d{\mathcal{H}}^{n-1}(x)
\\ &=&
2\sum_{i,k=1}^n\int_{\partial A} \Phi^{(i)}(x)\,({\mathcal{D}} U)_{ik}(x)\,
\nu_k(x)\,d{\mathcal{H}}^{n-1}(x)
\\ &&\qquad+\sum_{i=1}^n
\int_{\partial A} g^{(i)}(x)\;\Phi^{(i)}(x)\,d{\mathcal{H}}^{n-1}(x),
\end{eqnarray*}
where~$\nu$ is the exterior normal of~$A$.
This and the admissibility discussion of the perturbation
gives the boundary conditions
\begin{equation}\label{RA11}\begin{split}&
\sum_{k=1}^n({\mathcal{D}} U)_{nk}(x)\,
\nu_k(x)
=-\frac12\,
g^{(n)}(x)\qquad
{\mbox{for all $x\in S_2$}}\\
{\mbox{and }}\qquad &\sum_{k=1}^n({\mathcal{D}} U)_{nk}(x)\,
\nu_k(x)
\ge -\frac12\,
g^{(n)}(x)\qquad
{\mbox{for all $x\in S_1$.}}
\end{split}\end{equation}
If the surface forces are tangential to the boundary of the material body,
we have that~$g^{(n)}=0$ on the surface of the table, and
also the normal at these points is vertical,
therefore~\eqref{RA11}
reduces to
\begin{equation}\label{RA1167}\begin{split}&
({\mathcal{D}} U)_{nn}(x)
=0\qquad
{\mbox{for all $x\in S_2$}}\\
{\mbox{and }}\qquad &({\mathcal{D}} U)_{nn}(x)
\le0\qquad
{\mbox{for all $x\in S_1$.}}
\end{split}\end{equation}
Interestingly, the problem is naturally endowed with
``ambiguous'' boundary conditions\index{ambiguous boundary conditions}, in the sense that
there are two alternative boundary conditions in~\eqref{RA1167}
that are prescribed at the boundary, in terms of either equalities
and inequalities, and it is not a priori known
what condition is satisfied at each point. Also, by~\eqref{CALD},
we can write~\eqref{RA1167} in the form
\begin{equation}\label{RA1167-BIS}\begin{split}&
\frac{\partial U^{(n)}}{\partial x_n}(x)
=0\qquad
{\mbox{for all $x\in S_2$}}\\
{\mbox{and }}\qquad &\frac{\partial U^{(n)}}{\partial x_n}(x)
\le0\qquad
{\mbox{for all $x\in S_1$.}}
\end{split}\end{equation}
We now consider a magnified version of this picture at ``nice'' contact points.
For this, for simplicity we suppose that
\begin{equation}\label{Gh781-48}
\begin{split}&
0\in\partial A,\qquad B_{\rho}(\rho e_n)\subseteq A,\qquad
B_{\rho}(-\rho e_1)\cap \{x_n=0\}\subseteq S_1\\&\qquad{\mbox{and}}\qquad
B_{\rho}(\rho e_1)\cap \{x_n=0\}\subseteq S_2,
\end{split}\end{equation}
for some~$\rho>0$, see Figure~\ref{CON}.

\begin{figure}[h]
\centering
\includegraphics[width=6.5 cm]{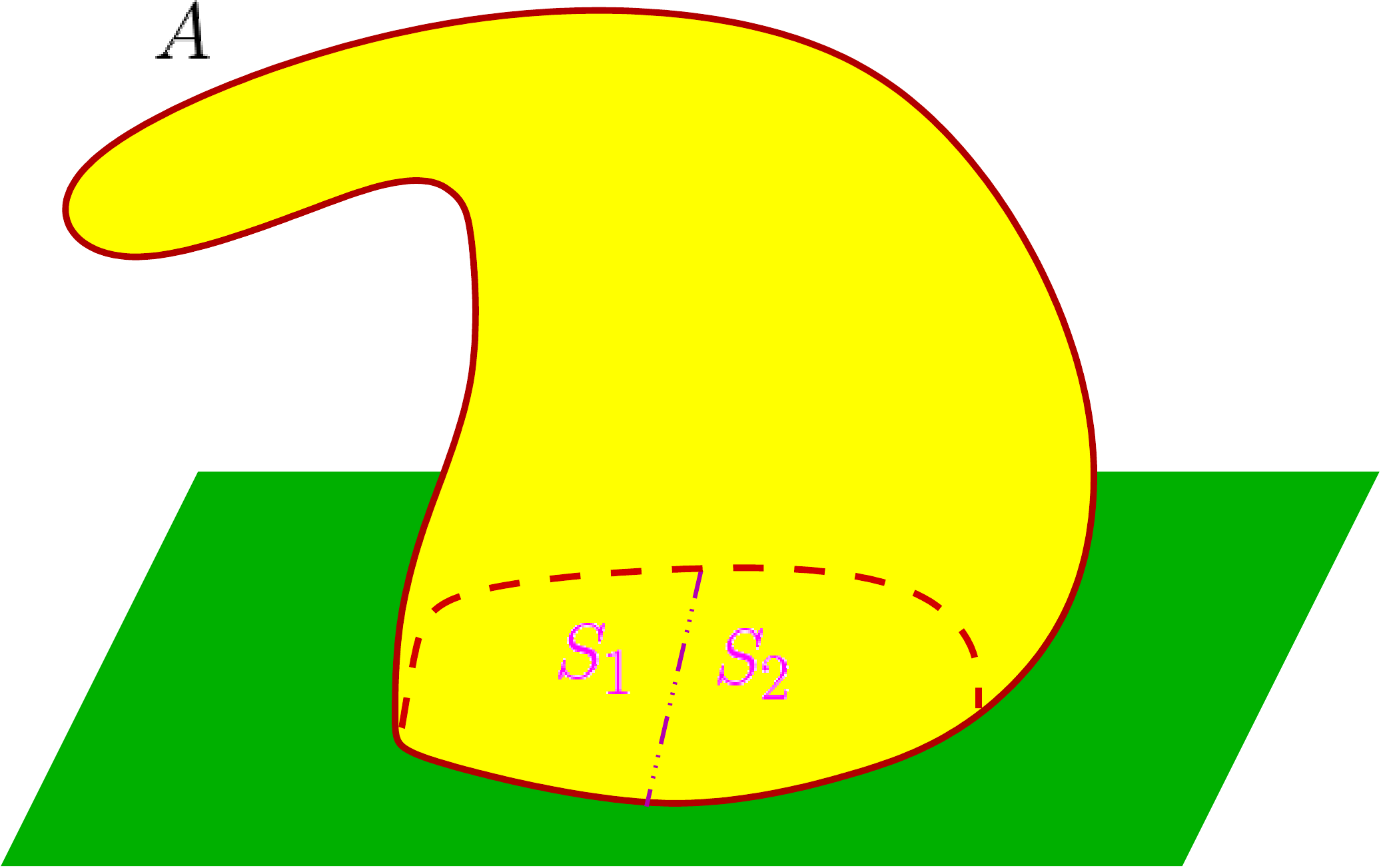}$\qquad$
\includegraphics[width=6.5 cm]{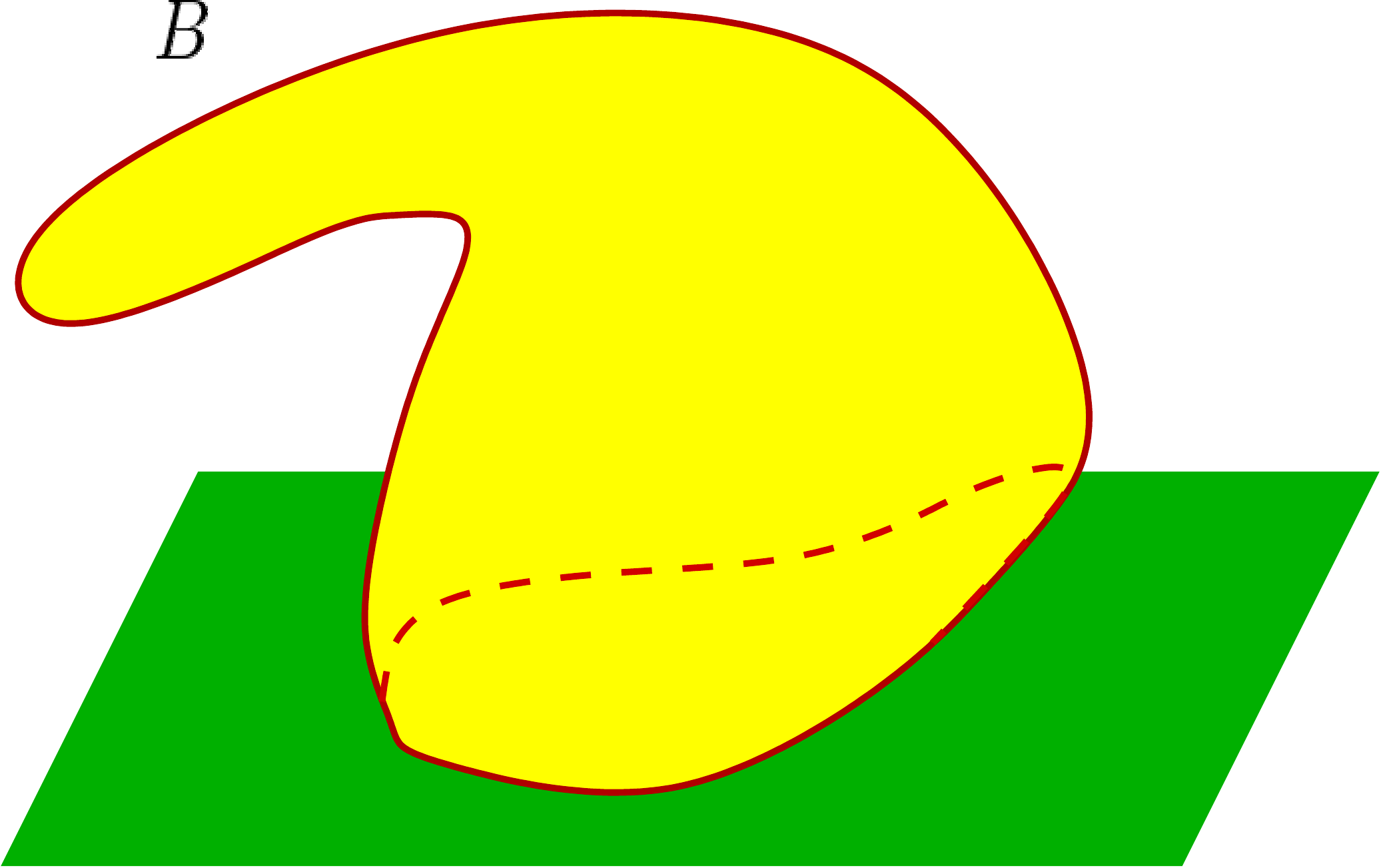}
\caption{\footnotesize\it 
Points leaving the contact set.}
\label{CON}
\end{figure}

Fixed~$\epsilon>0$, to be taken small in what follows,
we consider the transformation
$$ \Theta_\epsilon(x):=\left( \frac{x_1}{\epsilon},\dots,\frac{x_{n-1}}{\epsilon},
\frac{\sqrt{2}\;x_n}{\epsilon}\right).$$
By~\eqref{Gh781-48}, we have that, locally,
\begin{equation}\label{ConaVAU}
\begin{split}&
{\mbox{$\Theta_\epsilon(A)$
converges to~$\{x_n>0\}$,}}\\&
{\mbox{$\Theta_\epsilon(S_1)$
converges to~$\{x_n=0\}\cap\{x_1<0\}$}}\\
{\mbox{and }}\;&{\mbox{$\Theta_\epsilon(S_2)$
converges to~$\{x_n=0\}\cap\{x_1>0\}$,}} \end{split}\end{equation}
up to negligible sets.

Moreover, for each~$i\in\{1,\dots,n\}$, we define
\begin{equation} \label{89AK0} V_\epsilon^{(i)}(x):=
\epsilon^2\,\left[ U^{(i)}\big(\Theta_\epsilon(x)\big)-U^{(i)}(0)-
\sum_{k=1}^n \frac{\partial U^{(i)}}{\partial x_k}(0)\,\Theta^{(k)}_\epsilon(x)
\right].\end{equation}
Under a quadratic bound on~$U$, one can make the ansatz that~$V_\epsilon^{(i)}$
behaves well as~$\epsilon\searrow0$, and we will indeed assume that~$V_\epsilon=(
V_\epsilon^{(1)}, \dots,V_\epsilon^{(n)})$
converges smoothly to some~$V=(V^{(1)},\dots,V^{(n)})$. We also
set~$v:=V^{(n)}$. 
We point out that, by~\eqref{RA1167-BIS} and~\eqref{Gh781-48},
we can take sequences of points~$p_j\in S_1$ and~$q_j\in S_2$ which converge to the origin as~$j\to+\infty$,
and, assuming that the solution is regular enough, write that
\begin{equation}\label{89AK}
\begin{split}&
0=\lim_{j\to+\infty}{U^{(n)}}(p_j)=U^{(n)}(0)
\\{\mbox{and }}\qquad& 0=\lim_{j\to+\infty}\frac{\partial U^{(n)}}{\partial x_n}(q_j)
=\frac{\partial U^{(n)}}{\partial x_n}(0).
\end{split}\end{equation}
As a consequence, if~$\Theta_\epsilon(x)\in S_1\cup S_2$,
\begin{equation*}
\begin{split}
V^{(n)}_\epsilon(x)\,&=
\epsilon^2\,\left[ U^{(n)}\big(\Theta_\epsilon(x)\big)-
\sum_{k=1}^{n-1} \frac{\partial U^{(i)}}{\partial x_k}(0)\,\Theta^{(k)}_\epsilon(x)
\right]\\&\ge
-\epsilon^2\,
\sum_{k=1}^{n-1} \frac{\partial U^{(i)}}{\partial x_k}(0)\,\Theta^{(k)}_\epsilon(x)
\\&=-
\epsilon\,
\sum_{k=1}^{n-1} \frac{\partial U^{(i)}}{\partial x_k}(0)\,x_k,
\end{split}
\end{equation*}
and therefore, recalling~\eqref{ConaVAU}, we obtain that
\begin{equation}\label{1-32}
v(x_1,\dots,x_{n-1},0)\ge 0,\qquad{\mbox{for all }}(x_1,\dots,x_{n-1})\in{\mathbb{R}}^{n-1}.
\end{equation}
By~\eqref{89AK}, it also follows that
\begin{eqnarray*} \frac{\partial V_\epsilon^{(n)}}{\partial x_n}(x)&=&\sqrt{2}\;
\epsilon\,\left[ \frac{\partial U^{(n)}}{\partial x_n}
\big(\Theta_\epsilon(x)\big)-
\frac{\partial U^{(n)}}{\partial x_n}(0)
\right]\\&=&
\sqrt{2}\;
\epsilon\,\frac{\partial U^{(n)}}{\partial x_n}
\big(\Theta_\epsilon(x)\big).\end{eqnarray*}
Hence, by~\eqref{RA1167-BIS},
\begin{equation}\label{RA15100295683}\begin{split}&
\frac{\partial V_\epsilon^{(n)}}{\partial x_n}(x)
=0\qquad
{\mbox{if $\Theta_\epsilon(x)\in S_2$}}\\
{\mbox{and }}\qquad &\frac{\partial V_\epsilon^{(n)}}{\partial x_n}(x)
\le0\qquad
{\mbox{if $\Theta_\epsilon(x)\in S_1$.}}
\end{split}\end{equation}
This and \eqref{ConaVAU} formally lead to
\begin{equation}\label{RA1167-TRIS}\begin{split}&
\frac{\partial v}{\partial x_n}(x)
=0\qquad
{\mbox{in $\{x_n=0\}\cap\{x_1>0\}$}}\\
{\mbox{and }}\qquad &\frac{\partial v}{\partial x_n}(x)
\le0\qquad
{\mbox{in $\{x_n=0\}\cap\{x_1<0\}$.}}
\end{split}\end{equation}
One can make~\eqref{RA1167-TRIS} more precise in light of~\eqref{1-32}.
Namely, we claim that
\begin{equation}\label{RA1167-QUADRIS}
\frac{\partial v}{\partial x_n}(x)
=0\qquad
{\mbox{in $\{x_n=0\}\cap\{v>0\}$.}}
\end{equation}
To check this, let us take~$x\in\{x_n=0\}$ with~$a:=v(x)>0$
and let~$\epsilon>0$ be so small that~$V^{(n)}_\epsilon(x) \geq\frac{a}{2}$
and~$\left| \frac{\partial U^{(i)}}{\partial x_k}(0)\right|\,|x|\le\frac{a}{4\epsilon\,(n-1)}$,
for all~$k\in\{1,\dots,n-1\}$. Then, by~\eqref{89AK0}
and~\eqref{89AK}, we have that
$$ \epsilon^2 U^{(n)}
\big(\Theta_\epsilon(x)\big)=V^{(n)}_\epsilon(x)+\epsilon^2
\sum_{k=1}^{n-1} \frac{\partial U^{(i)}}{\partial x_k}(0)\,\Theta^{(k)}_\epsilon(x)
\ge\frac{a}{2}-\frac{a}{4}>0.$$
This and~\eqref{S1S2s} give that~$\Theta_\epsilon(x)\in S_2$
and consequently~\eqref{RA1167-QUADRIS} follows
from~\eqref{RA15100295683}.

We also introduce the notation
$$ \sigma_k:=\begin{cases}
1 & {\mbox{ if }} k\ne n,\\
{\sqrt2} & {\mbox{ if }} k= n.
\end{cases}$$
Then, we see that, for each~$m\in\{1,\dots,n\}$,
\begin{eqnarray*}
\frac{\partial^2 V_\epsilon^{(n)}}{\partial x_m^2}(x)&=&
\sigma_m^2\,
\frac{\partial^2 U^{(n)}}{\partial x_m^2}\big(\Theta_\epsilon(x)\big)
,\end{eqnarray*}
and accordingly, for every~$x\in{\mathbb{R}}^{n-1}\times(0,+\infty)$
such that~$\Theta_\epsilon(x)\in A$,
\begin{eqnarray*}
\Delta V_\epsilon^{(n)} (x)&=&\sum_{m=1}^n
\sigma_m^2\,
\frac{\partial^2 U^{(n)}}{\partial x_m^2}\big(\Theta_\epsilon(x)\big)
\\&=&
\sum_{m=1}^{n-1}
\frac{\partial^2 U^{(n)}}{\partial x_m^2}\big(\Theta_\epsilon(x)\big)
+2\,
\frac{\partial^2 U^{(n)}}{\partial x_n^2}\big(\Theta_\epsilon(x)\big).
\end{eqnarray*}
Hence, since, by~\eqref{ELAEQ},
\begin{eqnarray*}
\frac12\,f^{(n)}(x)&=&
\Delta{U^{(n)}}(x)+
\frac{\partial}{\partial x_n}{{\rm div }}\,U(x)\\
&=&\sum_{m=1}^n \frac{\partial^2 U^{(n)}}{\partial x_m^2}(x)+
\sum_{m=1}^n \frac{\partial^2 U^{(m)}}{\partial x_m\partial x_n}(x)\\
&=&\sum_{m=1}^{n-1} \frac{\partial^2 U^{(n)}}{\partial x_m^2}(x)+
\sum_{m=1}^{n-1} \frac{\partial^2 U^{(m)}}{\partial x_m\partial x_n}(x)
+2\,\frac{\partial^2 U^{(n)}}{\partial x_n^2}(x),
\end{eqnarray*}
we find that, if~$\Theta_\epsilon(x)\in A$,
\begin{eqnarray*}
\frac12\,f^{(n)}\big(\Theta_\epsilon(x)\big)&=&
\Delta V_\epsilon^{(n)} (x)+
\sum_{m=1}^{n-1} \frac{\partial^2 U^{(m)}}{\partial x_m\partial x_n}
\big(\Theta_\epsilon(x)\big).
\end{eqnarray*}
In particular, if the force field is due to vertical gravity, we have that~$f^{(n)}=-g$
for some constant~$g$, and accordingly
$$ \Delta V_\epsilon^{(n)} (x)=h(x),$$
as long as~$\Theta_\epsilon(x)\in A$, with
$$ h(x):=-\frac{g}2-\sum_{m=1}^{n-1} \frac{\partial^2 U^{(m)}}{
\partial x_m\partial x_n}\big(\Theta_\epsilon(x)\big).$$
Hence, by~\eqref{ConaVAU}, we can write
$$ \Delta v(x)=h(x),$$
for all~$x\in{\mathbb{R}}^{n-1}\times(0,+\infty)$. Combining this with~\eqref{1-32},
\eqref{RA1167-TRIS} and~\eqref{RA1167-QUADRIS},
we can write the system of equations
\begin{equation}\label{sau83eysap}\begin{cases}
& \Delta v(x)=h(x)\qquad
{\mbox{in $\{x_n>0\}$,}}\\
& v\ge 0\qquad
{\mbox{on $\{x_n=0\}$,}}\\
&
\displaystyle\frac{\partial v}{\partial x_n}(x)
\le0\qquad
{\mbox{on $\{x_n=0\}$}},\\
&\displaystyle\frac{\partial v}{\partial x_n}(x)
=0\qquad
{\mbox{on $\{x_n=0\}\cap\{v>0\}$.}}
\end{cases}\end{equation}
By taking even reflection, one can also define
$$ u(x)=u(x_1,\dots,x_n)=\begin{cases}
v(x_1,\dots,x_{n-1},x_n) & {\mbox{ if }} x_n\geq0,\\
v(x_1,\dots,x_{n-1},-x_n) & {\mbox{ if }} x_n<0.
\end{cases}$$
Similarly, we define~$h$ in~$\{x_n<0\}$ by even reflection,
and in this way
$$\Delta u(x)=\Delta v(x_1,\dots,x_{n-1},|x_n|)=h(x_1,\dots,x_{n-1},|x_n|)=h(x),$$
as long as~$x_n\ne0$.

We also observe that if~$\varphi\in C^\infty_0({\mathbb{R}}^n)$
is such that~$\varphi=0$ in~$\{x_n=0\}\cap\{v=0\}$,
\begin{eqnarray*}&&
\int_{ {\mathbb{R}}^n } \nabla u(x)\cdot\nabla\varphi(x)\,dx+
\int_{ {\mathbb{R}}^n } h(x)\,\varphi(x)\,dx
\\
&=& \int_{ {\mathbb{R}}^n \cap\{x_n>0\}} \nabla u(x)\cdot\nabla\varphi(x)\,dx
+ \int_{ {\mathbb{R}}^n \cap\{x_n<0\}} \nabla u(x)\cdot\nabla\varphi(x)\,dx\\
&&\qquad+
\int_{ {\mathbb{R}}^n \cap\{x_n>0\}} h(x)\,\varphi(x)\,dx+
\int_{ {\mathbb{R}}^n \cap\{x_n<0\}} h(x)\,\varphi(x)\,dx
\\
&=& \int_{ {\mathbb{R}}^n \cap\{x_n>0\}} {\rm div}\big(\varphi(x)\,\nabla u(x)\big)\,dx
+ \int_{ {\mathbb{R}}^n \cap\{x_n<0\}} {\rm div}\big(\varphi(x)\,\nabla u(x)\big)\,dx\\
&=& 
-\int_{ \{x_n=0\}} \varphi(x)\,\frac{\partial u}{\partial x_n}(x_1,\dots,x_{n-1},0^+)\,d
{\mathcal{H}}^{n-1}(x)\\&&\qquad
+
\int_{ \{x_n=0\}} \varphi(x)\,\frac{\partial u}{\partial x_n}(x_1,\dots,x_{n-1},0^-)\,d
{\mathcal{H}}^{n-1}(x)\\
&=& 
-2\int_{ \{x_n=0\}\cap\{ v>0\}} \varphi(x)\,\frac{\partial v}{\partial x_n}(x_1,\dots,x_{n-1},0^+)\,d
{\mathcal{H}}^{n-1}(x)\\&=&0,
\end{eqnarray*}
thanks to~\eqref{sau83eysap}.
This says that
\begin{equation*}
{\mbox{$\Delta u=h$ in~${\mathbb{R}}^n\setminus
(\{x_n=0\}\cap\{u=0\})$.}}\end{equation*}
This and~\eqref{sau83eysap} lead to the system
\begin{equation*}\begin{cases}
& \Delta u(x)=h(x)\qquad
{\mbox{in ${\mathbb{R}}^n\setminus
(\{x_n=0\}\cap\{u=0\})$,}}\\
& u\ge 0\qquad
{\mbox{on $\{x_n=0\}$,}}\\
&\displaystyle\frac{\partial u}{\partial x_n}(x)
\le 0\qquad
{\mbox{on $\{x_n=0\}$,}}\\
&\displaystyle\frac{\partial u}{\partial x_n}(x)
=0\qquad
{\mbox{on $\{x_n=0\}\cap\{u>0\}$,}}
\end{cases}\end{equation*}
which is in the form of the thin obstacle problem\footnote{As a matter of fact,
in the recent mathematical jargon,
there is some linguistic confusion about the ``Signorini problem'',
since this name is often used also for the
thin obstacle problem in
Example~\ref{THIN}.
In a sense,
the thin obstacle problem should be properly referred to
as the ``scalar'' Signorini problem, but the adjective ``scalar'' happens
to be often missing.
Of course, the ``original'' Signorini problem is technically even more demanding than
the thin obstacle problem, due to the vectorial nature of the question.
For the optimal regularity and the free boundary analysis of
the original Signorini problem and some important links with the
scalar Signorini problem, we refer to~\cite{MR3480553}
(see in particular Section~5 there).}
discussed in Example~\ref{THIN}
(compare with~\eqref{CO:AK2}).
}\end{example}

\begin{example}[Gamma function, Balakrishnan formula\index{formula!Balakrishnan}, the method of semigroups, and the Heaviside operational calculus]
{\rm Let us start with the classical definition of
the Euler's Gamma function\index{function!Gamma}:
$$ \Gamma (z)=\int _{0}^{+\infty }\tau^{z-1}\,e^{-\tau}\,d\tau. $$
We compute it at the point~$z:=1-s$, with~$s\in(0,1)$, and we integrate by parts,
thus obtaining
\begin{eqnarray*} \Gamma (1-s)&=&
\int_{0}^{+\infty }\tau^{-s}\,e^{-\tau}\,d\tau\\ &=&
-\int_{0}^{+\infty }\tau^{-s}\,\frac{d}{d\tau}(e^{-\tau}-1)\,d\tau\\&=&
-s\int_{0}^{+\infty }\tau^{-s-1}\,(e^{-\tau}-1)\,d\tau,
\end{eqnarray*}
which can be written as
$$ \Gamma(-s)=\int_{0}^{+\infty }\tau^{-s-1}\,(e^{-\tau}-1)\,d\tau.$$
Now, we take~$\delta>0$
and make the substitution~$t:=\delta^{-1} \tau$. In this way, we obtain that
\begin{equation}\label{7uAKKm1eee}
\delta^{s}=\frac1{\Gamma(-s)}\int_{0}^{+\infty }t^{-s-1}\,(e^{-t\delta }-1)
\,dt.\end{equation}
It turns out that one can make sense of this formula not only for a given
real parameter~$\delta>0$, but also when~$\delta$ is replaced by a suitably nice
operator such as the Laplacian (with the minus sign to make it positive).
Namely, formally taking~$\delta:=-\Delta$ in~\eqref{7uAKKm1eee},
one finds that
\begin{equation}\label{7uAKKm1eee2}
(-\Delta)^{s}=\frac1{\Gamma(-s)}\int_{0}^{+\infty }t^{-s-1}\,(e^{t\Delta}-1)
\,dt.\end{equation}
In spite of the sloppy way in which formula~\eqref{7uAKKm1eee2} was derived here,
it is possible to give a rigorous proof of it using operator theory.
Indeed, formula~\eqref{7uAKKm1eee2} was established by
Alampallam V. Balakrishnan in~\cite{MR0115096}.
Its meaning in the operator sense is that applying
the operator on the left hand
side of~\eqref{7uAKKm1eee2} to a nice (say, smooth and rapidly decreasing)
function
is equivalent to apply to it the operator on the right hand
side, namely
\begin{equation}\label{7uAKKm1eee3}
(-\Delta)^{s}u(x)=\frac1{\Gamma(-s)}\int_{0}^{+\infty }t^{-s-1}\,
\big(e^{t\Delta}u(x)-u(x)\big)
\,dt.\end{equation}
The meaning of~$e^{t\Delta}u(x)$ is also in the sense of operators.
To understand this notation one can set~$\Phi_u(x,t):=e^{t\Delta}u(x)$
and observe that, formally,
\begin{eqnarray*} &&\partial_t \Phi_u(x,t)=\partial_t\big(e^{t\Delta}u(x)\big)=
\Delta e^{t\Delta}u(x) =\Delta \Phi_u(x,t)\\
{\mbox{and }}&&\Phi_u(x,0)= e^{\Delta 0}u(x)=e^0 u(x)=u(x).
\end{eqnarray*}
These observations can be formalized by ``going backwards'' in the computation
and defining~$e^{t\Delta}u(x):=\Phi_u(x,t)$, where the latter is the solution
of the heat equation with initial datum~$u$, that is
$$ \begin{cases}
\partial_t \Phi_u(x,t)=\Delta \Phi_u(x,t) & {\mbox{ for all $x\in{\mathbb{R}}^n$ and~$t>0$,}}\\
\Phi_u(x,0)=u(x) & {\mbox{ for all $x\in{\mathbb{R}}^n$.
}}\end{cases}$$
With this notation, we can write~\eqref{7uAKKm1eee3} as
\begin{equation}\label{7uAKKm1eee4}
(-\Delta)^{s}u(x)=\frac1{\Gamma(-s)}\int_{0}^{+\infty }t^{-s-1}\,
\big(\Phi_u(x,t)-u(x)\big)
\,dt.\end{equation}
The power of formula~\eqref{7uAKKm1eee4} is apparent, since it reduces
a nonlocal, and in principle rather complicated, operator
such as the fractional Laplacian to the superposition
of classical heat semigroups, and can be exploited as a ``subordination
identity'' in which the well-established knowledge of the classical heat flow
leads to new results for the fractional setting, see e.g.~\cite{MR2858052}.
The importance and broad range of applicability of formula~\eqref{7uAKKm1eee4}
and of its various extensions is 
very clearly and extensively discussed in~\cites{2017arXiv171203347G, 2018arXiv180805159S}.

Now we make some historical comments about operator calculus and its successful
attempt to transform identities valid for real numbers into rigorous formulas
involving operators, under the appropriate assumptions. Without aiming
at reconstructing here the full history of the subject,
we recall that one of the first
attempts in the important directions sketched here was made by
Oliver Heaviside at the end of XIX century, who also tried to write
the solution of the heat equation in an operator form. Given the difficulty of the arguments
treated and the lack of mathematical technologies at that time,
some of the original arguments in the literature were probably not fully justified
and required the introduction of a brand new subject of mathematical analysis,
which indeed highly contribute to create and promote, see e.g.~\cite{MR555103}.
It is however plausible that the pioneering, albeit somewhat unrigorous, intuitions
of Heaviside were not always well-appreciated by the mathematical community
at that time. A footprint of this historical controversy has remained
in the work by Heaviside published in Volume~34 of the
periodical and scientific journal
{\em The Electrician}, in which Heaviside states that
{\em What one has a right to expect, however, is a fair field,
and that the want of sympathy should be kept in a neutral state, so as
not to lead to unnecessary obstruction. For even men who are not Cambridge
mathematicians deserve justice, which I very much fear they do not always get,
especially the meek and lowly}.

For an extensive treatment of the theory of operator calculus,
see e.g.~\cites{MR0105594, MR0361633, MR2244037, MR3468941} and the references therein.}\end{example}

\begin{example}[Fractional viscoelastic models, springs and dashpots]\label{LAD}
{\rm
A classical application of fractional derivatives occurs
in the phenomenological description of viscoelastic fluids. This is of course
a very advanced topic and we do not aim at fully cover it in these few pages:
see e.g. Section~10.2 of~\cite{MR1658022}, where
a number of fractional models for viscoelasticity\index{viscoelasticity} are discussed in detail.

Roughly speaking, a basic idea used in this context is that the viscoelastic effects
arise as a suitable ``ideal'' superposition of ``purely elastic'' and ``purely viscous''
phenomena, which are better understood when treated separately but whose
combined effect becomes quite difficult to comprise into classical equations.

On the one hand, the elastic effects are well-described by the displacement
of a classical spring subject to Hooke's Law, in which the displacement of the spring
is proportional to the force applied to it. That is, if~$\varepsilon$ denotes
the elongation of the spring and~$\sigma$ the force applied to it, one writes
\begin{equation}\label{VIS:000}
\sigma=\kappa\varepsilon ,
\end{equation}
for a suitable elastic coefficient~$\kappa>0$.

On the other hand, the viscous effects of fluids is classically described by
Newton's Law\index{law!Newton's}, according to which
forces are related to velocities, as in the formula
\begin{equation}\label{VIS:001}
\sigma=\nu\dot\varepsilon ,
\end{equation}
where the ``dot'' here above denotes derivative with respect to time, 
for a suitable viscous coefficient~$\nu>0$.

The rationale sustaining~\eqref{VIS:001} can be understood thinking about the free fall
of an object. In this case, if one takes into account the gravity and
the viscous friction of the air, the vertical position of a falling object is described
by the equation
\begin{equation}\label{8iwjs1324367000923}
mg-\nu\dot\varepsilon= m\ddot\varepsilon.
\end{equation}
For long times, the falling body reaches asymptotically a terminal velocity,
which can be guessed by~\eqref{8iwjs1324367000923} by formally imposing that the limit
acceleration is zero: in this limit regime, one thus obtain the velocity equation
\begin{equation}\label{008iwjs1324367000923}
mg-\nu\dot\varepsilon= 0,
\end{equation}
which formally coincides with equation~\eqref{VIS:001} when the force is
the gravitational one. Hence, in a sense, comparing~\eqref{VIS:001} with~\eqref{008iwjs1324367000923},
one can think that 
Newton's Law for viscid fluids describes the asymptotic velocity
in a regime in which the viscous effects are dominant and after a sufficient
amount of time (after which the acceleration effects become negligible).

Roughly speaking, the idea of viscoelasticity is that, in general,
fluids are neither perfectly elastic nor perfectly viscid, therefore
an accurate formulation of the problem requires the study of an operator which
interpolates between the ``derivative of order zero'' appearing
in~\eqref{VIS:000} and
the ``derivative of order one'' appearing
in~\eqref{VIS:001}, and of course fractional derivatives seem to perfectly fit
such a scope.

{F}rom the point of view of the notation, in the description of fluids,
the displacement function~$\varepsilon$ typically represents the ``strain'',
while the normalized force function~$\sigma$ typically represents the ``stress''
acting on the fluid particles.

\begin{figure}[h]
\centering
\includegraphics[width=4.5 cm]{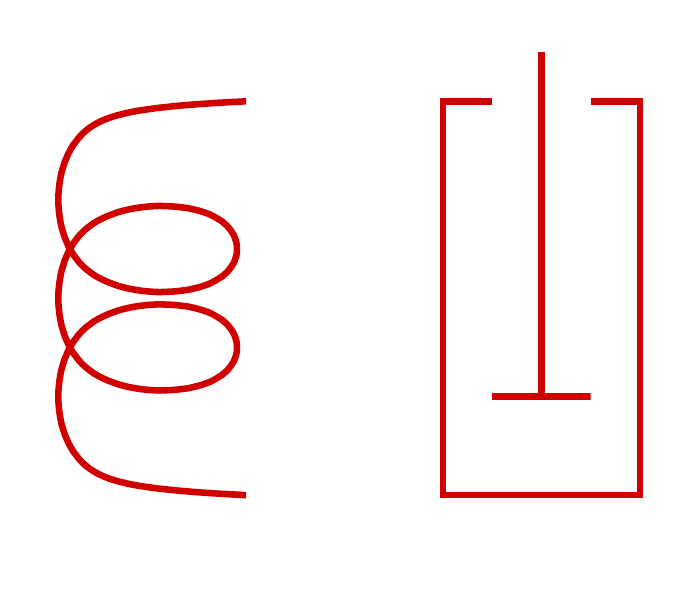}
\caption{\footnotesize\it Schematic representation of a spring (left) and a dashpot (right).}
\label{DASH}
\end{figure}

To give a concrete feeling of this superposition of elastic and viscid effects, we recall
here a purely mechanical model which was proposed by~\cite{Schiessel}
(we actually simplify the discussion presented in~\cite{Schiessel}, since we 
do not aim here at fully justified general statements).

The idea proposed by~\cite{Schiessel} is to take into account a system of springs,
which react to forces elastically according to
Hooke's Law, and
dashpots, or viscous dampers, in which strains and stresses are related by
Newton's Law.

\begin{figure}[h]
\centering
\includegraphics[width=6 cm]{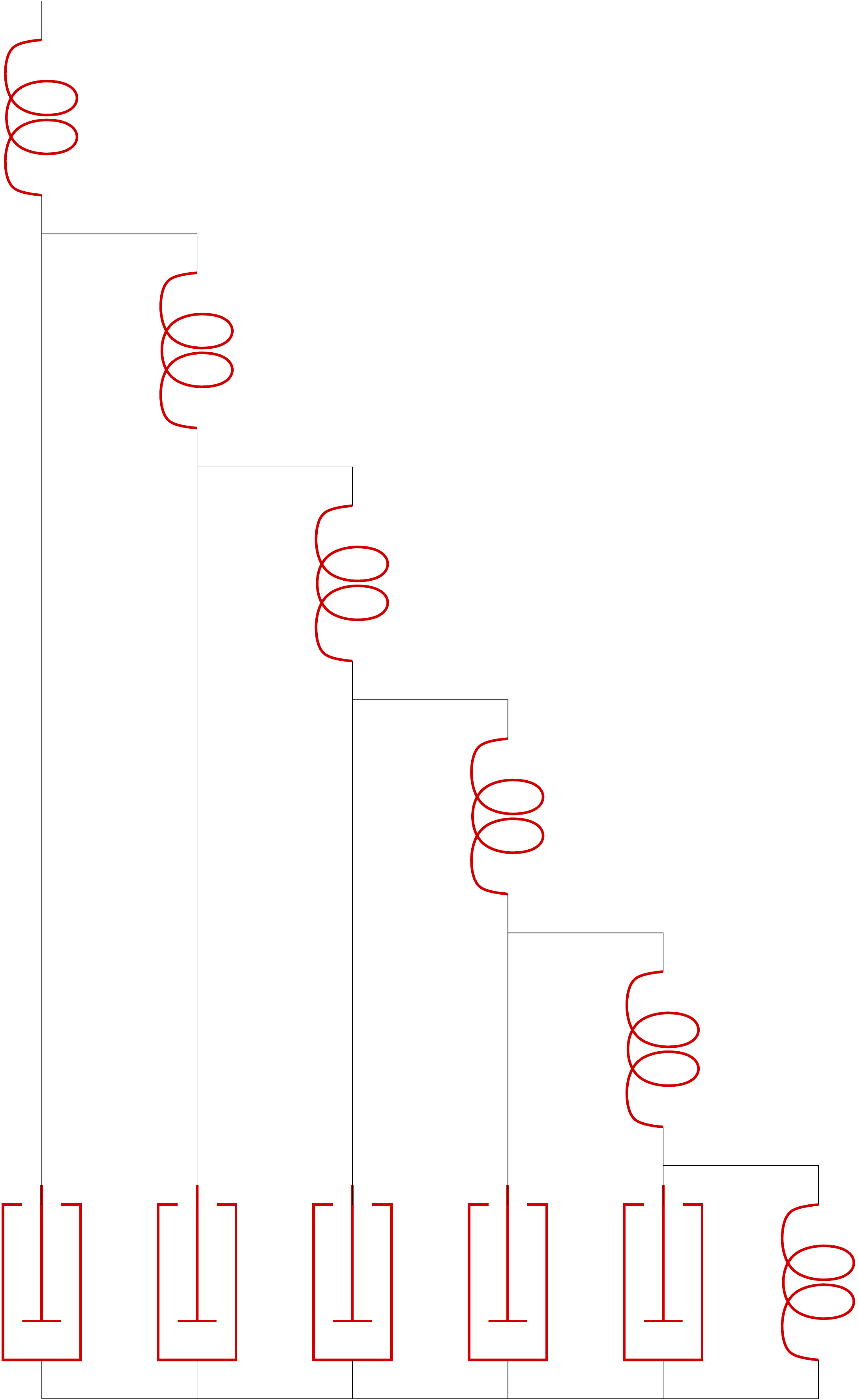}
\caption{\footnotesize\it The spring-dashpot ladder of Example~\ref{LAD}.}
\label{DASH2}
\end{figure}

To clarify the setting we will draw schematically springs and dashpots as described in Figure~\ref{DASH}.
Following~\cite{Schiessel}, the model that we discuss here
consists of a ladder-like structure with springs along
one of the struts and dashpots on the rungs of the ladder, see Figure~\ref{DASH2}.
The system contains~$n$ spring and~$n-1$ dashpots and we will formally
consider the limit as~$n\to+\infty$.
The superindexes~$d$ and~$s$ will refer to the dashpots and the springs, respectively,
while the subindexes will refer to the position in the ladder.
In particular, we can consider the elongation of the springs, denoted by~$\varepsilon^s_0,\dots,
\varepsilon^s_{n-1}$ and the ones of the dashpots, denoted by~$\varepsilon^d_0,\dots,
\varepsilon^d_{n-2}$. {F}rom Figure~\ref{DASH2}, we see that
\begin{equation}\label{EL:1}
\varepsilon^d_k=\varepsilon^s_{k+1}+\varepsilon^d_{k+1}.
\end{equation}
Moreover, if $\sigma^s_1,\dots,\sigma^s_n$ denote the stresses on the springs
and~$\sigma^d_1,\dots,\sigma^d_{n-1}$ the ones on the dashpots,
the parallel arrangement on the ladder gives that
\begin{equation}\label{EL:2}
\sigma^s_k=\sigma^s_{k+1}+\sigma^d_k.\end{equation}
Also, the total elongation of the system is given by
\begin{equation}\label{EL:TOT}
\varepsilon=\varepsilon^s_0+\varepsilon^d_0,\end{equation}
and the stress at the end of the ladder is given by the one of the first spring, namely
\begin{equation}\label{EL:TOT2}
\sigma=\sigma^s_0.\end{equation}
We will consider springs with the same elastic coefficient, that we normalize in such a way
that Hooke's Law writes in this case as
\begin{equation}\label{EL:3}
\varepsilon^s_k=2\sigma^s_k.
\end{equation}
Also, the dashpots will be taken to be all with the same viscous coefficients
and, in this way, Newton's Law will be written as
\begin{equation}\label{EL:4}
\dot\varepsilon^d_k=\sigma^d_k.
\end{equation}
The case of different springs and dashpots can also be taken into account
and it would produce quantitatively different analysis, see~\cite{Schiessel}
for details, but even this simpler case in which all the mechanical elements are
the same produces some interesting effects that we now analyze.

First of all, by~\eqref{EL:1} and~\eqref{EL:3},
\begin{equation}\label{EL:5}
\varepsilon^d_k=2\sigma^s_{k+1}+\varepsilon^d_{k+1}.
\end{equation}
Similarly, by~\eqref{EL:2} and~\eqref{EL:4},
\begin{equation}\label{EL:6}
\sigma^s_k=\sigma^s_{k+1}+\dot\varepsilon^d_k.
\end{equation}
It is now appropriate to consider the Laplace transform\index{transform!Laplace} of a function~$u$,
that we denote by
$$ \bar u(\omega):=
\int_{0}^{+\infty} u(t) \,e^{-t\omega}\,dt.$$
For further reference, we observe that
\begin{equation}\label{87ujJ8929ppPQ}
\begin{split}
\bar{\dot{u}}(\omega)\,&=\int_{0}^{+\infty} \dot u(t) \,e^{-t\omega}\,dt\\&=
\int_{0}^{+\infty}\left( \frac{d}{dt}\big( u(t) \,e^{-t\omega}\big) 
+\omega u(t) \,e^{-t\omega}\right)\,dt\\&=-u(0)+\omega\bar u(\omega).
\end{split}
\end{equation}
In a similar way,
considering the fractional derivative notation
$$ D^{1/2}_{t,0} u(t):=\int_0^t \frac{
\dot u(\tau)}{\sqrt{t-\tau}}\,d\tau,$$
to be compared with the general setting in the forthcoming formula~\eqref{defcap},
we have that
\begin{equation}\label{YTtatra}
\begin{split}
\overline{D^{1/2}_{t,0} u}(\omega)\,&=\int_0^{+\infty}\left[
\int_0^t \frac{
\dot u(\tau)}{\sqrt{t-\tau}}\,d\tau\right]\,e^{-t\omega}\,dt\\
&=\int_0^{+\infty}\left[\dot u(\tau)\int_\tau^{+\infty} \frac{e^{-t\omega}
}{\sqrt{t-\tau}}\,dt\right]\,d\tau\\
&=\int_0^{+\infty}\left[\dot u(\tau)e^{-\tau\omega}\int_0^{+\infty} \frac{e^{-\zeta\omega}
}{\sqrt{\zeta}}\,d\zeta\right]\,d\tau
\\&=\frac1{\sqrt{\omega}}
\int_0^{+\infty}\left[\dot u(\tau)e^{-\tau\omega}\int_0^{+\infty} \frac{e^{-\mu}
}{\sqrt{\mu}}\,d\mu\right]\,d\tau\\&=\frac{C}{\sqrt{\omega}}
\int_0^{+\infty}\dot u(\tau)e^{-\tau\omega}\,d\tau\\
&=\frac{C}{\sqrt{\omega}}
\int_0^{+\infty}\left(\frac{d}{d\tau}\Big(u(\tau)e^{-\tau\omega}\Big)+
\omega u(\tau)e^{-\tau\omega}
\right)
\,d\tau\\
&=-\frac{Cu(0)}{\sqrt{\omega}}+C\sqrt{\omega}
\int_0^{+\infty}u(\tau)e^{-\tau\omega}\,d\tau\\
&=-\frac{Cu(0)}{\sqrt{\omega}}+C\sqrt{\omega}\bar u(\omega),
\end{split}
\end{equation}
for a suitable~$C>0$.

Now, taking the Laplace transform of~\eqref{EL:5}, we find that
\begin{equation*}
\bar\varepsilon^d_k=2\bar\sigma^s_{k+1}+\bar\varepsilon^d_{k+1},
\end{equation*}
and therefore
\begin{equation}\label{EL:7}
\frac{\bar\varepsilon^d_k}{2\bar\sigma^s_{k+1}}=1+\frac{\bar\varepsilon^d_{k+1}}{2\bar\sigma^s_{k+1}}.
\end{equation}
Instead, taking the Laplace transform of~\eqref{EL:6}
and recalling~\eqref{87ujJ8929ppPQ}, assuming that the initial displacement vanishes,
we find that
\begin{equation}\label{7uAJJA93eirjj}
\bar\sigma^s_k=\bar\sigma^s_{k+1}+\omega\bar\varepsilon^d_k.
\end{equation}
We write this identity as
\begin{equation*}
\bar\sigma^s_{k+1}=\bar\sigma^s_{k+2}+\omega\bar\varepsilon^d_{k+1}
\end{equation*}
and we substitute it in the right
hand side of~\eqref{EL:7}, concluding that
\begin{equation}\label{EL:8}\begin{split}
\rho_k\,&:=
\frac{\bar\varepsilon^d_k}{2\bar\sigma^s_{k+1}}\\&=1+
\frac{\bar\varepsilon^d_{k+1}}{2(\bar\sigma^s_{k+2}+\omega\bar\varepsilon^d_{k+1})}\\&=
1+\frac{\rho_{k+1}}{1+\omega\frac{\bar\varepsilon^d_{k+1}}{\bar\sigma^s_{k+2}}}\\&=
1+\frac{\rho_{k+1}}{1+2\omega\rho_{k+1}}\\
&=1+\frac{1}{2\omega+\frac{1}{\rho_{k+1}}}.
\end{split}
\end{equation}
We can iterate~\eqref{EL:8} and then find that
\begin{equation*}
\rho_k=1+\frac{1}{2\omega+\frac{1}{1+\frac{1}{2\omega+\frac{1}{\rho_{k+2}}}}}=
1+\frac{1}{2\omega+\frac{1}{1+\frac{1}{2\omega+\frac{1}{
1+\frac{1}{2\omega+\frac{1}{\rho_{k+3}}}
}}}},
\end{equation*}
and so on. That is, in the formal limit of infinitely many springs and dashpots,
\begin{equation}\label{EL:10}
\frac{\bar\varepsilon^d_0}{2\bar\sigma^s_{1}}=
\rho_0=1+\frac{1}{2\omega+\frac{1}{1+\frac{1}{2\omega+\frac{1}{\rho_2}}}}=
1+\frac{1}{2\omega+\frac{1}{1+\frac{1}{2\omega+\frac{1}{
1+\frac{1}{2\omega+\frac{1}{\ddots}}
}}}},
\end{equation}
which is an infinite continuous fraction.

We observe that
\begin{equation}\label{EL:11}
{\mbox{the right hand side of~\eqref{EL:10} is equal to }}\frac{1+\sqrt{1+
\displaystyle\frac2\omega}}{2}.
\end{equation}
Indeed, the right hand side of~\eqref{EL:10} is a positive number,
say~$X$, and it satisfies that
$$ X=1+\frac{1}{2\omega+\frac{1}{X}}.$$
Solving~$X$ in this relation, we obtain~\eqref{EL:11}, as desired.

Then, from~\eqref{EL:10} and~\eqref{EL:11}, we conclude that
\begin{equation}\label{EL:13}
\frac{\bar\varepsilon^d_0}{2\bar\sigma^s_{1}}=
\frac{1+\sqrt{1+
\displaystyle\frac2\omega}}{2}.
\end{equation}
On the other hand, recalling~\eqref{EL:TOT}, \eqref{EL:TOT2}, \eqref{EL:3} and~\eqref{7uAJJA93eirjj},
\begin{equation*}
\begin{split}
\frac{\bar\varepsilon(\omega)}{2\bar\sigma(\omega)}\,&=
\frac{\bar\varepsilon^s_0+\bar\varepsilon^d_0}{2\bar\sigma^s_0}\\&=
\frac{\bar\varepsilon^s_0}{2\bar\sigma^s_0}+
\frac{\bar\varepsilon^d_0}{2\bar\sigma^s_0}\\&=
1+\frac{\bar\sigma^s_{1}}{\bar\sigma^s_1+\omega
\bar\epsilon^d_0}\times\frac{\bar\varepsilon^d_0}{2\bar\sigma^s_{1}},
\end{split}\end{equation*}
which combined to~\eqref{EL:13} gives that
\begin{equation}\label{EL:14}
\frac{\bar\varepsilon(\omega)}{2\bar\sigma(\omega)}=1+
\frac{\bar\sigma^s_{1}}{\bar\sigma^s_1+\omega
\bar\epsilon^d_0}\times\frac{1+\sqrt{1+
\displaystyle\frac2\omega}}{2}.
\end{equation}
For small~$\omega$, we see that~\eqref{EL:14}
becomes
\begin{equation}\label{EL:15}
\frac{\bar\varepsilon(\omega)}{\bar\sigma(\omega)}\simeq\sqrt{\frac2\omega}
.\end{equation}
We observe that, at least formally, the regime of small~$\omega$ corresponds to
that of large~$t$: a rigorous justification
of these asymptotics is likely to rely on a suitable use of an
Abelian-Tauberian Theorem (see e.g.~\cite{MR1015374})
and goes well beyond the scope of our heuristic argument (but see formulas~(38)
and~(39) in~\cite{Schiessel} for a quantitative analysis of these asymptotics).

Hence, from~\eqref{EL:15},
taking the initial displacement to be null, i.e. assuming that~$\varepsilon(0)=0$,
and normalizing the constants to be unitary
for the sake of simplicity,
in light of~\eqref{YTtatra}, we can write, for small~$\omega$,
that
\begin{equation*}
\bar\sigma(\omega)\simeq \sqrt{\omega}\,\bar\varepsilon(\omega)=
\overline{D^{1/2}_{t,0} \varepsilon}(\omega),\end{equation*}
and therefore, for large~$t$, that
$$ D^{1/2}_{t,0} \varepsilon(t)\simeq \sigma(t).$$
This provides a heuristic, but rather convincing, motivation
showing how fractional derivatives naturally surface in complex mechanical models
involving springs and dashpots and suggest that similar phenomena can arise
in models in which both elastic and viscid effects contribute effectively to the macroscopic
behaviour of the system.
}\end{example}

\begin{example}[Diffusion along a comb structure]\label{TGSCO}
{\rm In this example we show how the geometry of the diffusion media
can naturally produce fractional equations.
We take into account a diffusion model along a comb structure.
The diffusion along the backbone of the comb can be either classical or
of space-fractional type (the model that we present
was indeed introduced in~\cite{comb} in the case of classical diffusion
along the backbone, but we will present here an even more general setting
that comprises space-fractional diffusion as well).

The ramified medium that we take into account in this example
is a ``comb'', with a backbone on the horizontal axis and a fine grid
of vertical fingers which are located at mutual distance~$\epsilon>0$, see Figure~\ref{CCC}.

\begin{figure}[h]
\centering
\includegraphics[width=11.5 cm]{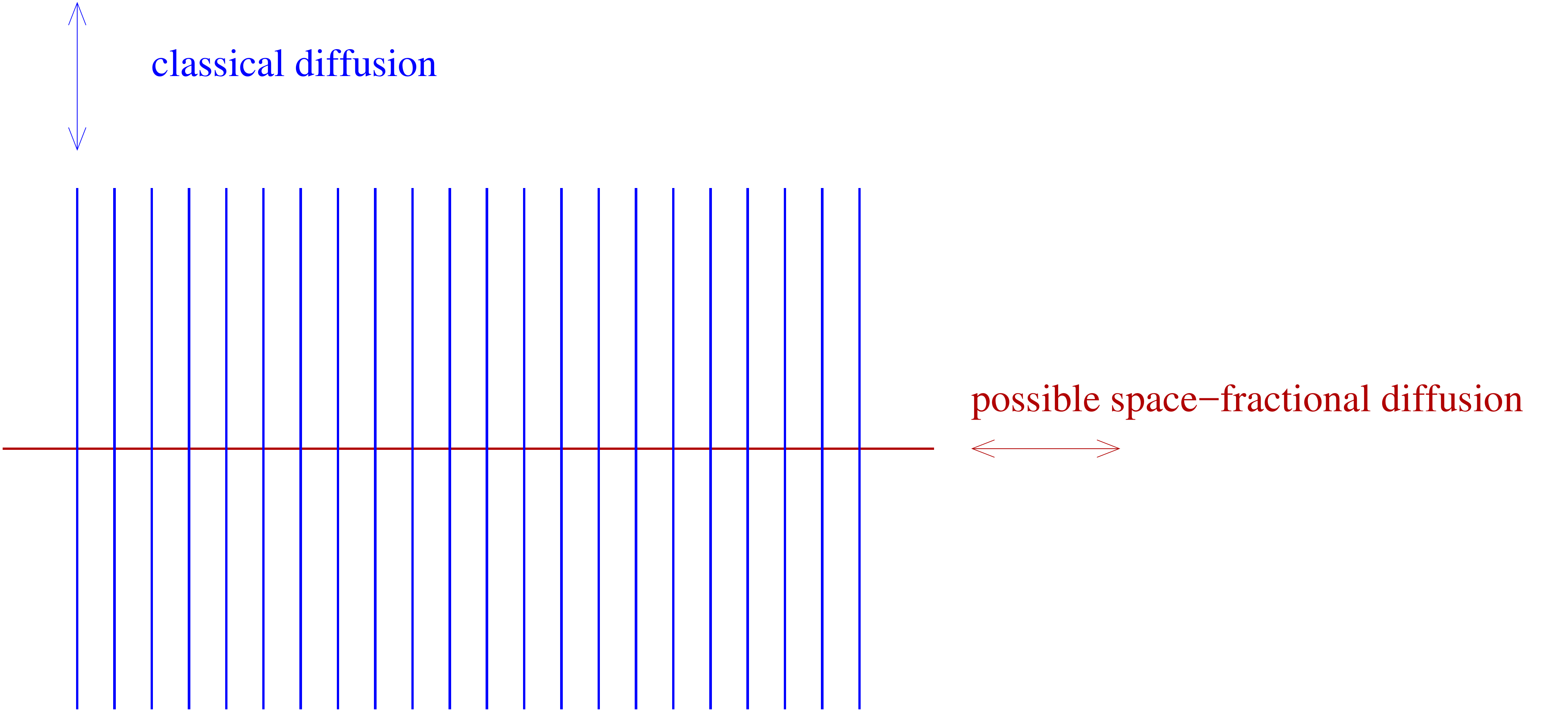}
\caption{\footnotesize\it 
The comb structure along which diffusion takes place in Example \ref{TGSCO}.}
\label{CCC}
\end{figure}

We consider the superposition of a horizontal diffusion along the backbone
driven by~$-(-\Delta)^s_x$, for
some~$s\in(0,1]$, the case~$s=1$ corresponding to classical diffusion
and the case~$s\in(0,1)$ to space-fractional diffusion
as in~\eqref{7A-DE} and~\eqref{7A-DE2}, and a vertical diffusion along the fingers of classical type.
To make the model attain a significant limit in the continuous approximation
in which~$\epsilon\searrow0$, since the fingers are infinitely
many and their distance tends to zero, it is convenient to assume
that the vertical diffusion is subject to a diffusive coefficient of order~$\epsilon$.
The initial position is taken for simplicity to be a concentrated mass 
at the origin. This model translates into the following mathematical formulation:
\begin{equation}\label{EQUco0}
\begin{cases}
\partial_t u(x,y,t)=-\delta_0(y)\,(-\Delta)^s_x u(x,y,t)+\epsilon\displaystyle\sum_{j\in{\mathbb{Z}}}
\delta_0(\epsilon j)\,\partial^2_y u(x,y,t)& \\ \qquad\qquad
{\mbox{ for all $(x,y)\in{\mathbb{R}}^2$ and $t>0$,}}\\
u(x,y,0)=\delta_0(x)\,\delta_0(y).
\end{cases}
\end{equation}
As customary, the notation~$\delta_0$ denotes here
the Dirac's Delta function centered at the origin.
We point out that
\begin{equation}\label{sedisy}
\lim_{\epsilon\searrow0}\epsilon\sum_{j\in{\mathbb{Z}}}
\delta_0(\epsilon j)=1,
\end{equation}
in the distributional sense. Indeed, if~$\varphi\in C^\infty_0({\mathbb{R}})$,
\begin{eqnarray*}
\epsilon\sum_{j\in{\mathbb{Z}}}\int_{\mathbb{R}}
\delta_0(\epsilon j)\,\varphi(y)\,dy=\epsilon\sum_{j\in{\mathbb{Z}}}\varphi(\epsilon j).
\end{eqnarray*}
Since the latter can be seen as a Riemann sum, we find that
$$\lim_{\epsilon\searrow0}
\epsilon\sum_{j\in{\mathbb{Z}}}\int_{\mathbb{R}}
\delta_0(\epsilon j)\,\varphi(y)\,dy=
\int_{\mathbb{R}}\varphi(y)\,dy,$$
which gives~\eqref{sedisy}.

Hence, by~\eqref{sedisy}, one can take into account the continuous limit
of~\eqref{EQUco0} as~${\epsilon\searrow0}$, that we can write in the form
\begin{equation}\label{EQUco}
\begin{cases}
\partial_t u(x,y,t)=-\delta_0(y)\,(-\Delta)^s_x u(x,y,t)+\partial^2_y u(x,y,t)& {\mbox{ for all $(x,y)\in{\mathbb{R}}^2$ and $t>0$,}}\\
u(x,y,0)=\delta_0(x)\,\delta_0(y).
\end{cases}
\end{equation}
We consider the effective transport along the backbone~$\{y=0\}$, given by the function
\begin{equation}\label{5INT-U} U(x,t):=\int_{\mathbb{R}} u(x,y,t)\,dy.\end{equation}
Our claim is that~$U$ satisfies a time-fractional equation given by
\begin{equation}\label{EQUcoU}
\begin{cases}
D^{1/2}_{t,0} U(x,t)= -(-\Delta)^s_x U(x,t)
& {\mbox{ for all $x\in{\mathbb{R}}$ and $t>0$,}}\\
U(x,0)=\delta_0(x).
\end{cases}
\end{equation}
Here, up to normalization constants, we are taking
\begin{eqnarray*} D^{1/2}_{t,0} U(x,t)&:=&
\int_0^t \frac{\partial_tU(x,\tau)}{\sqrt{t-\tau}}\,d\tau\\&=&
\int_0^t \frac{\partial_tU(x,t-\tau)}{\sqrt{\tau}}\,d\tau,\end{eqnarray*}
and one can compare this expression with the similar ones
in~\eqref{8sEQ} and~\eqref{FRAcha}, as well as with the more general
setting that we will introduce in~\eqref{defcap}.

To prove~\eqref{EQUcoU}, we first check the initial condition. For this,
using~\eqref{EQUco}, we see that
\begin{equation}\label{9AHK-2oakak} U(x,0)=
\int_{\mathbb{R}} u(x,y,0)\,dy=\int_{\mathbb{R}} \delta_0(x)\,\delta_0(y)\,dy=
\delta_0(x).\end{equation}
Having checked the initial condition in~\eqref{EQUcoU}, we now aim
at proving the validity of the evolution equation in~\eqref{EQUcoU}.
To this end, it is convenient
to consider
the Fourier-Laplace transform of a function~$v=v(x,t)$, namely
the Fourier transform\index{transform!Fourier-Laplace} in the variable~$x$ combined with the Laplace transform
in the variable~$t$.
Namely, up to normalization constants that we omit,
we define
$$ {\mathcal{E}}_v(\xi,\omega):=
\iint_{ {\mathbb{R}} \times(0,+\infty)} v(x,t) \,e^{-i x\xi-t\omega}\,dx\,dt.$$
We observe that, for every~$x\in{\mathbb{R}}$,
\begin{eqnarray*}&&
\int_0^{+\infty} D^{1/2}_{t,0} U(x,t)\,e^{-t\omega}\,dt \\&=&
\int_0^{+\infty}\left[
\int_0^t \frac{\partial_tU(x,t-\tau)
\;e^{-t\omega}
}{\sqrt{\tau}}\,d\tau\right]\,dt\\&=&
\int_0^{+\infty}\left[
\int_\tau^{+\infty} \frac{\partial_tU(x,t-\tau)
\;e^{-t\omega}
}{\sqrt{\tau}}\,dt\right]\,d\tau\\&=&
\int_0^{+\infty}\left[
\int_\tau^{+\infty} \Big( \partial_t\big( U(x,t-\tau)
\;e^{-t\omega}\big) +\omega U(x,t-\tau)\;e^{-t\omega}
\Big)\,dt\right]\,\frac{d\tau}{\sqrt{\tau}}\\
\\&=&
\int_0^{+\infty}\left[
- U(x,0)\;e^{-\tau\omega} +\int_\tau^{+\infty}\omega U(x,t-\tau)\;e^{-t\omega}
\,dt\right]\,\frac{d\tau}{\sqrt{\tau}}\\
&=&-\sqrt{\frac{\pi}{\omega}}\;U(x,0)+\omega\int_0^{+\infty}\left[
\int_0^{+\infty} U(x,\sigma)\;e^{-(\sigma+\tau)\omega}\,d\sigma
\right]\,\frac{d\tau}{\sqrt{\tau}}\\
&=&-\sqrt{\frac{\pi}{\omega}}\;U(x,0)+\sqrt{\pi\omega}
\int_0^{+\infty} U(x,\sigma)\;e^{-\sigma\omega}\,d\sigma.
\end{eqnarray*}
As a consequence,
\begin{equation}\label{841016810412}
\begin{split}
{\mathcal{E}}_{D^{1/2}_{t,0} U}(\xi,\omega)\,&=\int_{\mathbb{R}}
\left[-\sqrt{\frac{\pi}{\omega}}\;U(x,0)+\sqrt{\pi\omega}
\int_0^{+\infty} U(x,\sigma)\;e^{-\sigma\omega}\,d\sigma\right]
e^{-ix\xi}\,dx\\&=
-\sqrt{\frac{\pi}{\omega}}\;\hat U(\xi,0)+
\sqrt{\pi\omega}\,
{\mathcal{E}}_{U}(\xi,\omega),
\end{split}\end{equation}
where~$\hat U$ denotes the Fourier transform of~$U$ in the variable~$x$.

Moreover, by \eqref{7A-DE}, up to normalizing constants we can write that
\begin{eqnarray*}
{\mathcal{E}}_{(-\Delta)^s_x U}(\xi,\omega)&=&\int_0^{+\infty}
|\xi|^{2s} \,\hat U(\xi,t)\,
e^{-t\omega}\,dt\\&=&|\xi|^{2s}\,{\mathcal{E}}_{U}(\xi,\omega).
\end{eqnarray*}
By this and~\eqref{841016810412}, we see that, to check the validity
of the evolution equation in~\eqref{EQUcoU}, recalling~\eqref{9AHK-2oakak},
up to normalizing constants
we need to establish that
\begin{equation}\label{EQUcoU-F}
|\xi|^{2s}\,{\mathcal{E}}_{U}(\xi,\omega)-\sqrt{\frac{1}{\omega}}+
\sqrt{\omega}\,
{\mathcal{E}}_{U}(\xi,\omega)=0.
\end{equation}
To this end, we consider the Fourier-Laplace transform of the function~$u$
that solves~\eqref{EQUco}. Namely,
we define
\begin{equation}\label{5INT-W} W(\xi,y,\omega):={\mathcal{E}}_u(\xi,y,\omega)=
\iint_{ {\mathbb{R}} \times(0,+\infty)} u(x,y,t) \,e^{-i x\xi-t\omega}\,dx\,dt.\end{equation}
In view of~\eqref{EQUco}, we point out that
\begin{eqnarray*} &&
-\delta_0(y)\,|\xi|^{2s}W(\xi,y,\omega)+\partial^2_yW(\xi,y,\omega)\\
&=&-\delta_0(y)\iint_{ {\mathbb{R}} \times(0,+\infty)}
|\xi|^{2s}\,u(x,y,t) \,e^{-i x\xi-t\omega}\,dx\,dt
+\partial^2_yW(\xi,y,\omega)\\
&=&-\delta_0(y)\int_0^{+\infty} |\xi|^{2s}\,
\hat u(\xi,y,t) \,e^{-t\omega}\,dt
+\partial^2_yW(\xi,y,\omega)\\&=&
{\mathcal{E}}_{-\delta_0(y)\,(-\Delta)^s_x u+\partial^2_y u}(\xi,y,\omega)\\&=&
{\mathcal{E}}_{\partial_tu}(\xi,y,\omega)\\&=&
\iint_{ {\mathbb{R}} \times(0,+\infty)} \partial_t u(x,y,t) \,e^{-i x\xi-t\omega}\,dx\,dt\\
&=&
\iint_{ {\mathbb{R}} \times(0,+\infty)} \Big(
\partial_t \big(u(x,y,t) \,e^{-i x\xi-t\omega}\big)+
\omega u(x,y,t) \,e^{-i x\xi-t\omega}
\Big)
\,dx\,dt\\
&=&
-\int_{ {\mathbb{R}} } 
u(x,y,0)\,e^{-ix\xi}\,dx
+\omega\iint_{ {\mathbb{R}} \times(0,+\infty)}  u(x,y,t) \,e^{-i x\xi-t\omega}
\,dx\,dt\\
&=&-\delta_0(y)+\omega W(\xi,y,\omega).
\end{eqnarray*}
This gives that the map~${\mathbb{R}}\ni y\mapsto W(\xi,y,\omega)$
satisfies
\begin{equation}\label{6gW}
\partial^2_y W=a^2 W-b(2a+c)\delta_0(y)+c\delta_0(y)\,W,\end{equation}
where~$a=a(\omega):=\sqrt\omega$, $b=b(\xi,\omega):=\frac{1}{2\sqrt\omega+|\xi|^{2s}}$
and~$c=c(\xi):=|\xi|^{2s}$.

Hence, fixing~$\xi\in{\mathbb{R}}$ and~$t>0$, one considers~\eqref{6gW}
as an equation for a function of~$y\in{\mathbb{R}}$, in which~$a$, $b$
and~$c$ are coefficients independent of~$y$.

With this in mind, we observe that, if~$a>0$, $b\in{\mathbb{R}}$, 
and~$g(y):=b\,e^{-a|y|}$,
it holds that
\begin{equation}\label{7jALLAPP}
g''(y)=a^2 g(y)-b(2a+c)\delta_0(y)+c\delta_0(y)\,g(y),
\end{equation}
for any~$c\in{\mathbb{R}}$. To check this, let~$\varphi\in C^\infty_0({\mathbb{R}})$.
Then, we compute
\begin{eqnarray*}&&
\int_{\mathbb{R}} \big( g''(y)-a^2 g(y)+b(2a+c)\delta_0(y)-c\delta_0(y)\,g(y)\big)
\varphi(y)\,dy\\ &=&
\int_{\mathbb{R}} g(y)\varphi''(y)\,dy
-a^2\int_{\mathbb{R}}g(y)\varphi(y)\,dy
+b(2a+c)\varphi(0)-cg(0)\varphi(0)\\
&=&b\int_{\mathbb{R}}e^{-a|y|}\varphi''(y)\,dy
-a^2b\int_{\mathbb{R}} e^{-a|y|}\varphi(y)\,dy
+b(2a+c)\varphi(0)-bc\varphi(0)\\
&=&
b\int_{0}^{+\infty} e^{-ay}\varphi''(y)\,dy
+b\int_{-\infty}^0 e^{ay}\varphi''(y)\,dy\\&&\qquad
-a^2b\int_{0}^{+\infty} e^{-ay}\varphi(y)\,dy
-a^2b\int_{-\infty}^0 e^{ay}\varphi(y)\,dy\\&&\qquad
+b(2a+c)\varphi(0)-bc\varphi(0)\\&=&
-b\varphi'(0)+
ab\int_{0}^{+\infty} e^{-ay}\varphi'(y)\,dy
+
b\varphi'(0)
-ab\int_{-\infty}^0 e^{ay}\varphi'(y)\,dy\\&&\qquad
-a^2b\int_{0}^{+\infty} e^{-ay}\varphi(y)\,dy
-a^2b\int_{-\infty}^0 e^{ay}\varphi(y)\,dy\\&&\qquad
+b(2a+c)\varphi(0)-bc\varphi(0)\\&=&
-b\varphi'(0)-
ab \varphi(0)
+
a^2b\int_{0}^{+\infty} e^{-ay}\varphi(y)\,dy\\&&\qquad
+
b\varphi'(0)
-ab\varphi(0)
+a^2b\int_{-\infty}^0 e^{ay}\varphi(y)\,dy\\&&\qquad
-a^2b\int_{0}^{+\infty} e^{-ay}\varphi(y)\,dy
-a^2b\int_{-\infty}^0 e^{ay}\varphi(y)\,dy\\&&\qquad
+b(2a+c)\varphi(0)-bc\varphi(0)\\&=&0,
\end{eqnarray*}
which proves~\eqref{7jALLAPP}.

In the light of~\eqref{7jALLAPP}, we can write a solution of~\eqref{6gW} in the form
$$ W(\xi,y,\omega)=b(\xi,\omega) \,e^{-a(\omega)\,|y|}=
\frac{e^{-\sqrt\omega\,|y|}}{2\sqrt\omega+|\xi|^{2s}}.$$
As a consequence, recalling~\eqref{5INT-U} and~\eqref{5INT-W},
\begin{eqnarray*}
\frac{2}{ \sqrt\omega\big(2\sqrt\omega+|\xi|^{2s}\big) }&=&
\int_{\mathbb{R}}
\frac{e^{-\sqrt\omega\,|y|}}{2\sqrt\omega+|\xi|^{2s}}\,dy\\&=&
\int_{\mathbb{R}} W(\xi,y,\omega)\,dy\\&=&{\mathcal{E}}_U(\xi,\omega).
\end{eqnarray*}
Hence, we see that
$$\big(|\xi|^{2s}+2
\sqrt{\omega}\big)\,{\mathcal{E}}_{U}(\xi,\omega)=\frac{2}{ \sqrt\omega}.$$
With this, we have checked
validity of~\eqref{EQUcoU-F} and thus we have established the fractional equation
in~\eqref{EQUcoU}.
}\end{example}

\begin{example}[Fractional operators and option pricing]

{\rm One of the most common features in mathematical finance is option pricing.
Suppose that a holder wants
to buy some good, say, the ticket to the final match of Champions League
(an ``underlying''\index{underlying} in financial jargon), which
will occur at time~$T$.

The price of this ticket depends on time, and say that
at time~$t\in[0,T]$ this ticket costs~$S_t$
(we are using here a common notation in finance to use the
subindex~$t$ to denote the value of a function at a given time~$t$).
Rather than buying directly the ticket for the price~$S_t$,
he/she can buy an option\footnote{For simplicity,
we are discussing here the case of \emph{European options}\index{options!European},
in which
the holder can only
employ the option exactly at a given time $T$.
Instead, in the {\em American options}\index{options!American}
the holder has the right, but not the obligation, to buy the underlying
at an agreed-upon price at any time~$t\in[0,T]$
(and, of course,
the seller has an obligation to sell the underlying if the option is exercised).} that allow his/her
to buy the ticket at time~$T$ for a fixed price~$K$
(called in jargon ``strike price''\index{strike price}).

The question is: should the holder buy the ticket or the option?
Or, more precisely: what is the value that such an option has in the market?

We suppose that the value~$V$ of the option depends
on the
time $t$ and on the cost of the ticket~$S$, therefore we will write~$V=V(S,t)$.
What is obvious is the final value of the option, since
\begin{equation}\label{FIN}
V(S,T):=\max\{ S-K, 0\}.
\end{equation}
Indeed, at the final time, one can decide to either buy the ticket at a certain price~$S$
or the option at price~$V(S,T)$ and then the ticket at the strike price~$K$,
and these two operations should be equivalent, hence~$S=V(S,T)+K$ (this if~$S\ge K$,
otherwise the option has simply no value, since anyone can just buy
the ticket for a more convenient price, hence confirming~\eqref{FIN}).

Typically, in order to determine the option price, one can prove that it solves some partial differential equation. For instance, one can follow the Black-Scholes model,
and a variation of it, see e.g.~\cite{MR1904936}.
In our framework, a risk-neutral dynamic of the asset (the price of the ticket)
is
given by an exponential model
$$
S_t=S_0\exp(\mu t+\sigma W_t),
$$
where $\mu\in\mathbb{R}$
denotes a drift which
measures the expected yield of the underlying,
$\sigma\in(0,+\infty)$ is a diffusion coefficient
which measures the degree of variation of the trading 
price of the asset (in jargon, ``volatility''),
and $W_t$ is a ``reasonable'' stochastic process
modeling the unpredictable oscillations of the market. 
In the classical case, $W_t$ was taken to be simply the
Brownian motion, but recently fractional Brownian motions
and jump processes have been taken into account to
model possibly different evolutions of the market.

Without going into technical details, following e.g.~\cite{MR2584076},
we will suppose that the stochastic motion
arises from the superposition of a Brownian motion with a jump process.
Concretely, we assume that the ``infinitesimal generator'' of
the process is of the form
\begin{equation}\label{0-0usdjfcnv}
{\mathcal{A}}:= a\partial^2 -b (-\Delta)^s,\qquad{\mbox{ for some }}s\in(0,1),\end{equation}
with~$a$, $b\ge0$,
that is, if we denote by~${\mathbb{E}}$ the expected value, we assume that
\begin{equation}\label{CRAS} \lim_{\tau\to0} \frac{
{\mathbb{E}}\big( f(S_{t+\tau})\big)-f(S_t)
}{\tau}= {\mathcal{A}} f(S_t).\end{equation}
The heuristic rationale of this formula (say, with~$t=0$) is as follows:
in the appropriate scale, the probability
density of a particle traveling under
the superposition of a Brownian motion with a jump process satisfies a heat
equation in which the classical Laplacian is replaced by the operator~${\mathcal{A}}$,
see e.g.~\cite{MR2584076}, hence, if~$S_{t,x}$ denotes the
stochastic evolution starting at~$x$ (i.e. $S_{0,x}=x$), and~$
u(x,t):=
{\mathbb{E}}\big( f(S_{t,x})\big)$ we expect that~$u$ is a solution of
$$ \begin{cases}
\partial_t u(x,t)={\mathcal{A}}u(x,t), & {\mbox{ if }} t>0,\\
u(x,0)=f(x).
\end{cases}$$
Hence we can expect that
\begin{eqnarray*}&&
\lim_{\tau\to0} \frac{{\mathbb{E}}\big( f(S_{\tau,x})\big)-f(S_{0,x})}{\tau}=
\lim_{\tau\to0} \frac{u(x,\tau)-f(x)}{\tau}\\&&\qquad=
\lim_{\tau\to0} \frac{u(x,\tau)-u(x,0)}{\tau}
=\partial_t u(x,0)\\&&\qquad=
{\mathcal{A}}u(x,0)
={\mathcal{A}}f(x)\\&&\qquad={\mathcal{A}}f(S_{0,x}),
\end{eqnarray*}
which can provide a justification for~\eqref{CRAS}.

We also introduce a suitable It\^{o}'s formula, of the form
\begin{equation}\label{ITO}
\frac{d}{dt} f(S_t) = f'(S_t)\,\frac{dS_t}{dt}+{\mathcal{A}}f(S_t).
\end{equation}
To try to justify~\eqref{ITO}, one can notice that the stochastic process
can move ``indifferently'' up or down, say~$S_{t+\tau}-S_t$
have no ``definite sign'', hence
$$ \lim_{\tau\to0} \frac{
{\mathbb{E}}\big( f'(S_t)(S_{t+\tau}-S_t)\big)
}{\tau}=0.$$
That is, formally Taylor expanding, and recalling~\eqref{CRAS},
\begin{eqnarray*}
{\mathcal{A}} f(S_t)&=&
\lim_{\tau\to0} \frac{{\mathbb{E}}\big( f(S_{t+\tau})\big)-f(S_{t+\tau})}{\tau}
+\frac{f(S_{t+\tau})-f(S_t)}{\tau}\\
&=&\lim_{\tau\to0} \frac{{\mathbb{E}}\big( f(S_t)+
f'(S_t)(S_{t+\tau}-S_t)
\big)-\big(
f(S_t)+
f'(S_t)(S_{t+\tau}-S_t)
\big)}{\tau}
+\frac{d}{dt} f(S_t)\\
&=&\lim_{\tau\to0} -\frac{f'(S_t)(S_{t+\tau}-S_t)}{\tau}
+\frac{d}{dt} f(S_t)\\
&=& -f'(S_t)\,\frac{dS_t}{dt}
+\frac{d}{dt} f(S_t),
\end{eqnarray*}
which gives a heuristic justification of~\eqref{ITO}.

Clearly, formula~\eqref{ITO} can be extended to
functions which also depend explicitly on time, thus yielding
\begin{equation}\label{ITO2}
\frac{d}{dt} f(S_t,t) = 
\frac{\partial f}{\partial t}(S_t,t)+
\frac{\partial f}{\partial S}(S_t,t)\,\frac{dS_t}{dt}+{\mathcal{A}}f(S_t,t).
\end{equation}

Now, the idea is to try to determine an equation for the value~$V$
of the option, under reasonable assumptions on the market.
It would be desirable to release~$V$ from any randomness
and uncertainty offered by the market. In a sense, the oscillations of
the market could affect the price~$S_t$ of the ticket at time $t$,
but we would like to know~$V=V(S,t)$ in a way which is independent
of this randomness (only in dependence of the time~$t$
and any possible price of the ticket~$S$), and then evaluate~$V(S_t,t)$
to have the value of the option at time~$t$, for the price~$S_t$ of the ticket.

To do so, we try to build a ``risk-free'' portfolio.
Given the oscillations of the market, the strategy of possessing only
a certain number of options is highly subject to the market uncertainties
and it would be safer for a holder, or for a company, not only
to buy or sell one or more options but also to buy or sell one or more tickets
for the Champions League final. That is, a suitable portfolio should be of the form
\begin{equation}\label{PORT} P(S,t)=V(S,t)+\delta S,\end{equation}
with~$\delta\in{\mathbb{R}}$ giving the number of tickets
to possess with respect to
the option to make the total portfolio as ``stable''
as possible (negative values of~$\delta$ would
correspond to selling tickets, rather than buying them). To choose~$\delta$ in such a way that~$P$ becomes
as close to risk-free as possible, it is desirable to reduce the oscillations
of~$P$ with respect to the variations of~$S$.
To this end, we make the simplifying assumption that~$V(0,t)=0$,
i.e. the value of the option is null if so is the price of the ticket,
and we observe that
$$ V(S,t)=V(S,t)-V(0,t)=\frac{\partial V}{\partial S}(0,t)\,S+O(S^2),$$
that is
$$ V(S,t)-\frac{\partial V}{\partial S}(0,t)\,S=O(S^2).$$
Comparing this with~\eqref{PORT}, we see that
a reasonable possibility to make the portfolio as independent as possible
to the fluctuating value~$S$ is to choose~$\delta:=-\frac{\partial V}{\partial S}(0,t)$
in~\eqref{PORT}, which leads to
\begin{equation}\label{PORT2} P(S,t)=V(S,t)-\frac{\partial V}{\partial S}(0,t)\, S.\end{equation}
Let us make a brief comment on the minus sign in~\eqref{PORT2}.
One can suppose, for instance, that~$V$ is monotone increasing with
respect to~$S$ (the more the ticket costs, the more the option is valuable)
and thus~$\frac{\partial V}{\partial S}(0,t)>0$.
In the setting of~\eqref{PORT2}, this means that, to maintain
a balanced portfolio, if one buys options it is appropriate to sell tickets.

One then makes the ansatz that~\eqref{PORT2}
gives indeed a risk-free portfolio. Under this assumption,
the time evolution of~$P$ is just governed by the interest rate and one can write that
the time evolution of~$P(S_t,t)$ is simply~$P_0\,e^{rt}$, for
some~$r$, $P_0\in{\mathbb{R}}$.
As a consequence,
$$ \frac{d}{dt} P(S_t,t)=\frac{d}{dt}(P_0\,e^{rt})=
r\,P_0\,e^{rt}=r\,P(S_t,t).$$
Hence, recalling~\eqref{PORT2},
\begin{equation}\label{7:AKs}
\frac{d}{dt}\left( V(S_t,t)-\frac{\partial V}{\partial S}(0,t)\, S_t\right)=
r\,V(S_t,t)-r\,\frac{\partial V}{\partial S}(0,t)\, S_t.
\end{equation}
On the other hand, by~\eqref{ITO2},
\begin{eqnarray*}\frac{d}{dt}V(S_t,t) = 
\frac{\partial V}{\partial t}(S_t,t)+
\frac{\partial V}{\partial S}(S_t,t)\,\frac{dS_t}{dt}+{\mathcal{A}}V(S_t,t),\end{eqnarray*}
and so~\eqref{7:AKs} becomes
\begin{equation}\label{67123498dgwfsu}\begin{split}&
\frac{\partial V}{\partial t}(S_t,t)+
\frac{\partial V}{\partial S}(S_t,t)\,\frac{dS_t}{dt}+{\mathcal{A}}V(S_t,t)
-\frac{ d}{dt}\left(\frac{\partial V}{\partial S}(0,t)\, S_t\right)\\ =\;&
r\,V(S_t,t)-r\,\frac{\partial V}{\partial S}(0,t)\, S_t.\end{split}\end{equation}
It is also customary to neglect the
dependence of~$\frac{\partial V}{\partial S}(0,t)$ on~$t$
(say, the values of the options of very cheap tickets
are more or less the same independent on time), and thus
replace the term~$\frac{ d}{dt}\left(\frac{\partial V}{\partial S}(0,t)\, S_t\right)$
with~$ \frac{\partial V}{\partial S}(0,t)\, \frac{ dS_t}{dt}$. With this
approximation, one obtains from~\eqref{67123498dgwfsu}, after
simplifying two terms, that
\begin{equation}\label{BSS}
\frac{\partial V}{\partial t}(S_t,t)+{\mathcal{A}}V(S_t,t)=
r\,V(S_t,t)-r\,\frac{\partial V}{\partial S}(0,t)\, S_t.\end{equation}
Recalling~\eqref{0-0usdjfcnv}, when~$b=0$ one obtains
from~\eqref{BSS} the classical Black-Scholes equation
$$
\frac{\partial V}{\partial t}+rS_t\frac{\partial V}{\partial S_t}+\frac{\sigma^2}{2}S_t^2\frac{\partial^2 V}{\partial S_t^2}-rV=0\quad\text{in}\quad\mathbb{R}\times(0,T].
$$
In general, when~$b\ne0$ (and possibly~$a=0$)
one obtains in~\eqref{BSS} a nonlocal evolution equation
of fractional type. Such an equation
is complemented with the terminal condition in~\eqref{FIN}.}
\end{example}

\begin{example}[Complex analysis and Hilbert transform]{\rm
Given a (nice) function~$u:{\mathbb{R}}\to{\mathbb{R}}$,
the Hilbert transform\index{transform!Hilbert} of~$u$ is defined by
$$ H_u (t):=-{\frac {1}{\pi }}\int _{-\infty }^{\infty }{\frac {u(t)-u(\tau ) }{t-\tau }}\,d\tau .$$
We observe that
$$ \int _{|t-\tau|\in (r,R) }{\frac {u(t) }{t-\tau }}\,d\tau =u(t)\,
\int _{|\vartheta|\in (r,R) }{\frac {d\vartheta }{\vartheta }}=
u(t)\,\left(
\int _{r }^R{\frac {d\vartheta }{\vartheta }}+
\int _{-R}^{-r}{\frac {d\vartheta }{\vartheta }}\right)=
0,$$
for all~$0<r<R$. Hence,
in the principal value sense, after cancellations,
one can also write
$$ H_u (t):={\frac {1}{\pi }}\int _{-\infty }^{\infty }{\frac {u(\tau ) }{t-\tau }}\,d\tau .$$
Among the others, a natural application of
the Hilbert transform occurs in complex analysis.
Indeed, identifying points~$(x,y)\in{\mathbb{R}}\times[0,+\infty)$
of the real upper half-plane with points~$x+iy\in\{ z\in{\mathbb{C}} {\mbox{ with }} \Im z\ge0\}$
of the complex upper half-plane, one has that, {\em
on the boundary of the half-plane, the two harmonic conjugate functions of a holomorphic function are
related via the Hilbert transform}. More precisely,
if~$f$ is holomorphic in the complex upper half-plane,
we write~$z=x+iy$ with~$x\in{\mathbb{R}}$ and~$y\ge0$, and
$$ f(z)=u(x,y)+i v(x,y).$$
We also set~$u_0(x):=u(x,0)$ and~$v_0(x):=v(x,0)$.
Then, under natural regularity assumptions, we have that
\begin{equation}\label{HIL}
v_0(x)=H_{u_0}(x),\qquad{\mbox{ for all }}x\in{\mathbb{R}}.
\end{equation}
For rigorous complex analysis
results, we refer e.g.
to Theorem~93 on page~125 of~\cite{MR942661} and
to Theorems~3 and~4
on pages~77-78 of~\cite{MR1363489}. See also Sections 2.6--2.9
in~\cite{MR1363489} for a number of concrete applications of
the Hilbert transform.
We sketch two arguments to establish~\eqref{HIL},
one based on classical complex methods, and one exploiting
fractional calculus.

The first argument goes as follows. For any~$z=x+iy$ with~$y>0$, we consider the Hilbert transform of~$u_0$, up to normalization constants,
defined with a complex variable, namely we set
$$ F(z):=\frac1{\pi i} \int_{\mathbb{R}}
\frac{u_0(t)}{t-z}\,dt.$$
Then, $F$ is holomorphic in the upper half-plane. Moreover,
\begin{equation}\label{DeSTA}
\begin{split}
F(z)\,&=\frac1{\pi i} \int_{\mathbb{R}}
\frac{u_0(t)}{t-x-iy}\,dt
\\ &=\frac1{\pi } \int_{\mathbb{R}}
\frac{u_0(t)\, (i(x-t)+y)}{(t-x)^2+y^2}\,dt.
\end{split}\end{equation}
We let~$\tilde u(x,y):= \Re F(x+iy)$
and~$\tilde v(x,y):= \Im F(x+iy)$.
Then, using the substitution~$w:=(t-x)/y$,
\begin{eqnarray*}
\lim_{y\searrow0}\tilde u(x,y)&=&
\lim_{y\searrow0}
\frac1{\pi } \int_{\mathbb{R}}
\frac{u_0(t)\, y}{(t-x)^2+y^2}\,dt
\\
&=&
\lim_{y\searrow0}
\frac1{\pi } \int_{\mathbb{R}}
\frac{u_0(x+wy)}{w^2+1}\,dw
\\&=&
\frac{u_0(x)}{\pi } \int_{\mathbb{R}}
\frac{dw}{w^2+1}
\\&=& u_0(x).
\end{eqnarray*}
That is, the real part of~$F$ coincides with
the harmonic extension of~$u_0$ to the upper half-plane, 
up to harmonic functions vanishing on the trace.
Therefore (reducing to finite energy solutions), we suppose that~$\tilde u=u$. Since~$\tilde v$ and~$v$ are the 
conjugate harmonic functions of~$\tilde u$ and~$u$, respectively, from the Cauchy–Riemann equations we thereby find that
\begin{eqnarray*}&&
0=\partial_y (\tilde u-u)=-\partial_x (\tilde v-v)\\
{\mbox{and }}&&0=\partial_x (\tilde u-u)=\partial_y (\tilde v-v).
\end{eqnarray*}
Hence (restricting to functions with finite energy), we have that~$\tilde v=v$.

{F}rom these observations and~\eqref{DeSTA}, we find that
\begin{eqnarray*}
v_0(x)&=&\lim_{y\searrow0} \tilde v(x,y)
\\&=& \lim_{y\searrow0}
\frac1{\pi } \int_{\mathbb{R}}
\frac{u_0(t)\, (x-t)}{(t-x)^2+y^2}\,dt
\\&=& \lim_{y\searrow0}
\frac1{\pi } \int_{\mathbb{R}}
\frac{(u_0(t)-u_0(x))\, (x-t)}{(t-x)^2+y^2}\,dt
\\&=&
\frac1{\pi } \int_{\mathbb{R}}
\frac{(u_0(t)-u_0(x))\, (x-t)}{(t-x)^2}\,dt
\\&=&
-\frac1{\pi } \int_{\mathbb{R}}
\frac{u_0(t)-u_0(x)}{t-x}\,dt,\end{eqnarray*}
that gives~\eqref{HIL}.

We now present another argument to establish~\eqref{HIL}
based on fractional calculus.
Since~$u$ is harmonic in the upper half-plane,
using the Fourier transform in the~$x$ variable,
and dropping normalization constants for the
sake of simplicity,
we see that, for all~$y\in(0,+\infty)$,
$$ 0 ={\mathcal{F}} (\Delta u)(\xi,y)=-|\xi|^2 \hat u(\xi,y)+\partial^2_y \hat u(\xi,y),$$
and thus~$\hat u(\xi,y)= C(\xi)\,e^{-|\xi| y}$, with~$C(\xi)\in{\mathbb{R}}$.
This gives that
$$ \partial_y \hat u(\xi,0)= -|\xi|\,C(\xi) = -|\xi| \,\hat u(\xi,0)=-|\xi|\,\hat u_0(\xi).
$$
This and~\eqref{7A-DE} give that
$$ \partial_{y} u(x,0)=
{\mathcal{F}}^{-1}\Big(
\partial_{y}\hat u(\xi,0) \Big)
=
-{\mathcal{F}}^{-1}\Big(|\xi|\,\hat u_0(\xi)\Big)=
-\sqrt{-\Delta}u_0(x).$$
Hence, recalling~\eqref{7A-DE2},
dropping normalization constants for the
sake of simplicity, and using a parity cancellation, we see that
\begin{eqnarray*}
\partial_{y} u(x,0)&=&\frac12\,
\int_{{\mathbb{R}}}\frac{u_0(x+\theta)+
u_0(x-\theta)-2u_0(x)}{\theta^{2}}\,d\theta\\
&=&\lim_{\delta\searrow0}
\int_{{\mathbb{R}}\setminus(-\delta,\delta)}\frac{u_0(x+\theta)-u_0(x)}{\theta^{2}}\,d\theta\\
&=& \lim_{\delta\searrow0}
\int_{{\mathbb{R}}\setminus(-\delta,\delta)}\frac{u_0(x+\theta)
-u_0(x)-u_0'(x)\theta\Psi(\theta)
}{\theta^{2}}\,d\theta\\
&=& \int_{{\mathbb{R}}}\frac{u_0(x+\theta)
-u_0(x)-u_0'(x)\theta\Psi(\theta)
}{\theta^{2}}\,d\theta\\
&=& \int_{{\mathbb{R}}}\frac{u_0(\tau)
-u_0(x)
-u_0'(x)(\tau-x)\Psi(\tau-x)
}{(\tau-x)^{2}}\,d\tau,
\end{eqnarray*}
being~$\Psi\in C^\infty_0([-2,2], [0,1])$ an even function such that~$
\Psi=1$ in $[-1,1]$. Hence, from the Cauchy–Riemann equations,
$$ -\partial_{x}v_0(x)=-\partial_{x} v(x,0)=
\int_{{\mathbb{R}}}\frac{u_0(\tau)
-u_0(x)
-u_0'(x)(\tau-x)\Psi(\tau-x)
}{(\tau-x)^{2}}\,d\tau.$$
As a consequence,
\begin{equation}\label{701-39485x}
\begin{split}-
v_0(x)\,&= -\int_{-\infty}^x \partial_{x}v_0(t)\,dt\\&=
\int_{-\infty}^x\left[
\int_{{\mathbb{R}}}\frac{u_0(\tau)
-u_0(t)
-u_0'(t)(\tau-t)\Psi(\tau-t)
}{(\tau-t)^{2}}\,d\tau
\right]\,dt\\&=\lim_{R\to+\infty}
\int_{-\infty}^x\left[
\int_{-R+t}^{R+t}\frac{u_0(\tau)
-u_0(t)
-u_0'(t)(\tau-t)\Psi(\tau-t)
}{(\tau-t)^{2}}\,d\tau
\right]\,dt.
\end{split}\end{equation}
By an integration by parts, we notice that
\begin{eqnarray*}&&
\int_{-\infty}^x \frac{u_0(\tau)
-u_0(t) -u_0'(t)(\tau-t)\Psi(\tau-t)
}{(\tau-t)^{2}}\,dt\\
&=&
\int_{-\infty}^x \Big( u_0(\tau)
-u_0(t)
-u_0'(t)(\tau-t)\Psi(\tau-t)
\Big)\,
\frac{d}{dt}\left(\frac{1}{\tau-t}\right)\,dt\\&
=&\frac{ u_0(\tau)
-u_0(x)-u_0'(x)(\tau-x)\Psi(\tau-x)
}{\tau-x}\\ &&\quad+
\int_{-\infty}^x \frac{
u_0'(t)+
u_0''(t)(\tau-t)\Psi(\tau-t)-
u_0'(t)\Psi(\tau-t)-
u_0'(t)(\tau-t)\Psi'(\tau-t)
}{\tau-t}\,dt\\
&=&\frac{ u_0(\tau)
-u_0(x)}{\tau-x}
-u_0'(x)\Psi(\tau-x)
\\ &&\quad+
\int_{-\infty}^x 
\left(
\frac{u_0'(t)}{\tau-t}+
u_0''(t)\Psi(\tau-t)-\frac{
u_0'(t)\Psi(\tau-t)}{\tau-t}-
u_0'(t)\Psi'(\tau-t)\right)\,dt\\
&=&\frac{ u_0(\tau)
-u_0(x)}{\tau-x}
-u_0'(x)\Psi(\tau-x)
\\ &&\quad+
\int_{-\infty}^x 
\left(
\frac{u_0'(t)-u_0'(t)\Psi(\tau-t)}{\tau-t}+\frac{d}{dt}\Big(
u_0'(t)\Psi(\tau-t)\Big)\right)\,dt\\
&=&\frac{ u_0(\tau)
-u_0(x)}{\tau-x}
+
\int_{-\infty}^x u_0'(t)\Phi(\tau-t)\,dt,
\end{eqnarray*}
where we defined the odd function
$$ \Phi(r):=\frac{1-\Psi(r)}{r}.$$
By inserting this information into~\eqref{701-39485x}
and exchanging integrals, one obtains that
\begin{equation*}
\begin{split}-
v_0(x)\,&=\lim_{R\to+\infty}
\int_{-R+t}^{R+t}\left[
\frac{ u_0(\tau)
-u_0(x)}{\tau-x}
+
\int_{-\infty}^x u_0'(t)\Phi(\tau-t)\,dt
\right]\,d\tau
\\ &=\int_{\mathbb{R}}\frac{ u_0(\tau)
-u_0(x)}{\tau-x}\,d\tau
+\lim_{R\to+\infty}
\int_{-\infty}^x u_0'(t)\;\left[ \int_{-R}^{R} \Phi(\theta)\,d\theta\right]\,dt
\\ &=\int_{\mathbb{R}}\frac{ u_0(\tau)
-u_0(x)}{\tau-x}\,d\tau
.\end{split}\end{equation*}
This gives~\eqref{HIL}
(up to the normalizing constant that we have neglected).
}\end{example}

\begin{example}[Caputo derivatives and the fractional Laplacian]
{\rm The time-fractional diffusion that we model by the Caputo derivative
has fundamental differences with respect to the space-fractional diffusion
driven by the fractional Laplacian, since the latter possesses many invariances,
such as the ones under rotations and translations, that are not valid in the time-fractional
setting given by the Caputo derivative, in view of a memory effect which clearly distinguishes
between ``the past'' and ``the future'' and determines 
the ``time's arrow''.

On the other hand, the sum of two Caputo derivatives with
opposite time directions reduces to the fractional Laplacian,
as we now discuss in detail.

Let $\alpha\in(0,1)$ and $u$ sufficiently smooth. An integration by parts\footnote{As a historical
remark, we observe that
formulas such as in~\eqref{eq:fromcaptomar} naturally relate different
definitions of time-fractional derivatives, such as the Caputo derivative
and the Marchaud\index{fractional!derivative!Marchaud} fractional derivative.}
allows us to write the (left) Caputo derivative as
\begin{equation}
\label{eq:fromcaptomar}
D^{\alpha}_{t,a,+}u(t):=\frac{u(t)}{\Gamma(1-\alpha)(t-a)^{\alpha}}+\frac{\alpha}{\Gamma(1-\alpha)}\int_a^t\frac{u(t)-u(\tau)}{(t-\tau)^{\alpha+1}} d\tau.
\end{equation}
Choosing $a:=-\infty$, formula \eqref{eq:fromcaptomar} reduces to
\begin{equation}
\label{eq:marleft}
D^{\alpha}_{t,-\infty,+}u(t)=\frac{\alpha}{\Gamma(1-\alpha)}\int_{-\infty}^t\frac{u(t)-u(\tau)}{(t-\tau)^{\alpha+1}} d\tau=\frac{\alpha}{\Gamma(1-\alpha)}\int_0^{+\infty}\frac{u(t)-u(t-\tau)}{\tau^{\alpha+1}}d\tau.
\end{equation}
Since we will be interested in the core of this monograph in given fractional
derivatives with a prescribed arrow of time, we will focus
on this type of definition (and, in fact, also on higher order ones,
as in~\eqref{defcap}). Nevertheless, by an inversion of the time's arrow,
one can also define a notion of right Caputo derivative, which, when $a:=+\infty$, can be written as
\begin{equation}
\label{eq:marright}
D^{\alpha}_{t,+\infty,-}u(t):=\frac{\alpha}{\Gamma(1-\alpha)}\int_t^{+\infty}\frac{u(t)-u(\tau)}{(\tau-t)^{\alpha+1}} d\tau=\frac{\alpha}{\Gamma(1-\alpha)}\int_0^{+\infty}\frac{u(t)-u(t+\tau)}{\tau^{\alpha+1}}d\tau.
\end{equation}
See the forthcoming footnote on page~\pageref{NOTION}
for further comments on the notion of left and right fractional derivatives.
Summing up \eqref{eq:marleft} and \eqref{eq:marright}, and dropping normalization constants
for simplicity, we have that
\begin{eqnarray*}
D^{\alpha}_{t,-\infty,+}u(t)+D^{\alpha}_{t,+\infty,-}u(t)&=&
\int_0^{+\infty}\frac{2u(t)-u(t+\tau)-u(t-\tau)}{\tau^{\alpha+1}}d\tau \\
&=&\frac12\,\int_{-\infty}^{+\infty}\frac{2u(t)-u(t+\tau)-u(t-\tau)}{|\tau|^{\alpha+1}}d\tau \\
&=&\left(-\Delta_t\right)^{\alpha/2}u(t),
\end{eqnarray*}
where we used the integral representation of the fractional Laplacian
(recall~\eqref{7A-DE2}) and the
obvious one-dimensional notation
$$ \Delta_t:=\frac{\partial^2}{\partial t^2}.$$
Therefore, the sum of left and right Caputo derivatives with initial points $-\infty$ and $+\infty$ respectively, gives, up to a multiplicative constant, the one-dimensional fractional Laplacian of fractional order $\frac{\alpha}{2}$.
%%% It is interesting to remark that the equality \eqref{capandlap} provides an alternative representation of the anisotropic fractional Laplacian
%%% $$\int_{\mathbb{R}^n} (2u(x)-u(x+y)-u(x-y))d\mu(y),$$
%%% where we choose $\displaystyle d\mu(y):=\sum_{i=1}^n\frac{d\delta_{e_i}(y)+d\delta_{-e_i}(y)}{|y|^{n+s}}$, and, for any $i=1,\ldots,n$, $e_i$ denotes the $i$-th element of the canonical basis of $\mathbb{R}^n$.
%%% Indeed
%%% $$\int_{\mathbb{R}^n} (2u(x)-u(x+y)-u(x-y))d\mu(y)=\sum_{i=1}^n \left(-\frac{\partial^2}{\partial {x_i}^2}\right)^{s/2}u(x)=\sum_{i=1}^n \left({}_{x_i}D^{s}_{-\infty+}+{}_{x_i}D^{s}_{+\infty-}\right)u(x)$$
}\end{example}
\medskip

Other applications of fractional equations will be discussed in Appendix~\ref{APPEA}.

\chapter{Main results}\label{DUEE}
\label{s:first}
After having discussed in detail several motivations for fractional equations
in Chapter~\ref{WHY},
we begin here the mathematically rigorous part of this monograph,
and we start presenting the main original mathematical
results contained in this book and their relation with the existing literature.
\medskip

In this work we prove the local density of functions
which annihilate a linear operator built by classical and 
fractional derivatives, both in space and time.\medskip

Nonlocal operators of fractional type
present a variety of challenging problems in pure mathematics,
also in connections with long-range phase transitions and nonlocal
minimal surfaces,
and are nowadays commonly exploited in a large number of models
describing complex phenomena related
to anomalous diffusion and boundary reactions
in physics, biology and material sciences (see e.g.~\cite{claudia}
for several examples, for instance in atom dislocations in crystals and
water waves models).
Furthermore, anomalous diffusion in the space variables can be seen
as the natural counterpart of discontinuous
Markov processes, thus providing important connections
with problems in probability and statistics, and several applications to
economy and finance (see e.g.~\cite{MR0242239,MR3235230} for pioneer works relating
anomalous diffusion and financial models).\medskip

On the other hand, the development of time-fractional derivatives
began at the end of the seventeenth century, also in view of
contributions by mathematicians
such as Leibniz, Euler, Laplace, Liouville, Abel, Heaviside, and many others,
see e.g.~\cite{MR2624107, MR0444394, MR860085, MR125162492, ferrari} and the references therein for several
interesting scientific and historical discussions.
{F}rom the point of view of the applications, time-fractional derivatives 
naturally provide a model to comprise memory effects in the description of the
phenomena under consideration. \medskip

In this work, the time-fractional derivative will be mostly described
in terms of the so-called
Caputo fractional derivative (see~\cite{MR2379269}),  which
induces a natural ``direction'' in
the time variable, distinguishing between ``past'' and ``future''. In particular, the time direction encoded in this setting
allows the analysis of ``non anticipative systems'', namely phenomena in which
the state at a given time depends on past events, but not on future ones.
The Caputo derivative is also related to other types of time-fractional derivatives,
such as the Marchaud fractional derivative, which has
applications in modeling anomalous time
diffusion, see e.g.~\cite{MR3488533, AV, ferrari}.
See also~\cite{MR1219954, MR1347689} for more details on fractional
operators and several applications.\medskip

Here, we will take advantadge of the nonlocal structure
of a very general linear operator containing fractional derivatives
in some variables (say, either time, or space, or both),
in order to approximate, in the smooth sense and with arbitrary precision,
any prescribed function. Remarkably, {\em no structural assumption}
needs to be taken on the prescribed function: therefore
this approximation property reveals a {\em truly nonlocal behaviour},
since it is
in contrast with the rigidity of the functions that lie in the kernel
of classical linear
operators (for instance, harmonic functions cannot approximate 
a function with interior maxima or minima, functions with null first derivatives 
are necessarily constant, and so on).

The approximation
results with solutions of nonlocal operators have been first introduced
in~\cite{MR3626547}
for the case of the fractional Laplacian,
and since then widely studied under different perspectives,
including harmonic analysis,
see~\cite{MR3774704, 2016arXiv160909248G, 2017arXiv170804285R, 2017arXiv170806294R, 2017arXiv170806300R}.
The approximation result for the
one-dimensional case of a fractional derivative of Caputo type
has been considered in~\cite{MR3716924, CDV18}, and
operators involving classical time derivatives and additional classical derivatives
in space have been studied in~\cite{DSV1}.
\medskip

The great flexibility of solutions of fractional problems established
by this type of approximation results
has also consequences that go beyond
the purely mathematical curiosity. 
For example, these results 
can be applied to study the evolution of
biological populations, showing how a nonlocal hunting or dispersive
strategy can be 
more convenient than one based on classical diffusion,
in order to avoid waste of resources and optimize the search for food
in sparse environment, see~\cite{MR3590678, MR3579567}.
Interestingly, the theoretical descriptions
provided in this setting can be compared with a series of concrete
biological data and real world experiments, confirming
anomalous diffusion behaviours in many biological species, see~\cite{ALBA}.

\medskip

Another interesting application of time-fractional derivatives
arises in neuroscience, for instance in view of the anomalous diffusion
which has been experimentally measured in neurons, see e.g.~\cite{SANTA}
and the references therein.
In this case, the anomalous diffusion could be seen as the effect
of the highly ramified structure of the biological cells
taken into account, see~\cite{comb, DV1}. 
\medskip

In many applications, it is also natural to consider the case in
which different types of diffusion take 
place in different variables: for instance, classical diffusion
in space variables could be
naturally combined to anomalous diffusion with respect to variables
which take into account genetical information, see~\cite{GEN1, GEN2}.
\medskip

Now, to state the main original results of this work, we introduce some notation.
In what follows, we will denote the ``local variables''
with the symbol $x$, the ``nonlocal variables''
with $y$, the ``time-fractional variables''
with $t$.
Namely, we consider the variables
\begin{equation}\label{1.0}\begin{split}
&x=\left(x_1,\ldots,x_n\right)\in\mathbb{R}^{p_1}\times\ldots\times\mathbb{R}^{p_n},
\\&
y=\left(y_1,\ldots,y_M\right)\in\mathbb{R}^{m_1}
\times\ldots\times\mathbb{R}^{m_M}\\
{\mbox{and }}\;&
t=\left(t_1,\ldots,t_l\right)\in\mathbb{R}^l,\end{split}\end{equation}
for some $p_1,\dots,p_n$, $M$, $m_1,\dots,m_M$, $l \in\mathbb{N}$, and we let
$$\left(x,y,t\right)\in\mathbb{R}^N,\qquad{\mbox{
where }}\;N:=p_1+\ldots+p_n+m_1+\ldots+m_M+l.$$ 
When necessary, we will use the notation $B_R^k$ to denote the $k$-dimensional
ball of radius $R$, centered at the origin in $\mathbb{R}^k$; otherwise, when there are no ambiguities, we will use the usual notation $B_R$.

Fixed $r=\left(r_1,\ldots,r_n\right)\in\mathbb{N}^{p_1}\times\ldots\times\mathbb{N}^{p_n}$,
with~$|r_i|\ge1$ for each~$i\in\{1,\dots, n \}$,
and~$\XA=\left(\XA_1,\ldots,\XA_n\right)\in\mathbb{R}^n$, we consider the local operator
acting on the variables~$x=(x_1,\dots,x_n)$ given by
\begin{equation}\label{ILPOAU-1}
\mathfrak{l}:=\sum_{i=1}^n {\XA_i\partial^{r_i}_{x_i}}.
\end{equation}
where the multi-index notation has been used.

Furthermore, given $\XB=\left(\XB_1,\ldots,\XB_M\right)\in\mathbb{R}^M$ and
$s=\left(s_1,\ldots,s_M\right)\in\left(0,+\infty\right)^M$, we consider the operator
\begin{equation}\label{ILPOAU-2}
\mathcal{L}:=\sum_{j=1}^M {\XB_j(-\Delta)^{s_j}_{y_j}},
\end{equation}
where each operator~$(-\Delta)^{s_j}_{y_j}$
denotes the fractional Laplacian\index{fractional!Laplacian} of order $2s_j$ acting
on the set of space variables~$y_j\in\mathbb{R}^{m_j}$. More precisely,
for any~$j\in\{1,\dots,M\}$,
given $s_j>0$ and $h_j\in\mathbb{N}$ with $h_j:=\min_{q_j\in\mathbb{N}}$ such that $s_j\in(0,q_j)$,
in the spirit of~\cite{AJS2}, we consider
the operator
\begin{equation}
\label{nonlocop}
(-\Delta)^{s_j}_{y_j}u\left(x,y,t\right):=\int_{\mathbb{R}^{m_j}} 
{\frac{\left(\delta_{h_j} u\right)
\left(x,y,t,Y_j\right)}{|Y_j|^{m_j+2s_j}} \,dY_j},
\end{equation}
where
\begin{equation}\label{898989ksdc}
\left(\delta_{h_j} u\right)\left(x,y,t,Y_j\right):=
\sum_{k=-h_j}^{h_j} {\left(-1\right)^k \binom{2h_j}{h_j-k}u
\left(x,y_1,\ldots,y_{j-1},y_j+kY_j,y_{j+1},\ldots,y_M,t\right)}.\end{equation}

In particular, when $h_j:=1$, this setting comprises the case
of the fractional Laplacian~$\left(-\Delta\right)^{s_j}_{y_j}$ of order~$2s_j\in(0,2)$, given by
\begin{equation*}\begin{split}
\left(-\Delta\right)^{s_j}_{y_j}u\left(x,y,t\right) 
&\, := c_{m_j,s_j}\; 
\int_{\mathbb{R}^{m_j}} 
\Big(2u(x,y,t)-
u(x,y_1,\ldots,y_{j-1},y_j+Y_j,y_{j+1},\ldots,y_M,t)\\&\qquad\qquad-
u(x,y_1,\ldots,y_{j-1},y_j-Y_j,y_{j+1},\ldots,y_M,t)
\Big)\;
\frac{dY_j}{|Y_j|^{m_j+2s_j}},\end{split}
\end{equation*}
where $s_j\in(0,1)$ and $c_{m_j,s_j}$ denotes a multiplicative normalizing constant
(see e.g. formula~(3.1.10) in~\cite{claudia}).

It is interesting to recall that
if $h_j=2$ and $s_j=1$ the setting in~\eqref{nonlocop}
provides a nonlocal representation for the classical Laplacian,
see \cite{AV}.
\medskip

In our general framework,
we take into account also nonlocal operators of time-fractional type.
To this end,
for any~$\alpha>0$, letting $k:=[\alpha]+1$ and $a\in\mathbb{R}\cup\left\{-\infty\right\}$,
one can introduce the left\footnote{In the literature, one often finds also \label{NOTION}
the notion of right Caputo fractional derivative, defined for $t<a$ by 
$$ \frac{(-1)^k}{\Gamma\left(k-\alpha\right)}\int^{a}_{t}
\frac{\partial_{t}^{k} u(\tau)}{\left(\tau-t\right)^{\alpha-k+1}}
\, d\tau.$$
Since the right time-fractional derivative boils down to the left one
(by replacing~$t$ with~$2a-t$), in this work we focus only
on the case of left derivatives.

Also, though there are several time-fractional derivatives that are studied
in the literature under different perspectives,
we focus here on the Caputo derivative, since it possesses well-posedness properties
with respect to classical initial
value problems, differently than other time-fractional derivatives, such as the
Riemann-Liouville derivative\index{fractional!derivative!Riemann-Liouville},
in which the initial value setting involves data containing
derivatives of
fractional order.}
Caputo fractional derivative\index{fractional!derivative!Caputo} of order $\alpha$
and initial point\index{initial point} $a$, defined, for~$t>a$,
as
\begin{equation}
\label{defcap}
D_{t,a}^{\alpha}u(t):=\frac{1}{\Gamma(k-\alpha)}\int_a^t \frac{\partial_{t}^{k} u\left(\tau\right)}{(t-\tau)^{\alpha-k+1}} d\tau,
\end{equation}
where\footnote{For notational simplicity, we will often denote~$\partial_t^k u
=u^{(k)}$.} $\Gamma$ denotes the Euler's Gamma function. 

%%%%			For sufficiently smooth functions, a simple integration by parts allows to write the Caputo fractional derivative, when $\alpha\in(0,1)$, in the following equivalent form
%%%%			\begin{equation}
%%%%			\Gamma(1-\alpha)D_{t,a}^\alpha u(t):=\frac{u(t)-u(a)}{(t-a)^\alpha}+\alpha\int_a^t \frac{u(t)-u(\tau)}{(t-\tau)^{\alpha+1}} d\tau;
%%%%			\end{equation}
%%%%			notice that this definition takes into account only the values $t>a$, but if we assume that $u(t)=u(a)$ for any $t<a$, then we have
%%%%			\begin{equation}
%%%%			\begin{split}
%%%%			\label{march}
%%%%			&\frac{u(t)-u(a)}{(t-a)^\alpha}+\alpha\int_a^t \frac{u(t)-u(\tau)}{(t-\tau)^{\alpha+1}} d\tau=\alpha\int_{-\infty}^a \frac{u(t)-u(a)}{(t-\tau)^{\alpha+1}} d\tau+\alpha\int_a^t \frac{u(t)-u(\tau)}{(t-\tau)^{\alpha+1}} d\tau= \\
%%%%			&\alpha\int_{-\infty}^a \frac{u(t)-u(\tau)}{(t-\tau)^{\alpha+1}} d\tau+\alpha\int_a^t \frac{u(t)-u(\tau)}{(t-\tau)^{\alpha+1}} d\tau=\alpha\int_{-\infty}^t \frac{u(t)-u(\tau)}{(t-\tau)^{\alpha+1}} d\tau.
%%%%			\end{split}
%%%%			\end{equation}
%%%%			This formulation of \eqref{defcap} is better known as Marchaud fractional derivative with initial point $-\infty$; notice that \eqref{march} recalls the structure of the one-variable fractional Laplacian, but in this case, for the well-posedness of the boundary value problems, we need only a left-sided prescription of the solution. \\
In this framework,
fixed $\XC=\left(\XC_1,\ldots,\XC_l\right)\in\mathbb{R}^l$,
$\alpha=(\alpha_1,\dots,\alpha_l)\in(0,+\infty)^l$
and~$a=(a_1,\dots,a_l)\in(\mathbb{R}\cup\left\{-\infty\right\})^l$, 
we set
\begin{equation}\label{ILPOAU-3}
\mathcal{D}_a:=\sum_{h=1}^l \XC_h \,D_{t_h, a_h}^{\alpha_h}\,.
\end{equation} 
Then, in the notation introduced in~\eqref{ILPOAU-1}, \eqref{ILPOAU-2}
and~\eqref{ILPOAU-3}, we consider here the superposition of the
local, the space-fractional, and the time-fractional operators, that is, we set
\begin{equation}\label{1.6BIS}
\Lambda_a:=\mathfrak{l}+\mathcal{L}+\mathcal{D}_a.
\end{equation}
With this, the statement of our main result goes as follows:

\begin{theorem}\label{theone}
Suppose that 
\begin{equation}\label{NOTVAN}\begin{split}&
{\mbox{either there exists~$i\in\{1,\dots,M\}$ such that~$\XB_i\ne0$
and~$s_i\not\in{\mathbb{N}}$,}}\\
&{\mbox{or there exists~$i\in\{1,\dots,l\}$ such that~$\XC_i\ne0$ and $\alpha_i\not\in{\mathbb{N}}$.}}\end{split}
\end{equation}
Let $\ell\in\mathbb{N}$, $f:\mathbb{R}^N\rightarrow\mathbb{R}$,
with $f\in C^{\ell}\big(\overline{B_1^N}\big)$. Fixed $\epsilon>0$,
there exist
\begin{equation}\label{EX:eps}\begin{split}&
u=u_\epsilon\in C^\infty\left(B_1^N\right)\cap C\left(\mathbb{R}^N\right),\\
&a=(a_1,\dots,a_l)=(a_{1,\epsilon},\dots,a_{l,\epsilon})
\in(-\infty,0)^l,\\ {\mbox{and }}\quad&
R=R_\epsilon>1\end{split}\end{equation} such that
\begin{equation}\label{MAIN EQ}\left\{\begin{matrix}
\Lambda_a u=0 &\mbox{ in }\;B_1^N, \\
u=0&\mbox{ in }\;\mathbb{R}^N\setminus B_R^N,
\end{matrix}\right.\end{equation}
and
\begin{equation}\label{IAzofm}
\left\|u-f\right\|_{C^{\ell}(B_1^N)}<\epsilon.
\end{equation}
\end{theorem}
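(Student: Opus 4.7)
My plan is to follow the framework developed for such approximation theorems in \cite{MR3626547}, and extended in \cite{MR3716924, CDV18, DSV1}, adapted to the combined operator $\Lambda_a$.

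As a first step, I would reduce the problem to approximating a polynomial. Since $f \in C^\ell(\overline{B_1^N})$, by standard density (for instance via Stone--Weierstrass, or a convolution-and-Taylor argument) there exists a polynomial $P$ with $\|f - P\|_{C^\ell(B_1^N)} < \epsilon/2$. By linearity of $\Lambda_a$, if every polynomial can be approximated in $C^\ell(B_1^N)$ within arbitrary tolerance by compactly supported $u$ with $\Lambda_a u = 0$ in $B_1^N$, then the theorem follows.

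Next, I would recast the polynomial approximation claim in dual form. Let $\mathcal{V}_{a,R}$ denote the space of $u \in C^\infty(B_1^N) \cap C(\mathbb{R}^N)$ supported in $B_R^N$ with $\Lambda_a u = 0$ in $B_1^N$, and let $J^\ell : \mathcal{V}_{a,R} \to \mathbb{R}^{N_\ell}$, with $N_\ell := \binom{N+\ell}{\ell}$, be the map sending $u$ to its $\ell$-jet at the origin. The polynomial approximation claim is equivalent to the assertion that, for some $a \in (-\infty,0)^l$ and some $R > 1$, the image $J^\ell(\mathcal{V}_{a,R})$ is the entire jet space $\mathbb{R}^{N_\ell}$. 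If this were to fail, Hahn--Banach would provide a nonzero tuple $(c_\beta)_{|\beta|\le\ell}$ with $\sum_{|\beta|\le\ell} c_\beta\,\partial^\beta u(0) = 0$ for all $u \in \mathcal{V}_{a,R}$, and the goal becomes to derive a contradiction.

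The contradiction would be forced by exhibiting a rich family of explicit kernel elements of $\Lambda_a$, using hypothesis \eqref{NOTVAN}. Suppose for definiteness $\XB_i \ne 0$ and $s_i \notin \mathbb{N}$; the other branch is handled analogously, starting from a seed built from $(t_h - a_h)_+^{\alpha_h}$ and the explicit Caputo-derivative action on power functions. From the one-variable fractional-Laplacian approximation theorem of \cite{MR3626547}, one obtains a seed $\psi(y_i)$, compactly supported, whose $\ell$-jet at a chosen point can be freely prescribed and which satisfies $(-\Delta)^{s_i}_{y_i}\psi = 0$ in a small neighborhood. Viewed as a function on $\mathbb{R}^N$, this $\psi$ is annihilated by every term of $\Lambda_a$ except $\XB_i(-\Delta)^{s_i}_{y_i}$, so $\Lambda_a\psi = 0$ in $B_1^N$; its jet, however, only spans the $y_i$-directions. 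To cover the remaining directions, the construction is iterated by combining such one-variable seeds (for each fractional block provided by \eqref{NOTVAN}) with classical solutions of the local part $\mathfrak{l}$ through a perturbative Neumann-type scheme, so that suitable linear combinations generate every jet in $\mathbb{R}^{N_\ell}$.

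The principal obstacle is exactly this combination step: $\Lambda_a$ being a genuine sum of a local differential operator and nonlocal operators in disjoint sets of variables, separable ansatze $u=\phi(x)\psi(y)\chi(t)$ are not in the kernel, and the classical tensor-product trick is unavailable. The remedy is to treat the non-fractional directions perturbatively, using that one has compact support in $B_R^N$ (with $R$ allowed to depend on $\epsilon$) and that $a$ can be taken arbitrarily negative, so that the tails of the nonlocal operators applied to the seed are small in $B_1^N$; the resulting error can be absorbed into the target tolerance $\epsilon$ by a final small correction. Showing convergence of this iteration, and verifying that the approximant satisfies $\Lambda_a u = 0$ exactly in $B_1^N$ (rather than only approximately), is the main technical work and requires careful choice of cutoffs together with sharp quantitative bounds on the fractional operators acting on the error.
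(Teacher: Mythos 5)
Your skeleton (reduce to polynomials, dualize the jet map via Hahn--Banach, contradict the annihilating functional by exhibiting kernel elements with rich jets) is indeed the strategy of the paper, but the step you identify as ``the principal obstacle'' rests on a false premise, and the workaround you sketch does not close the argument. You assert that separable ans\"atze $u=\phi(x)\psi(y)\chi(t)$ are not in the kernel of $\Lambda_a$ and that ``the classical tensor-product trick is unavailable.'' The opposite is true, and it is precisely the engine of the paper's proof: since each block of $\Lambda_a$ acts on a disjoint set of variables, for a product $u=\prod_i v_i(x_i)\prod_j\phi_j(y_j)\prod_h\psi_h(t_h)$ one has $\Lambda_a u=\big(\sum_i\mu_i+\sum_j\XB_j\lambda_j+\sum_h\XC_h\nu_h\big)u$ whenever each factor is (locally) an eigenfunction of its own operator --- $v_i$ solving an ODE $\partial^{r_i}_{x_i}v_i=-\overline{\XA}_i v_i$, $\phi_j$ a Dirichlet eigenfunction of $(-\Delta)^{s_j}_{y_j}$, $\psi_h$ a Mittag--Leffler eigenfunction of the Caputo derivative --- and the free parameters are then tuned so that the eigenvalues sum to zero. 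This yields \emph{exact} local solutions, which is essential: your proposed substitute, a ``perturbative Neumann-type scheme'' producing approximate solutions to be fixed by ``a final small correction,'' is incompatible with the requirement that $\Lambda_a u=0$ exactly in $B_1^N$, and no mechanism is given for performing such a correction within the kernel.

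Two further gaps. First, surjectivity of the $\ell$-jet map at the origin is not by itself equivalent to $C^\ell(B_1^N)$-approximation of polynomials: matching the jet of a monomial controls nothing about the remainder on all of $B_1$. The paper bridges this by taking $K$ much larger than $\ell$, killing all derivatives up to order $K$ except the target one, and then applying the anisotropic rescaling $(x,y,t)\mapsto(\eta^{1/r}x,\eta^{1/(2s)}y,\eta^{1/\alpha}t)$ under which each block of the operator is homogeneous; the high-order remainder then carries a positive power of $\eta$ in $C^\ell$. Your proposal omits this entirely. Second, the operator depends on the initial points $a$, so kernel elements for different $a$ cannot be superposed; the paper must first reduce to $\Lambda_{-\infty}$ (via a polynomial extension of the time factors below $a_h$) before any linear-algebra or density argument can be run, and only afterwards recover finite $a$. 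Finally, to make the span argument work one needs the sharp boundary asymptotics $\partial^\beta\phi_j\sim d^{s_j-|\beta|}$ for first eigenfunctions of $(-\Delta)^{s_j}$ with $s_j>1$ possibly large, and the corresponding expansions of the Mittag--Leffler functions near the initial point; these are substantial results proved in the paper and cannot be quoted from the $s\in(0,1)$ literature alone.
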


We observe that the initial points of the Caputo type operators
in Theorem~\ref{theone} also depend on~$\epsilon$, as detailed in~\eqref{EX:eps}
(but the other parameters, such as the orders of the operators
involved, are fixed arbitrarily).\medskip

We also stress that condition~\eqref{NOTVAN} requires that
the operator~$\Lambda_a$ contains at least one nonlocal operator
among its building blocks in~\eqref{ILPOAU-1}, \eqref{ILPOAU-2}
and~\eqref{ILPOAU-3}. This condition cannot be avoided, since
approximation results in the same spirit of Theorem~\ref{theone}
cannot hold for classical differential operators.\medskip

Theorem~\ref{theone} comprises, as particular cases,
the nonlocal approximation results established in the recent literature of this
topic. Indeed, when
\begin{eqnarray*}
&&\XA_1=\dots=\XA_n=\XB_1=\dots=\XB_{M-1}=\XC_1=\dots=\XC_l=0,\\
&& \XB_M=1,\\
{\mbox{and }}&& s\in(0,1)
\end{eqnarray*}
we see that Theorem~\ref{theone} recovers the main result
in~\cite{MR3626547}, giving the local density of $s$-harmonic functions
vanishing outside a compact set.\medskip

Similarly, when
\begin{eqnarray*}&&
\XA_1=\dots=\XA_n=\XB_1=\dots=\XB_{M}=\XC_1=\dots=\XC_{l-1}=0,\\
&&\XC_l=1,\\{\mbox{and }}&&\mathcal{D}_a=D_{t,a}^{\alpha},
\quad{\mbox{ for some~$\alpha>0$, $a<0$}}\end{eqnarray*}
we have that
Theorem~\ref{theone} reduces to
the main results in~\cite{MR3716924} for~$\alpha\in(0,1)$
and~\cite{CDV18} for~$\alpha>1$, in which such approximation result
was established for Caputo-stationary functions, i.e, functions that annihilate
the Caputo fractional derivative.\medskip

Also, when
\begin{eqnarray*}
&&p_1=\dots=p_{n}=1,\\
&&\XC_1=\dots=\XC_{l}=0,\\ {\mbox{and }}
&&s_j\in(0,1),\quad{\mbox{for every~$j\in\{1,\dots,M\},$}}\end{eqnarray*}
we have that
Theorem~\ref{theone} recovers the cases taken into account in~\cite{DSV1},
in which approximation results have been established
for the superposition of a local operator
with a superposition of fractional Laplacians of order~$2s_j<2$.\medskip

In this sense, not only Theorem~\ref{theone} comprises the existing literature,
but it goes beyond it, since it combines 
classical derivatives, fractional Laplacians and Caputo fractional derivatives
altogether.
In addition, it comprises the cases in which the space-fractional Laplacians 
taken into account are of order
greater than~$2$. 

As a matter of fact, this point is also a novelty
introduced by Theorem~\ref{theone} here
with respect to the previous literature.

Theorem~\ref{theone} was announced in~\cite{CDV18}, and
we have just received the very interesting preprint~\cite{2018arXiv181007648K}
which also considered the case of
different, not necessarily fractional, powers of the Laplacian,
using a different and innovative methodology.
\medskip

The rest of this book is organized as follows. 
Chapter~\ref{CH3} focuses on time-fractional operators.
More precisely, in Sections~\ref{s:second}
and~\ref{s:grf0}
we study the boundary behaviour of the eigenfunctions of the
Caputo derivative and of functions with vanishing Caputo derivative, respectively,
detecting their singular
boundary behaviour in terms of
explicit representation formulas. These type of results are also
interesting in themselves and can find further applications. 

Chapter~\ref{CH4} is devoted to some properties
of the higher order fractional Laplacian. 
More precisely, Section~\ref{s:grf} provides
some representation formula of the solution
of~$(-\Delta)^s u=f$ in a ball, with~$u=0$ outside this ball, for all~$s>0$,
and extends
the Green formula methods introduced
in~\cite{MR3673669} and \cite{AJS3}.

Then, in Section~\ref{SEC:eigef} we 
study the boundary behaviour of the first Dirichlet
eigenfunction of higher order fractional equations, and in Section~\ref{sec5}
we give some precise asymptotics
at the boundary for the first Dirichlet eigenfunction\index{Dirichlet!eigenfunction}
of~$(-\Delta)^s$ for any~$s>0$.

Section~\ref{s:hwb} is devoted to
the analysis of the asymptotic behaviour of $s$-harmonic
functions, with a ``spherical bump function'' as exterior Dirichlet datum.

Chapter~\ref{CH5} is devoted to the proof of our main result. To this end,
Section~\ref{s:fourthE} contains an auxiliary statement, namely
Theorem~\ref{theone2}, which will imply
Theorem~\ref{theone}. This is technically convenient,
since the operator~$\Lambda_a$ depends in principle on the initial
point~$a$: this has the disadvantage that if~$\Lambda_a u_a=0$
and~$\Lambda_b u_b=0$ in some domain, the function~$u_a+u_b$
is not in principle a solution of any operator, unless~$a=b$.
To overcome such a difficulty, in Theorem~\ref{theone2}
we will reduce to the case in which~$a=-\infty$, exploiting
a polynomial extension that we have introduced and used in~\cite{CDV18}.

In Section~\ref{s:fourth0}
we make the main step towards the proof of Theorem~\ref{theone2}.
Here, we prove that functions in the kernel
of nonlocal operators such as the one in~\eqref{1.6BIS}
span with their derivatives a maximal Euclidean space.
This fact is special for the nonlocal case
and its proof is based on the boundary analysis of the fractional operators in both
time and space.
Due to the general form of the operator in~\eqref{1.6BIS},
we have to distinguish here several cases,
taking advantage of either the time-fractional
or the space-fractional components of the operators.

Finally, in Section~\ref{s:fourth} we
complete the proof of
Theorem~\ref{theone2}, using the previous approximation
results and suitable rescaling arguments.

The final appendix provides concrete examples in which our main result can be applied.

\chapter{Boundary behaviour of solutions of time-fractional equations}\label{CH3}

In this chapter, we give precise asymptotics for the boundary behaviour of
solutions of time-fractional equations. The cases of the eigenfunctions
and of the Dirichlet problem with vanishing forcing term
will be studied in detail (the latter will be often referred to
as the time-fractional harmonic case, borrowing a terminology from
elliptic equations, with a slight abuse of notation in our case).

\section{Sharp boundary behaviour\index{boundary behaviour} for the time-fractional eigenfunctions}\label{s:second}

In this section we show that the eigenfunctions of the Caputo fractional derivative 
in~\eqref{defcap}
have
an explicit representation via the Mittag-Leffler function\index{function!Mittag-Leffler}.
For this,
fixed $\alpha$, $\beta\in\mathbb{C}$ with $\Re\left(\alpha\right)>0$,
for any $z$ with $\Re\left(z\right)>0$, we recall that
the Mittag-Leffler function is defined as
\begin{equation}\label{Mittag}
E_{\alpha,\beta}\left(z\right):=\sum_{j=0}^{+\infty} {\frac{z^j}{\Gamma
\left(\alpha j+\beta\right)}}.
\end{equation} 
The Mittag-Leffler function plays an important role in equations
driven by the Caputo derivatives, replacing the exponential function
for classical differential equations, as given by the following well-established result
(see \cite{MR3244285} and the references therein):

\begin{lemma}\label{lemma1}
Let~$\alpha\in(0,1]$,
$\lambda\in{\mathbb{R}}$, and
$a\in\mathbb{R}\cup\left\{-\infty\right\}$.
Then, the unique solution of the boundary value problem
\begin{equation*}\left\{
\begin{matrix}
D_{t,a}^{\alpha}u(t)=\lambda\, u(t) &
\mbox{ for any }t\in (a,+\infty),\\
u(a)=1 &
\end{matrix}\right.
\end{equation*}
is given by $E_{\alpha,1}\left(\lambda \left(t-a\right)^\alpha\right)$.
\end{lemma}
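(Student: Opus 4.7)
The plan is to verify by direct computation that $u_\star(t) := E_{\alpha,1}(\lambda(t-a)^\alpha)$ solves the Cauchy problem, and then establish uniqueness via an equivalent Volterra integral equation. By the translation invariance of~\eqref{defcap} (via the substitution $s = t-a$), I reduce to the case $a=0$; the case $a = -\infty$ is understood as a suitable limit and can be treated analogously. For $\alpha = 1$ the Caputo derivative reduces to the ordinary derivative and the lemma becomes the classical ODE $u' = \lambda u$ with $u(0)=1$, whose unique solution is $e^{\lambda t}=E_{1,1}(\lambda t)$; so I focus on $\alpha\in(0,1)$, in which case $k=1$ in~\eqref{defcap}.

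For existence, writing
\begin{equation*}
u_\star(t) = \sum_{j=0}^{+\infty} \frac{\lambda^j\, t^{\alpha j}}{\Gamma(\alpha j + 1)},
\end{equation*}
I differentiate termwise to get $u_\star'(\tau) = \sum_{j \geq 1} \lambda^j \tau^{\alpha j - 1}/\Gamma(\alpha j)$, substitute into~\eqref{defcap}, exchange sum and integral, and evaluate each term via the Beta integral
\begin{equation*}
\int_0^t \tau^{\alpha j - 1}\, (t-\tau)^{-\alpha}\, d\tau \;=\; \frac{\Gamma(\alpha j)\, \Gamma(1-\alpha)}{\Gamma(\alpha(j-1)+1)}\, t^{\alpha(j-1)}.
\end{equation*}
The factors $\Gamma(\alpha j)$ and $\Gamma(1-\alpha)$ cancel against those appearing in $u_\star'$ and in~\eqref{defcap}, and after reindexing $j \mapsto j+1$ the remaining series is exactly $\lambda u_\star(t)$. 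The initial condition $u_\star(0) = 1$ is immediate, since only the $j=0$ term survives at $t=0$.

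For uniqueness, if $v$ is another classical solution, then $w := v - u_\star$ satisfies $D^{\alpha}_{t,0} w = \lambda w$ with $w(0) = 0$. Applying the Riemann--Liouville fractional integral $I^\alpha$ of order $\alpha$ to both sides, and using Fubini to identify $I^\alpha D^\alpha w = w - w(0) = w$, I recast the problem as the homogeneous weakly singular Volterra equation
\begin{equation*}
w(t) = \frac{\lambda}{\Gamma(\alpha)} \int_0^t (t-\tau)^{\alpha - 1}\, w(\tau)\, d\tau,
\end{equation*}
and a Picard iteration (or a Gronwall--Bellman inequality adapted to the kernel $(t-\tau)^{\alpha-1}$) forces $w \equiv 0$ on every bounded subinterval $[0,T]$, hence on the whole $(0,+\infty)$.

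The main obstacle is the rigorous justification of the termwise derivative/integral exchanges in the existence step and the weakly singular Gronwall bound in the uniqueness step. Both hinge on the uniform convergence of the Mittag--Leffler series on compact subsets of $[0,+\infty)$, which follows from the factorial-type asymptotic growth of $\Gamma(\alpha j + 1)$; once uniform convergence on compacta is available, all the interchanges and the Fubini step become standard, and the weakly singular Gronwall lemma closes the argument.
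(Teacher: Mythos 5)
Your proposal is correct, and the existence half is essentially the computation the paper itself uses: the paper does not prove Lemma~\ref{lemma1} directly (it quotes it from the literature, citing~\cite{MR3244285}), but its proof of the generalization Lemma~\ref{MittagLEMMA} proceeds exactly as you do in the case $k=1$ --- termwise differentiation of the Mittag--Leffler series, exchange of sum and integral, evaluation of each term by the Beta integral~\eqref{H:2}, cancellation of the Gamma factors via $z\Gamma(z)=\Gamma(z+1)$, and reindexing. Where you genuinely diverge is in the uniqueness step: the paper disposes of it by taking the difference $w$ of two solutions and invoking Theorem~4.1 of~\cite{MR3563609} as a black box, whereas you recast $D^{\alpha}_{t,0}w=\lambda w$, $w(0)=0$, as the homogeneous weakly singular Volterra equation $w=\lambda I^{\alpha}w$ (using $I^{\alpha}D^{\alpha}w=w-w(0)$) and close with Picard iteration or a singular Gronwall bound. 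Your route is more self-contained and elementary, at the cost of having to justify the identity $I^{\alpha}D^{\alpha}w=w-w(0)$ and the Gronwall lemma with kernel $(t-\tau)^{\alpha-1}$; the paper's citation is shorter but delegates the real content. Both are sound. The only point to keep honest is the case $a=-\infty$, which you (like the paper) wave at briefly: there the initial condition $u(a)=1$ needs a separate interpretation, so strictly speaking the reduction to $a=0$ covers only $a\in\mathbb{R}$.
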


Lemma~\ref{lemma1} can be actually generalized\footnote{It is easily seen that
for~$k:=1$ Lemma~\ref{MittagLEMMA} boils down
to Lemma~\ref{lemma1}.} to any
fractional order of differentiation~$\alpha$:

\begin{lemma}\label{MittagLEMMA}
Let $\alpha\in(0,+\infty)$, with~$\alpha\in(k-1,k]$ and~$k\in\mathbb{N}$,
$a\in\mathbb{R}\cup\left\{-\infty\right\}$, and~$\lambda\in{\mathbb{R}}$. Then,
the unique 
continuous solution of the boundary value problem
\begin{equation}\label{CHE:0}\left\{
\begin{matrix}
D_{t,a}^{\alpha}u(t)=\lambda \,u(t) &
\mbox{ for any }t\in (a,+\infty),\\
u(a)=1 ,\\
\partial^m_t u(a)=0&\mbox{ for any }
m\in\{1,\dots,k-1\}
\end{matrix}\right.
\end{equation}
is given by $u\left(t\right)=E_{\alpha,1}\left(\lambda \left(t-a\right)^\alpha\right)$.

\begin{proof}
For the sake of simplicity we take $a=0$.
Also, the case in which~$\alpha\in{\mathbb{N}}$ can be checked with a direct computation,
so we focus on the case~$\alpha\in(k-1,k)$, with~$k\in{\mathbb{N}}$.

We
let $u\left(t\right):=E_{\alpha,1}\left(\lambda t^\alpha\right)$.
It is straightforward to see that~$ u(t)=1+{\mathcal{O}}(t^k)$ and therefore
\begin{equation}\label{CHE:1}
u(0)=1 \qquad{\mbox{and}}\qquad
\partial^m_t u(0)=0\;\mbox{ for any }\;
m\in\{1,\dots,k-1\}.
\end{equation}
We also claim that
\begin{equation}\label{CHE:2}
D_{t,a}^{\alpha}u(t)=\lambda \,u(t) \;
\mbox{ for any }\;t\in (0,+\infty).
\end{equation}
To check this,
we recall~\eqref{defcap} and~\eqref{Mittag} (with~$\beta:=1$), 
and we have that
\begin{eqnarray*}&&
D_{t,a}^{\alpha} u\left(t\right) \\&= &
\frac{1}{\Gamma\left(k-\alpha\right)}\int_0^t {\frac{u^{(k)}\left(\tau\right)}{
\left(t-\tau\right)^{\alpha-k+1}}\, d\tau} \\
&=& \frac{1}{\Gamma\left(k-\alpha\right)}
\int_0^t {\left(\sum_{j=1}^{+\infty} {\lambda^j\frac{\alpha j\left(\alpha j
-1\right)\ldots\left(\alpha j-k+1\right)}{\Gamma\left(\alpha j
+1\right)} \tau^{\alpha j-k}}\right)\frac{d\tau}{\left(t-\tau\right)^{\alpha-k+1}}} \\
&=& \sum_{j=1}^{+\infty} {\lambda^j\,\frac{\alpha j\left(\alpha j-1\right)
\ldots\left(\alpha j-k+1\right)}{\Gamma\left(k-\alpha\right)
\Gamma\left(\alpha j+1\right)}}
\int_0^t {\tau^{\alpha j-k}\left(t-\tau\right)^{k-\alpha-1} \,d\tau}.
\end{eqnarray*}
Hence, using the change of variable $\tau=t\sigma$, we obtain that
\begin{equation}\label{H:1}
D_{t,a}^{\alpha} u\left(t\right) =
\sum_{j=1}^{+\infty} {\lambda^j\,
\frac{\alpha j\left(\alpha j-1\right)\ldots\left(\alpha j-k+1\right)}{
\Gamma\left(k-\alpha\right)\Gamma\left(\alpha j+1\right)}}
t^{\alpha j-\alpha}\int_0^1 {\sigma^{\alpha j-k}\left(1-\sigma\right)^{k-\alpha-1}
\, d\tau}.\end{equation}
On the other hand, from the basic properties of the Beta function, it is known
that if~$\Re(z)$, $\Re(w)>0$, then
\begin{equation}\label{H:2} \int_0^1 {\sigma^{z-1}\left(1-\sigma\right)^{w-1}\, dt}
=\frac{\Gamma\left(z\right)\Gamma\left(w\right)}{\Gamma\left(z+w\right)}.
\end{equation}
In particular, taking~$z:=\alpha j-k+1\in(\alpha-k+1,+\infty)\subseteq(0,+\infty)$
and~$w:=k-\alpha\in(0,+\infty)$, and substituting~\eqref{H:2} into~\eqref{H:1},
we conclude that
\begin{equation}\label{H:3}\begin{split}
D_{t,a}^{\alpha} u\left(t\right)=
& \sum_{j=1}^{+\infty} {\lambda^j\frac{\alpha j\left(\alpha j-1\right)\ldots\left(\alpha j-k+1\right)}{\Gamma\left(k-\alpha\right)\Gamma\left(\alpha j+1\right)}}\frac{\Gamma\left(\alpha j-k+1\right)\Gamma\left(k-\alpha\right)}{\Gamma\left(\alpha j-\alpha+1\right)}\,t^{\alpha j-\alpha} \\
=& \sum_{j=1}^{+\infty} {\lambda^j\frac{\alpha j\left(\alpha j-1\right)\ldots\left(\alpha j-k+1\right)}{\Gamma\left(\alpha j+1\right)}}\frac{\Gamma\left(\alpha j-k+1\right)}{\Gamma\left(\alpha j-\alpha+1\right)}\,t^{\alpha j-\alpha}.
\end{split}\end{equation}
Now we use the fact that $z\Gamma\left(z\right)=\Gamma\left(z+1\right)$
for any $z\in\mathbb{C}$ with $\Re\left(z\right)>-1$, so, we have
\begin{equation*}
\alpha j\left(\alpha j-1\right)\ldots\left(\alpha j-k+1\right)\Gamma\left(\alpha j-k+1\right)=\Gamma\left(\alpha j+1\right).
\end{equation*}
Plugging this information into~\eqref{H:3}, we thereby find that
\begin{equation*}
D_{t,a}^{\alpha} u\left(t\right)=
\sum_{j=1}^{+\infty} {\frac{\lambda^j}{\Gamma\left(\alpha j-\alpha+1\right)}t^{\alpha j-\alpha}}=\sum_{j=0}^{+\infty} {\frac{\lambda^{j+1}}{\Gamma\left(\alpha j+1\right)}t^{\alpha j}}=\lambda u(t).
\end{equation*}
This proves~\eqref{CHE:2}.

Then, in view of~\eqref{CHE:1} and~\eqref{CHE:2} we obtain that~$u$
is a solution of~\eqref{CHE:0}.
Hence, to complete the proof of the desired result, we have to show
that such a solution is unique. To this end, supposing that we have two
solutions of~\eqref{CHE:0}, we consider their difference~$w$, and
we observe that~$w$ is a solution of
\begin{equation*}\left\{
\begin{matrix}
D_{t,0}^{\alpha}w(t)=\lambda \,w(t) &
\mbox{ for any }t\in (0,+\infty),\\
\partial^m_t w(0)=0&\mbox{ for any }
m\in\{0,\dots,k-1\}.
\end{matrix}\right.
\end{equation*}
By Theorem~4.1 in~\cite{MR3563609}, it follows that~$w$ vanishes
identically, and this proves the desired uniqueness result.
\end{proof}
\end{lemma}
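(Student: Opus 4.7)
The plan is to verify directly that the Mittag-Leffler candidate satisfies the equation and initial conditions, and then invoke a standard uniqueness result for Caputo initial value problems. By the translation $t \mapsto t - a$, I would reduce immediately to the case $a = 0$; and since the integer case $\alpha = k$ is just the classical linear ODE $u^{(k)} = \lambda u$ with the standard initial data making $u(t) = E_{k,1}(\lambda t^k)$ the solution (which reduces, e.g. for $k=1$, to the exponential), I would focus on $\alpha \in (k-1,k)$.

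For existence, I would set $u(t) := E_{\alpha,1}(\lambda t^\alpha) = \sum_{j=0}^{\infty} \lambda^j t^{\alpha j}/\Gamma(\alpha j + 1)$. The initial conditions are immediate from the series: the $j=0$ term gives $u(0)=1$, while every other term is $t^{\alpha j}$ with $\alpha j \geq \alpha > k-1$, so all derivatives at $0$ of order $m \in \{1,\dots,k-1\}$ vanish. To verify $D_{t,0}^\alpha u = \lambda u$, I would differentiate the series term-by-term $k$ times, obtaining $u^{(k)}(\tau) = \sum_{j\geq 1} \lambda^j \alpha j(\alpha j - 1)\cdots(\alpha j - k + 1)\,\tau^{\alpha j - k}/\Gamma(\alpha j + 1)$, insert this into the definition \eqref{defcap} of the Caputo derivative, and swap sum and integral (justified by absolute convergence on compact intervals, since the series has infinite radius of convergence). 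Each resulting integral $\int_0^t \tau^{\alpha j - k}(t-\tau)^{k-\alpha-1}\,d\tau$ becomes, after the substitution $\tau = t\sigma$, equal to $t^{\alpha j - \alpha}\,B(\alpha j - k + 1,\, k - \alpha)$, where the Beta function values are finite because both arguments have positive real part (using $\alpha j - k + 1 \geq \alpha - k + 1 > 0$ for $j \geq 1$).

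The algebraic simplification then proceeds by writing the Beta function as $\Gamma(\alpha j - k + 1)\Gamma(k-\alpha)/\Gamma(\alpha j - \alpha + 1)$, cancelling $\Gamma(k-\alpha)$ with the prefactor from \eqref{defcap}, and using the functional equation $\alpha j(\alpha j - 1)\cdots(\alpha j - k + 1)\Gamma(\alpha j - k + 1) = \Gamma(\alpha j + 1)$ to kill the denominator $\Gamma(\alpha j + 1)$. This collapses the series to $\sum_{j\geq 1} \lambda^j t^{\alpha j - \alpha}/\Gamma(\alpha j - \alpha + 1)$, and after shifting the index $j \mapsto j+1$ this equals $\lambda \sum_{j\geq 0} \lambda^j t^{\alpha j}/\Gamma(\alpha j + 1) = \lambda u(t)$, as required.

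For uniqueness, I would take the difference $w$ of two continuous solutions; then $w$ solves the homogeneous Caputo equation with all prescribed data equal to zero, and a known uniqueness theorem for such initial value problems (as in Theorem~4.1 of \cite{MR3563609}) forces $w \equiv 0$. The main technical obstacle is really the bookkeeping in the series manipulation, particularly keeping track of the shift of the index and the positivity of the Beta arguments when $\alpha$ is close to $k$; none of these is conceptually difficult, but they must be handled carefully to avoid illegal $\Gamma$-evaluations at nonpositive integers.
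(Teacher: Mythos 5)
Your proposal is correct and follows essentially the same route as the paper: reduce to $a=0$, verify the initial conditions from the series expansion, differentiate term by term, evaluate the resulting Beta integrals after the substitution $\tau=t\sigma$, simplify via the functional equation of the Gamma function, and conclude uniqueness from Theorem~4.1 of \cite{MR3563609} applied to the difference of two solutions. No gaps.
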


%%%%			Another useful property we take into account is the scaling property of the Caputo derivative.
%%%%			\begin{proposition}
%%%%			\label{capsca}
%%%%			Let $r>0$ and $u_r(t):=u(rt)$. Then
%%%%			\begin{equation}
%%%%			D_{t,a}^{\alpha}u_r(t)=r^\alpha \,D_{t,ra}^{\alpha}u (rt).
%%%%			\end{equation}
%%%%			\begin{proof}
%%%%			With a simple computation, we have
%%%%			\begin{align*}
%%%%			D_{t,a}^{\alpha}u_r(t)&=\frac{1}{\Gamma(k-\alpha)}\int_a^t \frac{u_r^{(k)}(\tau)}{(t-\tau)^{\alpha-k+1}}\, d\tau=\frac{r^k}{\Gamma(k-\alpha)}\int_a^t \frac{u^{(k)}(r\tau)}{(t-\tau)^{\alpha-k+1}} \,d\tau \\
%%%%			&=\frac{r^{k-1}}{\Gamma(k-\alpha)}\int_{ra}^{rt} \frac{u^{(k)}(w)}{\left(t-\frac{w}{r}\right)^{\alpha-k+1}}\, dw=\frac{r^\alpha}{\Gamma(k-\alpha)}\int_{ra}^{rt} \frac{u^{(k)}(w)}{\left(rt-w\right)^{\alpha-k+1}} \,dw \\
%%%%			&=r^\alpha \,D_{t,ra}^{\alpha}u(rt),
%%%%			\end{align*}
%%%%			where we have also used the change of variable $w:=r\tau$.
%%%%			\end{proof}
%%%%			\end{proposition}

The boundary behaviour of the Mittag-Leffler function for different values
of the fractional parameter~$\alpha$ is depicted in Figure~\ref{FIG1}.
In light of~\eqref{Mittag},
we notice in particular that, near~$z=0$,
$$ E_{\alpha,\beta}\left(z\right)=
\frac{1}{\Gamma\left(\beta\right)}+\frac{z}{\Gamma\left(\alpha +\beta\right)}+O(z^2)$$
and therefore, near~$t=a$,
\begin{equation*}
E_{\alpha,1}\left(\lambda \left(t-a\right)^\alpha\right)
=1+\frac{\lambda \left(t-a\right)^\alpha}{\Gamma\left(\alpha +1\right)}+O\big(\lambda^2\left(t-a\right)^{2\alpha}\big).
\end{equation*}

\begin{figure}[h]
\centering
\includegraphics[width=8 cm]{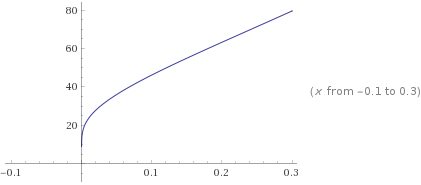}
\includegraphics[width=8 cm]{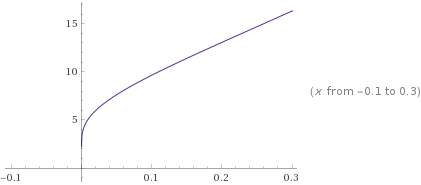}\\
\bigskip
\includegraphics[width=8 cm]{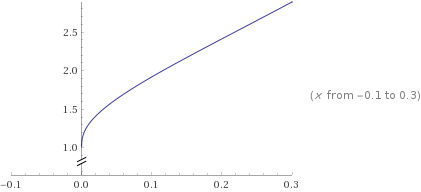}
\includegraphics[width=8 cm]{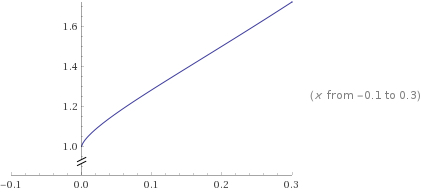}\\
\bigskip
\includegraphics[width=8 cm]{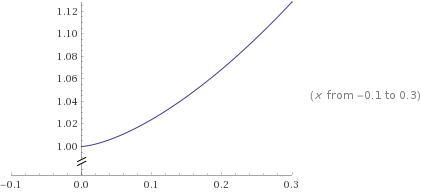}
\includegraphics[width=8 cm]{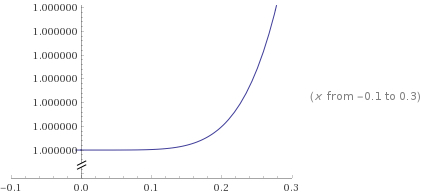}
\caption{\footnotesize\it Behaviour of the
Mittag-Leffler
function $E_{\alpha,1}\left(t^\alpha\right)$ near the origin
for $\alpha=\frac{1}{100}$, $\alpha=\frac{1}{20}$, $\alpha=\frac{1}{3}$, 
$\alpha=\frac{2}{3}$
$\alpha=\frac{3}{2}$ and~$\alpha=\frac{11}{2}$.}
\label{FIG1}
\end{figure}

%\chapter{Scale invariance for fractional Laplacian}\label{s:third}
%
%In this section we show that the operators taken into account in $\mathcal{L}$
%in~\eqref{ILPOAU-2}
%are scale-invariant. Namely, we point out the details
%of the following elementary observation:
%
%\begin{lemma}
%Let~$s_j\in(0,+\infty)$ and~$u\in C^\infty({\mathbb{R}}^{m_j})$.
%Fixed~$r>0$,
%let~$u_r(y_j):=u(ry_j)$. Then
%$$ (-\Delta)^{s_j}_{y_j} u_r(y_j)= r^{2s_j} (-\Delta)^{s_j}_{y_j}u(ry_j).$$
%\end{lemma}
%
%\begin{proof}
%{F}rom~\eqref{nonlocop} and~\eqref{898989ksdc}, we have that
%\begin{eqnarray*}
%(-\Delta)^{s_j}_{y_j} u_r(y_j) &=&
%\int_{\mathbb{R}^{m_j}} 
%\frac{\left(\delta_{h_j} u_r\right)(y_j,Y_j)}{|Y_j|^{m_j+2s_j}}\; \,dY_j
%\\ &=&
%\sum_{k=-h_j}^{h_j}
%(-1)^k \binom{2h_j}{h_j-k}
%\int_{\mathbb{R}^{m_j}} 
%\frac{u_r(y_j+kY_j)}{|Y_j|^{m_j+2s_j}}\, \,dY_j
%\\ &=& \sum_{k=-h_j}^{h_j}
%(-1)^k \binom{2h_j}{h_j-k}
%\int_{\mathbb{R}^{m_j}} 
%\frac{u_r(y_j+kY_j)}{|Y_j|^{m_j+2s_j}}\, \,dY_j.
%\end{eqnarray*}
%Then, using the change of variable~$rY_j=:Z_j$, we obtain that
%\begin{eqnarray*}
%(-\Delta)^{s_j}_{y_j} u_r(y_j) &=&
%r^{m_j+2s_j}\sum_{k=-h_j}^{h_j}
%(-1)^k \binom{2h_j}{h_j-k}
%\int_{\mathbb{R}^{m_j}} 
%\frac{u(ry_j+kZ_j)}{|Z_j|^{m_j+2s_j}}\, \,\frac{dZ_j}{r^{m_j}}
%\\ &=& r^{2s_j}\,
%\sum_{k=-h_j}^{h_j}
%(-1)^k \binom{2h_j}{h_j-k}
%\int_{\mathbb{R}^{m_j}} 
%\;\frac{u(ry_j+kZ_j)}{\left|Z_j\right|^{m_j+2s_j}}
%\,dZ_j\\
%&=& r^{2s_j}\,
%\int_{\mathbb{R}^{m_j}} 
%\frac{\left(\delta_{h_j} u\right)(ry_j,Z_j)}{\left|Z_j\right|^{m_j+2s_j}}\;\,dZ_j\\
%&=& r^{2s_j} (-\Delta)^{s_j}_{y_j} u(ry_j),
%\end{eqnarray*}
%as desired.
%\end{proof}

\section{Sharp boundary behaviour\index{boundary behaviour} for the time-fractional harmonic functions}\label{s:grf0}

In this section, we detect the optimal boundary behaviour of time-fractional
harmonic functions and of their derivatives.
The result that we need for our purposes is the following:

\begin{lemma}\label{LF}
Let~$\alpha\in(0,+\infty)\setminus\mathbb{N}$.
There exists a function~$\psi:\mathbb{R}\to\mathbb{R}$ such
that~$\psi\in C^\infty((1,+\infty))$ and
\begin{eqnarray}
\label{LAp1}&&D^\alpha_0\psi(t)=0 \qquad{\mbox{for all }}t\in(1,+\infty),\\
\label{LAp2}{\mbox{and }}&&\lim_{\epsilon\searrow0}
\epsilon^{\ell-\alpha} \partial^\ell\psi(1+\epsilon t)=\kappa_{\alpha,\ell}\, t^{\alpha-\ell},
\qquad{\mbox{for all }}\ell\in\mathbb{N},
\end{eqnarray}
for some~$\kappa_{\alpha,\ell}\in\mathbb{R}\setminus\{0\}$, where~\eqref{LAp2}
is taken in the sense of distribution for~$t\in(0,+\infty)$.
\end{lemma}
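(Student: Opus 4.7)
The plan is to construct $\psi$ as an explicit elementary function built from $(\cdot-1)_+^\alpha$ and $(\cdot)_+^\alpha$, chosen so that their Caputo derivatives cancel on $(1,+\infty)$ while their difference retains the desired singular profile at the point~$1$. Setting $k:=[\alpha]+1$ so that $\alpha\in(k-1,k)$, I would take
$$
\psi(t):=(t-1)_+^{\alpha}-t_+^{\alpha}+p(t),\qquad
p(t):=\sum_{j=0}^{k-1}\binom{\alpha}{j}(t-1)^j,
$$
where $p$ is the degree-$(k-1)$ Taylor polynomial of $t\mapsto t^\alpha$ at $t=1$, and $t_+^\alpha$ denotes $t^\alpha$ for $t\ge 0$ (extended by $0$ for $t<0$). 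Clearly $\psi$ is defined on $\mathbb{R}$ and belongs to $C^\infty((1,+\infty))$.

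To check \eqref{LAp1}, observe that for $\tau>1$ one has $\partial_\tau^k(\tau-1)_+^\alpha=\alpha(\alpha-1)\cdots(\alpha-k+1)(\tau-1)^{\alpha-k}$, so by \eqref{defcap} and the substitution $\tau=1+(t-1)u$ a Beta integral as in \eqref{H:2} gives
$D^\alpha_0(t-1)_+^\alpha=\Gamma(\alpha+1)$ on $(1,+\infty)$, via the identity $\alpha(\alpha-1)\cdots(\alpha-k+1)\Gamma(\alpha-k+1)=\Gamma(\alpha+1)$. The same substitution with center $0$ in place of $1$ yields $D^\alpha_0 t_+^\alpha=\Gamma(\alpha+1)$ on $(0,+\infty)$. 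Finally, $D^\alpha_0 p\equiv 0$ because $p$ is a polynomial of degree $k-1<k$, hence its $k$-th derivative vanishes. Summing the three contributions gives $D^\alpha_0\psi\equiv 0$ on $(1,+\infty)$.

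For \eqref{LAp2}, one writes, for $t>1$, $\psi(t)=(t-1)^\alpha-R_k(t)$ with $R_k(t):=t^\alpha-p(t)$. Taylor's theorem ensures that $R_k\in C^\infty((0,+\infty))$ with $R_k^{(j)}(1)=0$ for $j=0,\dots,k-1$ and $R_k(t)=O((t-1)^k)$ near $t=1$. Differentiating $\ell$ times and rescaling,
$$
\epsilon^{\ell-\alpha}\,\partial^\ell\psi(1+\epsilon t)
=\frac{\Gamma(\alpha+1)}{\Gamma(\alpha-\ell+1)}\,t^{\alpha-\ell}
-\epsilon^{\ell-\alpha}R_k^{(\ell)}(1+\epsilon t).
$$
For $\ell\le k-1$ the remainder is $O(\epsilon^{k-\alpha}t^{k-\ell})\to 0$; for $\ell\ge k$, $R_k^{(\ell)}(1+\epsilon t)$ stays bounded while $\epsilon^{\ell-\alpha}\to 0$ (since $\ell\ge k>\alpha$). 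In both cases the remainder vanishes in the limit, so \eqref{LAp2} holds with $\kappa_{\alpha,\ell}=\Gamma(\alpha+1)/\Gamma(\alpha-\ell+1)$, which is nonzero exactly because $\alpha\notin\mathbb{N}$ so that $\Gamma(\alpha-\ell+1)$ has no pole.

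The delicate point is the bookkeeping for small $\ell$: without the Taylor correction $p$, the raw difference $(t-1)_+^\alpha-t_+^\alpha$ would carry an $O(1)$ smooth tail $-t^\alpha$ at $t=1$ whose $\ell$-th derivative is merely bounded, and multiplication by $\epsilon^{\ell-\alpha}$ would blow up when $\ell<\alpha$. Subtracting precisely the first $k$ terms of the Taylor expansion of $t^\alpha$ at $t=1$ absorbs this obstruction and simultaneously produces a single, finite, nonzero scaling limit for every $\ell\in\mathbb{N}$, while preserving the identity $D^\alpha_0\psi=0$ thanks to the polynomial nature of $p$.
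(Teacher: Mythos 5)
Your proof is correct, but it takes a genuinely different route from the paper's. The paper does not construct $\psi$ explicitly: it imports it from Lemmas~2.5 and~2.6 of~\cite{CDV18}, where $\psi$ arises as the solution of a Caputo--Dirichlet problem with a prescribed datum $\psi_0$ on $[0,1]$ and is known only through the integral representation~\eqref{VBVBHJSnb}; the asymptotics~\eqref{LAp2} are then obtained by moving all $\ell$ derivatives onto the test function and running a dominated convergence argument that upgrades the zeroth-order expansion $\epsilon^{-\alpha}\psi(1+\epsilon)\to\kappa$ to every order. You instead exhibit the closed-form competitor $(t-1)_+^{\alpha}-t_+^{\alpha}+p(t)$, verify~\eqref{LAp1} by two Beta-integral computations in the spirit of~\eqref{H:2} (each power has Caputo derivative identically equal to $\Gamma(\alpha+1)$, so they cancel, and the Taylor polynomial $p$ is annihilated because $p^{(k)}\equiv 0$), and read off~\eqref{LAp2} directly from Taylor's theorem. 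What your approach buys is explicitness --- a concrete $\psi$, the explicit constants $\kappa_{\alpha,\ell}=\Gamma(\alpha+1)/\Gamma(\alpha-\ell+1)$, and locally uniform convergence on compact subsets of $(0,+\infty)$, which is stronger than the distributional convergence required --- at the price of being tailored to this specific lemma, whereas the paper's route reuses machinery from~\cite{CDV18} that also supplies the membership of $\psi$ in the classes $C^{k,\alpha}$ needed downstream. The one point you should make explicit is that your $\psi^{(k)}$ exists only almost everywhere, with the integrable singularities $\tau^{\alpha-k}$ at $\tau=0$ and $(\tau-1)^{\alpha-k}$ at $\tau=1$ (both exponents lie in $(-1,0)$ since $\alpha\in(k-1,k)$), so the Caputo integral in~\eqref{defcap} must be read as an absolutely convergent improper integral; since $\psi^{(k-1)}$ is absolutely continuous this is harmless, but it deserves a sentence.
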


\begin{proof} We use Lemma~2.5
in~\cite{CDV18}, according to which
(see in particular formula~(2.16)
in~\cite{CDV18}) the claim in~\eqref{LAp1}
holds true. Furthermore (see formulas~(2.19)
and~(2.20) in~\cite{CDV18}), we can write that, for all~$t>1$,
\begin{equation}\label{VBVBHJSnb}
\psi(t)=-\frac1{\Gamma(\alpha)\Gamma([\alpha]+1-\alpha)}
\iint_{[1,t]\times[0,3/4]}\partial^{[\alpha]+1}\psi_0(\sigma)\,(\tau-\sigma)^{[\alpha]-\alpha}\,
(t-\tau)^{\alpha-1}\,d\tau\,d\sigma,
\end{equation}
for a suitable~$\psi_0\in C^{[\alpha]+1}([0,1])$.

In addition, by Lemma~2.6 in~\cite{CDV18}, we can write that
\begin{equation}\label{0oLAMJA}
\lim_{\epsilon\searrow0}
\epsilon^{ -\alpha} \psi(1+\epsilon)=\kappa,
\end{equation}
for some~$\kappa\ne0$.
Now we set
$$ (0,+\infty)\ni t\mapsto f_\epsilon(t):=
\epsilon^{\ell-\alpha} \partial^\ell\psi(1+\epsilon t).$$
We observe that, for any~$\varphi\in C^\infty_0((0,+\infty))$,
\begin{equation}\label{UHAikAJ678OKA}
\begin{split}& \int_0^{+\infty} f_\epsilon(t)\,\varphi(t)\,dt=
\epsilon^{\ell-\alpha} \int_0^{+\infty}\partial^\ell\psi(1+\epsilon t)\varphi(t)\,dt\\
&=
\epsilon^{-\alpha} \int_0^{+\infty}\frac{d^\ell}{dt^\ell}\big(\psi(1+\epsilon t)\big)\varphi(t)\,dt
=(-1)^\ell\,
\epsilon^{-\alpha} \int_0^{+\infty}\psi(1+\epsilon t)\,\partial^\ell\varphi(t)\,dt.
\end{split}\end{equation}
Also, in view of~\eqref{VBVBHJSnb},
\begin{eqnarray*}&&
\epsilon^{-\alpha}|\psi(1+\epsilon t)|\\
&=&\left|\frac{\epsilon^{-\alpha}}{\Gamma(\alpha)\Gamma([\alpha]+1-\alpha)}
\iint_{[1,1+\epsilon t]\times[0,3/4]}\partial^{[\alpha]+1}\psi_0(\sigma)\,(\tau-\sigma)^{[\alpha]-\alpha}\,
(1+\epsilon t-\tau)^{\alpha-1}\,d\tau\,d\sigma\right|
\\&\le&C\,\epsilon^{-\alpha}\,
\int_{[1,1+\epsilon t]}(1+\epsilon t-\tau)^{\alpha-1}\,d\tau
\\ &=& Ct^\alpha,
\end{eqnarray*}
which is locally bounded in~$t$, where~$C>0$ here above may vary from line to line.

As a consequence, we can pass to the limit in~\eqref{UHAikAJ678OKA}
and obtain that
$$\lim_{\epsilon\searrow0} \int_0^{+\infty} f_\epsilon(t)\,\varphi(t)\,dt
=
(-1)^\ell\, \int_0^{+\infty} \lim_{\epsilon\searrow0}
\epsilon^{-\alpha}
\psi(1+\epsilon t)\,\partial^\ell\varphi(t)\,dt.
$$
This and~\eqref{0oLAMJA} give that
$$\lim_{\epsilon\searrow0} \int_0^{+\infty} f_\epsilon(t)\,\varphi(t)\,dt
=
(-1)^\ell\, \kappa\,\int_0^{+\infty} t^\alpha\,
\partial^\ell\varphi(t)\,dt=\kappa\,\alpha\dots(\alpha-\ell+1)\int_0^{+\infty} t^{\alpha-\ell}\,\varphi(t)\,dt,$$
which establishes~\eqref{LAp2}.
\end{proof}

\chapter{Boundary behaviour of solutions of space-fractional equations}\label{CH4}

In this chapter, we give precise asymptotics for the boundary behaviour of
solutions of space-fractional equations. The cases of the eigenfunctions
and of the Dirichlet problem with vanishing forcing term
will be studied in detail. To this end, we will also
exploit useful representation formulas of the solutions
in terms of suitable Green functions.

\section{Green representation formulas and solution of $(-\Delta)^s u=f$ in $B_1$ with homogeneous Dirichlet datum}
\label{s:grf}
Our goal is to provide some representation results on the solution
of~$(-\Delta)^s u=f$ in a ball, with~$u=0$ outside this ball, for all~$s>0$.
Our approach is an extension of the Green formula methods introduced
in~\cite{MR3673669} and \cite{AJS3}:
differently from the previous literature, we are not assuming here that~$f$
is regular in the whole of the ball, but merely that it is H\"older continuous
near the boundary and sufficiently integrable inside. Given the type of singularity
of the Green function, these assumptions are sufficient to obtain meaningful
representations, which in turn will be useful to deal with
the eigenfunction problem in the subsequent Chapter~\ref{SEC:eigef}.

\subsection{Solving $(-\Delta)^s u=f$ in $B_1$ for discontinuous~$f$ vanishing near $\partial B_1$}

Now, we want to extend the representation
results of \cite{MR3673669} and \cite{AJS3} to
the case in which the right hand side is not H\"older continuous,
but merely in a Lebesgue space, but it has the additional
property of vanishing near the boundary of the domain.
To this end, fixed~$s>0$,
we consider the polyharmonic Green\index{function!Green}
function in $B_1\subset{\mathbb{R}}^n$, given,
for every~$x\ne y\in\mathbb{R}^n$, by 
\begin{equation}\label{GREEN}
\begin{split}& 
\mathcal{G}_s\left(x,y\right):=\frac{k(n,s)}{\left|x-y\right|^{n-2s}}\,
\int_0^{r_0\left(x,y\right)} 
\frac{\eta^{s-1}}{\left(\eta+1\right)^{\frac{n}{2}}} \,d\eta,
\\ {\mbox{where }}\quad&r_0\left(x,y\right):=
\frac{\left(1-\left|x\right|^2\right)_+\left(1-\left|y\right|^2\right)_+}{\left|x-y\right|^2}, \\
{\mbox{with }}\quad&
k(n,s):=\frac{\Gamma\left(\frac{n}{2}\right)}{
\pi^{\frac{n}{2}}\,4^s\Gamma^2\left(s\right)}. 
\end{split}
\end{equation}
Given~$x\in B_1$, we also set
\begin{equation}\label{GREEN2} d(x):=1-|x|.\end{equation}
In this setting, we have:

\begin{proposition}\label{LONTANO}
Let~$r\in(0,1)$ and~$f\in L^2(B_1)$, with~$f=0$ in~${\mathbb{R}}^n\setminus B_r$. Let
\begin{equation}\label{DEF uG} u(x):=
\begin{cases}
\displaystyle\int_{B_1} \mathcal{G}_s\left(x,y\right)\,f(y)\,dy & {\mbox{ if }}x\in B_1,\\
0&{\mbox{ if }}x\in{\mathbb{R}}^n\setminus B_1.
\end{cases}
\end{equation}
Then:
\begin{equation}
\label{LON1}
u\in L^1(B_1), {\mbox{ and }} \|u\|_{L^1(B_1)}\le C\,\|f\|_{L^1(B_1)},
\end{equation}
\begin{equation}\label{LON2}
{\mbox{for every $R\in(r,1)$, }} \sup_{x\in B_1\setminus B_R}
d^{-s}(x)\,|u(x)|\le C_R\,\|f\|_{L^1(B_1)},\end{equation}
\begin{equation}\label{LON3}
{\mbox{$u$ satisfies }}(-\Delta)^s u=f{\mbox{ in }}B_1 {\mbox{ in the sense of
distributions,}} 
\end{equation}
and
\begin{equation}\label{LON4}
u\in W^{2s,2}_{loc}(B_1).
\end{equation}
Here above, 
$C>0$ is a constant depending on~$n$, $s$ and~$r$,
$C_R>0$ is a constant depending on~$n$, $s$, $r$ and~$R$
and $C_\rho>0$ is a constant depending on~$n$, $s$, $r$ and~$\rho$.
\end{proposition}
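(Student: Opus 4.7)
The plan is to establish the four conclusions in turn, exploiting the explicit form of the Green function in \eqref{GREEN}. For \eqref{LON1}, by Fubini--Tonelli it suffices to prove the uniform bound $\sup_{y \in B_1} \int_{B_1} \mathcal{G}_s(x,y)\,dx \le C$. I would split the $x$-integral into the regions $\{r_0(x,y) \le 1\}$ and $\{r_0(x,y) > 1\}$. In the first, the inner integral in \eqref{GREEN} is bounded by $r_0^s/s$, which after collecting powers of $|x-y|$ yields a kernel controlled by $d(x)^s d(y)^s / |x-y|^n$; since $|x-y| \ge c\,\max(d(x),d(y))^{1/2}(d(y))^{1/2}$ on this region, the resulting integrand is integrable. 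In the region $\{r_0 > 1\}$, one breaks the $\eta$-integral at $\eta = 1$: the piece on $[0,1]$ is bounded, and the piece on $[1,r_0]$ is estimated separately according to the sign of $s - n/2$ and combines with $|x-y|^{2s-n}$ to give a locally integrable kernel in all three regimes $n > 2s$, $n = 2s$, $n < 2s$ (recall that in the last case the resulting Green function is in fact bounded).

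For \eqref{LON2}, the hypothesis that $f \equiv 0$ outside $B_r$ becomes crucial. For $x \in B_1 \setminus B_R$ and $y \in B_r$ one has $|x-y| \ge R-r$ and $1-|y|^2 \le 1$, hence
$$r_0(x,y) \le \frac{2\,d(x)}{(R-r)^2}.$$
Taking $x$ close enough to $\partial B_1$ so that $r_0 \le 1$, the inner integral in $\mathcal{G}_s$ is bounded by $r_0^s/s$, and collecting the bounded powers of $|x-y|$ produces $\mathcal{G}_s(x,y) \le C_R\, d(x)^s$; the complementary case $r_0 > 1$ occurs only away from $\partial B_1$ and can be absorbed in $C_R$. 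Integrating against $|f|$ yields the pointwise bound of \eqref{LON2}. For \eqref{LON4}, once \eqref{LON3} is established it will follow from standard interior regularity for $(-\Delta)^s$, applied locally in $B_1$ where $u$ is bounded in a neighbourhood by combining the interior bound from \eqref{LON1}, the boundary decay \eqref{LON2}, and $f \in L^2$.

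For \eqref{LON3}, the strategy is approximation: choose $f_k \in C_c^\infty(B_r)$ with $f_k \to f$ in $L^2(B_1)$, and let $u_k$ be the corresponding solutions given by \eqref{DEF uG}. For such regular, compactly-supported data, the theory of \cite{MR3673669,AJS3} gives that $u_k$ solves $(-\Delta)^s u_k = f_k$ in $B_1$ with $u_k \equiv 0$ outside $B_1$, in particular in the distributional sense. By \eqref{LON1} applied to $f - f_k$, $u_k \to u$ in $L^1(B_1)$, and since both functions vanish outside $B_1$ they converge in $L^1(\mathbb{R}^n)$. For any $\varphi \in C_c^\infty(B_1)$, $(-\Delta)^s \varphi$ lies in $L^\infty(\mathbb{R}^n)$ with polynomial decay at infinity (by Fourier analysis, using that $\widehat{\varphi}$ is a Schwartz function and $|\xi|^{2s}\widehat{\varphi}(\xi) \in L^1$), so $\langle u_k, (-\Delta)^s \varphi\rangle \to \langle u, (-\Delta)^s \varphi\rangle$, while $\langle f_k, \varphi\rangle \to \langle f, \varphi\rangle$ by $L^2$ convergence.

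The main obstacle is the careful Green-function estimate in Steps 1 and 2, especially in the higher-order regime $s \ge 1$, where the relative sizes of $s$ and $n/2$ force the analysis to be broken into cases and the elementary bound $\int_0^{r_0}\eta^{s-1}(\eta+1)^{-n/2}d\eta \le r_0^s/s$ is no longer always useful. A secondary delicate point, in Step 3, is verifying that the classical distributional identity for $u_k$ holds with vanishing exterior datum in the full generality $s > 0$; this should be traceable through \cite{MR3673669, AJS3}, but in the higher-order case it is essential that the finite-difference definition \eqref{nonlocop} be compatible with the pairing against $(-\Delta)^s\varphi$, which may require a separate Fubini argument on the kernel to avoid subtleties coming from the nonlocal tails.
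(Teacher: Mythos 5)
Your treatment of \eqref{LON1}, \eqref{LON2} and \eqref{LON3} is essentially sound and close in spirit to the paper's. For \eqref{LON1} the paper simply imports the pointwise bound $\mathcal{G}_s(x,y)\le C\,d^{s-\epsilon/2}(x)\,d^{s-\epsilon/2}(y)\,|x-y|^{\epsilon-n}$ from Lemma~3.3 of \cite{AJS3} and applies Fubini; your direct splitting at $r_0=1$ with the three regimes $n>2s$, $n=2s$, $n<2s$ is a workable self-contained substitute. Your \eqref{LON2} is the paper's argument (the worry about $r_0>1$ is unnecessary, since $\int_0^{T}\eta^{s-1}(\eta+1)^{-n/2}\,d\eta\le T^s/s$ for every $T>0$), and the mollification scheme for \eqref{LON3} is exactly what the paper does.

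The genuine gap is \eqref{LON4}. You dispose of it by appealing to ``standard interior regularity for $(-\Delta)^s$'', but no such off-the-shelf result is available in the generality $s>0$ of this proposition, where $(-\Delta)^s$ is the higher-order operator defined through the finite differences \eqref{nonlocop}. Even for $s\in(0,1)$ the paper has to work: it localizes with a cutoff $v^*=v\eta$, invokes the commutator identity $(-\Delta)^s v^*-\eta(-\Delta)^s v=g^*$ with $\|g^*\|_{L^2}\le C\|v\|_{W^{s,2}}$ from \cite{MR3641649}, applies the $W^{2s,2}$ estimate of Lemma~3.1 of \cite{MR2863859}, and only then passes to the limit along the mollified data. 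For $s\ge 1$ the proof is an induction on the integer part of $s$ that exploits the specific structure of the polyharmonic Green function, namely the recursion $-\Delta_x\mathcal{G}_s(x,y)=\mathcal{G}_{s-1}(x,y)-C\,P_{s-1}(x,y)$ (Lemma~3.1 of \cite{AJS3}) together with the observation that $P_{s-1}*f$ is smooth because $f$ is supported in $B_r$ and the quantity $[x,y]$ is bounded below there; this reduces $W^{2s,2}_{loc}$ regularity of $\mathcal{G}_s*f$ to $W^{2s-2,2}_{loc}$ regularity of $\mathcal{G}_{s-1}*f$ plus classical elliptic regularity for $-\Delta$. Without this inductive mechanism, or an equivalent substitute, your argument for \eqref{LON4} does not go through. (A minor additional slip: \eqref{LON1} and \eqref{LON2} give an $L^1$ bound and a bound near $\partial B_1$, not interior boundedness of $u$, so the phrase ``$u$ is bounded in a neighbourhood'' is also unjustified as stated.)
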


When~$f\in C^\alpha(B_1)$ for some~$\alpha\in(0,1)$,
Proposition~\ref{LONTANO} boils down to the main results of \cite{MR3673669}
and~\cite{AJS3}.

\begin{proof}[Proof of Proposition~\ref{LONTANO}] 
We recall the following useful estimate, see Lemma~3.3 in 
\cite{AJS3}:
for any~$\epsilon\in\left(0,\,\min\{n,s\}\right)$, and any~$\bar R$, $\bar r>0$,
$$ \frac1{\bar R^{n-2s}}\,\int_0^{\bar r/\bar R^2}\frac{\eta^{s-1}}{
\left(\eta+1\right)^{\frac{n}{2}}} \,d\eta\le
\frac{2}{s}\;\frac{\bar r^{s-(\epsilon/2)}}{\bar R^{n-\epsilon}},$$
and so, by~\eqref{GREEN} and~\eqref{GREEN2},
for every~$x$, $y\in B_1$,
$$ \mathcal{G}_s\left(x,y\right)\le
\frac{C\,d^{s-(\epsilon/2)}(x)\,d^{s-(\epsilon/2)}(y)}{|x-y|^{n-\epsilon}}
$$
for some~$C>0$. Hence, recalling~\eqref{DEF uG},
\begin{eqnarray*}
\int_{B_1} |u(x)|\,dx&\le& \int_{B_1}
\left(\int_{B_1} \mathcal{G}_s\left(x,y\right)\,|f(y)|\,dy\right)\,dx
\\ &\le& C \int_{B_1}
\left(\int_{B_1} \frac{|f(y)|}{|x-y|^{n-\epsilon}}\,dy\right)\,dx\\&=&
C \int_{B_1}
\left(\int_{B_1} \frac{|f(y)|}{|x-y|^{n-\epsilon}}\,dx\right)\,dy\\&=&
C \int_{B_1}|f(y)|\,dy,
\end{eqnarray*}
up to renaming~$C>0$ line after line, and this proves~\eqref{LON1}.

Now, if~$x\in B_1\setminus B_R$ and~$y\in B_r$, with~$0<r<R<1$, we have that
$$|x-y|\ge |x|-|y|\ge R-r$$
and accordingly
$$ r_0\left(x,y\right)\le
\frac{2d(x)}{(R-r)^2},$$
which in turn implies that
$$ \mathcal{G}_s\left(x,y\right)\le\frac{k(n,s)}{\left|x-y\right|^{n-2s}}\,
\int_0^{{2d(x)}/{(R-r)^2}} 
\frac{\eta^{s-1}}{\left(\eta+1\right)^{\frac{n}{2}}} \,d\eta,
\le {C\,d^s(x)},$$
for some~$C>0$.
As a consequence,
since~$f$ vanishes outside~$B_r$, we see that, for any~$x\in B_1\setminus B_R$,
\begin{eqnarray*}
|u(x)|\le
\int_{B_r} \mathcal{G}_s\left(x,y\right)\,|f(y)|\,dy\le C\,d^s(x)
\int_{B_r} |f(y)|\,dy,
\end{eqnarray*}
which proves~\eqref{LON2}.

Now, we fix~$\hat r\in(r,1)$ and consider a mollification of~$f$,
that we denote by~$f_j\in C^\infty_0(B_{\hat r})$, with~$f_j\to f$
in~$L^2(B_1)$ as~$j\to+\infty$. 
We also write~$\mathcal{G}_s * f$ as a short notation for the right hand side
of~\eqref{DEF uG}. Then, by \cite{MR3673669} and \cite{AJS3},
we know that~$u_j:=\mathcal{G}_s * f_j$ is a (locally smooth, hence distributional) solution of~$(-\Delta)^s u_j=f_j$.
Furthermore, if we set~$\tilde u_j:=u_j-u$ and~$\tilde f_j:=f_j-f$
we have that
$$ \tilde u_j=\mathcal{G}_s * (f_j-f)=\mathcal{G}_s * \tilde f_j,$$
and therefore, by~\eqref{LON1},
$$ \|\tilde u_j\|_{L^1(B_1)}\le C\,\|\tilde f_j\|_{L^1(B_1)},$$
which is infinitesimal as~$j\to+\infty$. This says that~$u_j\to u$
in~$L^1(B_1)$ as~$j\to+\infty$, and consequently, for any~$\varphi\in C^\infty_0(B_1)$,
\begin{eqnarray*}
&&\int_{B_1} u(x)\,(-\Delta)^s\varphi(x)\,dx=\lim_{j\to+\infty}
\int_{B_1} u_j(x)\,(-\Delta)^s\varphi(x)\,dx
\\&&\qquad=\lim_{j\to+\infty}
\int_{B_1} f_j(x)\,\varphi(x)\,dx=\int_{B_1} f(x)\,\varphi(x)\,dx,
\end{eqnarray*}
thus completing the proof of~\eqref{LON3}.

Now, to prove~\eqref{LON4}, we can suppose that~$s\in(0,+\infty)\setminus{\mathbb{N}}$,
since the case of integer~$s$ is classical, see e.g. \cite{MR1814364}.
First of all, we claim that
\begin{equation}\label{08}
{\mbox{\eqref{LON4} holds true for every~$s\in(0,1)$.}}
\end{equation}
For this, we first claim that
if~$g\in C^\infty(B_1)$ and~$v$ is a (locally smooth) solution of~$(-\Delta)^s v=g$
in~$B_1$, with~$v=0$ outside~$B_1$, then~$v\in W^{2s,2}_{loc}(B_1)$,
and, for any~$\rho\in(0,1)$, 
\begin{equation}\label{BIX}
\|v\|_{W^{2s,2}(B_\rho)}\le C_\rho\,\|g\|_{L^2(B_1)}.
\end{equation}
This claim can be seen as a localization of
Lemma~3.1 of~\cite{MR2863859},
or a quantification of the last claim in Theorem~1.3 of~\cite{MR3641649}.
To prove~\eqref{BIX}, we let~$R_-<R_+\in(\rho,1)$,
and consider~$\eta\in C^\infty_0(B_{R_+})$ with~$\eta=1$ in~$B_{R_-}$.
We let~$v^*:=v\eta$, and we recall formulas~(3.2), (3.3)
and~(A.5) in~\cite{MR3641649}, according to which
\begin{equation*}\begin{split}&
(-\Delta)^s v^*-\eta(-\Delta)^s v=g^* \quad{\mbox{ in }}{\mathbb{R}}^n,\\
{\mbox{with }}\quad&\| g^*\|_{L^2({\mathbb{R}}^n)}\le C\,\|v\|_{W^{s,2}({\mathbb{R}}^n)}
,\end{split}\end{equation*}
for some~$C>0$. 

Moreover, 
using a notation taken from \cite{MR3641649}
we denote by~$W^{s,2}_0(\overline{B_1})$
the space of functions in~$W^{s,2}({\mathbb{R}}^n)$ vanishing outside~$B_1$
and we consider the dual space~$
W^{-s,2}_0(\overline{B_1})$. We remark that if~$h\in L^2(B_1)$
we can naturally identify~$h$ as an element of~$
W^{-s,2}_0(\overline{B_1})$ by considering the action of~$h$
on any~$\varphi\in W^{s,2}_0(\overline{B_1})$ as defined by
$$ \int_{B_1} h(x)\,\varphi(x)\,dx.$$
With respect to this, we have that
\begin{equation}\label{DUE} \|h\|_{W^{-s,2}_0(\overline{B_1})}=\sup_{{\varphi\in
W^{s,2}_0(\overline{B_1})}\atop{\|\varphi\|_{W^{s,2}_0(\overline{B_1})}=1}}
\int_{B_1}h(x)\,\varphi(x)\,dx\le\|h\|_{L^2(B_1)}.\end{equation}
We notice also that
$$ \|v\|_{W^{s,2}({\mathbb{R}}^n)}\le C\,\| g\|_{W^{-s,2}(\overline{B_1})},$$
in light of Proposition~2.1 of~\cite{MR3641649}. This and~\eqref{DUE}
give that
$$ \|v\|_{W^{s,2}({\mathbb{R}}^n)}\le C\,\| g\|_{L^2(B_1)}.$$
Then, by Lemma~3.1 of~\cite{MR2863859}
(see in particular formula~(3.2) there, applied here with~$\lambda:=0$),
we obtain that
\begin{equation}\label{BIX0}
\begin{split}
\|v^*\|_{W^{2s,2}({\mathbb{R}}^n)}\,&\le C\,\|
\eta(-\Delta)^s v+
g^*\|_{L^2({\mathbb{R}}^n)}\\
&\le C\,\big( \| (-\Delta)^s v\|_{L^2(B_{R_+})}+\|g^*\|_{L^2({\mathbb{R}}^n)}\big)
\\&=C\,\big( \| g\|_{L^2(B_{R_+})}+\|g^*\|_{L^2({\mathbb{R}}^n)}\big)\\
&\le C\,\big(\| g\|_{L^2(B_{1})}+
\|v\|_{W^{s,2}({\mathbb{R}}^n)}\big)\\
&\le C\, \| g\|_{L^2(B_{1})}
,\end{split}\end{equation}
up to renaming~$C>0$ step by step.
On the other hand, since~$v^*=v$ in~$B_\rho$,
$$ \|v\|_{W^{2s,2}(B_\rho)}=
\|v^*\|_{W^{2s,2}(B_\rho)}\le \|v^*\|_{W^{2s,2}({\mathbb{R}}^n)}.$$
{F}rom this and~\eqref{BIX0}, we obtain \eqref{BIX}, as desired.

Now, we let~$f_j$, $\tilde f_j$, $u_j$ and~$\tilde u_j$
as above and make use
of~\eqref{BIX} to write
\begin{equation}\label{qwe89:BBA}
\begin{split}
&\|u_j\|_{W^{2s,2}(B_\rho)}\le C_\rho\,\|f_j\|_{L^2(B_1)}
\\ {\mbox{and }}\quad&
\|\tilde u_j\|_{W^{2s,2}(B_\rho)}\le C_\rho\,\|\tilde f_j\|_{L^2(B_1)}.
\end{split}
\end{equation}
As a consequence,
taking the limit as~$j\to+\infty$
we obtain that
$$ \|u\|_{W^{2s,2}(B_\rho)}\le C_\rho\,\|f\|_{L^2(B_1)},$$
that is~\eqref{LON4} in this case, namely the claim in~\eqref{08}.

Now, to prove~\eqref{LON4}, we argue by induction on
the integer part of~$s$. When the integer part of~$s$
is zero, the basis of the induction is warranted by~\eqref{08}.
Then, to perform the inductive step, given~$s\in(0,+\infty)\setminus{\mathbb{N}}$,
we suppose that~\eqref{LON4} holds true for~$s-1$,
namely
\begin{equation}\label{0lL:AN1}
\mathcal{G}_{s-1} * f
\in W^{2s-2,2}_{loc}(B_1).
\end{equation}
Then, following~\cite{AJS3},
it is convenient to introduce the notation
$$ [x,y]:=\sqrt{|x|^2|y|^2-2x\cdot y+1}$$
and consider
the auxiliary kernel given, for every~$x\ne y\in B_1$, by
\begin{equation}
\label{aux}
P_{s-1}(x,y):=\frac{(1-|x|^2)^{s-2}_+(1-|y|^2)^{s-1}_+(1-|x|^2|y|^2)}{[x,y]^n}.
\end{equation} 
We point out that if~$x\in B_r$ with~$r\in(0,1)$,
then
\begin{equation}
\label{klop}
[x,y]^2\ge|x|^2|y|^2-2|x|\,| y|+1=(1-|x|\,|y|)^2\ge(1-r)^2>0.
\end{equation}
Consequently, since~$f$ is supported in~$B_r$,
\begin{equation}\label{0lL:AN2}
P_{s-1}*f\in C^\infty({\mathbb{R}}^n).\end{equation}
Then,
we recall that
\begin{equation}\label{0lL:AN3}
-\Delta_x\mathcal{G}_s(x,y)=\mathcal{G}_{s-1}(x,y)-CP_{s-1}(x,y),\end{equation}
for some~$C\in{\mathbb{R}}$, see Lemma~3.1 in \cite{AJS3}.

As a consequence, in view of~\eqref{0lL:AN1}, \eqref{0lL:AN2}, \eqref{0lL:AN3},
we conclude that
$$ -\Delta (\mathcal{G}_s*f)= (-\Delta_x\mathcal{G}_s)*f
\in W^{2s-2,2}_{loc}(B_1).$$
This and the classical elliptic regularity theory (see e.g. \cite{MR1814364})
give that~$\mathcal{G}_s*f\in W^{2s,2}_{loc}(B_1)$, which
completes the inductive proof and establishes~\eqref{LON4}.
\end{proof}

\subsection{Solving $(-\Delta)^s u=f$ in $B_1$ for~$f$ H\"older continuous near $\partial B_1$}

Our goal is now
to extend the representation
results of \cite{MR3673669} and \cite{AJS3} to
the case in which the right hand side is not H\"older continuous
in the whole of the ball,
but merely in a neighborhood of the boundary.
This result is obtained here by superposing the
results in \cite{MR3673669} and \cite{AJS3}
with Proposition~\ref{LONTANO} here, taking advantage of the linear
structure of the problem.

\begin{proposition} \label{LEJOS}
Let~$f\in L^2(B_1)$.
Let~$\alpha$, $r\in(0,1)$ and assume that
\begin{equation}\label{CHlaIA}
f\in C^\alpha(B_1\setminus B_r).\end{equation}
In the notation of~\eqref{GREEN}, let
\begin{equation} \label{0olwsKA}
u(x):=
\begin{cases}
\displaystyle\int_{B_1} \mathcal{G}_s\left(x,y\right)\,f(y)\,dy & {\mbox{ if }}x\in B_1,\\
0&{\mbox{ if }}x\in{\mathbb{R}}^n\setminus B_1.
\end{cases}
\end{equation}
Then, in the notation of~\eqref{GREEN2}, we have that:
\begin{equation}\label{VIC2}
{\mbox{for every $R\in(r,1)$, }} \sup_{x\in B_1\setminus B_R}
d^{-s}(x)\,|u(x)|\le C_R\,\big(\|f\|_{L^1(B_1)}+
\|f\|_{L^\infty(B_1\setminus B_r)}\big)
,\end{equation}
\begin{equation}\label{VIC3}
{\mbox{$u$ satisfies }}(-\Delta)^s u=f{\mbox{ in }}B_1 {\mbox{ in the sense of
distributions,}} \end{equation}
and
\begin{equation}\label{VIC4} 
u\in W^{2s,2}_{loc}(B_1).
\end{equation}
Here above, 
$C>0$ is a constant depending on~$n$, $s$ and~$r$,
$C_R>0$ is a constant depending on~$n$, $s$, $r$ and~$R$
and $C_\rho>0$ is a constant depending on~$n$, $s$, $r$ and~$\rho$.
\end{proposition}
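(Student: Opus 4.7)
The plan is to exploit the linearity hinted at in the text by decomposing $f$ into a piece handled by Proposition~\ref{LONTANO} and a piece handled by the previously available H\"older theory of~\cite{MR3673669} and~\cite{AJS3}. Concretely, I would fix radii $r<r''<r'<R$ (where $R\in(r,1)$ is the one in \eqref{VIC2}) and a cutoff $\chi\in C^\infty_0(B_{r'})$ with $\chi\equiv 1$ on $B_{r''}$, then split
\[
f = f_1+f_2,\qquad f_1 := \chi f,\qquad f_2 := (1-\chi)f.
\]
Then $f_1\in L^2(B_1)$ is supported in $B_{r'}\Subset B_1$, while $f_2$ vanishes on $B_{r''}\supset B_r$ and, on $B_1\setminus B_{r''}$, is the product of a smooth function with $f\in C^\alpha(B_1\setminus B_r)$; in particular $f_2\in C^\alpha(\overline{B_1})$ with norms controlled by those of $f$ on the annulus, and $\|f_2\|_{L^\infty(B_1)}\le \|f\|_{L^\infty(B_1\setminus B_r)}$.

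Set $u_i(x):=\int_{B_1}\mathcal{G}_s(x,y)f_i(y)\,dy$ for $x\in B_1$ and $u_i\equiv 0$ outside, so that by \eqref{0olwsKA} and linearity of the integral, $u=u_1+u_2$. Proposition~\ref{LONTANO}, applied to $f_1$ (whose support lies in $B_{r'}$), delivers at once that $u_1\in L^1(B_1)\cap W^{2s,2}_{loc}(B_1)$, that $(-\Delta)^s u_1=f_1$ distributionally in $B_1$, and that $\sup_{B_1\setminus B_R}d^{-s}(x)|u_1(x)|\le C_R\|f_1\|_{L^1(B_1)}\le C_R\|f\|_{L^1(B_1)}$ (here one uses $R>r'$). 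For the second piece, the H\"older regularity $f_2\in C^\alpha(\overline{B_1})$ places us in the framework of the main representation results of~\cite{MR3673669} and~\cite{AJS3}, which yield that $u_2$ is a (locally smooth, hence distributional) solution of $(-\Delta)^s u_2=f_2$ in $B_1$, lies in $W^{2s,2}_{loc}(B_1)$, and satisfies a sharp boundary bound of the form $|u_2(x)|\le C\, d^s(x)\,\|f_2\|_{L^\infty(B_1)}$ in a neighborhood of $\partial B_1$.

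Summing the two contributions, distributional testing against any $\varphi\in C^\infty_0(B_1)$ gives
\[
\int u\,(-\Delta)^s\varphi = \int u_1\,(-\Delta)^s\varphi+\int u_2\,(-\Delta)^s\varphi = \int (f_1+f_2)\varphi = \int f\varphi,
\]
proving \eqref{VIC3}; while $u_1,u_2\in W^{2s,2}_{loc}(B_1)$ yields \eqref{VIC4}, and the two boundary estimates combine to \eqref{VIC2} with the stated dependence $\|f\|_{L^1(B_1)}+\|f\|_{L^\infty(B_1\setminus B_r)}$, using $\|f_2\|_{L^\infty(B_1)}\le\|f\|_{L^\infty(B_1\setminus B_r)}$.

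The only delicate point I anticipate is the $d^s$ boundary estimate for $u_2$ in terms of $\|f_2\|_{L^\infty}$ (rather than $\|f_2\|_{C^\alpha}$). For $s\in(0,1)$ this is the classical barrier/Green-function estimate, but for the higher-order range $s>1$ one has to invoke the sharper form of the boundary asymptotics in~\cite{AJS3} (or iterate via the identity \eqref{0lL:AN3} coupling $\mathcal{G}_s$, $\mathcal{G}_{s-1}$, and the smooth auxiliary kernel $P_{s-1}$ introduced in \eqref{aux}, which near $\partial B_1$ behaves like $d^{s-1}$). Apart from this bookkeeping, the argument is a clean superposition of Proposition~\ref{LONTANO} with the cited prior results, and the dependencies of the constants $C_R$ on $n$, $s$, $r$, $R$ are inherited directly from those sources together with the choice of $\chi$.
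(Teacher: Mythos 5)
Your proposal is correct and follows essentially the same route as the paper: multiply $f$ by a cutoff supported in an intermediate ball to split it into a compactly supported $L^2$ piece (handled by Proposition~\ref{LONTANO}) and a globally H\"older piece (handled by Theorem~1.1 of~\cite{AJS3}), then sum. The ``delicate point'' you flag is not actually an issue, since the cited result already gives the boundary bound for $u_2$ in terms of $\|f_2\|_{L^\infty(B_1)}$, which is exactly what the paper invokes.
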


\begin{proof} We take~$r_1\in(r,1)$ and~$\eta\in C^\infty_0(B_{r_1})$
with~$\eta=1$ in~$B_r$.
Let also
$$f_1:=f\eta\qquad{\mbox{and}}\qquad f_2:=f-f_1.$$
We observe that~$f_1\in L^2(B_1)$, and that~$f_1=0$ outside~$B_{r_1}$.
Therefore, we are in the position of applying Proposition~\ref{LONTANO}
and find a function~$u_1$ (obtained by convolving~$\mathcal{G}_s$
against~$f_1$) such that
\begin{eqnarray}
&& \label{XLON2}
{\mbox{for every $R\in(r_1,1)$, }} \sup_{x\in B_1\setminus B_R}
d^{-s}(x)\,|u_1(x)|\le C_R\,\|f_1\|_{L^1(B_1)},\\ 
\label{XLON3}
&& {\mbox{$u_1$ satisfies }}(-\Delta)^s u_1=f_1{\mbox{ in }}B_1 {\mbox{ in the sense of
distributions,}} 
\\
\label{XLON4}{\mbox{and }}
&& u_1\in W^{2s,2}_{loc}(B_1).
\end{eqnarray}
On the other hand, we have that~$f_2=f(1-\eta)$
vanishes outside~$B_1\setminus B_r$
and it is H\"older continuous. Accordingly,
we can apply Theorem~1.1 of~\cite{AJS3}
and find a function~$u_2$ (obtained by convolving~$\mathcal{G}_s$
against~$f_2$) such that
\begin{eqnarray}
&& \label{YLON2}
{\mbox{for every $R\in(r_1,1)$, }} \sup_{x\in B_1\setminus B_R}
d^{-s}(x)\,|u_2(x)|\le C_R\,\|f_2\|_{L^\infty(B_1)},\\ 
\label{YLON3}
&& {\mbox{$u_2$ satisfies }}(-\Delta)^s u_2=f_2{\mbox{ in }}B_1 {\mbox{ in the sense of
distributions,}} 
\\
\label{YLON4}
{\mbox{and }}&& u_2\in C^{2s+\alpha}_{loc}(B_1).
\end{eqnarray}
Then, $f=f_1+f_2$, and thus,
in view of~\eqref{0olwsKA}, we have that~$
u=u_1+u_2$. Also, $u$ satisfies~\eqref{VIC2},
thanks to~\eqref{XLON2} and~\eqref{YLON2},
\eqref{VIC3},
thanks to~\eqref{XLON3} and~\eqref{YLON3}, and~\eqref{VIC4},
thanks to~\eqref{XLON4} and~\eqref{YLON4}.
\end{proof}

\section{Existence and regularity for the first eigenfunction of the higher order fractional Laplacian}\label{SEC:eigef}

The goal of these pages is
to study the boundary behaviour of the first Dirichlet
eigenfunction of higher order fractional equations.

For this, writing~$s=m+\sigma$, with~$m\in{\mathbb{N}}$ and~$\sigma\in(0,1)$,
we define the energy space 
\begin{equation}
\label{energy}
H_0^s\left(B_1\right):=\left\{u\in H^s\left(\mathbb{R}^n\right);\; u=0 \;\text{in}\; \mathbb{R}^n\setminus B_1\right\},
\end{equation}
endowed with the Hilbert norm
\begin{equation}
\label{energynorm} \left\|u\right\|_{H_0^s\left(B_1\right)}:=
\left(\sum_{\left|\alpha\right|\leq m} {\left\|\partial^\alpha u
\right\|^2_{L^2\left(B_1\right)}}+\mathcal{E}_s
\left(u,u\right)\right)^{\frac{1}{2}},\end{equation}
where
%%		$$\mathcal{E}_s\left(u,v\right):=\begin{cases}
%%		\mathcal{E}_{\sigma}\left(\Delta^{\frac{m}{2}}u,\Delta^{\frac{m}{2}}v\right),\hspace{0,4 cm}\text{if}\hspace{0.1 cm}m\hspace{0.1 cm}\text{is even} \\
%%		\sum_{k=1}^N {\mathcal{E}_{\sigma}\left(\partial_k\Delta^{\frac{m-1}{2}}u,\partial_k\Delta^{\frac{m-1}{2}}v\right)},\hspace{0,4 cm}\text{if}\hspace{0.1 cm}m\hspace{0.1 cm}\text{is odd}.
%%		\end{cases}$$
%%		In both cases, it reduces again to
\begin{equation}\label{ENstut}
\mathcal{E}_s\left(u,v\right)=\int_{\mathbb{R}^n} {
\left|\xi\right|^{2s}\mathcal{F}u\left(\xi\right)\overline{\mathcal{F}v\left(\xi\right)} \,
d\xi},\end{equation}
being~$\mathcal{F}$ the Fourier transform and using the notation~$\overline{z}$ to denote the complex conjugated
of a complex number~$z$.

In this setting, we consider~$u\in H^s_0(B_1)$ to be 
such that
\begin{equation}\label{dirfun}\begin{cases}
\left(-\Delta\right)^s u=\lambda_1 u &\quad\text{ in }B_1, \\
u=0 &\quad\text{ in } \mathbb{R}^n\setminus\overline{B_1},
\end{cases}\end{equation}
for every~$s>0$, with~$\lambda_1$ as small as possible.

The existence of solutions of \eqref{dirfun} is ensured
via variational techniques, as stated in the following result:

\begin{lemma}\label{VARIA}
The functional~$\mathcal{E}_s\left(u,u\right)$
attains its minimum~$\lambda_1$ on the functions in~$H^s_0(B_1)$
with unit norm in~$L^2(B_1)$.

The minimizer satisfies~\eqref{dirfun}.

In addition, $\lambda_1>0$.

\begin{proof} The proof is based on the direct method
in the calculus of variations. We provide some details for completeness.
Let~$s=m+\sigma$,
with~$m\in\mathbb{N}$ and~$\sigma\in(0,1)$.
Let us consider a minimizing sequence~$u_j\in H^s_0(B_1)\subseteq H^m({\mathbb{R}}^n)$
such that~$\|u_j\|_{L^2(B_1)}=1$ and
$$ \lim_{j\to+\infty}\mathcal{E}_s\left(u_j,u_j\right)=\inf_{{u\in H^s_0(B_1)}\atop{
\|u\|_{L^2(B_1)}=1}}\mathcal{E}_s\left(u,u\right).$$
In particular, we have that~$u_j$ is bounded in~$H^s_0(B_1)$ uniformly in~$j$,
so, up to a subsequence, it converges to some~$u_\star$
weakly in~$H^s_0(B_1)$ and strongly in~$L^2(B_1)$
as~$j\to+\infty$.

The weak lower semicontinuity of the seminorm $\mathcal{E}_s\left(\cdot,\cdot\right)$
then implies that~$u_\star$ is the desired minimizer.

Then, given~$\phi\in C^\infty_0(B_1)$, we have that
$$ \mathcal{E}_s\left(u_\star+\epsilon\phi,u_\star+\epsilon\phi\right)
\ge\mathcal{E}_s\left(u_\star,u_\star\right),$$
for every~$\epsilon\in{\mathbb{R}}$, and this gives that~\eqref{dirfun} is
satisfied in the sense of distributions,
and also in the classical sense by the elliptic regularity theory.

Finally, we have that~$\mathcal{E}_s\left(u_\star,u_\star\right)>0$,
since~$u_\star$ (and thus~${\mathcal{F}}u_\star$) does not vanish identically.
Consequently,
$$ \lambda_1=\frac{ \mathcal{E}_s
\left(u_\star,u_\star\right)}{\|u_\star\|_{L^2(B_1)}^2}=
\mathcal{E}_s\left(u_\star,u_\star\right)>0,$$
as desired.
\end{proof}
\end{lemma}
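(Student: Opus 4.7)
The plan is to apply the direct method in the calculus of variations. First, since $\mathcal{E}_s(u,u)\ge 0$ by the Fourier representation \eqref{ENstut}, the infimum $\lambda_1$ is a finite nonnegative number, and I would pick a minimizing sequence $u_j\in H^s_0(B_1)$ with $\|u_j\|_{L^2(B_1)}=1$ and $\mathcal{E}_s(u_j,u_j)\to\lambda_1$. The uniform bound on $\mathcal{E}_s(u_j,u_j)$ together with $\|u_j\|_{L^2}=1$ gives a uniform bound on the full norm in \eqref{energynorm}: by Plancherel, for any multi-index $\alpha$ with $|\alpha|\le m<s$ one has $|\xi|^{2|\alpha|}\le 1+|\xi|^{2s}$, so
\[\|\partial^\alpha u_j\|_{L^2}^2=\int_{\mathbb{R}^n}|\xi|^{2|\alpha|}|\mathcal{F}u_j(\xi)|^2\,d\xi\le\int_{\mathbb{R}^n}(1+|\xi|^{2s})|\mathcal{F}u_j(\xi)|^2\,d\xi=\|u_j\|_{L^2}^2+\mathcal{E}_s(u_j,u_j).\]

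Next, since each $u_j$ is supported in the fixed compact set $\overline{B_1}$ and uniformly bounded in $H^s$, a Rellich--Kondrachov type compact embedding yields, up to a subsequence, $u_j\rightharpoonup u_\star$ weakly in $H^s_0(B_1)$ and $u_j\to u_\star$ strongly in $L^2(B_1)$, with $\|u_\star\|_{L^2(B_1)}=1$. Weak lower semicontinuity of $\mathcal{E}_s$, immediate from Fatou applied to $|\xi|^{2s}|\mathcal{F}u_j|^2$ after passing to weak $L^2$-convergence of $|\xi|^s\mathcal{F}u_j$, gives $\mathcal{E}_s(u_\star,u_\star)\le\liminf_j\mathcal{E}_s(u_j,u_j)=\lambda_1$, so $u_\star$ attains the infimum.

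To obtain \eqref{dirfun}, I would compute the first variation: for $\phi\in C^\infty_0(B_1)$, the fact that $\epsilon=0$ is a minimum of $\mathcal{E}_s(u_\star+\epsilon\phi,u_\star+\epsilon\phi)-\lambda_1\|u_\star+\epsilon\phi\|_{L^2}^2$ forces $\mathcal{E}_s(u_\star,\phi)=\lambda_1\int_{B_1}u_\star\phi\,dx$, which is exactly $(-\Delta)^s u_\star=\lambda_1 u_\star$ distributionally in $B_1$; the equation is promoted to the classical sense by elliptic regularity. Finally, if $\lambda_1=0$ then $|\xi|^{2s}|\mathcal{F}u_\star(\xi)|^2\equiv 0$, so $\mathcal{F}u_\star$ is supported at the origin and $u_\star$ is a polynomial; being compactly supported in $\overline{B_1}$, it must vanish identically, contradicting $\|u_\star\|_{L^2(B_1)}=1$, and hence $\lambda_1>0$.

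The main technical point to be careful about is the compact embedding $H^s_0(B_1)\hookrightarrow L^2(B_1)$ for \emph{all} $s>0$, including the higher-order regime $s>1$ relevant to this paper. This is not an issue of the regularity of $\partial B_1$ but follows cleanly from the hard-coded compact support condition in \eqref{energy} combined with the $H^s$-bound, via an equicontinuity argument on the Fourier side (or, equivalently, by noting that Fourier multiplication by $(1+|\xi|^2)^{-s/2}$ sends the bounded set of $\mathcal{F}u_j$'s into a precompact family in $L^2_{\mathrm{loc}}$).
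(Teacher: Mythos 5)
Your proposal is correct and follows essentially the same route as the paper: the direct method with a minimizing sequence, compactness of $H^s_0(B_1)\hookrightarrow L^2(B_1)$, weak lower semicontinuity of $\mathcal{E}_s$, a first-variation computation for the Euler--Lagrange equation, and a Fourier-side argument for $\lambda_1>0$. You actually supply more detail than the paper at two points where its write-up is terse or slightly imprecise — the constrained first variation (the paper's displayed inequality ignores the unit-norm constraint, whereas your Rayleigh-quotient/Lagrange-multiplier formulation is the correct one) and the justification that $\mathcal{E}_s(u_\star,u_\star)=0$ forces $u_\star\equiv 0$ — so no gaps.
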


Our goal is now to apply Proposition~\ref{LEJOS}
to solutions of~\eqref{dirfun}, taking~$f:=\lambda u$. To this end,
we have to check that condition~\eqref{CHlaIA}
is satisfied, namely that solutions of~\eqref{dirfun} are
H\"older continuous in $B_1\setminus B_r$, for any $0<r<1$.

To this aim, we prove that polyharmonic operators of any order~$s>0$ 
always admit
a first eigenfunction in the ball which does not change sign
and which is radially symmetric. For this, we start discussing
the sign property:

\begin{lemma}\label{ikAHHPKAK}
There exists a nontrivial solution of~\eqref{dirfun} that does not change sign.

\begin{proof} We exploit a method explained in detail in
Section~3.1 of~\cite{MR2667016}. As a matter of fact,
when~$s\in{\mathbb{N}}$, the desired result is exactly Theorem~3.7
in~\cite{MR2667016}.

Let~$u$ be as in
Lemma~\ref{VARIA}.
If either~$u\ge0$ or~$u\le0$, then the desired result is proved.
Hence, we argue by contradiction,
assuming that~$u$ attains strictly positive and strictly negative values.
We define
$$ {\mathcal{K}}:=\{ w:{\mathbb{R}}^n\to{\mathbb{R}} {\mbox{ s.t. 
$\mathcal{E}_s\left(w,w\right)<+\infty$, and
$w\ge0$ in $B_1$}} \}.$$
Also, we set
$$ {\mathcal{K}}^\star
:=\{ w\in H^s_0(B_1) {\mbox{ s.t. }}
\mathcal{E}_s\left(w,v\right)\le0
{\mbox{ for all }}v\in {\mathcal{K}}\}.$$
We claim that
\begin{equation}\label{kstar po}
{\mbox{if~$w\in{\mathcal{K}}^\star$, then $w\le0$}}.
\end{equation}
To prove this, we recall the notation in~\eqref{GREEN},
take~$\phi\in C^\infty_0(B_1)\cap{\mathcal{K}}$,
and let
$$ v_\phi(x):=
\begin{cases}
\displaystyle\int_{B_1} \mathcal{G}_s\left(x,y\right)\,\phi(y)\,dy & {\mbox{ if }}x\in B_1,\\
0&{\mbox{ if }}x\in{\mathbb{R}}^n\setminus B_1.
\end{cases}
$$
Then~$v_\phi\in{\mathcal{K}}$ and it satisfies~$
\left(-\Delta\right)^s v_\phi=\phi$ in~$B_1$, thanks
to~\cite{MR3673669} or~\cite{AJS3}.

Consequently, we can write, for every~$x\in B_1$,
$$ \phi(x)={\mathcal{F}}^{-1}(|\xi|^{2s}{\mathcal{F}}v_\phi)(x).$$
Hence, for every~$w\in{\mathcal{K}}^\star$,
\begin{eqnarray*}
0&\ge&\mathcal{E}_s\left(w,v_\phi\right)\\
&=&
\int_{\mathbb{R}^n} 
\left|\xi\right|^{2s}\mathcal{F}v_\phi\left(\xi\right)
\overline{\mathcal{F}w\left(\xi\right)} \,d\xi
\\&=&
\int_{\mathbb{R}^n} {\mathcal{F}}^{-1}(
\left|\xi\right|^{2s}\mathcal{F}v_\phi)(x)\,
w\left(x\right) \,dx\\&=&
\int_{B_1} {\mathcal{F}}^{-1}(
\left|\xi\right|^{2s}\mathcal{F}v_\phi)(x)\,
w\left(x\right) \,dx\\&=&
\int_{B_1}\phi(x)\,w\left(x\right) \,dx.
\end{eqnarray*}
Since~$\phi$ is arbitrary and nonnegative, this gives that~$w\le0$,
and this establishes~\eqref{kstar po}.

Furthermore, by Theorem~3.4 in~\cite{MR2667016}, we can write
$$ u=u_1+u_2,$$
with~$u_1\in {\mathcal{K}}\setminus\{0\}$,
$u_2\in{\mathcal{K}}^\star\setminus\{0\}$, and~$\mathcal{E}_s\left(u_1,u_2\right)=0$.

We observe that
$$ \mathcal{E}_s\left(u_1-u_2,u_1-u_2\right)=
\mathcal{E}_s\left(u_1,u_1\right)+\mathcal{E}_s\left(u_2,u_2\right)
+2\mathcal{E}_s\left(u_1,u_2\right)=\mathcal{E}_s\left(u_1,u_1\right)+\mathcal{E}_s\left(u_2,u_2\right).$$
In the same way,
$$ \mathcal{E}_s\left(u,u\right)=
\mathcal{E}_s\left(u_1+u_2,u_1+u_2\right)=
\mathcal{E}_s\left(u_1,u_1\right)+\mathcal{E}_s\left(u_2,u_2\right),$$
and therefore
\begin{equation}\label{7yhbAxcvTFV}
\mathcal{E}_s\left(u_1-u_2,u_1-u_2\right)=\mathcal{E}_s\left(u,u\right).
\end{equation}
On the other hand,
\begin{eqnarray*}
\| u_1-u_2\|_{L^2(B_1)}^2-\| u\|_{L^2(B_1)}^2&=&\| u_1-u_2\|_{L^2(B_1)}^2-\| u_1+u_2\|_{L^2(B_1)}^2\\
&=& -4\int_{B_1} u_1(x)\,u_2(x)\,dx.
\end{eqnarray*}
As a consequence, since~$u_2\le0$ in view of~\eqref{kstar po},
we conclude that
$$ \| u_1-u_2\|_{L^2(B_1)}^2-\| u\|_{L^2(B_1)}^2\ge0.$$
This and~\eqref{7yhbAxcvTFV} say that the function~$u_1-u_2$
is also a minimizer for the variational problem in Lemma~\ref{VARIA}.
Since now~$u_1-u_2\ge0$, the desired result follows.
\end{proof}
\end{lemma}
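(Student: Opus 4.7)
The plan is to take an arbitrary $L^2$-normalized minimizer $u$ from Lemma~\ref{VARIA} and, if it changes sign in $B_1$, replace it by another $L^2$-normalized minimizer that does not. The mechanism is the Moreau-type decomposition of $u$ with respect to the convex cone of nonnegative functions, carried out in the energy inner product $\mathcal{E}_s$; this is the higher-order analogue of the dual cone method developed for polyharmonic problems in Section~3.1 of the Gazzola--Grunau--Sweers monograph.

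Concretely, I would introduce the closed convex cone
\[
\mathcal{K}:=\{w\in H^s_0(B_1)\;:\; w\ge 0 \text{ a.e.\ in }B_1\}
\]
and its polar cone in the energy geometry,
\[
\mathcal{K}^{\star}:=\{w\in H^s_0(B_1)\;:\; \mathcal{E}_s(w,v)\le 0 \text{ for every }v\in\mathcal{K}\}.
\]
Since $\mathcal{E}_s$ is equivalent to the square of the Hilbert norm \eqref{energynorm} on $H^s_0(B_1)$ (Poincar\'e-type inequality, using $\lambda_1>0$ from Lemma~\ref{VARIA}), a standard projection argument in the Hilbert space $(H^s_0(B_1),\mathcal{E}_s)$ yields a unique decomposition $u=u_1+u_2$ with $u_1\in\mathcal{K}$, $u_2\in\mathcal{K}^{\star}$ and $\mathcal{E}_s(u_1,u_2)=0$.

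The key step (which I expect to be the main obstacle) is the sign property
\begin{equation*}
w\in\mathcal{K}^{\star}\;\Longrightarrow\; w\le 0 \text{ a.e.\ in }B_1.
\end{equation*}
To establish it I would use the positivity of the polyharmonic Green kernel $\mathcal{G}_s$ from \eqref{GREEN}: for any test density $\phi\in C^\infty_0(B_1)$ with $\phi\ge 0$, the convolution
\[
v_\phi(x):=\int_{B_1}\mathcal{G}_s(x,y)\,\phi(y)\,dy
\]
belongs to $\mathcal{K}$ and satisfies $(-\Delta)^s v_\phi=\phi$ in $B_1$ thanks to Proposition~\ref{LEJOS}. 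Testing the definition of $\mathcal{K}^{\star}$ against $v_\phi$, using the Fourier representation \eqref{ENstut} of $\mathcal{E}_s$ and the identity $\phi=\mathcal{F}^{-1}(|\xi|^{2s}\mathcal{F}v_\phi)$, one obtains
\[
0\ge\mathcal{E}_s(w,v_\phi)=\int_{B_1}\phi(x)\,w(x)\,dx,
\]
and the arbitrariness of $\phi\ge 0$ forces $w\le 0$ in $B_1$. Here the positivity of $\mathcal{G}_s$ built into its integral representation is essential, and this is exactly where higher-order polyharmonic difficulties usually appear; in our situation they are neutralised precisely because Proposition~\ref{LEJOS} gives a nonnegative Green representation in the ball.

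With this sign information in hand, the conclusion is a two-line computation. Because $\mathcal{E}_s(u_1,u_2)=0$,
\[
\mathcal{E}_s(u_1-u_2,u_1-u_2)=\mathcal{E}_s(u_1,u_1)+\mathcal{E}_s(u_2,u_2)=\mathcal{E}_s(u,u),
\]
while, since $u_1\ge 0$ and $u_2\le 0$ in $B_1$,
\[
\|u_1-u_2\|_{L^2(B_1)}^{2}-\|u\|_{L^2(B_1)}^{2}=-4\int_{B_1}u_1 u_2\,dx\ge 0.
\]
Thus the Rayleigh quotient of $u_1-u_2$ does not exceed that of $u$, so $\tilde u:=u_1-u_2$, suitably $L^2$-normalised, is again a minimizer in Lemma~\ref{VARIA} and therefore solves \eqref{dirfun}; by construction $\tilde u\ge 0$ in $B_1$ and is nontrivial because $u$ is, which proves the lemma.
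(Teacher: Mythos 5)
Your proposal is correct and follows essentially the same route as the paper: the Moreau dual-cone decomposition $u=u_1+u_2$ in the energy inner product $\mathcal{E}_s$, the sign property of $\mathcal{K}^{\star}$ established via the positivity of the Green kernel $\mathcal{G}_s$ and testing against $v_\phi$, and the comparison of Rayleigh quotients of $u_1-u_2$ and $u$. The only cosmetic differences are that the paper takes $\mathcal{K}$ to be the cone of all finite-energy functions nonnegative in $B_1$ (not just those in $H^s_0(B_1)$) and invokes Theorem~3.4 of the Gazzola--Grunau--Sweers monograph for the decomposition rather than a direct projection argument.
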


Now, we define the spherical mean\index{spherical mean} of a function~$v$ by
$$ v_\sharp(x):=
\frac{1}{\left|\mathbb{S}^{n-1}\right|}
\int_{\mathbb{S}^{n-1}} v({\mathcal{R}}_\omega\,x)
\,d{\mathcal{H}}^{n-1}(\omega)
$$
where~${\mathcal{R}}_\omega$ is the rotation corresponding to the solid angle~$\omega
\in{\mathbb{S}^{n-1}}$, ${\mathcal{H}}^{n-1}$ is the standard
Hausdorff measure, and~$\left|\mathbb{S}^{n-1}\right|=
{\mathcal{H}}^{n-1}(\mathbb{S}^{n-1})$.
Notice that~$v_\sharp(x)=v_\sharp({\mathcal{R}}_\varpi x)$
for any~$\varpi \in\mathbb{S}^{n-1}$, that is~$v_\sharp$
is rotationally invariant.

Then, we have:

\begin{lemma}
\label{lapsfercom}
Any positive power of the Laplacian commutes
with the spherical mean, that is
$$ \big( (-\Delta)^s v\big)_\sharp(x)=(-\Delta)^s v_\sharp(x).$$
\begin{proof} By density,
we prove the claim for a function~$v$ in the
Schwartz space of smooth and rapidly decreasing functions.
In this setting, writing~${\mathcal{R}}_\omega^T$
to denote the transpose of the rotation~${\mathcal{R}}_\omega$,
and changing variable~$\eta:={\mathcal{R}}_\omega^T\,\xi$,
we have that
\begin{equation}\label{RFA2}
\begin{split} (-\Delta)^s v({\mathcal{R}}_\omega\,x)\,&=
\int_{{\mathbb{R}}^n} |\xi|^{2s} {\mathcal{F}}v(\xi)\,
e^{2\pi i {\mathcal{R}}_\omega\,x\cdot\xi}\,d\xi\\ &=
\int_{{\mathbb{R}}^n} |\xi|^{2s} {\mathcal{F}}v(\xi)\,
e^{2\pi i x\cdot{\mathcal{R}}_\omega^T\,\xi}\,d\xi\\
&=
\int_{{\mathbb{R}}^n} |\eta|^{2s}
{\mathcal{F}}v({\mathcal{R}}_\omega\,\eta)\,
e^{2\pi i x\cdot\eta}\,d\eta.
\end{split}\end{equation}
On the other hand, using the substitution~$y:={\mathcal{R}}_\omega^T\,x$,
\begin{eqnarray*}
{\mathcal{F}}v({\mathcal{R}}_\omega\,\eta)
&=&\int_{{\mathbb{R}}^n} v(x)\,
e^{-2\pi i x\cdot{\mathcal{R}}_\omega\,\eta}\,dx\\
&=& \int_{{\mathbb{R}}^n} v(x)\,
e^{-2\pi i {\mathcal{R}}_\omega^T\,x\cdot\eta}\,dx\\
&=& \int_{{\mathbb{R}}^n} v({\mathcal{R}}_\omega\,y)\,
e^{-2\pi i y\cdot\eta}\,dy,
\end{eqnarray*}
and therefore, recalling~\eqref{RFA2},
$$ (-\Delta)^s v({\mathcal{R}}_\omega\,x)=
\iint_{{\mathbb{R}}^n\times{\mathbb{R}}^n} |\eta|^{2s}
v({\mathcal{R}}_\omega\,y)\,
e^{2\pi i (x-y)\cdot\eta}\,dy\,d\eta.$$
As a consequence,
\begin{eqnarray*}
\big( (-\Delta)^s v\big)_\sharp(x)&=&
\frac{1}{\left|\mathbb{S}^{n-1}\right|}
\int_{\mathbb{S}^{n-1}} (-\Delta)^s v({\mathcal{R}}_\omega\,x)
\,d{\mathcal{H}}^{n-1}(\omega)
\\ &=&
\frac{1}{\left|\mathbb{S}^{n-1}\right|}
\iiint_{\mathbb{S}^{n-1}\times{\mathbb{R}}^n\times{\mathbb{R}}^n} 
|\eta|^{2s}
v({\mathcal{R}}_\omega\,y)\,
e^{2\pi i (x-y)\cdot\eta}
\,d{\mathcal{H}}^{n-1}(\omega)\,dy\,d\eta\\
&=& \iint_{{\mathbb{R}}^n\times{\mathbb{R}}^n} 
|\eta|^{2s}
v_\sharp(y)\,
e^{2\pi i (x-y)\cdot\eta}\,dy\,d\eta\\&=&
\int_{{\mathbb{R}}^n} 
|\eta|^{2s} {\mathcal{F}} (v_\sharp)(\eta)\,
e^{2\pi i x\cdot\eta}\,d\eta\\&=&(-\Delta)^s v_\sharp(x),
\end{eqnarray*}
as desired.
\end{proof}
\end{lemma}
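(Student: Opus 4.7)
The plan is to reduce the statement to the single fact that the fractional Laplacian commutes with pullback by rotations, and then to exploit linearity together with Fubini's theorem to interchange $(-\Delta)^s$ with the averaging integral over $\mathbb{S}^{n-1}$ that defines the spherical mean. Throughout I will work on the Schwartz class, so that all integrals converge absolutely and Fubini applies without comment; the general case follows by density, since both sides depend continuously on $v$ in a suitable topology (as already invoked in the statement).

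First I would establish the rotational covariance of $(-\Delta)^s$: for every $\omega\in\mathbb{S}^{n-1}$ and every Schwartz function $v$,
\begin{equation}\label{PLAN:rot}
(-\Delta)^s\bigl(v\circ \mathcal{R}_\omega\bigr)(x)=\bigl((-\Delta)^s v\bigr)(\mathcal{R}_\omega x).
\end{equation}
This is essentially the computation already carried out in the excerpt leading to~\eqref{RFA2}: writing $w(x):=v(\mathcal{R}_\omega x)$, the change of variable $\eta:=\mathcal{R}_\omega^T\xi$ (which preserves $|\xi|$ and the Lebesgue measure) yields $\mathcal{F}w(\xi)=\mathcal{F}v(\mathcal{R}_\omega\xi)$, and then
\begin{equation*}
(-\Delta)^s w(x)=\int_{\mathbb{R}^n}|\xi|^{2s}\mathcal{F}v(\mathcal{R}_\omega\xi)\,e^{2\pi i x\cdot\xi}\,d\xi=\int_{\mathbb{R}^n}|\eta|^{2s}\mathcal{F}v(\eta)\,e^{2\pi i\mathcal{R}_\omega x\cdot\eta}\,d\eta,
\end{equation*}
which is exactly the right-hand side of~\eqref{PLAN:rot}.

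With~\eqref{PLAN:rot} in hand, the rest is a linear averaging argument. By definition of the spherical mean,
\begin{equation*}
(-\Delta)^s v_\sharp(x)=(-\Delta)^s\left(\frac{1}{|\mathbb{S}^{n-1}|}\int_{\mathbb{S}^{n-1}}v(\mathcal{R}_\omega\,\cdot)\,d\mathcal{H}^{n-1}(\omega)\right)(x),
\end{equation*}
and since $(-\Delta)^s$ is a linear operator which (acting on Schwartz functions, via the Fourier multiplier $|\xi|^{2s}$) may be interchanged with the finite measure integration in $\omega$ by Fubini's theorem, this equals
\begin{equation*}
\frac{1}{|\mathbb{S}^{n-1}|}\int_{\mathbb{S}^{n-1}}(-\Delta)^s\bigl(v\circ\mathcal{R}_\omega\bigr)(x)\,d\mathcal{H}^{n-1}(\omega)=\frac{1}{|\mathbb{S}^{n-1}|}\int_{\mathbb{S}^{n-1}}\bigl((-\Delta)^s v\bigr)(\mathcal{R}_\omega x)\,d\mathcal{H}^{n-1}(\omega),
\end{equation*}
where the last equality uses~\eqref{PLAN:rot}. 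The right-hand side is precisely $\bigl((-\Delta)^s v\bigr)_\sharp(x)$, which concludes the proof.

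The only mildly delicate step is the justification of the interchange of $(-\Delta)^s$ with the spherical integral; on Schwartz functions this is immediate from the Fourier representation and Fubini, and the general case follows by density, so no genuine obstacle arises.
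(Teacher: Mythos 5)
Your proposal is correct and follows essentially the same route as the paper: both arguments rest on the Fourier multiplier representation of $(-\Delta)^s$, the invariance of $|\xi|^{2s}$ and of Lebesgue measure under rotations, and Fubini's theorem to interchange the spherical average with the operator. The only difference is presentational — you isolate the rotational covariance $(-\Delta)^s(v\circ\mathcal{R}_\omega)=((-\Delta)^s v)\circ\mathcal{R}_\omega$ as a separate step and then average, whereas the paper carries out the same computation in one chain of iterated integrals.
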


It is also useful to observe that the spherical mean is compatible with the energy bounds.
In particular we have the following observation:

\begin{lemma}
We have that
\begin{equation}\label{ENblaoe1}
\mathcal{E}_s\left(v_\sharp,v_\sharp\right)\le
\mathcal{E}_s\left(v,v\right).\end{equation}
Moreover,
\begin{equation}\label{ENblaoe2}
{\mbox{if~$v\in H^s_0(B_1)$, then so does~$v_\sharp$.}}\end{equation}

\begin{proof} We see that
\begin{eqnarray*}
{\mathcal{F}}(v_\sharp)(\xi)&=&
\int_{\mathbb{R}^n} v_\sharp(x)\,e^{-2\pi ix\cdot\xi}\,dx\\
&=&
\frac{1}{\left|\mathbb{S}^{n-1}\right|}
\iint_{\mathbb{S}^{n-1}\times{\mathbb{R}^n}} 
v({\mathcal{R}}_\omega\,x)\,e^{-2\pi ix\cdot\xi}
\,d{\mathcal{H}}^{n-1}(\omega)\,dx
\end{eqnarray*}
and therefore, taking the complex conjugated,
$$ \overline{ {\mathcal{F}}(v_\sharp)(\xi) }=
\frac{1}{\left|\mathbb{S}^{n-1}\right|}
\iint_{\mathbb{S}^{n-1}\times{\mathbb{R}^n}} 
v({\mathcal{R}}_\omega\,x)\,e^{2\pi ix\cdot\xi}
\,d{\mathcal{H}}^{n-1}(\omega)\,dx.$$
Hence, by~\eqref{ENstut}, and exploiting the changes of variables~$y:=
{\mathcal{R}}_{\omega}\,x$ and~$\tilde y:=
{\mathcal{R}}_{\tilde\omega}\,\tilde x$,
\begin{eqnarray*}&&
\mathcal{E}_s\left(v_\sharp,v_\sharp\right)\\&=&\int_{\mathbb{R}^n} 
\left|\xi\right|^{2s}\mathcal{F}(v_\sharp)\left(\xi\right)
\overline{\mathcal{F}(v_\sharp)\left(\xi\right)} \,d\xi\\
&=&\frac{1}{\left|\mathbb{S}^{n-1}\right|^2}
\iiiint\!\!\!\int_{\mathbb{S}^{n-1}\times\mathbb{S}^{n-1}\times{\mathbb{R}^n}\times{\mathbb{R}^n}\times{\mathbb{R}^n}} 
|\xi|^{2s}v({\mathcal{R}}_\omega\,x)\,v({\mathcal{R}}_{\tilde\omega}\,\tilde x)\,e^{2\pi i (\tilde x-x)\cdot\xi}
\,d{\mathcal{H}}^{n-1}(\omega)
\,d{\mathcal{H}}^{n-1}(\tilde\omega)\,dx\,d\tilde x\,d\xi\\
&=& \frac{1}{\left|\mathbb{S}^{n-1}\right|^2}
\iiiint\!\!\!\int_{\mathbb{S}^{n-1}\times\mathbb{S}^{n-1}\times{\mathbb{R}^n}\times{\mathbb{R}^n}\times{\mathbb{R}^n}} 
|\xi|^{2s}v(y)\,v(\tilde y)\,e^{2\pi i \tilde y\cdot{\mathcal{R}}_{\tilde\omega}\,\xi}
e^{-2\pi i y\cdot{\mathcal{R}}_\omega\,\xi}
\,d{\mathcal{H}}^{n-1}(\omega)
\,d{\mathcal{H}}^{n-1}(\tilde\omega)\,dy\,d\tilde y\,d\xi\\
&=& \frac{1}{\left|\mathbb{S}^{n-1}\right|^2}
\iiint_{\mathbb{S}^{n-1}\times\mathbb{S}^{n-1}\times{\mathbb{R}^n}} 
|\xi|^{2s}\,{\mathcal{F}}v({\mathcal{R}}_\omega\,\xi)\,\overline{
{\mathcal{F}}v({\mathcal{R}}_{\tilde\omega}\,\xi)}\,
\,d{\mathcal{H}}^{n-1}(\omega)
\,d{\mathcal{H}}^{n-1}(\tilde\omega)\,d\xi.
\end{eqnarray*}
Consequently, using the Cauchy-Schwarz Inequality,
and the substitutions~$\eta:=
{\mathcal{R}}_{\omega}\,\xi$ and~$\tilde\eta:=
{\mathcal{R}}_{\tilde\omega}\,\xi$,
\begin{eqnarray*}
\mathcal{E}_s\left(v_\sharp,v_\sharp\right)&\le&
\frac{1}{\left|\mathbb{S}^{n-1}\right|^2}
\iiint_{\mathbb{S}^{n-1}\times\mathbb{S}^{n-1}\times{\mathbb{R}^n}} 
|\xi|^{2s}\,\big|{\mathcal{F}}v({\mathcal{R}}_\omega\,\xi)\big|
\,\big|
{\mathcal{F}}v({\mathcal{R}}_{\tilde\omega}\,\xi)\big|\,
\,d{\mathcal{H}}^{n-1}(\omega)
\,d{\mathcal{H}}^{n-1}(\tilde\omega)\,d\xi
\\ &\le&
\frac{1}{\left|\mathbb{S}^{n-1}\right|^2}
\left(
\iiint_{\mathbb{S}^{n-1}\times\mathbb{S}^{n-1}\times{\mathbb{R}^n}} 
|\xi|^{2s}\,\big|{\mathcal{F}}v({\mathcal{R}}_\omega\,\xi)\big|^2
\,d{\mathcal{H}}^{n-1}(\omega)
\,d{\mathcal{H}}^{n-1}(\tilde\omega)\,d\xi\right)^{\frac12}\\&&\qquad\cdot
\left(
\iiint_{\mathbb{S}^{n-1}\times\mathbb{S}^{n-1}\times{\mathbb{R}^n}} 
|\xi|^{2s}\,
\big|{\mathcal{F}}v({\mathcal{R}}_{\tilde\omega}\,\xi)\big|^2
\,d{\mathcal{H}}^{n-1}(\omega)
\,d{\mathcal{H}}^{n-1}(\tilde\omega)\,d\xi\right)^{\frac12}\\
\\ &=&
\frac{1}{\left|\mathbb{S}^{n-1}\right|^2}
\left(
\iiint_{\mathbb{S}^{n-1}\times\mathbb{S}^{n-1}\times{\mathbb{R}^n}} 
|\eta|^{2s}\,\big|{\mathcal{F}}v(\eta)\big|^2
\,d{\mathcal{H}}^{n-1}(\omega)
\,d{\mathcal{H}}^{n-1}(\tilde\omega)\,d\eta\right)^{\frac12}\\&&\qquad\cdot
\left(
\iiint_{\mathbb{S}^{n-1}\times\mathbb{S}^{n-1}\times{\mathbb{R}^n}} 
|\tilde\eta|^{2s}\,\big|
{\mathcal{F}}v(\tilde\eta)\big|^2
\,d{\mathcal{H}}^{n-1}(\omega)
\,d{\mathcal{H}}^{n-1}(\tilde\omega)\,d\tilde\eta\right)^{\frac12}
\\ &=&
\left(
\int_{{\mathbb{R}^n}} 
|\eta|^{2s}\,\big|{\mathcal{F}}v(\eta)\big|^2
\,d\eta\right)^{\frac12}
\left(
\int_{\mathbb{R}^{n}} 
|\tilde\eta|^{2s}\,\big|
{\mathcal{F}}v(\tilde\eta)\big|^2
\,d\tilde\eta\right)^{\frac12}
\\&=&
\mathcal{E}_s\left(v,v\right).
\end{eqnarray*}
This proves~\eqref{ENblaoe1}.

Now, we prove~\eqref{ENblaoe2}. For this, we observe that
$$ \frac{\partial^\ell v_\sharp}{\partial x_{j_1}\dots\partial x_{j_\ell}}(x)=
\frac{1}{\left|\mathbb{S}^{n-1}\right|}\sum_{k_1,\dots,k_\ell=1}^n
\int_{\mathbb{S}^{n-1}} 
\frac{\partial^\ell v}{\partial x_{k_1}\dots\partial x_{k_\ell}}
({\mathcal{R}}_\omega\,x)\;{\mathcal{R}}_\omega^{k_1j_1}\dots{\mathcal{R}}_\omega^{k_\ell j_\ell}
\;d{\mathcal{H}}^{n-1}(\omega),$$
for every $\ell\in{\mathbb{N}}$ and~$j_1,\dots,j_\ell\in\{1,\dots,n\}$,
where~${\mathcal{R}}_\omega^{jk}$ denotes the $(j,k)$ component of the matrix~${\mathcal{R}}_\omega$.
In particular,
$$ \left|\frac{\partial^\ell v_\sharp}{\partial x_{j_1}\dots\partial x_{j_\ell}}(x)\right|\le
C\,\sum_{k_1,\dots,k_\ell=1}^n
\int_{\mathbb{S}^{n-1}} 
\left|\frac{\partial^\ell v}{\partial x_{k_1}\dots\partial x_{k_\ell}}
({\mathcal{R}}_\omega\,x)\right|
\;d{\mathcal{H}}^{n-1}(\omega),$$
for some~$C>0$ only depending on~$n$ and~$\ell$, and hence
\begin{eqnarray*} \left\|\frac{\partial^\ell v_\sharp}{\partial x_{j_1}\dots
\partial x_{j_\ell}}(x)\right\|_{L^2(B_1)}^2&\le&
C\,\sum_{k_1,\dots,k_\ell=1}^n
\iint_{\mathbb{S}^{n-1}\times B_1} 
\left|\frac{\partial^\ell v}{\partial x_{k_1}\dots\partial x_{k_\ell}}
({\mathcal{R}}_\omega\,x)\right|^2
\;d{\mathcal{H}}^{n-1}(\omega)\,dx\\&=&
C\,\sum_{k_1,\dots,k_\ell=1}^n
\iint_{\mathbb{S}^{n-1}\times B_1} 
\left|\frac{\partial^\ell v}{\partial x_{k_1}\dots\partial x_{k_\ell}}
(y)\right|^2
\;d{\mathcal{H}}^{n-1}(\omega)\,dy\\&=&
C\,\sum_{k_1,\dots,k_\ell=1}^n
\left\|\frac{\partial^\ell v}{\partial x_{k_1}\dots\partial x_{k_\ell}}
\right\|_{L^2(B_1)}^2,
\end{eqnarray*}
up to renaming~$C$.

This, together with~\eqref{energynorm} and~\eqref{ENblaoe1},
gives~\eqref{ENblaoe2}, as desired.
\end{proof}
\end{lemma}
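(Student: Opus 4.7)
My plan is to isolate one central observation: spherical averaging intertwines with the Fourier transform, i.e. $\mathcal{F}(v_\sharp) = (\mathcal{F}v)_\sharp$. Once this is in hand, both inequalities follow by combining pointwise convexity inequalities with rotational invariance. To prove the intertwining, I would fix $\omega\in\mathbb{S}^{n-1}$, set $v_\omega(x):=v(\mathcal{R}_\omega x)$, and compute $\mathcal{F}v_\omega(\xi)$ via the substitution $y=\mathcal{R}_\omega x$ (whose Jacobian is $1$): this gives $\mathcal{F}v_\omega(\xi)=\mathcal{F}v(\mathcal{R}_\omega\xi)$. Averaging over $\omega$ and interchanging integrals (justified by Fubini for Schwartz $v$, then extended by density) yields the claim.

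With the intertwining property in place, the proof of \eqref{ENblaoe1} reduces to a pointwise Cauchy--Schwarz (or Jensen) bound on the spherical mean of $\mathcal{F}v$, namely
\[
\bigl|(\mathcal{F}v)_\sharp(\xi)\bigr|^2 \;\le\; \bigl(|\mathcal{F}v|^2\bigr)_\sharp(\xi),
\]
multiplied by the rotationally invariant weight $|\xi|^{2s}$. Integrating in $\xi$, exchanging the order of integration between $\xi$ and $\omega$, and then using the change of variable $\eta=\mathcal{R}_\omega\xi$ (for which $|\eta|^{2s}=|\xi|^{2s}$) turns the right-hand side into $\frac{1}{|\mathbb{S}^{n-1}|}\int_{\mathbb{S}^{n-1}}\!\int_{\mathbb{R}^n}|\eta|^{2s}|\mathcal{F}v(\eta)|^2\,d\eta\,d\mathcal{H}^{n-1}(\omega) = \mathcal{E}_s(v,v)$, giving the desired bound.

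For \eqref{ENblaoe2}, the support condition $v_\sharp=0$ outside $B_1$ is immediate, since rotations preserve $|x|$, so $v(\mathcal{R}_\omega x)=0$ for every $\omega$ whenever $|x|\ge 1$. To handle the $L^2$ norms of classical derivatives of $v_\sharp$ up to order $m$ (which appear in \eqref{energynorm}), I would differentiate under the integral sign, obtaining
\[
\partial^\alpha v_\sharp(x) = \frac{1}{|\mathbb{S}^{n-1}|}\int_{\mathbb{S}^{n-1}} \sum_{|\beta|=|\alpha|} c_{\alpha,\beta}(\mathcal{R}_\omega)\,(\partial^\beta v)(\mathcal{R}_\omega x)\,d\mathcal{H}^{n-1}(\omega),
\]
with bounded coefficients $c_{\alpha,\beta}$ (entries of tensor powers of $\mathcal{R}_\omega$). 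An application of Minkowski's integral inequality in the $\omega$-variable, followed by the change of variable $y=\mathcal{R}_\omega x$ (preserving $B_1$), bounds $\|\partial^\alpha v_\sharp\|_{L^2(B_1)}$ by a constant times $\sum_{|\beta|=|\alpha|}\|\partial^\beta v\|_{L^2(B_1)}$. Combining with \eqref{ENblaoe1} yields $\|v_\sharp\|_{H^s_0(B_1)}\le C\|v\|_{H^s_0(B_1)}$, which is what we need.

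The main subtlety I anticipate is the justification of differentiating under the integral and of the Fubini exchanges for rough $v\in H^s_0(B_1)$: I would handle this by first working with smooth $v$ (e.g., Schwartz functions or $C^\infty_0(B_1)$ via a mollification respecting the support), establishing both estimates there, and then passing to the limit using the continuity of the linear map $v\mapsto v_\sharp$ with respect to the norms involved, which is exactly what the two inequalities themselves provide.
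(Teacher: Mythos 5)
Your proposal is correct and follows essentially the same route as the paper: the paper's quintuple-integral computation followed by Cauchy--Schwarz is exactly your intertwining identity $\mathcal{F}(v_\sharp)=(\mathcal{F}v)_\sharp$ combined with the pointwise Jensen bound and the rotation invariance of $|\xi|^{2s}$, and the derivative estimates for \eqref{ENblaoe2} are handled identically (chain rule under the integral, Minkowski/Cauchy--Schwarz in $\omega$, change of variables preserving $B_1$). Your explicit remarks on the support condition and on the density argument for rough $v$ are sound additions that the paper leaves implicit.
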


With this preliminary work, we can now find a nontrivial,
nonnegative
and radial solution of~\eqref{dirfun}.

\begin{proposition}\label{2.52.5}
There exists a solution of~\eqref{dirfun} in~$H^s_0(B_1)$
which is radial, nonnegative
and with unit norm in~$L^2(B_1)$.

\begin{proof} Let~$u\ge0$ be a nontrivial solution of~\eqref{dirfun},
whose existence is warranted by Lemma~\ref{ikAHHPKAK}.

Then, we have that~$u_\sharp\ge0$.
Moreover,
\begin{eqnarray*}&& \int_{B_1}u_\sharp(x)\,dx=
\frac{1}{\left|\mathbb{S}^{n-1}\right|}
\iint_{\mathbb{S}^{n-1}\times B_1} u({\mathcal{R}}_\omega\,x)
\,d{\mathcal{H}}^{n-1}(\omega)\,dx\\&&\qquad=
\frac{1}{\left|\mathbb{S}^{n-1}\right|}
\iint_{\mathbb{S}^{n-1}\times B_1} u(y)
\,d{\mathcal{H}}^{n-1}(\omega)\,dy=
\int_{ B_1} u(y)
\,dy>0,
\end{eqnarray*}
and therefore~$u_\sharp$ does not vanish identically.

As a consequence, we can define
$$u_\star:=\frac{u_\sharp}{\|u_\sharp\|_{L^2(B_1)}}.$$
We know that~$u_\star\in H^s_0(B_1)$, due to~\eqref{ENblaoe2}.
Moreover, in view of Lemma~\ref{lapsfercom},
$$ (-\Delta)^s u_\star=
\frac{(-\Delta)^s u_\sharp}{\|u_\sharp\|_{L^2(B_1)}}=
\frac{\big((-\Delta)^s u\big)_\sharp}{\|u_\sharp\|_{L^2(B_1)}}
=\frac{\lambda_1\,u_\sharp}{\|u_\sharp\|_{L^2(B_1)}}
=\lambda_1\,u_\star,$$
which gives the desired result.
\end{proof}
\end{proposition}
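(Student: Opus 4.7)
The plan is to combine the three main tools already established: the existence of a sign-definite eigenfunction (Lemma~\ref{ikAHHPKAK}), the fact that $(-\Delta)^s$ commutes with spherical means (Lemma~\ref{lapsfercom}), and the fact that the spherical mean preserves the energy space $H^s_0(B_1)$ (equation~\eqref{ENblaoe2}). The strategy is to start from a nontrivial nonnegative eigenfunction and symmetrize it by spherical averaging; radiality is then automatic, nonnegativity is preserved, the eigenvalue equation is preserved by commutation, and finally one normalizes in $L^2(B_1)$.

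Concretely, I would first invoke Lemma~\ref{ikAHHPKAK} to pick a nontrivial eigenfunction $u \in H^s_0(B_1)$ of \eqref{dirfun} satisfying $u \ge 0$ (up to replacing $u$ by $-u$). Next, I would form the spherical mean $u_\sharp$ and observe that it is radial by construction, pointwise nonnegative since $u \ge 0$, and still belongs to $H^s_0(B_1)$ by~\eqref{ENblaoe2}.

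The key verification is that $u_\sharp$ does not vanish identically: exchanging the order of integration and using that rotations preserve the ball $B_1$ and its Lebesgue measure, one obtains
\[
\int_{B_1} u_\sharp(x)\,dx = \frac{1}{|\mathbb{S}^{n-1}|}\int_{\mathbb{S}^{n-1}}\int_{B_1} u(\mathcal{R}_\omega x)\,dx\,d\mathcal{H}^{n-1}(\omega) = \int_{B_1} u(y)\,dy > 0,
\]
since $u$ is nonnegative and nontrivial. Once nontriviality is established, Lemma~\ref{lapsfercom} gives $(-\Delta)^s u_\sharp = \bigl((-\Delta)^s u\bigr)_\sharp = (\lambda_1 u)_\sharp = \lambda_1 u_\sharp$, so the eigenvalue equation is preserved. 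Finally, I would set $u_\star := u_\sharp / \|u_\sharp\|_{L^2(B_1)}$, which is radial, nonnegative, of unit $L^2$ norm, and still solves \eqref{dirfun}.

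There is no real obstacle here: all the heavy lifting is done in the preceding lemmas. The only points requiring any care are the positivity of $\int_{B_1} u_\sharp$ (handled above by Fubini and rotation invariance of the ball) and the compatibility of the spherical mean with the Dirichlet condition outside $B_1$, which follows because $\mathcal{R}_\omega(\mathbb{R}^n \setminus B_1) = \mathbb{R}^n \setminus B_1$ so $u(\mathcal{R}_\omega x) = 0$ for $x \notin B_1$, hence $u_\sharp = 0$ outside $B_1$, consistent with $u_\sharp \in H^s_0(B_1)$.
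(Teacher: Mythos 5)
Your proposal is correct and follows essentially the same route as the paper's proof: take the nonnegative nontrivial eigenfunction from Lemma~\ref{ikAHHPKAK}, pass to its spherical mean, verify nontriviality via $\int_{B_1}u_\sharp=\int_{B_1}u>0$, use Lemma~\ref{lapsfercom} and~\eqref{ENblaoe2} to preserve the equation and the energy space, and normalize in $L^2(B_1)$. The extra observation about the exterior Dirichlet condition being preserved under rotations is a harmless (and correct) addition.
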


Now, we are in the position of proving the following result.

\begin{lemma}
\label{onesob}
Let $s\ge1$ and~$r\in(0,1)$. If $u\in H_0^s\left(B_1\right)$ and $u$ is radial, then $u\in C^\alpha\left({\mathbb{R}}^n\setminus B_r\right)$ for any $\alpha\in\left[0,\frac{1}{2}\right]$.
\begin{proof}
We write
\begin{equation}
u\left(x\right)=u_0\left(\left|x\right|\right),\qquad\mbox{for some }
\;u_0:[0,+\infty)\rightarrow\mathbb{R}
\end{equation}
and we observe that~$u\in H_0^s\left(B_1\right)\subset H^1\left({\mathbb{R}}^n\right) $.

Accordingly, for any $0<r<1$, we have
\begin{equation}
\label{funz}
\infty>\int_{{\mathbb{R}}^n\setminus B_r} |u(x)|^2 \,dx
=\int_r^{+\infty} |u_0(\rho)|^2\rho^{n-1} \,d\rho\geq 
r^{n-1}\int_r^{+\infty}|u_0(\rho)|^2 \,d\rho
\end{equation}
and
\begin{equation}
\label{grad}
\infty>\int_{{\mathbb{R}}^n\setminus B_r} {|\nabla u(x)|^2 \,dx}=\int_r^{+\infty}
{\left|u_0' (\rho)\right|^2\rho^{n-1} \,d\rho}
\geq r^{n-1}\int_r^{+\infty}{\left|u_0' (\rho)\right|^2 \,d\rho}.
\end{equation}
Thanks to \eqref{funz} and \eqref{grad} we have that $u_0\in H^1\left(\left(r,+\infty\right)\right)$,
with~$u_0=0$ in~$[1,+\infty)$.

Then, from 
the Morrey Embedding Theorem, it follows that
$u_0\in C^\alpha\left(\left(r,+\infty\right)\right)$ for any
$\alpha\in\left[0,\frac{1}{2}\right]$, which leads to the desired result.
\end{proof}
\end{lemma}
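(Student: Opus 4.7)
The plan is to reduce the statement to a one-dimensional Sobolev embedding for the radial profile $u_0$. First, I would note that since $s\geq 1$, the space $H^s_0(B_1)$ sits continuously inside $H^1(\mathbb{R}^n)$ (by the definition of the norm in \eqref{energynorm}, all derivatives up to order $m\geq 1$ are in $L^2$), so in particular $u\in H^1(\mathbb{R}^n)$ with $u\equiv 0$ outside $B_1$.

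Next, using the radial representation $u(x)=u_0(|x|)$, I would pass to polar coordinates to express the $L^2$ norms of $u$ and $\nabla u$ on $\mathbb{R}^n\setminus B_r$ as weighted one-dimensional integrals of $u_0$ and $u_0'$, respectively. Since $\rho^{n-1}\geq r^{n-1}$ on $(r,+\infty)$, both $u_0$ and $u_0'$ belong to $L^2((r,+\infty))$, hence $u_0\in H^1((r,+\infty))$ (with $u_0\equiv 0$ on $[1,+\infty)$).

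Then I would invoke the one-dimensional Morrey embedding $H^1(I)\hookrightarrow C^{1/2}(\overline I)$ (and, a fortiori, $C^\alpha$ for every $\alpha\in[0,1/2]$) to conclude that $u_0\in C^\alpha((r,+\infty))$. Finally, because the map $x\mapsto |x|$ is Lipschitz on $\mathbb{R}^n\setminus B_r$, composition with $u_0$ preserves the Hölder exponent, yielding $u=u_0(|\cdot|)\in C^\alpha(\mathbb{R}^n\setminus B_r)$.

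There is essentially no technical obstacle here; the only thing to be careful about is keeping track of the fact that the weight $\rho^{n-1}$ in the polar change of variables is bounded below on $(r,+\infty)$ (so that the weighted $L^2$ integrability implies the unweighted one), and that $u$ vanishing outside $B_1$ translates into $u_0$ vanishing on $[1,+\infty)$, which makes $u_0\in H^1((r,+\infty))$ unambiguous.
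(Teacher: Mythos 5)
Your proposal is correct and follows essentially the same route as the paper's proof: reduction to the radial profile via polar coordinates, the lower bound $\rho^{n-1}\ge r^{n-1}$ to pass from weighted to unweighted $L^2$ bounds, membership of $u_0$ in $H^1((r,+\infty))$, and the one-dimensional Morrey embedding. Your closing remark about composing with the Lipschitz map $x\mapsto|x|$ is a small detail the paper leaves implicit, but it is the same argument.
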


\begin{corollary}\label{QUESTCP} Let~$s\in(0,+\infty)$.
There exists a radial, nonnegative
and nontrivial solution of \eqref{dirfun}
which belongs to~$H^s_0(B_1)\cap C^{\alpha}({\mathbb{R}}^n\setminus B_{1/2})$,
for some~$\alpha\in(0,1)$.

\begin{proof} If~$s\in(0,1)$, the desired claim follows from 
Corollary~8 in~\cite{DSV1}.

If instead~$s\ge1$, we obtain the desired result as a consequence of
Proposition \ref{2.52.5} and Lemma \ref{onesob}.
\end{proof}
\end{corollary}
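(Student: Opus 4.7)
The plan is to split into two regimes according to the order $s$ and assemble the conclusion from results already established (or cited) in the excerpt. For $s\in(0,1)$, no new work is required: the statement is essentially Corollary~8 of \cite{DSV1}, which produces a radial, nonnegative, nontrivial first Dirichlet eigenfunction of $(-\Delta)^s$ in $B_1$ with the desired H\"older regularity up to the boundary from the exterior side. So in this regime I would just quote that result.

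The substantive case is $s\ge 1$. Here I would first invoke Proposition~\ref{2.52.5} to obtain a function $u_\star\in H^s_0(B_1)$ that is radial, nonnegative, has unit $L^2$-norm, and solves \eqref{dirfun} with eigenvalue $\lambda_1$. Since $s\ge 1$ we have the continuous embedding $H^s_0(B_1)\subset H^1(\mathbb{R}^n)$, so $u_\star\in H^1(\mathbb{R}^n)$ and vanishes outside $B_1$. This is precisely the hypothesis of Lemma~\ref{onesob} applied with, say, $r=1/2$, which yields $u_\star\in C^{\alpha}(\mathbb{R}^n\setminus B_{1/2})$ for every $\alpha\in[0,\tfrac12]$, in particular for some $\alpha\in(0,1)$. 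Combining these two facts gives the conclusion in the regime $s\ge 1$.

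I do not anticipate any real obstacle: the corollary is essentially a bookkeeping statement that packages Proposition~\ref{2.52.5} (existence, radiality, sign) together with Lemma~\ref{onesob} (boundary regularity for radial $H^s_0$ functions via the one-dimensional Morrey embedding on the radial profile) in the case $s\ge 1$, and defers to \cite{DSV1} in the case $s\in(0,1)$. The only minor point to be careful about is to record that the solution produced by Proposition~\ref{2.52.5} is nontrivial (guaranteed by its unit $L^2$-norm) and to note that $u_\star=0$ on $\mathbb{R}^n\setminus B_1$ so that the H\"older estimate on the exterior annulus $B_1\setminus B_{1/2}$ extends trivially to all of $\mathbb{R}^n\setminus B_{1/2}$.
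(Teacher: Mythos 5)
Your proposal matches the paper's proof exactly: the case $s\in(0,1)$ is delegated to Corollary~8 of \cite{DSV1}, and the case $s\ge1$ is obtained by combining Proposition~\ref{2.52.5} with Lemma~\ref{onesob}. The extra bookkeeping you record (nontriviality from the unit $L^2$-norm, and the trivial extension of the H\"older bound to $\mathbb{R}^n\setminus B_1$ where the function vanishes) is correct and consistent with the paper's argument.
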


\section{Boundary asymptotics\index{boundary behaviour} of the first eigenfunctions of~$(-\Delta)^s$}
\label{sec5}

In Lemma 4 of~\cite{DSV1}, some precise asymptotics
at the boundary for the first Dirichlet eigenfunction
of~$(-\Delta)^s$ have been established in the range~$s\in(0,1)$.

Here, we obtain a related expansion in the range~$s>0$
for the eigenfunction provided in Corollary~\ref{QUESTCP}.
The result that we obtain is the following:

\begin{proposition}
\label{sharbou}
There exists a nontrivial solution $\phi_*$ of \eqref{dirfun}
which belongs to~$H^s_0(B_1)\cap C^{\alpha}
({\mathbb{R}}^n\setminus B_{1/2})$,
for some~$\alpha\in(0,1)$, and such that, for every~$e\in\partial B_1$
and~$\beta=(\beta_1,\dots,\beta_n)\in{\mathbb{N}}^n$,
\[\lim_{\epsilon\searrow 0}\epsilon^{\left|\beta\right|-s}\partial^\beta\phi_*\left(e+\epsilon X\right)=\left(-1\right)^{\left|\beta\right|}k_*\, s\left(s-1\right)\ldots\left(s-\left|\beta\right|+1\right)e_1^{\beta_1}\ldots e_n^{\beta_n}\left(-e\cdot X\right)_+^{s-\left|\beta\right|},\]
in the sense of distribution, with~$|\beta|:=\beta_1+\dots+\beta_n$
and~$k_*>0$.\end{proposition}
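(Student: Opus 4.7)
The plan is to take $\phi_*$ to be the radial, nonnegative, nontrivial eigenfunction provided by Corollary~\ref{QUESTCP}, which lies in $H^s_0(B_1)\cap C^\alpha(\mathbb{R}^n\setminus B_{1/2})$, and to write $\phi_*(x)=\phi_0(|x|)$. Setting $f:=\lambda_1\phi_*$, one has $f\in L^2(B_1)$ and $f\in C^\alpha(B_1\setminus B_{1/2})$, so that Proposition~\ref{LEJOS} applies and yields the Green representation
\begin{equation}\label{PRoGRePrHPLanASfh}
\phi_*(x)=\lambda_1\int_{B_1}\mathcal{G}_s(x,y)\,\phi_*(y)\,dy,\qquad x\in B_1.
\end{equation}
The whole boundary asymptotic analysis then reduces to the study of the right-hand side of~\eqref{PRoGRePrHPLanASfh} as $x\to\partial B_1$.

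The first and main step is to establish the sharp leading behaviour
\begin{equation}\label{PRoLEADnnzZzq}
\phi_*(x)=k_*\,d(x)^s+o\big(d(x)^s\big)\qquad\text{as }x\to\partial B_1,
\end{equation}
for some constant $k_*>0$. To this end, fix a cutoff and split $f=f_1+f_2$, where $f_1$ is supported in $B_{3/4}$ and $f_2$ is H\"older continuous and supported in $B_1\setminus B_{1/2}$; write correspondingly $\phi_*=u_1+u_2$ using the linearity of the representation~\eqref{PRoGRePrHPLanASfh}. For $u_2$ we invoke the sharp boundary expansion for H\"older data from~\cite{AJS3}, which produces an explicit $d(x)^s$ leading term with positive coefficient. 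For $u_1$ we use the explicit formula~\eqref{GREEN} together with the elementary expansion $\int_0^r\eta^{s-1}(\eta+1)^{-n/2}\,d\eta=s^{-1}r^s+O(r^{s+1})$ as $r\searrow 0$, combined with the fact that $r_0(x,y)\to 0$ uniformly in $y\in\mathrm{supp}\,f_1$ as $x\to\partial B_1$, to derive that $\mathcal{G}_s(x,y)$ has a leading factor proportional to $(1-|x|^2)^s=2^s d(x)^s+O(d(x)^{s+1})$ on that support; integration against $f_1\geq 0$ then gives a matching $d(x)^s$ contribution. Summing both pieces yields~\eqref{PRoLEADnnzZzq}.

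The second step converts~\eqref{PRoLEADnnzZzq} into the asymptotic required by the proposition through the radial structure. For any $e\in\partial B_1$ and $X\in\mathbb{R}^n$ one has $|e+\epsilon X|^2=1+2\epsilon\,e\cdot X+\epsilon^2|X|^2$, hence
\begin{equation*}
d(e+\epsilon X)=\epsilon(-e\cdot X)_++O(\epsilon^2),
\end{equation*}
so that, substituting into~\eqref{PRoLEADnnzZzq} and using the pointwise control,
\begin{equation}\label{PRopointAsaYU}
\epsilon^{-s}\phi_*(e+\epsilon X)\longrightarrow k_*\,(-e\cdot X)_+^s\qquad\text{as }\epsilon\searrow 0,
\end{equation}
pointwise and with a local $L^1$ bound uniform in $\epsilon$ that comes from the continuity of $\phi_*$ outside $B_{1/2}$. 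Hence~\eqref{PRopointAsaYU} holds in $\mathcal{D}'(\mathbb{R}^n)$. The statement for $|\beta|\geq 1$ then follows by distributional differentiation: setting $g_\epsilon(X):=\epsilon^{-s}\phi_*(e+\epsilon X)$, the chain rule gives $\partial^\beta_X g_\epsilon(X)=\epsilon^{|\beta|-s}(\partial^\beta\phi_*)(e+\epsilon X)$, and passage to the distributional limit combined with the identity
\begin{equation*}
\partial^\beta\big[k_*(-e\cdot X)_+^s\big]=(-1)^{|\beta|}k_*\,s(s-1)\cdots(s-|\beta|+1)\,e_1^{\beta_1}\cdots e_n^{\beta_n}(-e\cdot X)_+^{s-|\beta|}
\end{equation*}
concludes the proof.

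The main obstacle is the rigorous derivation of~\eqref{PRoLEADnnzZzq} together with the positivity $k_*>0$. The H\"older piece $u_2$ is handled by the asymptotics of~\cite{AJS3}, while the bulk piece $u_1$ requires a careful manipulation of the Green function expansion in~\eqref{GREEN} controlling remainder terms uniformly in $y$ on $\mathrm{supp}\,f_1$ as $x$ approaches $\partial B_1$. Crucially, $k_*>0$ is secured because both leading contributions are strictly positive: this follows from the strict positivity of $\mathcal{G}_s$ inside $B_1\times B_1$ (which forces the bulk integral against $\phi_*\geq 0$, $\phi_*\not\equiv 0$ to have a positive $d(x)^s$ coefficient) and from the corresponding positivity in the H\"older expansion of~\cite{AJS3} applied with nonnegative data.
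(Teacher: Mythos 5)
Your overall strategy is sound and, in its second half, coincides with the paper's: the passage from the zeroth-order boundary asymptotic to the statement for general $\beta$ via $g_\epsilon(X)=\epsilon^{-s}\phi_*(e+\epsilon X)$, a uniform bound, dominated convergence and integration by parts is exactly the argument used here (see the proof of Proposition~\ref{sharbou} built on Corollary~\ref{8iJJAUMPAAAxc} and the bound~\eqref{COSI}). The first half is organized differently: you split $f=\lambda_1\phi_*$ into a bulk piece $f_1$ and a H\"older piece $f_2$ and expand the Green function only against $f_1$, whereas the paper proves a single directional limit (Lemma~\ref{lemsix}) for the whole Green potential of an $f$ that is merely $L^2$ inside and $C^\alpha$ near $\partial B_1$, by truncating the kernel at $r_1=\min\{1/2,r_0\}$, expanding in a series, isolating the $k=0$ term, and passing to the limit with Vitali's theorem. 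Your treatment of the bulk piece (Taylor expansion of $\int_0^{r_0}\eta^{s-1}(\eta+1)^{-n/2}\,d\eta$ using that $r_0\to0$ uniformly on $\mathrm{supp}\,f_1$) is essentially the paper's analysis of $\mathcal{G}_0$ on the set $\mathcal{D}_2$, and your positivity argument for $k_*$ matches the paper's.

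The one genuine gap is the sentence ``for $u_2$ we invoke the sharp boundary expansion for H\"older data from~\cite{AJS3}, which produces an explicit $d(x)^s$ leading term with positive coefficient.'' That expansion is not available off the shelf in the generality you need: \cite{AJS3} is used in this paper only for the one-sided bound $\sup d^{-s}|u_2|\le C\|f_2\|_{L^\infty}$, and the precise first-order asymptotic of the Green potential was previously known only for $s\in(0,1)$ and globally H\"older $f$ (Lemma~6 of~\cite{DSV1}); extending it to all $s>0$ and to data that is H\"older only near the boundary is precisely the content of Lemma~\ref{lemsix}, whose proof requires the delicate estimates near $z\approx e+\epsilon\omega$ (the ball $B_{3\sqrt{\epsilon}}(e+\epsilon\omega)$, the bounds $|f(z)|\le C\epsilon^{\alpha/2}$ and the tail estimates on the series) that your proposal does not supply. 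So this step must be proved, not cited; the good news is that the same kernel expansion you use for $u_1$, supplemented by the H\"older continuity of $f_2$ to handle the region where $r_0$ is not small, does the job. A minor additional point: the uniform-in-$\epsilon$ bound needed for dominated convergence is $|\phi_*(e+\epsilon X)|\le C\epsilon^s$ on $\mathrm{supp}\,\psi$, which follows from the quantitative decay $|\phi_*(x)|\le C\,d(x)^s$ (i.e.~\eqref{COSI}), not from mere continuity of $\phi_*$ outside $B_{1/2}$ as you state; your own leading-order expansion does provide this decay, but the justification should be phrased accordingly.
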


The proof of Proposition~\ref{sharbou} relies on Proposition~\ref{LEJOS}
and some auxiliary computations on the Green function in~\eqref{GREEN}.
We start with the following result:

\begin{lemma}
\label{lklkl}
Let $0<r<1$, $e\in\partial B_1$, $s>0$,
$f\in C^\alpha(\mathbb{R}^n\setminus B_r)\cap L^2(\mathbb{R}^n)$ 
for some $\alpha\in(0,1)$, and $f=0$ outside $B_1$. Then the integral
\begin{equation}
\label{I1I2}
\int_{B_1} f(z)\frac{(1-|z|^2)^s}{s|z-e|^n} \,dz
\end{equation}
is finite.
\begin{proof}
We denote by~$I$ the integral in~\eqref{I1I2}. We let
$$ I_1:=
\int_{B_1\setminus B_r} f(z)\frac{(1-|z|^2)^s}{s|z-e|^n} \,dz
\qquad{\mbox{and}}\qquad I_2:=
\int_{B_r} f(z)\frac{(1-|z|^2)^s}{s|z-e|^n} \,dz.$$
Then, we have that
\begin{equation}\label{SsalKAM:1} I=I_1+I_2.\end{equation}
Now, if $z\in B_1\setminus B_r$, we have that
\begin{equation*}
f(z)\leq|f(z)-f(e)|\leq C|z-e|^\alpha,
\end{equation*}
therefore
\begin{equation}\label{SsalKAM:2}
I_1\leq\int_{B_1\setminus B_r}\frac{(1-|z|^2)^s}{s|z-e|^{n-\alpha}} \,dz<\infty.
\end{equation}
If instead~$z\in B_r$,
\begin{equation*}
|z-e|\geq 1-r>0,
\end{equation*}
and consequently
\begin{equation}\label{SsalKAM:3}
I_2\leq \frac{1}{s\,(1-r)^n}\int_{B_r}f(z)\, dz<\infty.
\end{equation}
The desired result follows from~\eqref{SsalKAM:1},
\eqref{SsalKAM:2} and~\eqref{SsalKAM:3}.
\end{proof}
\end{lemma}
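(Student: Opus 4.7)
The strategy is the natural splitting of the domain into the far-from-boundary region, where the only potential issue is the singularity of $|z-e|^{-n}$ at $z=e$, and the near-boundary region $B_1\setminus B_r$, where the Hölder regularity of $f$ together with the vanishing of $f$ at $e$ compensates for the singularity. Writing $I=I_1+I_2$ with $I_1$ the integral over $B_1\setminus B_r$ and $I_2$ the integral over $B_r$ is exactly the decomposition performed in the statement proof I will aim for.

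First I would handle $I_2$. For any $z\in B_r$ one has $|z-e|\ge 1-|z|\ge 1-r>0$, so the kernel is bounded by a constant depending only on $r$, $s$ and $n$. Since $f\in L^2(B_1)$ and $B_r$ has finite measure, Cauchy--Schwarz (or the trivial $L^1$-bound on a bounded domain) gives $|I_2|\le C(n,s,r)\,\|f\|_{L^2(B_1)}$, which is finite.

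Next I would handle $I_1$. The key observation is that $f$ is continuous on $\mathbb{R}^n\setminus B_r$ and vanishes outside $B_1$, so by continuity $f(e)=0$. Consequently the Hölder bound writes, for $z\in B_1\setminus B_r$, as
\[
|f(z)|=|f(z)-f(e)|\le [f]_{C^\alpha(\mathbb{R}^n\setminus B_r)}\,|z-e|^\alpha.
\]
Therefore
\[
|I_1|\le C\int_{B_1\setminus B_r}\frac{(1-|z|^2)^s}{s\,|z-e|^{n-\alpha}}\,dz.
\]
Since $(1-|z|^2)^s\le 2^s$ is bounded on $B_1$, the finiteness reduces to the integrability of $|z-e|^{-(n-\alpha)}$ over $B_1$, which holds because $n-\alpha<n$: a spherical-shell computation around $e$ gives a finite value. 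The main point (the only one needing a moment of thought) is to observe that continuity of $f$ across $\partial B_1$, forced by $f\in C^\alpha(\mathbb{R}^n\setminus B_r)$ together with $f\equiv 0$ outside $B_1$, implies $f(e)=0$; once this is in place, the Hölder exponent $\alpha>0$ strictly lowers the order of the singularity from $n$ to $n-\alpha$ and integrability is immediate. Combining the two bounds yields $|I|\le |I_1|+|I_2|<\infty$, as desired.
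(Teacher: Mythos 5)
Your proposal is correct and follows essentially the same route as the paper: the same splitting $I=I_1+I_2$ over $B_1\setminus B_r$ and $B_r$, the Hölder bound $|f(z)|=|f(z)-f(e)|\le C|z-e|^\alpha$ (using $f(e)=0$) to tame the singularity on the outer region, and the lower bound $|z-e|\ge 1-r$ together with integrability of $f$ on the inner region. If anything, you are slightly more careful than the paper in spelling out why $f(e)=0$ and in invoking Cauchy--Schwarz to pass from the $L^2$ hypothesis to an $L^1$ bound on $B_r$.
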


Next result gives
a precise boundary behaviour of the Green function
for any $s>0$ (the case in which~$s\in(0,1)$ and~$f\in
C^\alpha(\mathbb{R}^n)$ was considered in Lemma~6 of~\cite{DSV1},
and in fact the proof presented here also simplifies the one in
Lemma~6 of~\cite{DSV1} for the setting considered there).

\begin{lemma}
\label{lemsix}
Let $e$, $\omega\in\partial B_1$, $\epsilon_0>0$ and~$r\in(0,1)$. 
Assume that
\begin{equation}\label{CHIAmahdfn}
e+\epsilon\omega\in B_1,\end{equation}
for any $\epsilon\in(0,\epsilon_0]$. Let $f\in C^\alpha(\mathbb{R}^n\setminus B_r)\cap L^2(\mathbb{R}^n)$ for some $\alpha\in(0,1)$, with $f=0$ outside $B_1$. \\
Then
\begin{equation}
\lim_{\epsilon\searrow 0}\epsilon^{-s}\int_{B_1} f(z)
\mathcal{G}_s(e+\epsilon\omega,z) \,dz=k(n,s)\,
\int_{B_1} f(z)\frac{(-2e\cdot\omega)^s(1-|z|^2)^s}{s|z-e|^n}\, dz,
\end{equation}
for a suitable normalizing constant~$k(n,s)>0$.

\begin{proof} In light of~\eqref{CHIAmahdfn}, we have that
\begin{equation*}1>
|e+\epsilon\omega|^2=1+\epsilon^2+2\epsilon e\cdot\omega,
\end{equation*}
and therefore
\begin{equation}\label{RGHHCicj}
-e\cdot\omega>\frac{\epsilon}{2}>0.
\end{equation}
Moreover, if $r_0$ is as given in \eqref{GREEN}, we have that, for all~$z\in B_1$,
\begin{equation}
\label{kfg}
r_0(e+\epsilon\omega,z)=\frac{\epsilon(-\epsilon-2e\cdot\omega)(1-|z|^2)}{|z-e-\epsilon\omega|^2}\leq\frac{3\epsilon}{|z-e-\epsilon\omega|^2}.
\end{equation}
Also, a Taylor series representation allows us to write, for any~$t\in(-1,1)$,
\begin{equation}\label{7uJAJMMA.aA}
\frac{t^{s-1}}{(t+1)^{\frac{n}{2}}}=\sum_{k=0}^\infty \binom{-n/2}{k}t^{k+s-1}.
\end{equation}
We also notice that
\begin{equation}
\label{bound}\begin{split}&
\left|\binom{-n/2}{k}\right|=
\left|
\frac{-\frac{n}2
\left(-\frac{n}2-1\right)\,...\,\left(-\frac{n}2-k+1\right)
}{k!}
\right|
=
\frac{\frac{n}2
\left(\frac{n}2+1\right)\,...\,\left(\frac{n}2+(k-1)\right)
}{k!}\\
&\qquad\le\frac{n
\left(n+1\right)\,...\,\left(n+(k-1)\right)
}{k!}
\le\frac{\left(n+(k-1)\right)!
}{k!}= (k+1)\,...\,\left(n+(k-1)\right)\\&\qquad
\le (n+k+1)^{n+1}.
\end{split}\end{equation}
This and the Root Test
give that the series in~\eqref{7uJAJMMA.aA}
is uniformly convergent on compact sets in $(-1,1)$.

As a consequence, if we set
\begin{equation}
\label{min}
r_1(x,z):=\min\left\{\frac{1}{2},r_0(x,z)\right\},
\end{equation}
we can switch integration and summation signs and obtain that
\begin{equation}
\label{mmno}
\int_0^{r_1(x,z)} \frac{t^{s-1}}{(t+1)^{\frac{n}{2}}}\, dt=\sum_{k=0}^\infty c_k(r_1(x,z))^{k+s},
\end{equation}
where
\begin{equation*}
c_k:=\frac{1}{k+s}\binom{-n/2}{k}.
\end{equation*}
Once again, the bound in~\eqref{bound}, together with~\eqref{min},
give that the series in~\eqref{mmno} is convergent.

Now, we omit for simplicity the normalizing constant~$k(n,s)$
in the definition of the Green function in~\eqref{GREEN},
and
we define
\begin{equation}\label{GNAKDDEF}
\mathcal{G}(x,z):=|z-x|^{2s-n}\sum_{k=0}^\infty c_k(r_1(x,z))^{k+s}
\end{equation}
and
\begin{equation*}
g(x,z):=|z-x|^{2s-n}\int_{r_1(x,z)}^{r_0(x,z)} \frac{t^{s-1}}{(t+1)^{\frac{n}{2}}} \,dt.
\end{equation*}
Using~\eqref{GREEN}
and \eqref{mmno}, and dropping dimensional constants for the sake of shortness,
we can write
\begin{equation}
\label{splitgreen}
\mathcal{G}_s(x,z)=\mathcal{G}(x,z)+g(x,z).
\end{equation}
Now, we show that
\begin{equation}\label{VKLACL}
g(x,z)\leq
\begin{cases} C\chi(r,z)\,|z-x|^{2s-n}&\quad\text{if}\quad n>2s, \\ 
C\chi(r,z)\,\log r_0(x,z)&\quad\text{if}\quad n=2s, \\
C\chi(r,z)\,|z-x|^{2s-n}(r_0(x,z))^{s-\frac{n}{2}}&\quad\text{if}\quad n<2s, \end{cases}
\end{equation}
where~$\chi(r,z)=1$ if~$r_0(x,z)> \frac{1}{2}$
and~$\chi(r,z)=0$ if~$r_0(x,z)\leq \frac{1}{2}$.
To check this,
we notice that if~$r_0(x,z)\leq \frac{1}{2}$ we have that~$r_1(x,z)=r_0(x,z)$,
due to~\eqref{min}, and therefore~$g(x,z)=0$. 

On the other hand, if $r_0(x,z)>\frac{1}{2}$, 
we deduce from~\eqref{min} that~$r_1(x,z)=\frac12$, and consequently
\begin{equation*}
g(x,z)\leq |z-x|^{2s-n}\int_{1/2}^{r_0(x,z)} t^{s-\frac{n}{2}-1} dt\leq
\begin{cases} C|z-x|^{2s-n}&\quad\text{if}\quad n>2s, \\ 
C\log r_0(x,z)&\quad\text{if}\quad n=2s, \\
C|z-x|^{2s-n}(r_0(x,z))^{s-\frac{n}{2}}&\quad\text{if}\quad n<2s, \end{cases}
\end{equation*}
for some constant $C>0$. This completes the proof of~\eqref{VKLACL}.

Now, we exploit the bound in~\eqref{VKLACL} when $x=e+\epsilon\omega$. 
For this, we notice that 
if~$r_0(e+\epsilon\omega,z)>\frac12$,
recalling \eqref{kfg}, we find that
\begin{equation}\label{75g8676769}
|z-e-\epsilon\omega|^2\leq 6\epsilon<9\epsilon,
\end{equation}
and therefore $z\in B_{3\sqrt{\epsilon}}(e+\epsilon\omega)$.

Hence, using~\eqref{VKLACL},
\begin{equation}
\label{estimates1}
\begin{split}
&\left|\int_{B_1} f(z)g(e+\epsilon\omega,z) dz\right|\leq 
\int_{B_{3\sqrt{\epsilon}}(e+\epsilon\omega)}
|f(z)||g(e+\epsilon\omega,z)| dz \\ &
\leq\begin{cases} C\displaystyle\int_{B_{3\sqrt{\epsilon}}(e+\epsilon\omega)}
|f(z)||z-e-\epsilon\omega|^{2s-n} dz&\quad\text{if}\quad n>2s,\\
C\displaystyle\int_{B_{3\sqrt{\epsilon}}(e+\epsilon\omega)}|f(z)|\log r_0(
e+\epsilon\omega,z) dz&\quad\text{if}\quad n=2s, \\
C\displaystyle\int_{B_{3\sqrt{\epsilon}}(e+\epsilon\omega)}
|f(z)||z-e-\epsilon\omega|^{2s-n}(r_0(e+\epsilon\omega,z))^{
s-\frac{n}{2}} dz &\quad\text{if}\quad n<2s .\end{cases}
\end{split}
\end{equation}
Now, if $z\in B_{3\sqrt{\epsilon}}(e+\epsilon\omega)$,
then \begin{equation}\label{z0okdscxi}
|z-e|\leq |z-e-\epsilon\omega|+|\epsilon\omega|
\leq 3\sqrt{\epsilon}+\epsilon<4\sqrt{\epsilon}.\end{equation}
Furthermore,
for a given~$r\in(0,1)$, we have that~$
B_{3\sqrt{\epsilon}}(e+\epsilon\omega)\subseteq{\mathbb{R}}^n\setminus
B_r$, provided that~$\epsilon$ is sufficiently small.

Hence, if~$z\in B_{3\sqrt{\epsilon}}(e+\epsilon\omega)$,
we can exploit the regularity of~$f$ and deduce that
$$ |f(z)|=|f(z)-f(e)|\leq C|z-e|^\alpha.$$
This and~\eqref{z0okdscxi} lead to
\begin{equation}
\label{su}
|f(z)|\leq C\epsilon^{\frac{\alpha}{2}},
\end{equation}
for every~$z\in B_{3\sqrt{\epsilon}}(e+\epsilon\omega)$.

Thanks to \eqref{kfg}, \eqref{estimates1} and \eqref{su}, we have that
\begin{equation*}
\begin{split}
\left|\int_{B_1} f(z)g(e+\epsilon\omega,z) dz\right|&\leq\begin{cases} C\epsilon^{\frac{\alpha}{2}}
\displaystyle\int_{B_{3\sqrt{\epsilon}}(e+\epsilon\omega)}|z-e-\epsilon\omega|^{2s-n} dz&\quad\text{if}\quad n>2s,\\ C\epsilon^{\frac{\alpha}{2}}
\displaystyle\int_{B_{3\sqrt{\epsilon}}(e+\epsilon\omega)}\log \frac{3\epsilon}{|z-e-\epsilon\omega|^2} dz&\quad\text{if}\quad n=2s ,\\ C\epsilon^{\frac{\alpha}{2}+s-\frac{n}{2}}\displaystyle\int_{B_{3\sqrt{\epsilon}}(e+\epsilon\omega)} dz &\quad\text{if}\quad n<2s \end{cases} \\
&\leq C\epsilon^{\frac{\alpha}{2}+s},
\end{split}
\end{equation*}
up to renaming~$C$.

This and \eqref{splitgreen} give that
\begin{equation}
\label{alalal}
\int_{B_1} f(z)\mathcal{G}_s(e+\epsilon\omega,z) dz=\int_{B_1} f(z)\mathcal{G}(e+\epsilon\omega,z) dz+o(\epsilon^s).
\end{equation}
Now, we consider the series in~\eqref{GNAKDDEF},
and we split the contribution coming from the index $k=0$ from the ones coming from the indices $k>0$, namely
we write
\begin{equation}\label{IN16p}
\begin{split}
&\mathcal{G}(x,z)=\mathcal{G}_0(x,z)+\mathcal{G}_1(x,z), \\
\quad\text{with}\quad &\mathcal{G}_0(x,z):=\frac{|z-x|^{2s-n}}{s}(r_1(x,z))^s \\
\quad\text{and}\quad &\mathcal{G}_1(x,z):=|z-x|^{2s-n}\sum_{k=1}^{+\infty} c_k\,(r_1(x,z))^{k+s}.
\end{split}
\end{equation}
Firstly, we consider the contribution given by the term $\mathcal{G}_1$. Thanks to \eqref{min} and \eqref{su}, we have that
\begin{equation}
\label{estimates5}
\begin{split}
&\left|\int_{B_1\cap B_{3\sqrt{\epsilon}}(e+\epsilon\omega)}f(z)\mathcal{G}_1(e+\epsilon\omega,z) dz\right|\leq \int_{B_{3\sqrt{\epsilon}}(e+\epsilon\omega)} |f(z)|\mathcal{G}_1(e+\epsilon\omega,z) dz \\
&\quad\quad\leq C\epsilon^{\frac{\alpha}{2}}\int_{B_{3\sqrt{\epsilon}}(e+\epsilon\omega)}|z-e-\epsilon\omega|^{2s-n}\sum_{k=1}^{+\infty} |c_k|\,(r_1(e+\epsilon\omega,z))^{k+s} dz \\
&\quad\quad\leq C\epsilon^{\frac{\alpha}{2}}\int_{B_{3\sqrt{\epsilon}}(e+\epsilon\omega)}|z-e-\epsilon\omega|^{2s-n}\sum_{k=1}^{+\infty} |c_k|\,\left(\frac{1}{2}\right)^{k+s} dz \\
&\quad\quad\leq C\epsilon^{\frac{\alpha}{2}}\int_{B_{3\sqrt{\epsilon}}(e+\epsilon\omega)}|z-e-\epsilon\omega|^{2s-n} dz \\
&\quad\quad\leq C\epsilon^{\frac{\alpha}{2}+s},
\end{split}
\end{equation}
up to renaming the constant $C$ step by step. 

On the other hand, for every~$z\in{\mathbb{R}}^n$,
\[|z|=|e+\epsilon\omega+z-e-\epsilon\omega|
\geq|e+\epsilon\omega|-|z-e-\epsilon\omega|
\geq 1-\epsilon-|z-e-\epsilon\omega|.\] 
Therefore,
for every~$z\in B_1\setminus\left(B_r\cup B_{3\sqrt{\epsilon}}
(e+\epsilon\omega)\right)$, we can take~$e_*:=\frac{z}{|z|}$ and obtain that
\begin{equation}
\label{estimate2}
\begin{split}&
|f(z)|=|f(z)-f(e_*)|\leq
C|z-e_*|^\alpha= C(1-|z|)^\alpha
\\&\qquad\leq C(\epsilon+|z-e-\epsilon\omega|)^\alpha
\leq C|z-e-\epsilon\omega|^\alpha,
\end{split}\end{equation}
up to renaming~$C>0$.

Also, using \eqref{kfg}, we see that, for any $k>0$,
\begin{equation}
\label{estimates3}
\begin{split}
%% &(r_1(e+\epsilon\omega,z))^{k+s}=(r_1(e+\epsilon\omega,z))^{s+\frac{\alpha}{4}}(r_1(e+\epsilon\omega,z))^{k-\frac{\alpha}{4}} \\
%% {\mbox{and }}\quad\quad
&(r_0(e+\epsilon\omega,z))^{s+\frac{\alpha}{4}}\left(\frac{1}{2}\right)^{k-\frac{\alpha}{4}}\leq\frac{C\epsilon^{s+\frac{\alpha}{4}}}{2^k|z-e-\epsilon\omega|^{2s+\frac{\alpha}{2}}}.
\end{split}
\end{equation}
This, \eqref{min} and \eqref{estimate2} give that if $z\in B_1\setminus\left(B_r\cup B_{3\sqrt{\epsilon}}(e+\epsilon\omega)\right)$, then
\begin{equation*}
\begin{split}
|f(z)\mathcal{G}_1(e+\epsilon\omega,z)| \,&
\leq C|z-e-\epsilon\omega|^{\alpha+2s-n}
\sum_{k=1}^{+\infty}|c_k|\,(r_1(e+\epsilon\omega,z))^{k+s} \\
&=C|z-e-\epsilon\omega|^{\alpha+2s-n}
\sum_{k=1}^{+\infty}|c_k|\,(r_1(e+\epsilon\omega,z))^{s+\frac\alpha4}
(r_1(e+\epsilon\omega,z))^{k-\frac\alpha4}\\
&\le C|z-e-\epsilon\omega|^{\alpha+2s-n}
\sum_{k=1}^{+\infty}|c_k|\,(r_0(e+\epsilon\omega,z))^{s+\frac\alpha4}
\left(\frac12\right)^{k-\frac\alpha4}\\
&\leq C\epsilon^{s+\frac{\alpha}{4}}|z-e-\epsilon\omega|^{\frac{\alpha}{2}-n}\sum_{k=1}^{+\infty}\frac{|c_k|}{2^k},
\end{split}
\end{equation*}
where the latter series is absolutely convergent thanks to \eqref{bound}. 

This implies that, if we
set $E:=B_1\setminus\left(B_r\cup B_{3\sqrt{\epsilon}}
(e+\epsilon\omega)\right)$, it holds that
\begin{equation}
\label{estimates4}
\begin{split}
&\left|\int_E f(z)\mathcal{G}_1(e+\epsilon\omega,z) dz\right|\leq C\epsilon^{s+\frac{\alpha}{4}}\int_E |z-e-\epsilon\omega|^{\frac{\alpha}{2}-n} dz \\
\quad\quad &\qquad\qquad\leq C\epsilon^{s+\frac{\alpha}{4}}\int_{B_1} |z-e-\epsilon\omega|^{\frac{\alpha}{2}-n} dz\leq C\epsilon^{s+\frac{\alpha}{4}}\int_{B_3} |z|^{\frac{\alpha}{2}-n} dz \leq C\epsilon^{s+\frac{\alpha}{4}}.
\end{split}
\end{equation}
Moreover, if $z\in B_r$, we have that
\begin{equation*}
|e+\epsilon\omega-z|\geq 1-\epsilon-r,
\end{equation*}
and therefore, recalling~\eqref{estimates3},
\begin{equation*}
\begin{split}
\sup_{z\in B_r} |\mathcal{G}_1(e+\epsilon\omega,z)|\,&\leq |z-e-\epsilon\omega|^{2s-n}\sum_{k=1}^{+\infty}
|c_k|\,\big(r_1(e+\epsilon\omega,z)\big)^{s+\frac\alpha4}
\big(r_1(e+\epsilon\omega,z)\big)^{k-\frac\alpha4}\\&\le
|z-e-\epsilon\omega|^{2s-n}\sum_{k=1}^{+\infty}
|c_k|\,\big(r_0(e+\epsilon\omega,z)\big)^{s+\frac\alpha4}
\left(\frac12\right)^{k-\frac\alpha4}
\\ &\le C\,
|z-e-\epsilon\omega|^{-n-\frac\alpha2}\sum_{k=1}^{+\infty}
\frac{|c_k|}{2^k}
\\ &\le
C(1-\epsilon-r)^{-n-\frac{\alpha}{2}}\,
\epsilon^{s+\frac{\alpha}{4}},
\end{split}\end{equation*}
up to renaming~$C$.

As a consequence, we find that
\begin{equation}
\label{estimates6}
\begin{split}
\left|\int_{B_r} f(z)\mathcal{G}_1(e+\epsilon\omega,z) dz\right|
&\leq 
\sup_{z\in B_r} |\mathcal{G}_1
(e+\epsilon\omega,z)|\,
\left\|f\right\|_{L^1(B_r)}
\\ &\leq \left\|f\right\|_{L^1(B_r)}(1-\epsilon-r)^{-n-\frac{\alpha}{2}}\epsilon^{s+\frac{\alpha}{4}}
\\
&\leq \left\|f\right\|_{L^1(B_r)}2^{n+\frac{\alpha}{2}}(1-r)^{-n-\frac{\alpha}{2}}\epsilon^{s+\frac{\alpha}{4}}
\\
&=C\epsilon^{s+\frac{\alpha}{4}},
\end{split}
\end{equation}
as long as $\epsilon$ is suitably
small with respect to $r$, and $C$ is a positive constant
which depends on $\|f\|_{L^1(B_r)}$, $r$, $n$ and~$\alpha$.

Then, by \eqref{estimates5}, \eqref{estimates4} and \eqref{estimates6} we conclude that
\begin{equation}
\int_{B_1} f(z)\mathcal{G}_1(e+\epsilon\omega,z) dz=o(\epsilon^s).
\end{equation}
Inserting this information into \eqref{alalal}, 
and recalling~\eqref{IN16p},
we obtain
\begin{equation}
\label{ololol}
\int_{B_1} f(z)\mathcal{G}_s(e+\epsilon\omega,z) dz=\int_{B_1} f(z)\mathcal{G}_0(e+\epsilon\omega,z) dz+o(\epsilon^s).
\end{equation}
Now, we define \[\mathcal{D}_1:=\left\{z\in B_1\quad\text{s.t.}\quad r_0(e+\epsilon\omega,z)>1/2\right\}\] and \[\mathcal{D}_2:=\left\{z\in B_1\quad\text{s.t.}\quad r_0(e+\epsilon\omega,z)\leq 1/2\right\}.\]
If $z\in\mathcal{D}_1$, then $z\in B_1\setminus B_r$, thanks to~\eqref{75g8676769},
and hence we can use \eqref{estimates1} and \eqref{su} and write 
\[|f(z)\mathcal{G}_0(e+\epsilon\omega,z)|\leq 
C\epsilon^{\frac{\alpha}{2}}|z-e-\epsilon\omega|^{2s-n}.\] 
Then, recalling again \eqref{estimates1},
\begin{equation}
\label{D1}
\left|\int_{\mathcal{D}_1} f(z)\mathcal{G}_1(e+\epsilon\omega,z) dz\right|\leq C\epsilon^{\frac{\alpha}{2}}\int_{B_{3\sqrt{\epsilon}}(e+\epsilon\omega)} |z-e-\epsilon\omega|^{2s-n} dz=C\epsilon^{\frac{\alpha}{2}+s},
\end{equation}
up to renaming the constant $C>0$. This information and \eqref{ololol} give that
\begin{equation*}
\int_{B_1} f(z)\mathcal{G}_s(e+\epsilon\omega,z) dz=
\int_{\mathcal{D}_2} f(z)\mathcal{G}_0(e+\epsilon\omega,z) dz+o(\epsilon^s).
\end{equation*}
Now, by \eqref{kfg} and \eqref{min}, if $z\in\mathcal{D}_2$,
\begin{equation*}
\mathcal{G}_0(e+\epsilon\omega,z)=\frac{|z-e-\epsilon\omega|^{2s-n}}{s}(r_0(e+\epsilon\omega))^s=\frac{\epsilon^s(-\epsilon-2e\cdot\omega)^s(1-|z|^2)^s}{s|z-e-\epsilon\omega|^n}.
\end{equation*}
Hence, we have
\begin{equation}
\label{sesa}
\begin{split}
&\lim_{\epsilon\searrow 0}\epsilon^{-s}\int_{B_1} f(z)\mathcal{G}_s(e+\epsilon\omega,z) dz \\
=\;&\lim_{\epsilon\searrow 0}\epsilon^{-s}\int_{\mathcal{D}_2} f(z)\mathcal{G}_0(e+\epsilon\omega,z) dz
\\
=\;&\lim_{\epsilon\searrow 0} \int_{\left\{2\epsilon(-\epsilon-2e\cdot\omega)(1-|z|^2)\leq|z-e-\epsilon\omega|^2\right\}} f(z)\frac{(-\epsilon-2e\cdot\omega)^s(1-|z|^2)^s}{s|z-e-\epsilon\omega|^n} dz.
\end{split}
\end{equation}
Now we set
\begin{equation}\label{H7uJA78JsadA}
F_\epsilon(z):=\begin{cases}f(z)
\displaystyle\frac{(-\epsilon-2e\cdot\omega)^s(1-|z|^2)^s}{
s|z-e-\epsilon\omega|^n}&\quad\text{if}\quad 2\epsilon
(-\epsilon-2e\cdot\omega)(1-|z|^2)\leq|z-e-\epsilon\omega|^2, \\
0&\quad\text{otherwise}, \end{cases}
\end{equation}
and we prove that for any $\eta>0$ there exists $\delta>0$ independent
of~$\epsilon$ such that, for any $E\subset\mathbb{R}^n$ with $|E|\leq\delta$, we have
\begin{equation}
\label{eta}
\int_{B_1\cap E} |F_\epsilon(z)| dz\leq\eta.
\end{equation}
To this aim, given~$\eta$ and~$E$ as above,
we define
\begin{equation} \label{RGANrho}
\rho:= \min\left\{
\epsilon (-\epsilon-2e\cdot\omega),\,
\sqrt{{ 2\epsilon (-\epsilon-2e\cdot\omega)}(1-r)},\,
\left(
\frac{
2^{s+\alpha} s^2\,\epsilon^{s+\alpha}\,(-\epsilon-2e\cdot\omega)^\alpha
\eta}{3^{2s}\,\|f\|_{C^\alpha(B_1\setminus B_r)}\,|\partial B_1|}
\right)^{\frac1{2\alpha}}
\right\}.\end{equation}
We stress that the above definition is well-posed, thanks to~\eqref{RGHHCicj}.
In addition, using the integrability of~$f$, we take~$\delta>0$
such that if~$A\subseteq B_1$ and~$|A|\le\delta$ then
\begin{equation}\label{PPKAjnaOP} \int_{A} |f(x)|\,dx\le \frac{s\rho^n\eta}{2\cdot 3^s}.
\end{equation}
We set 
\begin{equation}\label{9ikjendE} E_1:=E\cap B_{\rho}(e+\epsilon\omega)\qquad{\mbox{and}}\qquad
E_2:=E\setminus B_{\rho}(e+\epsilon\omega).\end{equation}
{F}rom~\eqref{H7uJA78JsadA}, we see that
$$ |F_\epsilon(z)|\le 
\frac{|f(z)|\,\chi_\star(z)}{2^s s\,\epsilon^s
|z-e-\epsilon\omega|^{n-2s}},$$
where
$$ \chi_\star(z):=\begin{cases}
1 & \quad\text{if}\quad 2\epsilon
(-\epsilon-2e\cdot\omega)(1-|z|^2)\leq|z-e-\epsilon\omega|^2, \\
0&\quad\text{otherwise},
\end{cases}$$
and therefore
\begin{equation}\label{THAnaoa}
\int_{B_1\cap E_1} |F_\epsilon(z)|\,dz
\le \int_{B_1\cap E_1}\frac{|f(z)|\,\chi_\star(z)}{2^s s\,\epsilon^s
|z-e-\epsilon\omega|^{n-2s}}\,dz.
\end{equation}
Now, for every~$z\in B_1\cap E_1\subseteq B_{\rho}(e+\epsilon\omega)$ for which~$\chi_\star(z)\ne0$,
we have that
$$ 2\epsilon
(-\epsilon-2e\cdot\omega)(1-|z|^2)\le|z-e-\epsilon\omega|^2\le\rho^2,$$
and hence
$$ |z|\ge \sqrt{1-\frac{\rho^2}{ 2\epsilon (-\epsilon-2e\cdot\omega)}}
\ge 1-\frac{\rho^2}{ 2\epsilon (-\epsilon-2e\cdot\omega)},$$
which in turn gives that~$|z|\ge r$, recall~\eqref{RGANrho}.

{F}rom this and~\eqref{THAnaoa} we deduce that
\begin{equation}\label{9ikjendE2}
\begin{split}&
\int_{B_1\cap E_1} |F_\epsilon(z)|\,dz
\le \int_{1-\frac{\rho^2}{ 2\epsilon (-\epsilon-2e\cdot\omega)}\le|z|<1
}\frac{\|f\|_{C^\alpha(B_1\setminus B_r)}\,(1-|z|)^\alpha}{2^s s\,\epsilon^s
|z-e-\epsilon\omega|^{n-2s}}\,dz\\&\qquad
\le \frac{\|f\|_{C^\alpha(B_1\setminus B_r)}}{2^s s\,\epsilon^s}\,
\left(\frac{\rho^2}{ 2\epsilon (-\epsilon-2e\cdot\omega)}\right)^\alpha
\int_{1-\frac{\rho^2}{ 2\epsilon (-\epsilon-2e\cdot\omega)}\le|z|<1
}\frac{dz}{|z-e-\epsilon\omega|^{n-2s}}\\&\qquad
\le \frac{\|f\|_{C^\alpha(B_1\setminus B_r)}}{2^s s\,\epsilon^s}\,
\left(\frac{\rho^2}{ 2\epsilon (-\epsilon-2e\cdot\omega)}\right)^\alpha
\int_{B_3}\frac{dx}{|x|^{n-2s}}
\\&\qquad=\frac{3^{2s}\,\|f\|_{C^\alpha(B_1\setminus B_r)}\,
|\partial B_1|}{
2^{s+\alpha+1} s^2\,\epsilon^{s+\alpha}\,(-\epsilon-2e\cdot\omega)^\alpha }\,
\;\rho^{2\alpha}\\&\qquad\le
\frac{\eta}{2},
\end{split}\end{equation}
where~\eqref{RGANrho} has been exploited in the last inequality.

We also point out that, by~\eqref{H7uJA78JsadA},
\eqref{PPKAjnaOP} and~\eqref{9ikjendE},
\begin{eqnarray*}
\int_{B_1\cap E_2}|F_\epsilon(z)|\,dz
&\le&\int_{(B_1\setminus
B_{\rho}(e+\epsilon\omega))\cap E}
|f(z)|\,\frac{(-\epsilon-2e\cdot\omega)^s(1-|z|^2)^s}{
s|z-e-\epsilon\omega|^n}\,dz\\
&\le&\frac{3^s}{s\rho^n}\int_{B_1\cap E}|f(z)|\,dz\\&\le&\frac{\eta}{2}.
\end{eqnarray*}
This, \eqref{9ikjendE} and~\eqref{9ikjendE2} give~\eqref{eta},
as desired.

Notice also that the sequence $F_\epsilon(z)$ converges pointwise
to the function \[F(z):=f(z)\frac{(-2e\cdot\omega)^s(1-|z|^2)^s}{s|z-e|^n}.\]
Hence \eqref{sesa}, \eqref{eta} and the Vitali Convergence Theorem allow us to conclude that
\begin{equation}\label{lkl5678kl}
\begin{split}
\lim_{\epsilon\searrow 0}\int_{B_1} f(z)\mathcal{G}_s(e+\epsilon\omega,z) dz&=\lim_{\epsilon\searrow 0}\int_{B_1} F_\epsilon(z) dz \\
&=\int_{B_1} f(z)\frac{(-2e\cdot\omega)^s(1-|z|^2)^s}{s|z-e|^n} dz,
\end{split}
\end{equation}
which establishes the claim of Lemma \ref{lemsix}
(notice that the finiteness of the latter quantity in~\eqref{lkl5678kl}
follows from~\eqref{lklkl}).
\end{proof}
\end{lemma}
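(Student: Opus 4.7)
The plan is to use the explicit representation of the Green function and identify the leading order asymptotic as $\epsilon \searrow 0$. The admissibility condition $e+\epsilon\omega \in B_1$ forces $1-|e+\epsilon\omega|^2 = \epsilon(-\epsilon - 2e\cdot\omega) > 0$, so in particular $-e\cdot\omega > \epsilon/2 > 0$. Substituting into the definition of $r_0$ in \eqref{GREEN} gives
$$r_0(e+\epsilon\omega, z) = \frac{\epsilon\,(-\epsilon - 2e\cdot\omega)(1-|z|^2)}{|z-e-\epsilon\omega|^2},$$
so that $r_0$ is of order $\epsilon$ for $z$ bounded away from $e+\epsilon\omega$. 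For small $r_0$, the elementary estimate
$$\int_0^{r_0} \frac{\eta^{s-1}}{(\eta+1)^{n/2}}\,d\eta = \frac{r_0^s}{s} + O(r_0^{s+1}),$$
obtained from the Taylor expansion $(\eta+1)^{-n/2} = \sum_{k\ge 0}\binom{-n/2}{k}\eta^k$, suggests that pointwise in $z \neq e$,
$$\epsilon^{-s}\,\mathcal{G}_s(e+\epsilon\omega, z) \longrightarrow \frac{k(n,s)\,(-2e\cdot\omega)^s(1-|z|^2)^s}{s\,|z-e|^n},$$
which is exactly the desired limiting integrand. The main work is to justify the interchange of limit and integral.

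To this end, I would first split the integrand, using the cutoff $r_1 := \min\{1/2, r_0\}$, as $\mathcal{G}_s(x,z) = \mathcal{G}(x,z) + g(x,z)$, where $\mathcal{G}$ uses the convergent power series $\sum c_k r_1^{k+s}$ (with $c_k = \frac{1}{k+s}\binom{-n/2}{k}$) for $\int_0^{r_1}$ and $g$ is the remainder $\int_{r_1}^{r_0}$. The coefficients $c_k$ decay fast enough (by a crude bound $|\binom{-n/2}{k}| \le (n+k+1)^{n+1}$) that the series converges absolutely for $r_1 \le 1/2$. Then decompose $\mathcal{G} = \mathcal{G}_0 + \mathcal{G}_1$, where $\mathcal{G}_0 := \frac{|z-x|^{2s-n}}{s}r_1^s$ is the leading term and $\mathcal{G}_1$ collects the $k\ge 1$ tail.

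Next, each piece must be shown to give an $o(\epsilon^s)$ contribution, except for the leading one. The key geometric observation is that $r_0(e+\epsilon\omega, z) > 1/2$ forces $|z - e - \epsilon\omega|^2 \le 6\epsilon$, so $z$ lies in $B_{3\sqrt{\epsilon}}(e+\epsilon\omega)$, which in turn is contained in the Hölder region $\mathbb{R}^n \setminus B_r$ for $\epsilon$ small. There, $|f(z)| \le C|z-e|^\alpha \le C\epsilon^{\alpha/2}$, and this extra Hölder decay, combined with the volume of $B_{3\sqrt{\epsilon}}$ and the bound on $|z-x|^{2s-n}$, produces the required $o(\epsilon^s)$ error for both the ``$g$'' contribution and the $\mathcal{D}_1 := \{r_0 > 1/2\}$ contribution of $\mathcal{G}_0$. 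For $\mathcal{G}_1$, one splits into $z$ near $e+\epsilon\omega$ (handled by Hölder continuity as above), $z$ in an intermediate annulus inside $B_1 \setminus B_r$ (handled by $|f(z)| \le C|z-e-\epsilon\omega|^\alpha$ combined with the bound $r_1^{k+s} \le r_0^{s+\alpha/4}(1/2)^{k-\alpha/4}$), and $z \in B_r$ (where $|z-e-\epsilon\omega|$ is uniformly bounded below and the $L^1$ norm of $f$ absorbs the singularity).

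The remaining piece is $\int_{\mathcal{D}_2} f(z)\mathcal{G}_0(e+\epsilon\omega, z)\,dz$, where on $\mathcal{D}_2 := \{r_0 \le 1/2\}$ one has $r_1 = r_0$ and hence
$$\epsilon^{-s}\mathcal{G}_0(e+\epsilon\omega, z) = \frac{(-\epsilon - 2e\cdot\omega)^s(1-|z|^2)^s}{s|z-e-\epsilon\omega|^n}\,\chi_{\mathcal{D}_2}(z).$$
This converges pointwise to the target integrand. The hard part will be justifying passage to the limit, since the limit kernel $\frac{(1-|z|^2)^s}{|z-e|^n}$ is only marginally integrable against $f$ (its finiteness is exactly Lemma~\ref{lklkl}). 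Dominated convergence is delicate; instead I would invoke Vitali's convergence theorem, which requires the equi-integrability bound: for every $\eta>0$ there exists $\delta>0$, uniform in $\epsilon$, such that $\int_{E\cap B_1}|F_\epsilon| \le \eta$ whenever $|E|\le \delta$. To verify this, split $E$ into $E_1 := E \cap B_\rho(e+\epsilon\omega)$ and $E_2 := E \setminus B_\rho(e+\epsilon\omega)$ for a judiciously chosen $\rho = \rho(\eta,\epsilon)$: on $E_1$ the constraint $r_0 \le 1/2$ forces $|z| \ge 1 - \frac{\rho^2}{2\epsilon(-\epsilon-2e\cdot\omega)}$, so $f$ is Hölder there and the Hölder decay beats the $|z-e-\epsilon\omega|^{2s-n}$ singularity; on $E_2$ the kernel is bounded by $\frac{3^s}{s\rho^n}$ and integrability of $f$ gives the bound via absolute continuity of the integral. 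Combined with pointwise convergence, Vitali's theorem yields the claimed limit.
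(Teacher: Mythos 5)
Your proposal is correct and follows essentially the same route as the paper's proof: the same splitting $\mathcal{G}_s=\mathcal{G}_0+\mathcal{G}_1+g$ via the cutoff $r_1=\min\{1/2,r_0\}$ and the power series, the same geometric observation that $r_0>1/2$ confines $z$ to $B_{3\sqrt{\epsilon}}(e+\epsilon\omega)$ where H\"older continuity of $f$ yields the $o(\epsilon^s)$ errors, and the same Vitali argument with the $E_1/E_2$ splitting for the surviving $\mathcal{D}_2$ term. No gaps.
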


With this preliminary work,
we can now establish the boundary behaviour of solutions which is needed
in our setting. As a matter of fact, from Lemma~\ref{lemsix} we immediately deduce that:

\begin{corollary}
\label{propsev}
Let $e$, $\omega\in\partial B_1$, $\epsilon_0>0$
and~$r\in(0,1)$. 

Assume that $e+\epsilon\omega\in B_1$, for any $\epsilon\in(0,\epsilon_0]$. Let $f\in C^\alpha(\mathbb{R}^n\setminus B_r)\cap L^2(\mathbb{R}^n)$ for some $\alpha\in(0,1)$, with $f=0$ outside $B_1$. 

Let $u$ be as in~\eqref{0olwsKA}.
Then,
\begin{equation*}
\lim_{\epsilon\searrow 0}\epsilon^{-s}u(e+\epsilon\omega)=k(n,s)(-2e\cdot\omega)^s\int_{B_1} f(z)\frac{(1-|z|^2)^s}{s|z-e|^n} dz,
\end{equation*}
where $k(n,s)$ denotes a positive normalizing constant.
\end{corollary}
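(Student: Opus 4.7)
The plan is to observe that Corollary~\ref{propsev} is a direct unpacking of Lemma~\ref{lemsix} through the definition of $u$. Specifically, since $e+\epsilon\omega\in B_1$ for every $\epsilon\in(0,\epsilon_0]$ by hypothesis, formula~\eqref{0olwsKA} yields, for all such $\epsilon$,
\[
u(e+\epsilon\omega)=\int_{B_1}\mathcal{G}_s(e+\epsilon\omega,z)\,f(z)\,dz,
\]
with no contribution from the second branch of the piecewise definition.

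Multiplying both sides by $\epsilon^{-s}$ and passing to the limit as $\epsilon\searrow 0$, the right-hand side is precisely the quantity whose limit is computed in Lemma~\ref{lemsix}. Since the hypotheses of Corollary~\ref{propsev} (namely $e,\omega\in\partial B_1$, $e+\epsilon\omega\in B_1$ for $\epsilon\in(0,\epsilon_0]$, and $f\in C^\alpha(\mathbb{R}^n\setminus B_r)\cap L^2(\mathbb{R}^n)$ with $f\equiv 0$ outside $B_1$) match exactly those of Lemma~\ref{lemsix}, we may apply that lemma to obtain
\[
\lim_{\epsilon\searrow 0}\epsilon^{-s}u(e+\epsilon\omega)
=k(n,s)\int_{B_1} f(z)\frac{(-2e\cdot\omega)^s(1-|z|^2)^s}{s|z-e|^n}\,dz,
\]
which, after factoring the $\omega$-dependent constant $(-2e\cdot\omega)^s$ out of the integral (it is independent of $z$), is exactly the stated formula.

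Since the corollary is truly an immediate restatement of Lemma~\ref{lemsix} in terms of the function $u$ rather than the convolution integral, there is no real obstacle to overcome in the argument itself; the substantive work lies entirely in Lemma~\ref{lemsix}, whose proof handles the delicate asymptotic analysis of $\mathcal{G}_s$ at the boundary. The only point worth being explicit about is that the hypothesis $e+\epsilon\omega\in B_1$ is used precisely to ensure that the upper branch of~\eqref{0olwsKA} applies, so that the integral representation can be invoked. The finiteness of the limiting integral, in turn, has already been guaranteed by Lemma~\ref{lklkl}, so the right-hand side of the conclusion is a well-defined real number.
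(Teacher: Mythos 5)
Your proposal is correct and coincides with the paper's treatment: the paper states that Corollary~\ref{propsev} is ``immediately deduced'' from Lemma~\ref{lemsix}, exactly by substituting the definition of $u$ from~\eqref{0olwsKA} (valid since $e+\epsilon\omega\in B_1$) and factoring the $z$-independent constant $(-2e\cdot\omega)^s$ out of the integral. Nothing further is needed.
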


Now we apply the previous results to detect the
boundary growth of a suitable
first eigenfunction. For our purposes, the statement that we need
is the following:

\begin{corollary}\label{8iJJAUMPAAAxc}
There exists a nontrivial solution $\phi_*$ of \eqref{dirfun}
which belongs to~$H^s_0(B_1)\cap C^{\alpha}
({\mathbb{R}}^n\setminus B_{1/2})$,
for some~$\alpha\in(0,1)$, and such that, for every~$e\in\partial B_1$,
\begin{equation}\label{CHaert}
\lim_{\epsilon\searrow 0}\epsilon^{-s}\phi_*(e+\epsilon\omega)=k_*\,
(-e\cdot\omega)^s_+,
\end{equation}
for a suitable constant~$k_*>0$.

Furthermore,
for every $R\in(r,1)$, there exists~$C_R>0$ such that
\begin{equation}\label{COSI}
\sup_{x\in B_1\setminus B_R}
d^{-s}(x)\,|\phi_*(x)|\le C_R.\end{equation}

\begin{proof}
Let~$\alpha\in(0,1)$ and~$\phi\in H^s_0(B_1)\cap C^{\alpha}({\mathbb{R}}^n\setminus B_{1/2})$
be the nonnegative
and nontrivial solution of \eqref{dirfun}, as given in Corollary~\ref{QUESTCP}.

In the spirit of~\eqref{0olwsKA},
we define
$$
\phi_*(x):=
\begin{cases}
\displaystyle\lambda_1\int_{B_1} \mathcal{G}_s\left(x,y\right)\,\phi(y)\,dy & {\mbox{ if }}x\in B_1,\\
0&{\mbox{ if }}x\in{\mathbb{R}}^n\setminus B_1.
\end{cases}$$
We stress that we can use Proposition~\ref{LEJOS}
in this context, with~$f:=\lambda_1\phi$,
since condition~\eqref{CHlaIA} is satisfied in this case.

Then, from~\eqref{VIC2} and~\eqref{VIC4}, we know that~$\phi_*\in H^s_0(B_1)$
and, from~\eqref{VIC3},
$$ (-\Delta)^s \phi_*=\lambda_1\,\phi{\mbox{ in }}B_1.$$
In particular, we have that~$(-\Delta)^s (\phi-\phi_*)=0$ in~$B_1$,
and~$\phi-\phi_*\in H^s_0(B_1)$, which give that~$\phi-\phi_*$ vanishes identically.
Hence, we can write that~$\phi=\phi_*$, and thus~$\phi_*$
is a solution of~\eqref{dirfun}.

Now, we check~\eqref{CHaert}.
For this, we distinguish two cases.
If~$e\cdot\omega\ge 0$, we have that
$$ |e+\epsilon\omega|^2 =1+2\epsilon e\cdot\omega+\epsilon^2>1,$$
for all~$\epsilon>0$. Then, in this case~$e+\epsilon\omega\in
{\mathbb{R}}^n\setminus B_1$, and therefore~$\phi_*(e+\epsilon\omega)=0$.
This gives that, in this case,
\begin{equation}\label{GIANDIACNKS}
\lim_{\epsilon\searrow 0}\epsilon^{-s}\phi_*(e+\epsilon\omega)=0.\end{equation}
If instead~$e\cdot\omega<0$, we see that
$$ |e+\epsilon\omega|^2 =1+2\epsilon e\cdot\omega+\epsilon^2<1,$$
for all~$\epsilon>0$ sufficiently small. Hence, we can exploit
Corollary~\ref{propsev} and find that
\begin{equation}\label{GIANDIACNKS2}
\lim_{\epsilon\searrow 0}\epsilon^{-s}\phi_*(e+\epsilon\omega)=\lambda_1\,
k(n,s)(-2e\cdot\omega)^s\int_{B_1} \phi(z)\frac{(1-|z|^2)^s}{s|z-e|^n} \,dz,\end{equation}
with~$k(n,s)>0$. Then, we define
$$ k_*:=2^s\,k(n,s)\int_{B_1} \phi(z)\frac{(1-|z|^2)^s}{s|z-e|^n} \,dz.$$
We observe that~$k_*$ is positive by construction,
with~$k(n,s)>0$. Also, in light of
Lemma~\ref{lklkl}, we know that $k_*$ is finite. 
Hence, from~\eqref{GIANDIACNKS} and~\eqref{GIANDIACNKS2}
we obtain~\eqref{CHaert}, as desired.

It only remains to check~\eqref{COSI}.
For this, we use~\eqref{VIC3}, and we see that,
for every~$R\in(r,1)$, 
$$ \sup_{x\in B_1\setminus B_R}
d^{-s}(x)\,|\phi_*(x)|\le C_R\,\lambda_1\,\big(\|\phi\|_{L^1(B_1)}+
\|\phi\|_{L^\infty(B_1\setminus B_r)}\big),
$$
and this gives~\eqref{COSI} up to renaming~$C_R$.
\end{proof}
\end{corollary}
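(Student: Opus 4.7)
My plan is to combine the radially symmetric eigenfunction produced by Corollary~\ref{QUESTCP} with the Green representation machinery of Proposition~\ref{LEJOS} and the boundary asymptotics of Corollary~\ref{propsev}. Concretely, let $\phi$ be the radial, nonnegative, nontrivial solution of~\eqref{dirfun} provided by Corollary~\ref{QUESTCP}, so that $\phi\in H^s_0(B_1)\cap C^\alpha(\mathbb{R}^n\setminus B_{1/2})$ for some $\alpha\in(0,1)$. I would then define
\[
\phi_*(x):=\begin{cases} \lambda_1\displaystyle\int_{B_1}\mathcal{G}_s(x,y)\,\phi(y)\,dy & \text{if } x\in B_1,\\ 0 & \text{if } x\in\mathbb{R}^n\setminus B_1,\end{cases}
\]
and argue that $\phi_*\equiv\phi$.

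To carry this out, first observe that the right-hand side $f:=\lambda_1\phi$ satisfies the hypothesis~\eqref{CHlaIA} of Proposition~\ref{LEJOS} with $r=\tfrac12$, since $\phi\in C^\alpha(\mathbb{R}^n\setminus B_{1/2})$ and $\phi\in L^2(B_1)$. Proposition~\ref{LEJOS} then yields $\phi_*\in W^{2s,2}_{\mathrm{loc}}(B_1)$, the distributional identity $(-\Delta)^s\phi_*=\lambda_1\phi$ in $B_1$, and the estimate~\eqref{VIC2}, which is exactly the bound~\eqref{COSI} claimed in the statement (after renaming $C_R$ and absorbing $\lambda_1$ and the two norms of $\phi$). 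To identify $\phi_*$ with $\phi$, I would note that $\phi_*$ vanishes outside $B_1$ by construction and satisfies, thanks to~\eqref{VIC2}, the same decay at $\partial B_1$ that places it in $H^s_0(B_1)$; hence $\phi-\phi_*\in H^s_0(B_1)$ with $(-\Delta)^s(\phi-\phi_*)=0$ in $B_1$, which by the uniqueness part of Lemma~\ref{VARIA} (or by testing against itself in the energy formulation~\eqref{ENstut}) forces $\phi\equiv\phi_*$.

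With this identification in hand, the asymptotic formula follows from a case split on the sign of $e\cdot\omega$. If $e\cdot\omega\ge 0$ then $|e+\epsilon\omega|^2=1+2\epsilon\, e\cdot\omega+\epsilon^2\ge 1$, so $e+\epsilon\omega\notin B_1$ and $\phi_*(e+\epsilon\omega)=0$ for every $\epsilon>0$, matching $k_*(-e\cdot\omega)_+^s=0$. If instead $e\cdot\omega<0$, then $e+\epsilon\omega\in B_1$ for all small $\epsilon>0$, and Corollary~\ref{propsev} applied to $f=\lambda_1\phi$ gives
\[
\lim_{\epsilon\searrow 0}\epsilon^{-s}\phi_*(e+\epsilon\omega)=\lambda_1\,k(n,s)\,(-2e\cdot\omega)^s\int_{B_1}\phi(z)\,\frac{(1-|z|^2)^s}{s\,|z-e|^n}\,dz.
\]
The key point now is to set
\[
k_*:=2^s\,\lambda_1\,k(n,s)\int_{B_1}\phi(z)\,\frac{(1-|z|^2)^s}{s\,|z-e|^n}\,dz,
\]
and verify that this constant is independent of $e\in\partial B_1$: this is where the radial symmetry of $\phi$ (granted by Corollary~\ref{QUESTCP}, via Proposition~\ref{2.52.5}) is essential, since the integral can be rewritten by a rotation sending any fixed $e_0\in\partial B_1$ to $e$ without altering $\phi$. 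Finiteness of $k_*$ is Lemma~\ref{lklkl}, and positivity follows because $\lambda_1>0$ by Lemma~\ref{VARIA}, $k(n,s)>0$, and $\phi\ge 0$ is nontrivial.

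The main obstacle I anticipate is the identification $\phi_*\equiv\phi$: one must be sure that the Green-function convolution truly lands in $H^s_0(B_1)$ and that uniqueness for $(-\Delta)^s w=0$ with $w\in H^s_0(B_1)$ holds in the higher-order regime $s>1$. The former is handled by combining~\eqref{VIC2} with~\eqref{VIC4} of Proposition~\ref{LEJOS}; the latter follows by the variational characterization, since any such $w$ would minimize $\mathcal{E}_s$ with zero energy and unit normalization would be impossible unless $w\equiv 0$. Once this identification is secured, the remaining estimates are mechanical applications of the two auxiliary results already available in the excerpt.
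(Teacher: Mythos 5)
Your proposal is correct and follows essentially the same route as the paper: define $\phi_*$ as the Green-function convolution of $\lambda_1\phi$, invoke Proposition~\ref{LEJOS} to get \eqref{COSI} and the identification $\phi_*\equiv\phi$ via uniqueness in $H^s_0(B_1)$, and then split on the sign of $e\cdot\omega$ using Corollary~\ref{propsev}. Your explicit remark that the radial symmetry of $\phi$ is what makes $k_*$ independent of $e$ (and your inclusion of the factor $\lambda_1$ in the definition of $k_*$) is a small but genuine improvement in care over the paper's write-up, which leaves both points implicit.
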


Now, we can complete the proof of Proposition~\ref{sharbou}, by arguing as follows.

\begin{proof}[Proof of Proposition~\ref{sharbou}]
Let $\psi$ be a test function in $C^\infty_0(\mathbb{R}^n)$.
Let also~$R:=\frac{r+1}{2}\in(r,1)$ and
$$ g_\epsilon(X):=
\epsilon^{-s}\phi_*(e+\epsilon X)\partial^{\beta}\psi(X).$$
We claim that
\begin{equation}\label{7UHSNs9oKN}
\sup_{{X\in{\mathbb{R}}^n}}|g_\epsilon(X)|\le C,\end{equation}
for some~$C>0$ independent of~$\epsilon$.
To prove this, we distinguish three cases.
If~$e+\epsilon X\in{\mathbb{R}}^n\setminus B_1$,
we have that~$\phi_*(e+\epsilon X)=0$ and thus~$g_\epsilon(X)=0$.
If instead~$e+\epsilon X\in B_R$,
we observe that
$$ R>|e+\epsilon X|\ge 1-\epsilon|X|,$$
and therefore~$|X|\ge \frac{1-R}{\epsilon}$. In particular,
in this case~$X$ falls outside the support of~$\psi$, as long as~$\epsilon>0$
is sufficiently small, and consequently~$\partial^{\beta}\psi(X)=0$
and~$g_\epsilon(X)=0$.

Hence, to complete the proof of~\eqref{7UHSNs9oKN},
we are only left with the case in which~$
e+\epsilon X\in B_1\setminus B_R$. In this situation,
we make use of~\eqref{COSI} and we find that
\begin{eqnarray*}
&& |\phi_*(e+\epsilon X)|\le C\,d^{s}(e+\epsilon X)=
C\,(1-|e+\epsilon X|)^s\\&&\qquad
\le C\,(1-|e+\epsilon X|)^s(1+|e+\epsilon X|)^s=
C\,(1-|e+\epsilon X|^2)^s\\&&\qquad=
C\,\epsilon^s(-2e\cdot X-\epsilon|X|^2)^s\le C\epsilon^s,
\end{eqnarray*}
for some~$C>0$ possibly varying from line to line,
and this completes the proof of~\eqref{7UHSNs9oKN}.

Now, from~\eqref{7UHSNs9oKN} and the
Dominated Convergence Theorem, we obtain that
\begin{equation}\label{eq567a8s81n} \lim_{\epsilon\searrow0}\int_{\mathbb{R}^n}
\epsilon^{-s}\phi_*(e+\epsilon X)\partial^{\beta}\psi(X) dX
=\int_{\mathbb{R}^n} \lim_{\epsilon\searrow0}
\epsilon^{-s}\phi_*(e+\epsilon X)\partial^{\beta}\psi(X) dX.\end{equation}
On the other hand, by Corollary~\ref{8iJJAUMPAAAxc},
used here with~$\omega:=\frac{X}{|X|}$, we know that
\begin{eqnarray*}&& \lim_{\epsilon\searrow0}
\epsilon^{-s}\phi_*(e+\epsilon X)
=\lim_{\epsilon\searrow0}
\epsilon^{-s}\phi_*(e+\epsilon |X|\omega)=|X|^s
\lim_{\epsilon\searrow 0}\epsilon^{-s}\phi_*(e+\epsilon\omega)\\&&\qquad=k_*\,|X|^s\,
(-e\cdot\omega)^s_+=k_*\,(-e\cdot X)^s_+.
\end{eqnarray*}
Substituting this into~\eqref{eq567a8s81n}, we thus find that
$$ \lim_{\epsilon\searrow0}\int_{\mathbb{R}^n}
\epsilon^{-s}\phi_*(e+\epsilon X)\partial^{\beta}\psi(X) dX
=k_*\,\int_{\mathbb{R}^n} (-e\cdot X)^s_+\partial^{\beta}\psi(X) dX.$$
As a consequence, integrating by parts twice,
\begin{equation*}
\begin{split}
&\lim_{\epsilon\searrow 0}\epsilon^{|\beta|-s}\int_{\mathbb{R}^n}
\partial^\beta\phi_*(e+\epsilon X)\psi(X) dX=
\lim_{\epsilon\searrow 0}\int_{\mathbb{R}^n}\partial^\beta
\Big(\epsilon^{-s}\phi_*(e+\epsilon X)\Big)\psi(X) dX \\
&\qquad=(-1)^{|\beta|}\lim_{\epsilon\searrow 0}\int_{\mathbb{R}^n}
\epsilon^{-s}\phi_*(e+\epsilon X)\partial^{\beta}\psi(X) dX \\
&\qquad=(-1)^{|\beta|}\,k_*\,\int_{\mathbb{R}^n} (-e\cdot X)^s_+\partial^{\beta}\psi(X) dX\\
&\qquad=k_*\,\int_{\mathbb{R}^n} \partial^{\beta}(-e\cdot X)^s_+\psi(X) dX
\\
&\qquad=(-1)^{|\beta|}\,k_*\, s(s-1)\ldots(s-|\beta|+1)e_1^{\beta_1}\ldots e_n^{\beta_n}\int_{\mathbb{R}^n}(-e\cdot X)^{s-|\beta|}_+\psi(X) dX.
\end{split}
\end{equation*}
Since the test function $\psi$ is arbitrary, the claim in
Proposition~\ref{sharbou} is proved.
\end{proof}

\section{Boundary behaviour\index{boundary behaviour} of~$s$-harmonic functions\index{function!$s$-harmonic}}
\label{s:hwb}

In this section we analyze the asymptotic behaviour of $s$-harmonic
functions, with a ``spherical bump function'' as exterior Dirichlet datum.

The result needed for our purpose is the following:

\begin{lemma}
\label{hbump} 
Let $s>0$. Let~$m\in\mathbb{N}_0$
and~$\sigma\in(0,1)$ such that~$s=m+\sigma$.

Then, there exists
\begin{equation}\label{0oHKNSSH013oe2urjhfe}
{\mbox{$\psi\in H^s(\mathbb{R}^n)\cap C^s_0(\mathbb{R}^n)$
such that $
(-\Delta)^s \psi=0$ in~$B_1$,}}\end{equation} and, for
every $x\in\partial B_{1-\epsilon}$,
\begin{equation}\label{0oHKNSSH013oe2urjhfe:2}
\psi(x)=k\,\epsilon^s+o(\epsilon^s),\end{equation}
as $\epsilon\searrow 0$, for some $k>0$.
\begin{proof}

Let~$\overline{\psi}\in C^\infty(\mathbb{R},\,[0,1])$ such that $\overline{\psi}=0$ in $\mathbb{R}\setminus(2,3)$ and $\overline{\psi}>0$ in $(2,3)$. Let $\psi_0(x):=(-1)^m\overline{\psi}(|x|)$.
We recall the Poisson kernel\index{kernel!Poisson}
$$ 
\Gamma_s(x,y):=(-1)^m\frac{\gamma_{n,\sigma}}{
|x-y|^n}\frac{(1-|x|^2)^s_+}{(|y|^2-1)^s},$$
for $x\in\mathbb{R}^n$, $y\in\mathbb{R}^n\setminus\overline{B_1}$, and
a suitable normalization constant~$\gamma_{n,\sigma}>0$ (see formulas~(1.10) and~(1.30)
in~\cite{ABX}).
We define
$$ \psi(x):=
\displaystyle\int_{{\mathbb{R}}^n\setminus B_1} \Gamma_s(x,y)\,\psi_0(y)\,dy+\psi_0(x).$$
Notice that~$\psi_0=0$ in~$B_{3/2}$ and therefore we can
exploit Theorem
in~\cite{ABX} and obtain that~\eqref{0oHKNSSH013oe2urjhfe}
is satisfied
(notice also that~$\psi=\psi_0$ outside~$B_1$, hence~$\psi$ is compactly supported).

Furthermore, to prove~\eqref{0oHKNSSH013oe2urjhfe:2}
we borrow some ideas from Lemma 2.2 in~\cite{MR3626547}
and we see that, for any $x\in \partial B_{1-\epsilon}$,
\begin{equation*}
\begin{split}
\psi(x)
&=c(-1)^m\int_{\mathbb{R}^n\setminus B_1} \frac{\psi_0(y)(1-|x|^2)^s}{(|y|^2-1)^s|x-y|^n} dy+\psi_0(x) \\
&=c(-1)^m\int_{\mathbb{R}^n\setminus B_1} \frac{\psi_0(y)(1-|x|^2)^s}{(|y|^2-1)^s|x-y|^n} dy \\
&=c\,(1-|x|^2)^s\int_2^3\left[\int_{\mathbb{S}^{n-1}} \frac{\rho^{n-1}\overline{\psi}(\rho)}{(\rho^2-1)^s|x-\rho\omega|^n} d\omega\right] d\rho
\\&=c\,(2\epsilon-\epsilon^2)^s\int_2^3\left[\int_{\mathbb{S}^{n-1}} \frac{\rho^{n-1}\overline{\psi}(\rho)}{(\rho^2-1)^s|(1-\epsilon)e_1-\rho\omega|^n} d\omega\right] d\rho \\
&=2^sc\,\epsilon^s\int_2^3\left[\int_{\mathbb{S}^{n-1}} \frac{\rho^{n-1}\overline{\psi}(\rho)}{(\rho^2-1)^s|e_1-\rho\omega|^n} d\omega\right] d\rho+o(\epsilon^s) \\
&=c\epsilon^s+o(\epsilon^s),
\end{split}
\end{equation*}
where~$c>0$ is a constant possibly varying from line to line, and this establishes~\eqref{0oHKNSSH013oe2urjhfe:2}.
\end{proof}
\end{lemma}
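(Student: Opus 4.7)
The plan is to construct $\psi$ explicitly via a higher-order Poisson-type representation formula with an exterior datum supported in a spherical annulus bounded away from $\partial B_1$, and then read off the boundary behaviour from the factor $(1-|x|^2)^s$ that naturally appears in the Poisson kernel of $(-\Delta)^s$.

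First I would fix a smooth radial bump: pick $\overline\psi \in C^\infty(\mathbb{R},[0,1])$ with $\overline\psi = 0$ outside $(2,3)$ and $\overline\psi > 0$ on $(2,3)$, and define the exterior datum $\psi_0(x) := (-1)^m \overline\psi(|x|)$. The sign $(-1)^m$ is forced by the Poisson kernel for the higher-order fractional Laplacian, which in $B_1$ takes the form
\[
\Gamma_s(x,y) = (-1)^m \frac{\gamma_{n,\sigma}}{|x-y|^n}\frac{(1-|x|^2)^s_+}{(|y|^2-1)^s}, \qquad x\in B_1,\; y \in \mathbb{R}^n \setminus \overline{B_1},
\]
for a suitable positive normalization constant $\gamma_{n,\sigma}$ (this is available from the literature on higher-order fractional Dirichlet problems, e.g.\ in the Abatangelo--Jarohs--Sald\'ana work cited as \cite{ABX}). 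Setting
\[
\psi(x) := \int_{\mathbb{R}^n \setminus B_1} \Gamma_s(x,y)\,\psi_0(y)\,dy + \psi_0(x),
\]
the exterior component is $\psi|_{\mathbb{R}^n \setminus B_1} = \psi_0$, which is smooth and compactly supported, and the interior integral gives the $s$-harmonic extension into $B_1$. Since $\psi_0$ vanishes identically in a neighbourhood of $\partial B_1$ (it lives in the shell $\{2<|x|<3\}$), the regularity theory for such Poisson representations yields $\psi \in H^s(\mathbb{R}^n) \cap C^s_0(\mathbb{R}^n)$ and $(-\Delta)^s \psi = 0$ in $B_1$, giving \eqref{0oHKNSSH013oe2urjhfe}.

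For the asymptotics \eqref{0oHKNSSH013oe2urjhfe:2}, I would use that $\psi_0$ is radial, hence $\psi$ is radial as well, so it suffices to compute $\psi(x)$ at $x = (1-\epsilon)e_1$. Here $\psi_0(x) = 0$ because $|x|<1<2$, so writing the integral in polar coordinates $y = \rho\omega$ and factoring out the boundary term $(1-|x|^2)^s = (2\epsilon - \epsilon^2)^s = (2\epsilon)^s + o(\epsilon^s)$:
\[
\psi(x) = (1-|x|^2)^s \int_2^3 \int_{\mathbb{S}^{n-1}} \frac{\gamma_{n,\sigma}\,\rho^{n-1} \overline\psi(\rho)}{(\rho^2-1)^s\, |x-\rho\omega|^n}\,d\omega\,d\rho.
\]
As $\epsilon \searrow 0$, the inner double integral converges to a strictly positive, finite constant: finiteness follows from $|e_1 - \rho\omega| \geq \rho - 1 \geq 1$ for $\rho \in (2,3)$, and positivity follows from $\overline\psi > 0$ on $(2,3)$ together with the positive $(-1)^m \Gamma_s$ prefactor matching the sign choice in $\psi_0$. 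This yields $\psi(x) = k\epsilon^s + o(\epsilon^s)$ for a positive constant $k := 2^s \gamma_{n,\sigma}\int_2^3 \int_{\mathbb{S}^{n-1}} \frac{\rho^{n-1}\overline\psi(\rho)}{(\rho^2-1)^s|e_1-\rho\omega|^n}\,d\omega\,d\rho$.

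The main obstacle I anticipate is correctly invoking the Poisson representation in the higher-order regime $s > 1$: for $m \geq 1$ the $s$-harmonic extension problem has additional solvability and sign subtleties (the kernel changes sign compared with the case $s \in (0,1)$, and the correct Dirichlet condition is exterior data together with a prescribed boundary trace of order $m$), which is exactly why the prefactor $(-1)^m$ is inserted in $\psi_0$ so that the formula for $\psi$ yields a genuine $s$-harmonic extension in $B_1$ with the required Sobolev and Hölder regularity. Once this is in place, the boundary expansion is a direct consequence of the explicit $(1-|x|^2)^s$ factor carried by the Poisson kernel.
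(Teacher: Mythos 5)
Your proposal is correct and follows essentially the same route as the paper: the same exterior datum $\psi_0(x)=(-1)^m\overline\psi(|x|)$ supported in the shell $\{2<|x|<3\}$, the same Poisson-kernel representation from~\cite{ABX}, and the same asymptotic computation factoring out $(1-|x|^2)^s=(2\epsilon-\epsilon^2)^s$ in polar coordinates, with the positivity of $k$ coming from the matched $(-1)^m$ signs. Your explicit use of radial symmetry to reduce to $x=(1-\epsilon)e_1$ is implicit in the paper's substitution as well, so there is nothing to add.
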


\begin{remark}\label{RUCAPSJD} {\rm
As in Proposition~\ref{sharbou}, one can extend~\eqref{0oHKNSSH013oe2urjhfe:2}
to higher derivatives (in the distributional sense), obtaining, for any~$e\in\partial B_1$
and~$\beta\in\mathbb{N}^n$
$$ \lim_{\epsilon\searrow0} \epsilon^{|\beta|-s}\partial^\beta\psi(e+\epsilon X)=k_\beta\,
e_1^{\beta_1}\dots e_n^{\beta_n}(-e\cdot X)_+^{s-|\beta|}
,$$
for some~$\kappa_\beta\ne0$.}\end{remark}

Using Lemma \ref{hbump}, in the spirit of \cite{MR3626547}, we
can construct a sequence of $s$-harmonic functions
approaching~$(x\cdot e)^s_+$ for a fixed unit vector $e$,
by using a blow-up argument. Namely, we prove the following:

\begin{corollary}
\label{lapiog}
Let $e\in\partial B_1$. There exists a sequence $v_{e,j}\in H^s(\mathbb{R}^n)\cap C^s(\mathbb{R}^n)$ such that $(-\Delta)^s v_{e,j}=0$ in $B_1(e)$, $v_{e,j}=0$ in $\mathbb{R}^n\setminus B_{4j}(e)$, and \[v_{e,j}\to\kappa(x\cdot e)^s_+\quad\mbox{in}\quad L^1(B_1(e)),\] as $j\to+\infty$, for some $\kappa>0$.
\begin{proof}
Let $\psi$ be as in Lemma \ref{hbump} and
define \[v_{e,j}(x):=j^s\psi\left(\frac{x}{j}-e\right).\]
The $s$-harmonicity and the property of being compactly supported follow
by the ones of $\psi$. We now prove the convergence.
To this aim, given $x\in B_1(e)$, we write $p_j:=\frac{x}{j}-e$ and $\epsilon_j:=1-|p_j|$. Recall that since $x\in B_1(e)$, then $|x-e|^2<1$, which implies that $|x|^2<2x\cdot e$ and $x\cdot e>0$ for any $x\in B_1(e)$. \\
As a consequence \[|p_j|^2=\left|\frac{x}{j}-e\right|^2=
\frac{|x|^2}{j^2}+1-2\frac{x}{j}\cdot e=1-\frac{2}{j}(x\cdot e)_+
+o\left(\frac{1}{j}\right)(x\cdot e)^2_+,\]
and so \[\epsilon_j=\frac{(1+o(1))}{j}(x\cdot e)_+.\]
Therefore, using \eqref{0oHKNSSH013oe2urjhfe:2},
\begin{equation*}
\begin{split}
v_{e,j}(x)&=j^s\psi(p_j) \\ &=j^s\kappa(\epsilon_j^s+o(\epsilon^s_j)) \\
&=j^s\left(\frac{\kappa}{j^s}(x\cdot e)_+^s+o\left(\frac{1}{j^s}\right)\right) \\
&=\kappa(x\cdot e)^s_+ +o(1).
\end{split}
\end{equation*}
Integrating over $B_1(e)$, we obtain the desired $L^1$-convergence.
\end{proof}
\end{corollary}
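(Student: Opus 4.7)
The plan is to build $v_{e,j}$ by a blow-up of the function $\psi$ provided by Lemma~\ref{hbump}, exploiting the scaling $2s$-homogeneity of the fractional Laplacian. Concretely, I would set
\[
v_{e,j}(x):=j^{s}\,\psi\!\left(\frac{x}{j}-e\right),
\]
and first check the two structural properties. For $s$-harmonicity on $B_1(e)$, I observe that if $x\in B_1(e)$ then
\[
\left|\frac{x}{j}-e\right|\le \frac{|x-e|}{j}+\left|\frac{e}{j}-e\right|<\frac{1}{j}+\frac{j-1}{j}=1,
\]
so $\tfrac{x}{j}-e\in B_1$, and the identity $(-\Delta)^{s}\bigl[\psi(\cdot/j-e)\bigr](x)=j^{-2s}\bigl((-\Delta)^{s}\psi\bigr)(x/j-e)$ combined with \eqref{0oHKNSSH013oe2urjhfe} gives $(-\Delta)^{s}v_{e,j}=0$ in $B_1(e)$. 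For the support, $\psi$ is supported in $\overline{B_{3}}$ (indeed $\psi=\psi_{0}$ outside $B_{1}$, and $\psi_{0}$ is supported in $\overline{B_{3}}$), so $v_{e,j}$ vanishes whenever $|x/j-e|>3$, i.e., whenever $|x-je|>3j$, which in particular holds outside $B_{4j}(e)$.

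The core of the argument is the $L^{1}$ convergence. For a fixed $x\in B_{1}(e)$ I would set
\[
p_{j}:=\frac{x}{j}-e,\qquad \epsilon_{j}:=1-|p_{j}|,
\]
and Taylor-expand
\[
|p_{j}|^{2}=1-\frac{2x\cdot e}{j}+\frac{|x|^{2}}{j^{2}},
\]
which yields $\epsilon_{j}=\tfrac{x\cdot e}{j}+o(1/j)$; note that $x\in B_{1}(e)$ forces $x\cdot e>0$, so $\epsilon_{j}>0$ for large $j$ and $p_{j}\in B_{1}$. Plugging this into the sharp asymptotics \eqref{0oHKNSSH013oe2urjhfe:2} from Lemma~\ref{hbump} gives
\[
v_{e,j}(x)=j^{s}\bigl(k\,\epsilon_{j}^{s}+o(\epsilon_{j}^{s})\bigr)
=k\,(x\cdot e)^{s}_{+}+o(1),
\]
so that $v_{e,j}\to \kappa\,(x\cdot e)^{s}_{+}$ pointwise in $B_{1}(e)$ with $\kappa:=k$.

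Finally, to upgrade pointwise to $L^{1}$ convergence on the bounded set $B_{1}(e)$, I would produce a uniform bound $|v_{e,j}(x)|\le C\,(x\cdot e)^{s}_{+}$ on $B_{1}(e)$ (which is $L^{1}$ there) and apply dominated convergence. Such a bound should follow from the same scaling identity once one notes that $\psi$ is $C^{s}_{0}(\mathbb{R}^{n})$ in $B_{1}$ with $\psi(0)=\psi_0(0)=0$, so that $|\psi(p_{j})|\le C\,\epsilon_{j}^{s}$ for some constant $C$ independent of $j$, uniformly in $p_{j}$ close to $\partial B_{1}$; combined with $\epsilon_{j}\le C(x\cdot e)/j$ this yields the required domination. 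The main subtlety I anticipate is ensuring that the $o(\epsilon_{j}^{s})$ remainder in Lemma~\ref{hbump} is uniform in the direction of approach to $\partial B_{1}$, so that the $o(1)$ term in the pointwise limit is genuinely locally uniform on $B_{1}(e)$; if needed this uniformity can be extracted from the explicit Poisson representation of $\psi$ used in the proof of Lemma~\ref{hbump}.
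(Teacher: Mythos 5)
Your proposal is correct and follows essentially the same route as the paper: the same rescaled function $v_{e,j}(x)=j^{s}\psi(x/j-e)$, the same scaling argument for $s$-harmonicity and support, and the same Taylor expansion of $\epsilon_j$ combined with \eqref{0oHKNSSH013oe2urjhfe:2} to get the pointwise limit $\kappa(x\cdot e)^s_+$. In fact you are more careful than the paper at the final step, where the paper simply "integrates over $B_1(e)$"; your explicit domination $|v_{e,j}|\le C(x\cdot e)^s_+$ and the remark about uniformity of the $o(\epsilon_j^s)$ remainder address a point the paper leaves implicit.
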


Now, we show that, as in the case $s\in (0,1)$ proved in
Theorem~3.1 of \cite{MR3626547}, we can find an $s$-harmonic function
with an arbitrarily large number of derivatives prescribed at some point.

\begin{proposition}
\label{maxhlapspan}
For any $\beta\in\mathbb{N}^n$, there exist~$p\in\mathbb{R}^n$, $R>r>0$, and~$v\in H^s(\mathbb{R}^n)\cap C^s(\mathbb{R}^n)$ such that
\begin{equation}
\label{csi}
\begin{cases}
(-\Delta)^s v=0&\quad\text{in}\quad B_r(p), \\
v=0&\quad\text{in}\quad\mathbb{R}^n\setminus B_R(p),
\end{cases}
\end{equation}
\begin{equation*}
D^\alpha v(p)=0\quad{\mbox{ for any }}\; \alpha\in\mathbb{N}^n\quad{\mbox{ with }}\;|\alpha|\leq|\beta|-1,\end{equation*}
\begin{equation*}
D^\alpha v(p)=0\quad{\mbox{ for any }}\; \alpha\in\mathbb{N}^n\quad{\mbox{ with }}\;|\alpha|=|\beta|\quad{\mbox{ and }}\;\alpha\neq\beta\end{equation*}  
and 
\begin{equation*}
D^\beta v(p)=1.\end{equation*}

\begin{proof}
Let $\mathcal{Z}$ be the set of all pairs~$(v,x)\in\left(H^s(\mathbb{R}^n)\cap C^s(\mathbb{R}^n)\right)\times B_r(p)$ that satisfy \eqref{csi} for some $R>r>0$ and $p\in\mathbb{R}^n$. 

To each pair $(v,x)\in\mathcal{Z}$ we associate the vector
$\left(D^\alpha v(x)\right)_{|\alpha|\leq|\beta|}\in\mathbb{R}^{K'}$, for some $K'=K'_{|\beta|}$ and 
consider~$\mathcal{V}$ to be the vector space spanned by this construction, namely
we set
$$\mathcal{V}:=\Big\{\left(D^\alpha v(x)\right)_{|\alpha|\leq|\beta|},
\quad{\mbox{ with }}\; (v,x)\in\mathcal{Z}
\Big\}.$$
We claim that
\begin{equation}\label{CLSPAZ}
\mathcal{V}=\mathbb{R}^{K'}.\end{equation}
To check this, we suppose by contradiction that~$\mathcal{V}$ lies in a 
proper subspace of~$\mathbb{R}^{K'}$. Then, $\mathcal{V}$ must lie in a
hyperplane, hence there exists 
\begin{equation}
\label{cnonull0}
c=(c_\alpha)_{|\alpha|\leq|\beta|}\in\mathbb{R}^{K'}\setminus\left\{0\right\} 
\end{equation}
which is orthogonal to any vector $\left(D^\alpha v(x)\right)_{|\alpha|\leq|\beta|}$
with~$(v,x)\in\mathcal{Z}$, that is
\begin{equation}
\label{prp18}
\sum_{|\alpha|\leq|\beta|} c_\alpha D^\alpha v(x)=0.
\end{equation}
We notice that the pair $(v_{e,j},x)$, with $v_j$ as in Corollary \ref{lapiog},
$e\in\partial B_1$
and $x\in B_1(e)$, belongs to~$\mathcal{Z}$. Consequently,
fixed~$\xi\in\mathbb{R}^n\setminus B_{1/2}$
and set~$e:=\frac{\xi}{|\xi|}$, we have that~\eqref{prp18} holds true when~$v:=v_{e,j}$ and $x\in B_1(e)$, namely
$$
\sum_{|\alpha|\leq|\beta|} c_\alpha D^\alpha v(x)=0.
$$
Let now~$\varphi\in C_0^\infty(B_1(e))$. 
Integrating by parts,
by Corollary \ref{lapiog} and the Dominated Convergence Theorem, 
we have that
\begin{eqnarray*}
&&0=\lim_{j\to+\infty}\int_{\mathbb{R}^n}\sum_{|\alpha|\leq|\beta|}
c_\alpha D^\alpha v_{e,j}(x)\varphi(x)\,dx
=\lim_{j\to+\infty}\int_{\mathbb{R}^n}\sum_{|\alpha|\leq|\beta|}(-1)^{|\alpha|}
c_\alpha v_{e,j}(x)D^\alpha\varphi(x)\,dx\\
&&\qquad=\kappa\int_{\mathbb{R}^n}\sum_{|\alpha|\leq|\beta|}(-1)^{|\alpha|}
c_\alpha(x\cdot e)^s_+D^\alpha\varphi(x)\,dx
=\kappa\int_{\mathbb{R}^n}\sum_{|\alpha|\leq|\beta|}c_\alpha
D^\alpha(x\cdot e)^s_+\varphi(x)\,dx.\end{eqnarray*}
This gives that, for every $x\in B_1(e)$,
\[\sum_{|\alpha|\leq|\beta|}c_\alpha D^\alpha(x\cdot e)^s_+=0.\] 
Moreover, for every $x\in B_1(e)$,
\[D^\alpha(x\cdot e)^s_+=s(s-1)\ldots(s-|\alpha|+1)
(x\cdot e)^{s-|\alpha|}_+e_1^{\alpha_1}\ldots e_n^{\alpha_n}.\]
In particular, for $x=\frac{e}{|\xi|}\in B_1(e)$,
\[D^\alpha(x\cdot e)^s_+\big|_{|_{x=e/{|\xi|}}}=s(s-1)
\ldots(s-|\alpha|+1)|\xi|^{-s}\xi_1^{\alpha_1}\ldots \xi_n^{\alpha_n}.\] And, using the usual multi-index notation, we write
\begin{equation}
\label{sampspal}
\sum_{|\alpha|\leq|\beta|}c_\alpha s(s-1)\ldots(s-|\alpha|+1)
\xi^\alpha=0,
\end{equation}
for any $\xi\in\mathbb{R}^n\setminus B_{1/2}$.
The identity \eqref{sampspal} describes
a polynomial in $\xi$ which vanishes for any~$\xi$ in an open subset of $\mathbb{R}^n$. As a result, the Identity Principle
for polynomials leads to
$$ c_\alpha s(s-1)\ldots(s-|\alpha|+1)=0,$$
for all~$|\alpha|\leq|\beta|$.

Consequently, since $s\in\mathbb{R}\setminus\mathbb{N}$, 
the product $s(s-1)\ldots(s-|\alpha|+1)$ never vanishes, 
and so the coefficients $c_\alpha$ are forced to be null for any $|\alpha|\leq|\beta|$. 
This is in contradiction with~\eqref{cnonull0}, and therefore the proof
of~\eqref{CLSPAZ} is complete.

{F}rom this, the desired claim in
Proposition~\ref{maxhlapspan} plainly follows.
\end{proof}
\end{proposition}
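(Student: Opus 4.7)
The plan is to reduce the problem to a linear-algebra statement: let $K'$ be the number of multi-indices of length at most $|\beta|$, and consider the family $\mathcal{Z}$ of all admissible pairs $(v,x)$, where $v\in H^s(\mathbb{R}^n)\cap C^s(\mathbb{R}^n)$ satisfies \eqref{csi} for some $p,r,R$ and $x\in B_r(p)$. The map sending $(v,x)\in\mathcal{Z}$ to its jet $(D^\alpha v(x))_{|\alpha|\le|\beta|}\in\mathbb{R}^{K'}$ generates a linear subspace $\mathcal{V}\subseteq\mathbb{R}^{K'}$; once we show $\mathcal{V}=\mathbb{R}^{K'}$, linearity immediately produces a function $v$ achieving the prescribed jet $D^\beta v(p)=1$ and $D^\alpha v(p)=0$ for the other indices with $|\alpha|\le|\beta|$.

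To establish the spanning property, I argue by contradiction. If $\mathcal{V}$ were a proper subspace, it would be contained in a hyperplane, so there would exist a nonzero $c=(c_\alpha)_{|\alpha|\le|\beta|}$ such that $\sum_{|\alpha|\le|\beta|}c_\alpha\,D^\alpha v(x)=0$ for every $(v,x)\in\mathcal{Z}$. I would then plug in the blow-up family $v_{e,j}$ provided by Corollary~\ref{lapiog}, noticing that $(v_{e,j},x)\in\mathcal{Z}$ for every $x\in B_1(e)$ and every unit vector $e\in\partial B_1$. Pairing the identity with an arbitrary test function $\varphi\in C_0^\infty(B_1(e))$, integrating by parts to shift all derivatives onto $\varphi$, and then using the $L^1$-convergence $v_{e,j}\to\kappa(x\cdot e)^s_+$ in $B_1(e)$ from Corollary~\ref{lapiog} together with the Dominated Convergence Theorem, one obtains
\[
\sum_{|\alpha|\le|\beta|}c_\alpha\,D^\alpha(x\cdot e)^s_+=0\qquad\text{in }B_1(e),
\]
in the distributional sense.

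The derivatives can be computed explicitly: $D^\alpha(x\cdot e)^s_+=s(s-1)\cdots(s-|\alpha|+1)\,e_1^{\alpha_1}\cdots e_n^{\alpha_n}\,(x\cdot e)^{s-|\alpha|}_+$. Parametrising $e=\xi/|\xi|$ for $\xi\in\mathbb{R}^n\setminus B_{1/2}$ and evaluating at the specific point $x=e/|\xi|\in B_1(e)$ turns this relation into a polynomial identity in~$\xi$ that holds on an open subset of $\mathbb{R}^n$, namely
\[
\sum_{|\alpha|\le|\beta|}c_\alpha\,s(s-1)\cdots(s-|\alpha|+1)\,\xi^\alpha=0.
\]
By the identity principle for polynomials, each coefficient must vanish, and since $s\notin\mathbb{N}$ the product $s(s-1)\cdots(s-|\alpha|+1)$ is always nonzero; thus $c_\alpha=0$ for every $\alpha$, contradicting the choice of $c$. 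This proves $\mathcal{V}=\mathbb{R}^{K'}$ and concludes the proof.

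The main obstacle lies in rigorously transferring the pointwise jet-relation from the smooth approximants $v_{e,j}$ to the singular limit $(x\cdot e)^s_+$: integration by parts against test functions and the distributional identity principle are what make this passage work, and the non-integrality of $s$ is essential in the final step to prevent the coefficients from being killed by the factor $s(s-1)\cdots(s-|\alpha|+1)$ rather than by $c_\alpha$.
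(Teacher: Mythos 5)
Your proposal is correct and follows essentially the same strategy as the paper's proof: the same hyperplane-contradiction argument, the same use of the blow-up family $v_{e,j}$ from Corollary~\ref{lapiog} paired with test functions and the Dominated Convergence Theorem, the same evaluation at $x=e/|\xi|$ to produce the polynomial identity, and the same use of $s\notin\mathbb{N}$ to force $c_\alpha=0$. No discrepancies to report.
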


\chapter{Proof of the main result}\label{CH5}

This chapter is devoted to the proof of the main result in Theorem~\ref{theone}.
This will be accomplished by an auxiliary result of purely nonlocal type
which will allow us to prescribe an arbitrarily large number of derivatives
at a point for the solution of a fractional equation.

\section{A result which implies Theorem \ref{theone}}\label{s:fourthE}

We will use the notation
\begin{equation}\label{NEOAAJKin1a}
\Lambda_{-\infty}:=\Lambda_{(-\infty,\dots,-\infty)},\end{equation}
that is we exploit~\eqref{1.6BIS} with~$a_1:=\dots:=a_l:=-\infty$.
This section presents the following statement:

\begin{theorem}\label{theone2}
Suppose that 
\begin{equation*}\begin{split}&
{\mbox{either there exists~$i\in\{1,\dots,M\}$ such that~$\XB_i\ne0$
and~$s_i\not\in{\mathbb{N}}$,}}\\
&{\mbox{or there exists~$i\in\{1,\dots,l\}$ such that~$\XC_i\ne0$ and $\alpha_i\not\in{\mathbb{N}}$.}}\end{split}
\end{equation*}
Let $\ell\in\mathbb{N}$, $f:\mathbb{R}^N\rightarrow\mathbb{R}$,
with $f\in C^{\ell}\big(\overline{B_1^N}\big)$. Fixed $\epsilon>0$,
there exist
\begin{equation*}\begin{split}&
u=u_\epsilon\in C^\infty\left(B_1^N\right)\cap C\left(\mathbb{R}^N\right),\\
&a=(a_1,\dots,a_l)=(a_{1,\epsilon},\dots,a_{l,\epsilon})
\in(-\infty,0)^l,\\ {\mbox{and }}\quad&
R=R_\epsilon>1\end{split}\end{equation*} such that:
\begin{itemize}
\item for every~$h\in\{1,\dots,l\}$ and~$(x,y,t_1,\dots,t_{h-1},t_{h+1},\dots,t_l)$
\begin{equation}\label{SPAZIO}
{\mbox{the map ${\mathbb{R}}\ni t_h\mapsto u(x,y,t)$
belongs to~$C^{k_h,\alpha_h}_{-\infty}$,}}
\end{equation}
in the notation of formula~(1.4) of~\cite{CDV18},
\item it holds that
\begin{equation}\label{MAIN EQ:2}\left\{\begin{matrix}
\Lambda_{-\infty} u=0 &\mbox{ in }\;B_1^{N-l}\times(-1,+\infty)^l, \\
u(x,y,t)=0&\mbox{ if }\;|(x,y)|\ge R,
\end{matrix}\right.\end{equation}
\begin{equation}\label{ESTENSIONE}
\partial^{k_h}_{t_h} u(x,y,t)=0\qquad{\mbox{if }}t_h\in(-\infty,a_h),\qquad{\mbox{for all }}h\in\{1,\dots,l\},
\end{equation}
and
\begin{equation}\label{IAzofm:2}
\left\|u-f\right\|_{C^{\ell}(B_1^N)}<\epsilon.
\end{equation}\end{itemize}
\end{theorem}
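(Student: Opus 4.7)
The plan is to adapt the strategy for single-operator density results developed in~\cite{MR3626547, MR3716924, CDV18, DSV1} to the mixed local/space-fractional/time-fractional setting of~$\Lambda_{-\infty}$. In outline, I would proceed in two stages: first establish a \emph{span property} asserting that, at some fixed point, the collection of~$\ell$-jets of functions in the kernel of~$\Lambda_{-\infty}$ (with the support and boundary constraints required by the statement) exhausts the whole Euclidean space~${\mathbb R}^K$ of possible~$\ell$-jets, where~$K$ counts the multi-indices~$|\beta|\le\ell$; then combine this with Taylor's theorem and a rescaling to produce the desired~$u$.

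The span property, to be established in Section~\ref{s:fourth0}, is the main obstacle and the place where the structural hypothesis~\eqref{NOTVAN} enters. By~\eqref{NOTVAN}, either some space-fractional block~$(-\Delta)^{s_i}_{y_i}$ with~$s_i\notin{\mathbb N}$ is active, or some Caputo block~$D^{\alpha_i}_{t_i,-\infty}$ with~$\alpha_i\notin{\mathbb N}$ is active. In the first case I would apply Proposition~\ref{maxhlapspan} (built on the sharp boundary expansion of Proposition~\ref{sharbou}, on the spherical bump of Lemma~\ref{hbump}, and on its higher-derivative extension Remark~\ref{RUCAPSJD}) to produce, for each target multi-index in the~$y_i$-block, a compactly supported function of~$y_i$ alone that is annihilated by~$(-\Delta)^{s_i}_{y_i}$ on a small ball and realises the prescribed jet at an interior point. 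In the second case I would invoke Lemma~\ref{LF}, whose asymptotic~\eqref{LAp2} provides a one-time-variable function~$\psi$ with~$D^{\alpha_i}_{t_i,0}\psi\equiv 0$ on~$(1,+\infty)$ and nontrivial singular behaviour at~$t=1^+$, from which independent jets at~$t=1^+$ of all orders up to~$\ell$ are extracted by differentiating in~$t_i$. In either case, the resulting single-variable building block is lifted to~${\mathbb R}^N$ by multiplication with a monomial in the remaining variables whose individual degree is strictly less than~$r_j$ in each local variable~$x_j$, strictly less than~$2h_j$ in each~$y_j$ with~$j\ne i$, and strictly less than~$k_h$ in each~$t_h$ with~$h\ne i$. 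Any such monomial is pointwise annihilated by~$\mathfrak l$, by each~$(-\Delta)^{s_j}_{y_j}$ with~$j\ne i$, and by each~$D^{\alpha_h}_{t_h,-\infty}$ with~$h\ne i$; this last fact, namely that the Caputo derivative with initial point~$-\infty$ of a polynomial of degree below~$k_h$ vanishes identically, is precisely what forces the restriction to~$a=-\infty$ (and is also the content of the polynomial-extension trick from~\cite{CDV18} that deduces Theorem~\ref{theone} from Theorem~\ref{theone2}). Because the monomial factors can be chosen freely, this lifting realises all prescribed jets in the remaining variables as well, yielding the span property.

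With the span property in hand, the final assembly in Section~\ref{s:fourth} is then standard: one forms a linear combination~$u$ of the span functions~$\{v_\beta\}_{|\beta|\le\ell}$, anchored at a chosen point and suitably rescaled, whose~$\ell$-jet at that point matches the~$\ell$-jet of~$f$ (this matching is solvable exactly because the span property guarantees an invertible linear system). Taylor's theorem in~$C^\ell(B_1^N)$, together with a sufficiently small scaling parameter, makes~$\|u-f\|_{C^\ell(B_1^N)}<\epsilon$, which is~\eqref{IAzofm:2}; the compact support in~$(x,y)$ of the rescaled building blocks yields the second line of~\eqref{MAIN EQ:2} with a suitable~$R=R_\epsilon$; and properties~\eqref{SPAZIO} and~\eqref{ESTENSIONE}, together with the existence of~$a=(a_{1,\epsilon},\ldots,a_{l,\epsilon})\in(-\infty,0)^l$, are encoded directly into the construction, since the time-variable factors built via Lemma~\ref{LF} (or carried through as polynomials) have by design the required support structure past some finite negative~$a_h$. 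The principal obstacle throughout is the span property itself, and within it the delicate task of coordinating the one genuinely nonlocal building block coming from~\eqref{NOTVAN} with polynomial lifts that are simultaneously compatible with every other block of~$\Lambda_{-\infty}$; this coordination is what makes the restriction to~$a=-\infty$ both natural and indispensable at this intermediate stage.
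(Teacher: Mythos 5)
Your overall two-stage architecture (a span property for jets of kernel functions, followed by jet-matching and an anisotropic rescaling) is the same as the paper's, but the way you propose to realise the span property has a gap that is fatal in the general setting of Theorem~\ref{theone2}. You build each spanning function as one genuinely nonlocal single-block factor times low-degree monomials in all remaining variables, relying on the fact that a monomial of degree $<r_j$ in $x_j$, $<2h_j$ in $y_j$, $<k_h$ in $t_h$ is annihilated pointwise by the corresponding block. For the space-fractional blocks with $\XB_j\ne0$, $j\ne i$, this fails: the functions entering the span argument must be compactly supported in $(x,y)$ (this is exactly what produces the second line of~\eqref{MAIN EQ:2}), and once the monomial in $y_j$ is truncated, $(-\Delta)^{s_j}_{y_j}$ applied to the truncated function is no longer zero near the origin, precisely because the operator is nonlocal. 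Moreover, even for the blocks where the monomial trick is sound (the local $x_j$-blocks and the Caputo blocks with initial point $-\infty$), capping the degree at $r_j-1$, resp.\ $k_h-1$, means you can only realise jets up to those orders in those variables, whereas the span property is needed for jets of order $K$ arbitrarily large: matching only the $\ell$-jet of $f$ at a single point and rescaling does not give $\|u-f\|_{C^\ell(B_1^N)}<\epsilon$ on the whole ball. The paper's final step (Section~\ref{s:fourth}) first reduces to monomial $f$ (which is homogeneous under the anisotropic scaling), then matches the full $K$-jet with $K=K_0+|\iota|+\ell$ and $K_0\ge(\gamma+1)/\delta$ as in~\eqref{blim}--\eqref{blo}, so that $g=w-f$ vanishes to order $K+1$ at the origin and the rescaled error is of size $\eta$ in $C^\ell$.

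The paper's resolution of both difficulties is structurally different from your lift: rather than making all but one factor ``invisible'' to the operator, it takes $w$ to be a product in which \emph{every} active block sees an eigenfunction --- ODE solutions $v_i$ with $\partial^{r_i}_{x_i}v_i=-\overline{\XA}_i\,\XX_i^{r_i}v_i$ for the local blocks, Dirichlet eigenfunctions of $(-\Delta)^{s_j}_{y_j}$ from Corollary~\ref{QUESTCP} translated so that the evaluation point sits at distance $\epsilon$ from the boundary of their support, and Mittag-Leffler functions for the Caputo blocks --- with the eigenvalues tuned (see~\eqref{alp}, \eqref{alpc}, \eqref{alp3}) so that their weighted sum vanishes and hence $\Lambda_{-\infty}w=0$ near the origin. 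Each factor has nontrivial derivatives of every order at the evaluation point (in a distributional, blown-up sense for the fractional factors, via Proposition~\ref{sharbou} and Lemma~\ref{LF}), and the free parameters $\XX$, $\XY$, $\XT$ then force every component of the putative orthogonal vector $\theta$ to vanish. To rescue your construction you would have to replace the monomial lifts by such eigenfunction factors and add the eigenvalue-balancing step; as written, the span property you need is not established.
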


The proof of Theorem~\ref{theone2} will basically occupy the
rest of this work, and this will lead us to the completion of the
proof of Theorem~\ref{theone}. Indeed, we have that:

\begin{lemma}\label{GRAT}
If the statement of Theorem~\ref{theone2} holds true,
then the statement in Theorem~\ref{theone} holds true.
\end{lemma}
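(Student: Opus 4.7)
My plan is a two-step modification of the function $u_\epsilon$ produced by Theorem~\ref{theone2}: first shrink the initial points $a_h$ so that $a_h<-1$, and then multiply by a smooth cutoff in the time variables to force compact support. The guiding principle is that because each Caputo derivative is a backward-in-time integral, one can freely alter $u_\epsilon$ outside the ``past'' of $B_1^N$ without changing $\Lambda_a u$ on $B_1^N$.

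Concretely, I would first apply Theorem~\ref{theone2} to obtain $u_\epsilon$, $a=(a_1,\dots,a_l)\in(-\infty,0)^l$, and $R>1$ with the listed properties, and then set $\tilde a_h:=\min\{a_h,-2\}$. Since $\partial^{k_h}_{t_h}u_\epsilon$ already vanishes on $(-\infty,a_h)\supseteq(-\infty,\tilde a_h)$ by \eqref{ESTENSIONE}, the integrand in the Caputo representation \eqref{defcap} is zero on $(-\infty,\tilde a_h)$, and therefore
$$D^{\alpha_h}_{t_h,\tilde a_h}u_\epsilon(x,y,t)=D^{\alpha_h}_{t_h,-\infty}u_\epsilon(x,y,t)\qquad\text{for every }t_h>\tilde a_h.$$
As $\tilde a_h<-1$ and each time coordinate in $B_1^N$ satisfies $t_h\in(-1,1)$, this identity holds throughout $B_1^N$, so $\Lambda_{\tilde a}u_\epsilon=\Lambda_{-\infty}u_\epsilon=0$ on $B_1^N$ by \eqref{MAIN EQ:2}.

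Next, I would take a tensor cutoff $\chi(t):=\prod_{h=1}^{l}\chi_h(t_h)$, with $\chi_h\in C^\infty(\mathbb{R},[0,1])$ equal to $1$ on $[\tilde a_h,2]$ and vanishing outside $[\tilde a_h-1,3]$, and define $u:=\chi\,u_\epsilon$. On $B_1^N$ we have $\chi\equiv 1$, so $u$ and $u_\epsilon$ agree there; hence $u\in C^\infty(B_1^N)\cap C(\mathbb{R}^N)$ is inherited from $u_\epsilon$, and \eqref{IAzofm:2} yields \eqref{IAzofm}. Since $\chi$ does not depend on $(x,y)$, the operators $\mathfrak l$ and $(-\Delta)^{s_j}_{y_j}$ commute with multiplication by $\chi$, so $(\mathfrak l+\mathcal L)u=\chi(\mathfrak l+\mathcal L)u_\epsilon$ coincides with $(\mathfrak l+\mathcal L)u_\epsilon$ on $B_1^N$. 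For the Caputo piece, at a point $(x,y,t)\in B_1^N$ the integral defining $D^{\alpha_h}_{t_h,\tilde a_h}u$ involves only values $u(x,y,t_1,\dots,\tau,\dots,t_l)$ with $\tau\in[\tilde a_h,t_h]\subseteq[\tilde a_h,1]$, where $\chi_h(\tau)\equiv 1$; and because every $t_{h'}$ with $h'\neq h$ lies in $(-1,1)\subseteq[\tilde a_{h'},2]$, we also have $\chi_{h'}(t_{h'})=1$. Thus $\chi\equiv1$ on the entire domain of integration, giving $D^{\alpha_h}_{t_h,\tilde a_h}u=D^{\alpha_h}_{t_h,\tilde a_h}u_\epsilon$ on $B_1^N$. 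Summing, $\Lambda_{\tilde a}u=0$ in $B_1^N$, and $u$ vanishes outside the bounded set $\{|(x,y)|<R\}\cap\prod_{h=1}^{l}[\tilde a_h-1,3]$, which is contained in some $B_{R_\epsilon}^N$.

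The only delicate point is the interaction of the cutoff with the Caputo derivative, where the Leibniz rule would in general produce crossed terms from derivatives landing on $\chi$: the product form of $\chi$ combined with the fact that $\chi\equiv1$ on the entire backward-in-time integration region of each $D^{\alpha_h}_{t_h,\tilde a_h}$ is exactly what kills those cross terms and allows the equation to be transferred from $u_\epsilon$ to $u$. This is the reason for the specific tensor shape of the cutoff and for the preliminary reduction $\tilde a_h<-1$.
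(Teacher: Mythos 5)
Your proposal is correct and follows essentially the same route as the paper: first identify the Caputo derivative with finite initial point with the one based at $-\infty$ (the paper does this via Lemma~A.1 of \cite{CDV18} keeping the original $a_h$, you do it by shrinking to $\tilde a_h\le-2$ and noting the integrand vanishes in the far past — a cosmetic difference), and then multiply by a time cutoff equal to $1$ on the whole backward integration region so that neither the Caputo terms nor the spatial terms are altered on $B_1^N$, while gaining compact support in $t$. No gaps.
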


\begin{proof} Assume that the claims
in Theorem~\ref{theone2} are satisfied. Then, by~\eqref{SPAZIO} and~\eqref{ESTENSIONE},
we are in the position of exploting Lemma~A.1 in~\cite{CDV18}
and conclude that, in~$B_1^{N-l}\times(-1,+\infty)^l$,
$$ D^{\alpha_h}_{t_h ,a_h} u=D^{\alpha_h}_{t_h,-\infty} u,$$
for every~$h\in\{1,\dots,l\}$. This and~\eqref{MAIN EQ:2}
give that
\begin{equation} \label{33ujNAKS}
\Lambda_{a}u=\Lambda_{-\infty} u=0 \qquad\mbox{ in }\;B_1^{N-l}
\times(-1,+\infty)^l.\end{equation}
We also define
$$\underline{a}:=\min_{h\in\{1,\dots,l\}} a_h$$
and take~$\tau\in C^\infty_0 ([-\underline{a}-2,3])$ with~$\tau=1$
in~$[-\underline{a}-1,1]$. Let
\begin{equation}\label{UJNsdA} U(x,y,t):=u(x,y,t)\,\tau(t_1)\dots\tau(t_l).\end{equation}
Our goal is to prove that~$U$ satisfies the theses of Theorem~\ref{theone}.
To this end, we observe that~$u=U$ in~$B^N_1$, therefore~\eqref{IAzofm}
for~$U$
plainly follows from~\eqref{IAzofm:2}.

In addition, from~\eqref{defcap}, we see that~$
D^{\alpha_h}_{t_h,a_h}$ at a point~$t_h\in(-1,1)$
only depends on the values of the function
between~$a_h$ and~$1$. Since the cutoffs in~\eqref{UJNsdA} do not
alter these values, we see that~$D^{\alpha_h}_{t_h,a_h}U=D^{\alpha_h}_{t_h,a_h}u$
in~$B_1^N$, and accordingly~$\Lambda_a U=\Lambda_a u$ in~$B_1^N$.
This and~\eqref{33ujNAKS} say that
\begin{equation}\label{9OAJA}
\Lambda_a U=0\qquad{\mbox{in }}B_1^N.\end{equation}
Also, since~$u$ in Theorem~\ref{theone2} is compactly supported
in the variable~$(x,y)$, we see from~\eqref{UJNsdA} that~$U$
is compactly supported in the variables~$(x,y,t)$.
This and~\eqref{9OAJA} give that~\eqref{MAIN EQ} is satisfied by~$U$
(up to renaming~$R$).
\end{proof} 

\section{A pivotal span result towards the proof of Theorem \ref{theone2}}\label{s:fourth0}

In what follows, we let~$\Lambda_{-\infty}$
be as in~\eqref{NEOAAJKin1a}, we recall the setting in~\eqref{1.0},
and we
use the following multi-indices notations:
\begin{equation}\label{mulPM}
\begin{split}
& \iota=\left(i,I,\mathfrak{I}\right)=\left(i_1,\ldots,i_n,I_1,\ldots,I_M,\mathfrak{I}_1,
\ldots,\mathfrak{I}_l\right)\in\mathbb{N}^N\\
{\mbox{and }} &
\partial^\iota w=\partial^{i_1}_{x_1}\ldots\partial^{i_n}_{x_n}
\partial^{I_1}_{y_1}\ldots\partial^{I_M}_{y_M}\partial^{\mathfrak{I}_1}_{t_1}
\ldots\partial^{\mathfrak{I}_l}_{t_l}w.
\end{split}\end{equation}
Inspired by Lemma 5 of~\cite{DSV1},
we consider the span of the derivatives of functions in~$
\ker\Lambda_{-\infty}$, with derivatives up to a fixed order $K\in{\mathbb{N}}$.
We want to prove that the derivatives of such functions span
a maximal vectorial space. 

For this, we denote by $\partial^K w(0)$
the vector with entries given,
in some prescribed order,
by~$
\partial^\iota w(0)$ with $\left|\iota\right|\leq K$.

We notice that
\begin{equation}\label{8iokjKK}
{\mbox{$\partial^K w(0)\in\mathbb{R}^{K'}$ for some $K'\in{\mathbb{N}}$,}}
\end{equation}
with~$K'$ depending on~$K$.

Now, we adopt the notation in formula~(1.4) of~\cite{CDV18},
and
we denote by~$
\mathcal{A}$ \label{CALSASS}
the set of all functions~$w=w(x,y,t)$
such that for all~$h\in\{1,\ldots, l\}$ and all~$
(x,y,t_1,\ldots,t_{h-1},t_{h+1},\ldots,t_l)\in\mathbb{R}^{N-1}$,
the map~${\mathbb{R}}\ni t_h\mapsto w(x,y,t)$ belongs to~$
C^{\infty}((a_h,+\infty))\cap C^{k_h,\alpha_h}_{-\infty}$,
and~\eqref{ESTENSIONE} holds true for some~$a_h\in (-2,0)$.

We also set
\begin{equation*}
\begin{split}
\mathcal{H}:=\Big\{w\in C(\mathbb{R}^N)
\cap C_0(\mathbb{R}^{N-l})\cap C^\infty(\mathcal{N})\cap\mathcal{A},
\text{ for some neighborhood 
$\mathcal{N} $
of the origin, } \\
 \text{ such that }
\Lambda_{-\infty} w=0 \text{ in }\mathcal{N}\Big\}
\end{split}
\end{equation*}
and, for any $w\in\mathcal{H}$, let $\mathcal{V}_K$ be the vector space spanned by the vector $\partial^K w(0)$. 

By \eqref{8iokjKK}, we know that~$\mathcal{V}_K\subseteq\mathbb{R}^{K'}$.
In fact, we show that equality holds in this inclusion, as
stated in the following\footnote{Notice that results
analogous to Lemma~\ref{lemcin}
cannot hold for solutions of local operators: for instance,
pure second derivatives of harmonic functions have to satisfy
a linear equation, so they are forced to lie in a proper subspace.
In this sense, results such as Lemma~\ref{lemcin} here reveal a truly nonlocal
phenomenon.}
result:
\begin{lemma}
\label{lemcin}
It holds that $\mathcal{V}_K=\mathbb{R}^{K'}$.
\end{lemma}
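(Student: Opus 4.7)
The plan is to argue by contradiction and reduce the problem to an Identity Principle. Suppose for the sake of contradiction that $\mathcal{V}_K$ is a proper subspace of $\mathbb{R}^{K'}$; then it is contained in some hyperplane, so there exists a nonzero vector $c=(c_\iota)_{|\iota|\leq K}\in\mathbb{R}^{K'}\setminus\{0\}$, indexed by multi-indices $\iota=(i,I,\mathfrak{I})$ as in~\eqref{mulPM}, such that
\begin{equation*}
\sum_{|\iota|\leq K}c_\iota\,\partial^\iota w(0)=0 \qquad\text{for every }w\in\mathcal{H}.
\end{equation*}
The goal is then to exhibit a family of elements of $\mathcal{H}$ rich enough to force every coefficient $c_\iota$ to vanish.

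First I would split the analysis along the two alternatives in hypothesis~\eqref{NOTVAN}. In the space-fractional regime, choose $i_0\in\{1,\dots,M\}$ with $\XB_{i_0}\neq 0$ and $s_{i_0}\notin\mathbb{N}$, and construct test functions of tensor-product form
\begin{equation*}
w_{e,P,Q,j}(x,y,t)=P(x)\,V_{e,j}(y_{i_0})\,\prod_{j'\neq i_0} B_{j'}(y_{j'})\,\prod_{h=1}^{l}Q_h(t_h),
\end{equation*}
where $P$ is a polynomial in $x$ lying in the kernel of $\mathfrak{l}$, each $B_{j'}$ is a polynomial or compactly supported smooth factor chosen so that the remaining space-fractional operators $(-\Delta)^{s_{j'}}_{y_{j'}}$ contribute zero at the origin, each $Q_h$ is a polynomial in $t_h$ of degree strictly less than $\alpha_h$ (hence Caputo-annihilated in a neighborhood of $0$ after the vanishing extension required by~\eqref{ESTENSIONE}), and $V_{e,j}$ is the $s_{i_0}$-harmonic bump sequence of Corollary~\ref{lapiog}, translated so as to be $s_{i_0}$-harmonic near $0$ in the $y_{i_0}$-variable, with singular direction $e\in\partial B_1^{m_{i_0}}$. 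By construction each factor is annihilated by the corresponding summand of $\Lambda_{-\infty}$, so $w_{e,P,Q,j}\in\mathcal{H}$. Substituting into the orthogonality relation and passing to the limit $j\to+\infty$ as in the proof of Proposition~\ref{maxhlapspan} yields
\begin{equation*}
\kappa\sum_{\iota}c_\iota\,\partial^\iota\!\bigl[P(x)\,(y_{i_0}\cdot e)_+^{s_{i_0}}\prod_{j'\neq i_0}B_{j'}(y_{j'})\prod_{h}Q_h(t_h)\bigr]\Big|_{0}=0,
\end{equation*}
valid for every admissible $e,P,B_{j'},Q_h$.

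Computing these derivatives explicitly, varying $P$, the $B_{j'}$ and the $Q_h$ among independent monomials (to sweep all possible multi-indices $i$, $I_{j'}$ with $j'\neq i_0$ and $\mathfrak{I}$), and letting $e$ range over an open subset of $\partial B_1^{m_{i_0}}$, the identity above reduces to a polynomial identity in $e$ whose nonzero coefficients are scalar multiples of $c_\iota\cdot s_{i_0}(s_{i_0}-1)\cdots(s_{i_0}-|I_{i_0}|+1)$. Since $s_{i_0}\notin\mathbb{N}$, the fractional prefactor never vanishes, so the Identity Principle for polynomials applied to the free variable $e$ forces $c_\iota=0$ for every $\iota$, contradicting $c\neq 0$. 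The time-fractional regime runs in parallel: pick $h_0$ with $\XC_{h_0}\neq 0$ and $\alpha_{h_0}\notin\mathbb{N}$, and replace $V_{e,j}$ by a rescaled and translated copy of the function $\psi$ of Lemma~\ref{LF} placed in the $t_{h_0}$ variable. The distributional limit $\epsilon^{\ell-\alpha_{h_0}}\partial^\ell\psi(1+\epsilon\cdot)\to\kappa_{\alpha_{h_0},\ell}\,t^{\alpha_{h_0}-\ell}$ of Lemma~\ref{LF} produces the analogous blow-up identity, and now the nonvanishing prefactor $\alpha_{h_0}(\alpha_{h_0}-1)\cdots(\alpha_{h_0}-\mathfrak{I}_{h_0}+1)$ (nonzero because $\alpha_{h_0}\notin\mathbb{N}$) plays the role of its spatial counterpart, closing the second case in the same way.

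The main obstacle, and the true technical heart of the argument, is engineering the tensor-product $w$ to actually lie in $\mathcal{H}$. One must arrange simultaneously that each factor is annihilated by the appropriate constituent of $\Lambda_{-\infty}$ on an open neighborhood of the origin (so that the linearity of $\Lambda_{-\infty}$ yields $\Lambda_{-\infty}w=0$ there); that the remaining nonlocal and higher-order operators acting on the "decoupled" factors genuinely contribute zero at the origin, not merely far from it, which forces the bump profiles to be supported with care so that other fractional Laplacians $(-\Delta)^{s_{j'}}_{y_{j'}}$ or other Caputo operators $D^{\alpha_{h'}}_{t_{h'},-\infty}$ pick up no spurious contribution at $0$; and that $w$ is compactly supported in the $(x,y)$ variables and belongs to the regularity class described in~\eqref{SPAZIO} together with the vanishing condition~\eqref{ESTENSIONE} in every time variable. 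This bookkeeping is where Proposition~\ref{maxhlapspan}, Corollary~\ref{lapiog}, Proposition~\ref{sharbou} and Lemma~\ref{LF} are deployed in earnest, and where the distinction between integer and non-integer orders built into~\eqref{NOTVAN} is crucially exploited.
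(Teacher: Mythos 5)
Your overall skeleton --- contradiction, a hyperplane vector $c$ orthogonal to all jets $\partial^K w(0)$, a rich family of tensor-product functions in $\mathcal{H}$, blow-up of the fractional factors, and the identity principle for polynomials --- is the same as the paper's. But the way you build the test functions has a genuine gap that is fatal whenever the operator contains a classical block. Requiring each tensor factor to be \emph{individually annihilated} by its own constituent of $\Lambda_{-\infty}$ forces the factor attached to any local piece into a rigid kernel whose jet at the origin cannot be prescribed freely. Concretely, if some $\XA_i\neq0$, your $P$ must satisfy $\mathfrak{l}P=0$, hence $\sum_i\XA_i\,\partial^{\gamma+r_i}P(0)=0$ for every multi-index $\gamma$ (for instance, if $\mathfrak{l}=\partial^2_{x_1}$ the admissible polynomials are affine and all their derivatives of order $\ge2$ vanish at the origin); so varying $P$ can never sweep the $x$-multi-indices independently, and the orthogonality relation gives no information on the coefficients $c_\iota$ with large $i$. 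The same rigidity afflicts the factors $B_{j'}$ when $s_{j'}\in\mathbb{N}$ (and note that $(-\Delta)^{s_{j'}}$ of a genuine polynomial is not even defined when $s_{j'}\notin\mathbb{N}$) and the factors $Q_h$ when $\alpha_h\in\mathbb{N}$; hypothesis~\eqref{NOTVAN} only guarantees \emph{one} non-integer order somewhere, so you cannot route around every rigid block. This obstruction is precisely the content of the paper's footnote to the lemma.

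The paper's proof avoids this by not asking each factor to lie in a kernel: each factor is an \emph{eigenfunction} of its own operator with a tunable eigenvalue --- $v_i$ solving $\partial^{r_i}_{x_i}v_i=-\overline{\XA}_i\XX_i^{r_i}v_i$, all of whose derivatives at $0$ are nonzero by~\eqref{notnull}, rescaled first Dirichlet eigenfunctions $\phi_j$ of $(-\Delta)^{s_j}_{y_j}$, and Mittag-Leffler functions $\psi_h$ from Lemma~\ref{MittagLEMMA} --- and one eigenvalue (e.g.\ $\lambda_M$ in~\eqref{alp}) is solved for so that the eigenvalues sum to zero, yielding $\Lambda_{-\infty}w=0$ without constraining any single factor's jet. (Your fully-decoupled construction does reappear in the paper, but only in the degenerate subcase where all $\XA_i$ and $\XC_h$ vanish.) A second point your sketch elides: after multiplying by the appropriate power of $\epsilon$ and letting $\epsilon\searrow0$, only the terms with $|I|$ (or $|\mathfrak{I}|$) \emph{maximal} survive the blow-up, so one cannot conclude $c_\iota=0$ for all $\iota$ in one stroke; the paper instead derives the vanishing of the top stratum and contradicts the maximality of $\overline{I}$ in~\eqref{IBARRA}. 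Any repair of your argument would need both the eigenvalue-balancing device and this stratification.
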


The proof of Lemma~\ref{lemcin} is 
by contradiction. Namely, if $\mathcal{V}_K$ does not exhaust the whole of $\mathbb{R}^{K'}$ there exists 
\begin{equation}
\label{tetaort}
\theta\in\partial B_1^{K'}
\end{equation}
such that
\begin{equation}
\label{aza}
\mathcal{V}_K\subseteq\left\{\zeta\in\mathbb{R}^{K'}
\text{ s.t. } \theta\cdot\zeta=0\right\}.
\end{equation}
In coordinates, recalling~\eqref{mulPM},
we write~$\theta$
as~$\theta_\iota=\theta_{i,I,\mathfrak{I}}$,
with~$i\in\mathbb{N}^{p_1+\dots+p_n}$,
$I\in\mathbb{N}^{m_1+\dots+m_M}$
and~$\mathfrak{I}\in\mathbb{N}^l$.
We consider
\begin{equation}\label{IBARRA}
\begin{split}&
{\mbox{a multi-index $\overline{I}\in\mathbb{N}^{m_1+\dots+m_M}$ 
such that it maximizes~$|I|$}}\\
&{\mbox{among all the multi-indexes~$(i,I,\mathfrak{I})$
for which~$\left|i\right|+\left|I\right|+|\mathfrak{I}|\leq K$}}\\
&{\mbox{and~$\theta_{i,I,\mathfrak{I}}\ne0$
for some~$(i,\mathfrak{I})$.}}
\end{split}\end{equation}
Some comments on the setting\label{FFOAK}
in~\eqref{IBARRA}. We stress that, by~\eqref{tetaort},
the set~$\mathcal{S}$
of indexes~$I$ for which there exist indexes~$
(i,\mathfrak{I})$ such that~$|i|+|I|+|\mathfrak{I}|\le K$
and~$\theta_{i,I,\mathfrak{I}}\ne0$ is not empty.
Therefore, since~${\mathcal{S}}$ is a finite set,
we can take
$$ S:=\sup_{I\in {\mathcal{S}}} |I|=\max_{I\in {\mathcal{S}}}
|I|\in{\mathbb{N}}\cap [0,K].$$
Hence, we consider a multi-index $\overline{I}$ for
which~$|\overline I|=S$ to obtain the
setting in~\eqref{IBARRA}. By construction, we have that
\begin{itemize}
\item $|i|+|\overline I|+|\mathfrak{I}|\le K$,
\item  if~$|I|>|\overline I|$, then $\theta_{i,I,\mathfrak{I}}=0$,
\item
and there exist multi-indexes~$i$ and~$\mathfrak{I}$
such that~$\theta_{i,\overline I,\mathfrak{I}}\ne0$.\end{itemize}

As a variation of the setting in~\eqref{IBARRA},
we can also consider
\begin{equation}\label{IBARRA2}
\begin{split}&
{\mbox{a multi-index $\overline{\mathfrak{I}}\in\mathbb{N}^{l}$ 
such that it maximizes~$|\mathfrak{I}|$}}\\
&{\mbox{among all the multi-indexes~$(i,I,\mathfrak{I})$
for which~$\left|i\right|+\left|I\right|+|\mathfrak{I}|\leq K$}}\\
&{\mbox{and~$\theta_{i,I,\mathfrak{I}}\ne0$
for some~$(i,I)$.}}
\end{split}\end{equation}
In the setting of~\eqref{IBARRA} and~\eqref{IBARRA2},
we claim that there exists an open set
of~$\mathbb{R}^{p_1+\ldots+p_n}\times\mathbb{R}^{m_1+\ldots+m_M}\times\mathbb{R}^{l}$
such that for every~$(\XX,\XY,\XT)$ in such open set we have that
\begin{equation}
\label{ipop}\begin{split}
{\mbox{either }}\qquad&
0=\sum_{{|i|+|I|+|\mathfrak{I}|\le K}\atop{|I| = |\overline{I}|}}
C_{i,I,\mathfrak{I}}\;\theta_{i,I,\mathfrak{I}}\;
\XX^i \XY^{{I}}\XT^{\mathfrak{I}},\qquad{\mbox{ with }}\qquad
C_{i,I,\mathfrak{I}}\ne0,\\
{\mbox{or }}\qquad&
0=\sum_{{|i|+|I|+|\mathfrak{I}|\le K}\atop{|\mathfrak{I}| = |\overline{\mathfrak{I}}|}}
C_{i,I,\mathfrak{I}}\;\theta_{i,I,\mathfrak{I}}\;
\XX^i \XY^{{I}}\XT^{\mathfrak{I}},\qquad{\mbox{ with }}\qquad
C_{i,I,\mathfrak{I}}\ne0.\end{split}
\end{equation}
In our framework, the claim in~\eqref{ipop} will be pivotal
towards the completion of the proof of Lemma~\ref{lemcin}.
Indeed, let us suppose for the moment that~\eqref{ipop}
is established and let us complete the proof of Lemma~\ref{lemcin}
by arguing as follows.

Formula \eqref{ipop} says that $\theta\cdot\partial^K w(0)$
is a polynomial which vanishes for any triple $(\XX,\XY,\XT)$
in an open subset of $\mathbb{R}^{p_1+\ldots+p_n}\times\mathbb{R}^{m_1+\ldots+m_M}\times\mathbb{R}^{l}$.
Hence, using the identity principle of polynomials, we have that each $C_{i,I,\mathfrak{I}}\;\theta_{i,I,\mathfrak{I}}$ is equal to zero
whenever~$|i|+|I|+|\mathfrak{I}|\le K$
and either~$|I|=|\overline I|$ (if the first identity in~\eqref{ipop}
holds true) or~$|\mathfrak{I}|=|\overline{\mathfrak{I}}|$
(if the second identity in~\eqref{ipop}
holds true). Then, since~$C_{i,I,\mathfrak{I}}\neq 0$,
we conclude that each $\theta_{i,I,\mathfrak{I}}$ is zero
as long as either~$|I|=|\overline{I}|$ (in the first case)
or~$|\mathfrak{I}|=|\overline{\mathfrak{I}}|$
(in the second case), but this contradicts either
the definition of $\overline{I}$
in~\eqref{IBARRA} (in the first case)
or the definition of~$\overline{\mathfrak{I}}$
in~\eqref{IBARRA2} (in the second case). This would therefore complete the proof
of Lemma~\ref{lemcin}.
\medskip

In view of the discussion above,
it remains to prove~\eqref{ipop}.
To this end, we distinguish the following four
cases:
\begin{enumerate}

\item\label{itm:case1} there exist $i\in\{1,\dots,n\}$ and
$j\in\{1,\dots,M\}$ such that~$\XA_i\ne0$ and~$\XB_j\ne0$,
\item\label{itm:case2} there exist $i\in\{1,\dots,n\}$ and
$h\in\{1,\dots,l\}$ such that~$\XA_i\ne0$ and~$\XC_h\ne0$,
\item\label{itm:case3} we have that~$\XA_1=\dots=\XA_n=0$,
and there exists~$j\in\{1,\dots,M\}$ such that~$\XB_j\ne0$,
\item\label{itm:case4} we have that~$\XA_1=\dots=\XA_n=0$,
and there exists~$h\in\{1,\dots,l\}$ such that~$\XC_h\ne0$.

\end{enumerate}

Notice that cases~\ref{itm:case1} and~\ref{itm:case3}
deal with the case in which space-fractional diffusion is present
(and in case~\ref{itm:case1} one also has classical
derivatives, while in case~\ref{itm:case3}
the classical derivatives are absent).

Similarly, cases~\ref{itm:case2} and~\ref{itm:case4}
deal with the case in which time-fractional diffusion is present
(and in case~\ref{itm:case2} one also has classical
derivatives, while in case~\ref{itm:case4}
the classical derivatives are absent).

Of course, the case in which both space- and time-fractional diffusion occur is already comprised by the
previous cases (namely, it is comprised in
both cases~\ref{itm:case1} and~\ref{itm:case2}
if classical derivatives are also present,
and in both cases~\ref{itm:case3}
and~\ref{itm:case4} if classical derivatives are absent).

\begin{proof}[Proof of \eqref{ipop}, case \ref{itm:case1}]
For any $j\in\left\{1,\ldots,M\right\}$ we denote by $\tilde{\phi}_{\star,j}$
the first eigenfunction for $(-\Delta)^{s_j}_{y_j}$
vanishing outside $B_1^{m_j}$ given in Corollary \ref{QUESTCP}. 
We normalize it such that $ \|\tilde{\phi}_{\star,j}\|_{L^2(\mathbb{R}^{m_j})}=1$,
and we write $\lambda_{\star,j}\in(0,+\infty)$ to indicate the corresponding first eigenvalue
(which now depends on~$s_j$), namely we write
\begin{equation}
\label{lambdastarj}
\begin{cases}
(-\Delta)^{s_j}_{y_j}\tilde{\phi}_{\star,j}=\lambda_{\star,j}\tilde{\phi}_{\star,j}&\quad\text{in}\,B_1^{m_j} ,\\
\tilde{\phi}_{\star,j}=0&\quad\text{in}\,\mathbb{R}^{m_j}\setminus\overline{B_1^{m_j}}.
\end{cases}
\end{equation}
Up to reordering the variables and/or
taking the operators to the other side of the equation, 
given the assumptions of case~\ref{itm:case1},
we can suppose that 
\begin{equation}\label{AGZ}
{\mbox{$\XA_1\ne0$}}\end{equation}
and
\begin{equation}\label{MAGGZ}
{\mbox{$\XB_M>0$}}.\end{equation} 
In view of~\eqref{AGZ}, we can define
\begin{equation}\label{MAfghjkGGZ} R:=\left( 
\frac{ 1 }{|\XA_1|}\displaystyle\left(\sum_{j=1}^{M-1}{|\XB_j|\lambda_{\star,j}}
+\sum_{h=1}^l|\XC_h|\right)\right)^{1/|r_{1}|}.\end{equation}
Now, we fix two sets of free parameters 
\begin{equation}\label{FREExi}
\XX_1\in(R+1,R+2)^{p_1},\ldots,\XX_n\in(R+1,R+2)^{p_n},\end{equation}
and
\begin{equation}\label{FREEmustar}
\XT_{\star,1}\in\left(\frac12,1\right),\dots,\XT_{\star,l}\in\left(\frac12,1\right).\end{equation}
We also set 
\begin{equation}\label{1.6md}
{\mbox{$\lambda_j:=\lambda_{\star,j}$ for $j\in\left\{1,\ldots,M-1\right\}$, }}\end{equation}
where $\lambda_{\star,j}$ is defined as in \eqref{lambdastarj}, and
\begin{equation}
\label{alp}
\lambda_M\,:=\,\frac{1}{\XB_M}\left(
\sum_{j=1}^n {\left|\XA_j\right|\XX_j^{r_j}}-\sum_{j=1}^{M-1}
{\XB_j\lambda_j}-\sum_{h=1}^l\XC_h\XT_{\star,h}\right).\end{equation}
Notice that this definition is well-posed, thanks to~\eqref{MAGGZ}.
In addition, from~\eqref{FREExi}, we can write~$\XX_{j}=(
\XX_{j1},\dots,\XX_{jp_j})$, and
we know that~$\XX_{j\ell}>R+1$
for any~$j\in\{1,\dots,n\}$ and any~$\ell\in\{1,\dots,p_j\}$.
Therefore,
\begin{equation}\label{1.15bis} \XX_j^{r_j}= \XX_{j1}^{r_{j1}}\dots\XX_{jp_j}^{r_{jp_j}}\ge0.\end{equation}
{F}rom this, \eqref{MAfghjkGGZ} and~\eqref{FREEmustar}, we deduce that
\begin{eqnarray*}&& \sum_{j=1}^n {\left|\XA_j\right|\XX_j^{r_j}}
\ge \left|\XA_1\right|\XX_1^{r_1}\ge
\left|\XA_1\right| (R+1)^{|r_1|}>
\left|\XA_1\right| R^{|r_1|}\\&&\qquad
=\sum_{j=1}^{M-1}
{|\XB_j|\lambda_j}+\sum_{h=1}^l |\XC_h|\geq\sum_{j=1}^{M-1}
{\XB_j\lambda_j}+\sum_{h=1}^l \XC_h\XT_{\star,h},\end{eqnarray*}
and consequently, by~\eqref{alp},
\begin{equation}
\label{alp-0}
\lambda_M>0.\end{equation}
We also set
\begin{equation}\label{OMEj}
\omega_j:=\begin{cases}1&\quad\text{if }\,j=1,\dots,M-1 ,\\
\displaystyle\frac{\lambda_{\star,M}^{1/2s_M}}{
\lambda_M^{1/2s_M}}&\quad\text{if }\,j=M.\end{cases}
\end{equation}
Notice that this definition is well-posed, thanks to~\eqref{alp-0}.
In addition, 
by~\eqref{lambdastarj}, we have that,
for any $j\in\{1,\dots,M\}$, the functions
\begin{equation}
\label{autofun1}
\phi_j\left(y_j\right):=\tilde{\phi}_{\star,j}\left(\frac{y_j}{\omega_j}\right)
%\label{autofun2}
%{\mbox{and }}\quad \psi_h\left(t_h\right)&:=&\tilde{\psi}_{\star,h}\left(\frac{t_h}{\varrho_h}\right)
\end{equation}
are eigenfunctions of $(-\Delta)^{s_j}_{y_j}$ in $B_{\omega_j}^{m_j}$ 
with external homogenous Dirichlet boundary condition\index{external!boundary condition},
and eigenvalues\index{Dirichlet!eigenvalues} $\lambda_j$:
namely, we can rewrite~\eqref{lambdastarj} as
\begin{equation}\label{REGSWYS-A}
\begin{cases}
(-\Delta)^{s_j}_{y_j} {\phi}_{j}=\lambda_{j} {\phi}_{j}&\quad\text{in}\,B_{\omega_j}^{m_j} ,\\
{\phi}_{j}=0&\quad\text{in}\,\mathbb{R}^{m_j}\setminus\overline{B_{\omega_j}^{m_j}}.
\end{cases}
\end{equation}
Now, we define
\begin{equation}
\label{chosofpsistar}
\psi_{\star,h}(t_h):=E_{\alpha_h,1}(t_h^{\alpha_h}),
\end{equation}
where~$E_{\alpha_h,1}$ denotes the Mittag-Leffler function
with parameters $\alpha:=\alpha_h$ and $\beta:=1$ as defined 
in \eqref{Mittag}.

Moreover, 
we consider~$a_h\in(-2,0)$, for every~$h=1,\dots,l$,
to be chosen appropriately in what follows
(the precise choice will be performed in~\eqref{pata7UJ:AKK}),
and, recalling~\eqref{FREEmustar},
we let 
\begin{equation}\label{TGAdef}
\XT_h:=\XT_{\star,h}^{1/{\alpha_h}},\end{equation} and we define
\begin{equation}
\label{autofun2}
\psi_h(t_h):=\psi_{\star,h}\big(\XT_h (t_h-a_h)\big)=
E_{\alpha_h,1}\big(\XT_{\star,h} (t_h-a_h)^{\alpha_h}\big).
\end{equation}
We point out that, thanks to Lemma~\ref{MittagLEMMA}, the function in~\eqref{autofun2}, solves
\begin{equation}
\label{jhjadwlgh}
\begin{cases}
D^{\alpha_h}_{t_h,a_h}\psi_h(t_h)=\XT_{\star,h}\psi_h(t_h)&\quad\text{in }\,(a_h,+\infty), \\
\psi_h(a_h)=1, \\
\partial^m_{t_h}\psi_h(a_h)=0&\quad\text{for every }\,m\in\{1,\dots,[\alpha_h] \}.
\end{cases}
\end{equation}
Moreover, for any $h\in\{1,\ldots, l\}$, we define
\begin{equation}
\label{starest}
\psi^{\star}_h(t_h):=\begin{cases}
\psi_h(t_h)\qquad\text{ if }\,t_h\in[a_h,+\infty) \\
1\qquad\qquad\text{ if }\,t_h\in(-\infty,a_h).\end{cases}
\end{equation}
Thanks to \eqref{jhjadwlgh} and Lemma A.3 in \cite{CDV18} applied here with $b:=a_h$, $a:=-\infty$, $u:=\psi_h$, $u_\star:=\psi^{\star}_h$, we have that $\psi^{\star}_h\in C^{k_h,\alpha_h}_{-\infty}$, and
\begin{equation}\label{DOBACHA}
D^{\alpha_h}_{t_h,-\infty}\psi_h^\star(t_h)=D^{\alpha_h}_{t_h,a_h}\psi_h(t_h)
=\XT_{\star,h}\psi_h(t_h)
=\XT_{\star,h}\psi_h^\star(t_h)\,\text{ in every interval }\,I\Subset(a_h,+\infty).
\end{equation}
We observe that the setting in \eqref{starest} is compatible with the ones in \eqref{SPAZIO} and \eqref{ESTENSIONE} .

{F}rom~\eqref{Mittag} and~\eqref{autofun2}, we see that
\begin{equation*}
\psi_h(t_h)=
\sum_{j=0}^{+\infty} {\frac{
\XT_{\star,h}^j\, (t_h-a_h)^{\alpha_h j}}{\Gamma\left(\alpha_h j+1\right)}}.
\end{equation*}
Consequently, for every~${\mathfrak{I}_h}\in\mathbb{N}$,
we have that
\begin{equation}\label{STAvca}
\partial^{\mathfrak{I}_h}_{t_h}\psi_h(t_h)=
\sum_{j=0}^{+\infty} {\frac{
\XT_{\star,h}^j\, \alpha_h j(\alpha_h j-1)\dots(\alpha_h j-
\mathfrak{I}_h+1)
(t_h-a_h)^{\alpha_h j-\mathfrak{I}_h}}{\Gamma\left(\alpha_h j+1\right)}}
.\end{equation}
Now, we define, for any $i\in\left\{1,\ldots,n\right\}$,
\begin{equation*}
\overline{\XA}_i:=
\begin{cases}
\displaystyle\frac{\XA_i}{\left|\XA_i\right|}\quad \text{ if }\XA_i\neq 0 ,\\
1 \quad \text{ if }\XA_i=0.
\end{cases}
\end{equation*}
We notice that
\begin{equation}\label{NOZABA}
{\mbox{$\overline{\XA}_i\neq 0$ for all~$i\in\left\{1,\ldots,n\right\}$,}}\end{equation}
and
\begin{equation}\label{NOZABAnd}
{\XA}_i\overline{\XA}_i=|{\XA}_i|.\end{equation}
Now, for each~$i\in\{1,\dots,n\}$, we consider
the multi-index~$r_i=(r_{i1},\dots,r_{i p_i})\in\mathbb{N}^{p_i}$.
This multi-index acts on~$\mathbb{R}^{p_i}$,
whose variables are denoted by~$x_i=(x_{i1},\dots,x_{ip_i})\in\mathbb{R}^{p_i}$.
We let~$\overline{v}_{i1}$ be the solution of
the Cauchy problem
\begin{equation}\label{CAH1}
\begin{cases}
\partial^{r_{i1}}_{x_{i1}}\overline{v}_{i1}=-\overline{\XA}_i\overline{v}_{i1} \\
\partial^{\beta_1}_{x_{i1}}\overline{v}_{i1}\left(0\right)=1\quad
\text{ for every } \beta_1\leq r_{i1}-1.
\end{cases}
\end{equation}
We notice that
the solution of the Cauchy problem in~\eqref{CAH1}
exists at least in a neighborhood of the origin of the form
$[-\rho_{i1},\rho_{i1}]$ for a suitable $\rho_{i1}>0$.

Moreover, if~$p_i\ge2$,
for any $\ell\in\{2,\dots, p_i\}$, we consider
the solution of the following Cauchy problem:
\begin{equation}\label{CAH2}
\begin{cases}
\partial^{r_{i\ell}}_{x_{i\ell}}\overline{v}_{i\ell}=\overline{v}_{i\ell} \\
\partial^{\beta_\ell}_{x_{i\ell}}\overline{v}_{i\ell}\left(0\right)=1\quad
\text{ for every } \beta_\ell\leq r_{i\ell}-1.
\end{cases}
\end{equation}
As above, these solutions
are well-defined at least in a neighborhood of the origin of the form $[-\rho_{i\ell},\rho_{i\ell}]$,
for a suitable $\rho_{i\ell}>0$.

Then, we define
\[\overline{\rho}_i:=\min\{ \rho_{i1},\dots,\rho_{i p_i}\}=\min_{\ell\in\{1,\dots,p_i\}}\rho_{i\ell}.\] 
In this way, for every~$x_i=(x_{i1},\dots,x_{ip_i})\in B^{p_i}_{\overline{\rho}_i}$,
we set
\begin{equation}\label{vba}
\overline{v}_i(x_i):=\overline{v}_{i1}(x_{i1})\ldots
\overline{v}_{i{p_i}}(x_{ip_i}).\end{equation}
By~\eqref{CAH1} and~\eqref{CAH2}, we have that
\begin{equation}
\label{cau}
\begin{cases}
\partial^{r_i}_{x_i}\overline{v}_i=-\overline{\XA}_i\overline{v}_i \\
\\
\partial^{\beta}_{x_i}\overline{v}_i\left(0\right)=1\quad
\begin{matrix}
&\text{ for every $\beta=(\beta_1,\dots\beta_{p_i})\in{\mathbb{N}}^{p_i}$}\\
&\text{ such that~$\beta_{\ell}\leq r_{i\ell}-1$ for each~$\ell\in\{1,\dots,p_i\}$.}\end{matrix}
\end{cases}
\end{equation}
Now, we define \[\rho:=\min\{ \overline{\rho}_1,\dots\overline{\rho}_n\}
=\min_{i\in\{1,\dots,n\}}\overline{\rho}_i.\]
We take 
\begin{equation*}
\overline{\tau}\in C_0^\infty\left(B^{p_1+\ldots+p_n}_{\rho/(R+2)}\right),
\end{equation*}
with $\overline{\tau}=1$ in $B^{p_1+\ldots+p_n}_{\rho/(2(R+2))}$, and,
for every~$x=(x_1,\dots,x_n)\in{\mathbb{R}}^{p_1}\times\dots\times{\mathbb{R}}^{p_n}$, we set 
\begin{equation}
\label{otau1}\tau_1\left(x_1,\ldots,x_n\right):=\overline{\tau}\left(\XX_1 \otimes x_1,\ldots,\XX_n \otimes x_n\right).\end{equation}
We recall that the free parameters~$\XX_1,\dots,\XX_n$
have been introduced in~\eqref{FREExi}, and we have used here the notation
$$ \XX_i\otimes x_i = (\XX_{i1},\dots,\XX_{ip_i})\otimes(x_{i1},\dots,x_{ip_i}):= (\XX_{i1}x_{i1},\dots,\XX_{ip_i}x_{ip_i})\in{\mathbb{R}}^{p_i},$$
for every~$i\in\{1,\dots,n\}$.

We also set, for any~$i\in\{1,\dots,n\}$,
\begin{equation}
\label{vuggei}
v_i\left(x_i\right):=\overline{v}_i\left(\XX_i\otimes x_i\right).
\end{equation}
We point out that if~$x_i\in B^{p_i}_{\overline\rho_i/(R+2)}$ we have that
$$ |\XX_i\otimes x_i|^2=\sum_{\ell=1}^{p_i}(\XX_{i\ell}x_{i\ell})^2\le
(R+2)^2\sum_{\ell=1}^{p_i}x_{i\ell}^2<\overline\rho_i^2,$$
thanks to~\eqref{FREExi}, and therefore the setting in~\eqref{vuggei}
is well-defined for every~$x_i\in B^{p_i}_{\overline\rho_i/(R+2)}$.

Recalling~\eqref{cau} and~\eqref{vuggei}, we see that, for any~$i\in\{1,\dots,n\}$,
\begin{equation}\label{QYHA0ow1dk} \partial_{x_i}^{r_i}v_i(x_i)=
\XX_i^{r_i}\partial_{x_i}^{r_i}\overline{v}_i\left(\XX_i\otimes x_i\right)
=
-\overline\XA_i \XX_i^{r_i}\overline{v}_i\left(\XX_i\otimes x_i\right)
=
-\overline\XA_i \XX_i^{r_i} {v}_i( x_i).\end{equation}
%Let also~\begin{equation}\label{otau2}\tau_2\in C^\infty_0([-30,30]^{l})\end{equation}
%with~$\tau_2=1$ in~$[-20,20]^{l}$.
We take $e_1,\ldots,e_M$, with
\begin{equation}
\label{econ}
e_j\in\partial B_{\omega_j}^{m_j},
\end{equation}
and we introduce an additional set of free parameters
$Y_1,\ldots,Y_M$ with 
\begin{equation}
\label{eq:FREEy}
Y_j\in\mathbb{R}^{m_j}\qquad{\mbox{
and }}\qquad e_j\cdot Y_j<0. \end{equation}
We let~$\epsilon>0$, to be taken small
possibly depending on the  free parameters $e_j$, $Y_j$ and $\XT_h$, 
and we define
\begin{equation}
\label{svs}
\begin{split}
w\left(x,y,t\right): 
=&\tau_1\left(x\right)v_1\left(x_1\right)\cdot
\ldots \cdot
v_n\left(x_n\right)\phi_1\left(y_1+e_1+\epsilon Y_1\right)\cdot
\ldots\cdot
\phi_M\left(y_M+e_M+\epsilon Y_M\right) \\
&\times\psi^\star_1(t_1)\cdot\ldots\cdot\psi^\star_l(t_l),
\end{split}
\end{equation}
where the setting in~\eqref{autofun1}, %% \eqref{autofun2},
\eqref{starest}, \eqref{otau1} and~\eqref{vuggei} has been exploited.

%We remark that, for every~$h\in\{1,\dots,l\}$,
%we have that~$\tau_2 (t_1,\dots,t_{h-1},\eta,t_{h+1},\dots,t_l)=1$
%when~$\eta\in(a_h,t_h)$ and~$|t_1|,\dots,|t_l|\le18$, and consequently
%\begin{equation}\label{TH92KA}
%\begin{split}&
%\Gamma([\alpha_h]+1-\alpha_h)\,D^{\alpha_h}_{t_h,a_h}
%\Big(\psi_1(t_1)\dots\psi_l(t_l)\tau_2(t)\Big)\\
%=\,&
%\int_{a_h}^{t_h} \frac{\partial^{[\alpha_h]+1}_{\eta}
%\Big(
%\psi_1(t_1)\dots\psi_{h-1}(t_{h-1})
%\psi_h(\eta)\psi_{h+1}(t_{h+1})\dots\psi_l(t_l)\;\tau_2 (t_1,\dots,t_{h-1},\eta,t_{h+1},\dots,t_l)
%\Big)}{(t_h-\eta)^{\alpha_h-[\alpha_h]}}\,d\eta\\
%=\,&
%\int_{a_h}^{t_h} \frac{\partial^{[\alpha_h]+1}_{\eta}
%\big(\psi_1(t_1)\dots\psi_{h-1}(t_{h-1})
%\psi_h(\eta)\psi_{h+1}(t_{h+1})\dots\psi_l(t_l)
%\big)}{(t_h-\eta)^{\alpha_h-[\alpha_h]}}\,d\eta\\
%=\,&\Gamma([\alpha_h]+1-\alpha_h)\,D^{\alpha_h}_{t_h,a_h}
%\Big(\psi_1(t_1)\dots\psi_l(t_l)\Big)\\
%=\,&\Gamma([\alpha_h]+1-\alpha_h)\,
%\psi_1(t_1)\dots\psi_{h-1}(t_{h-1})\,
%D^{\alpha_h}_{t_h,a_h}\psi_h(t_h)\,\psi_{h+1}(t_{h+1})\dots
%\psi_l(t_l),
%\end{split}\end{equation}
%as long as~$|t|\le 18$.

We also notice that $w\in C\left(\mathbb{R}^N\right)\cap C_0\left(\mathbb{R}^{N-l}\right)\cap\mathcal{A}$. Moreover,
if
\begin{equation}\label{pata7UJ:AKK}
a=(a_1,\dots,a_l):=\left(-\frac{\epsilon}{\XT_1},\dots,-\frac{\epsilon}{\XT_l}
\right)\in(-\infty,0)^l\end{equation}
and~$(x,y)$ is sufficiently close to the origin
and~$t\in(a_1,+\infty)\times\dots\times(a_l,+\infty)$, we have that
\begin{eqnarray*}&&
\Lambda_{-\infty} w\left(x,y,t\right)\\&=&
\left( \sum_{i=1}^n \XA_i \partial^{r_i}_{x_i}
+\sum_{j=1}^{M} \XB_j (-\Delta)^{s_j}_{y_j}+
\sum_{h=1}^{l} \XC_h D^{\alpha_h}_{t_h,-\infty}\right) w\left(x,y,t\right)\\
&=&
\sum_{i=1}^n \XA_i 
v_1\left(x_1\right)
\ldots  
v_{i-1}\left(x_{i-1}\right)
\partial^{r_i}_{x_i}v_{i}\left(x_{i}\right)v_{i+1}\left(x_{i+1}\right)
\ldots  v_n\left(x_n\right)\\
&&\qquad\times\phi_1\left(y_1+e_1+\epsilon Y_1\right)
\ldots\phi_M\left(y_M+e_M+\epsilon Y_M\right)\psi^\star_1\left(t_1\right)
\ldots\psi^\star_l\left(t_l\right)\\
&&+\sum_{j=1}^{M} \XB_j 
v_1\left(x_1\right)
\ldots v_n\left(x_n\right)\phi_1\left(y_1+e_1+\epsilon Y_1\right)
\ldots\phi_{j-1}\left(y_{j-1}+e_{j-1}+\epsilon Y_{j-1}\right)\\&&\qquad\times
(-\Delta)^{s_j}_{y_j}\phi_j\left(y_j+e_j+\epsilon Y_j\right)
\phi_{j+1}\left(y_{j+1}+e_{j+1}+\epsilon Y_{j+1}\right)
\ldots\phi_M\left(y_M+e_M+\epsilon Y_M\right)\\
&&\qquad\times\psi^\star_1\left(t_1\right)
\ldots\psi^\star_l\left(t_l\right)\\
&&+\sum_{h=1}^{l} \XC_h 
v_1\left(x_1\right)
\ldots v_n\left(x_n\right)\phi_1\left(y_1+e_1+\epsilon Y_1\right)\ldots\phi_M\left(y_M+e_M+\epsilon Y_M\right)\psi^\star_1\left(t_1\right)
\ldots\psi^\star_{h-1}\left(t_{h-1}\right)\\
&&\qquad\times D^{\alpha_h}_{t_h,-\infty}\psi^\star_h\left(t_h\right)
\psi^\star_{h+1}(t_{h+1})\ldots\psi^\star_l\left(t_l\right)\\
&=&
-\sum_{i=1}^n \XA_i \overline\XA_i\XX_i^{r_i}
v_1\left(x_1\right)
\ldots v_n\left(x_n\right)\phi_1\left(y_1+e_1+\epsilon Y_1\right)
\ldots\phi_M\left(y_M+e_M+\epsilon Y_M\right)\psi^\star_1(t_1)\ldots\psi^\star_l(t_l)\\
&&+\sum_{j=1}^{M} \XB_j \lambda_j
v_1\left(x_1\right)
\ldots v_n\left(x_n\right)\phi_1\left(y_1+e_1+\epsilon Y_1\right)
\ldots\phi_M\left(y_M+e_M+\epsilon Y_M\right)\psi^\star_1(t_1)\ldots\psi^\star_l(t_l)
\\ 
&&+\sum_{h=1}^{l} \XC_h \XT_{\star,h}
v_1\left(x_1\right)
\ldots v_n\left(x_n\right)\phi_1\left(y_1+e_1+\epsilon Y_1\right)
\ldots\phi_M\left(y_M+e_M+\epsilon Y_M\right)\psi^\star_1(t_1)\ldots\psi^\star_l(t_l)
\\
&=&\left( -\sum_{i=1}^n \XA_i \overline\XA_i\XX_i^{r_i}
+\sum_{j=1}^{M} \XB_j \lambda_j+\sum_{h=1}^l \XC_h\XT_{\star,h}\right) w(x,y,t),
\end{eqnarray*}
thanks to \eqref{REGSWYS-A}, \eqref{DOBACHA} and~\eqref{QYHA0ow1dk}
.

Consequently, making use of~\eqref{1.6md}, \eqref{alp} and~\eqref{NOZABAnd},
if~$(x,y)$ lies near the origin and~$t\in(a_1,+\infty)\times\dots\times(a_l,+\infty)$,
we have that
\begin{eqnarray*}&&
\Lambda_{-\infty} w\left(x,y,t\right)=
\left( -\sum_{i=1}^n |\XA_i |\XX_i^{r_i}
+\sum_{j=1}^{M-1} \XB_j \lambda_{j}+\XB_M\lambda_M+\sum_{h=1}^l \XC_h\XT_{\star,h}\right) w(x,y,t)
\\&&\qquad=
\left( -\sum_{i=1}^n |\XA_i |\XX_i^{r_i}
+\sum_{j=1}^{M-1} \XB_j \lambda_{\star,j}+\XB_M\lambda_M+\sum_{h=1}^l \XC_h\XT_{\star,h}\right) w(x,y,t)=0.
\end{eqnarray*}
This says that~$w\in\mathcal{H}$. Thus, in light of~\eqref{aza} we have that
\begin{equation}
\label{eq:ort}
0=\theta\cdot\partial^K w\left(0\right)=
\sum_{\left|\iota\right|\leq K}
{\theta_{\iota}\partial^\iota w\left(0\right)}=\sum_{|i|+|I|+|\mathfrak{I}|\le K}
\theta_{i,I,\mathfrak{I}}\,\partial_{x}^{i}\partial_{y}^{I}\partial_{t}^{\mathfrak{I}}w\left(0\right) 
.\end{equation}
%Since~$w$ is constant in~$t$ near the origin,
%recalling the notation in~\eqref{mulPM}
%we have that~$\partial^\iota w(0)=
%\partial^{(i,I,\mathfrak{I})} w(0)=0$ if~$|\mathfrak{I}|>0$.
Now, we recall~\eqref{vba} and we claim that,
for any $j\in\{1,\dots,n\}$,
any~$\ell\in\{1,\dots, p_j\}$ and any~$
i_{j\ell}\in\mathbb{N}$, we have that
\begin{equation}
\label{notnull}
\partial^{i_{j\ell}}_{x_{j\ell}}\overline{v}_{j\ell}(0)\neq 0.
\end{equation}
We prove it by induction over~$i_{j\ell}$. Indeed, if 
$i_{j\ell}\in\left\{0,\ldots,r_{j\ell}-1\right\}$, then
the initial condition in \eqref{CAH1} (if~$\ell=1$)
or~\eqref{CAH2} (if~$\ell\ge2$) gives that~$
\partial^{i_{j\ell}}_{x_{i\ell}}\overline{v}_{i\ell}\left(0\right)=1$, and
so~\eqref{notnull}
is true in this case.

To perform the inductive step,
let us now
suppose that the claim in~\eqref{notnull}
still holds for all $i_{j\ell}\in\left\{0,\ldots,i_0\right\}$
for some~$i_0$ such that $i_0\geq r_{j\ell}-1$.
Then, using the equation in~\eqref{CAH1} (if~$\ell=1$)
or in~\eqref{CAH2} (if~$\ell\ge2$), we have that
\begin{equation}\label{8JAMANaoaksd}
\partial^{i_0+1}_{x_{j\ell}}\overline{v}_j=
\partial^{i_0+1-r_{j\ell}}_{x_{j\ell}}\partial^{r_{j\ell}}_{x_{j\ell}}
\overline{v}_j=-\tilde{a}_j\partial^{i_0+1-r_{j\ell}}_{x_{j\ell}}\overline{v}_j,
\end{equation}
with
$$\tilde{a}_j:=\begin{cases}
\overline{a}_j & {\mbox{ if }}\ell=1,\\
-1 & {\mbox{ if }}\ell\ge2.
\end{cases}$$
Notice that~$\tilde{a}_j\ne0$, in view of~\eqref{NOZABA},
and~$\partial^{i_0+1-r_{j\ell}}_{x_{j \ell}}\overline{v}_j\left(0\right)\neq 0$,
by the inductive assumption. These considerations and~\eqref{8JAMANaoaksd}
give that~$\partial^{i_0+1}_{x_{j\ell}}\overline{v}_j\left(0\right)\neq 0$,
and this proves~\eqref{notnull}. 

Now, using \eqref{vba} and~\eqref{notnull} we have that,
for any $j\in\{1,\dots,n\}$ and any~$
i_{j}\in\mathbb{N}^{p_j}$,
\begin{equation*}
\partial^{i_j}_{x_{j}}\overline{v}_{j}(0)\neq 0.
\end{equation*}
This, \eqref{FREExi} and the computation in~\eqref{QYHA0ow1dk} give that,
for any $j\in\{1,\dots,n\}$ and any~$
i_{j}\in\mathbb{N}^{p_j}$,
\begin{equation}
\label{eq:adoajfiap}
\partial^{i_j}_{x_{j}} {v}_{j}(0)=\XX_j^{i_j}\partial^{i_j}_{x_{j}}\overline{v}_{j}(0)\neq 0.
\end{equation}
We also notice that, in light of~\eqref{starest}, \eqref{svs} and~\eqref{eq:ort},
\begin{equation}\label{7UJHASndn2weirit}\begin{split}&
0=\sum_{|i|+|I|+|\mathfrak{I}|\le K}
\theta_{i,I,\mathfrak{I}}\,
\partial^{i_1}_{x_{1}} {v}_{1}(0)\ldots
\partial^{i_n}_{x_{n}} {v}_{n}(0)
\,\partial_{y_1}^{I_1}\phi_1\left(e_1+\epsilon Y_1\right)
\ldots\partial_{y_M}^{I_M}\phi_M\left(e_M+\epsilon Y_M\right)\\
&\qquad\qquad\qquad\times\partial^{\mathfrak{I}_1}_{t_1}
\psi_1(0)\ldots\partial^{\mathfrak{I}_l}\psi_l(0).\end{split}\end{equation}
Now, by~\eqref{autofun1}
and Proposition \ref{sharbou}
(applied to $s:=s_j$, $\beta:=I_j$, $e:=\frac{e_j}{\omega_j}\in\partial B_1^{m_j}$, due to~\eqref{econ}, and $X:=\frac{Y_j}{\omega_j}$),
we see that, for any $j=1,\ldots, M$,
\begin{equation}
\label{alppoo}
\begin{split}\omega_j^{|I_j|}
\lim_{\epsilon\searrow 0}\epsilon^{|I_j|-s_j}\partial_{y_j}^{I_j}
\phi_j\left( e_j+\epsilon Y_j \right)
\; =\;&
\lim_{\epsilon\searrow 0}\epsilon^{|I_j|-s_j}\partial_{y_j}^{I_j}
\tilde\phi_{\star,j}\left(\frac{e_j+\epsilon Y_j}{\omega_j}\right)
\\=\;& \kappa_j \frac{e_j^{I_j}}{\omega_j^{|I_j|}}\left(-\frac{e_j}{\omega_j}\cdot\frac{Y_j}{\omega_j}\right)_+^{s_j-|I_j|}
,\end{split}\end{equation}
with~$\kappa_j\ne0$, in the sense of distributions (in the coordinates~$Y_j$).

Moreover, using~\eqref{STAvca} and \eqref{pata7UJ:AKK},
it follows that
\begin{eqnarray*}
\partial^{\mathfrak{I}_h}_{t_h}\psi_h(0)&=&
\sum_{j=0}^{+\infty} {\frac{
\XT_{\star,h}^j\, \alpha_h j(\alpha_h j-1)\dots(\alpha_h j-
\mathfrak{I}_h+1)
(0-a_h)^{\alpha_h j-\mathfrak{I}_h}}{\Gamma\left(\alpha_h j+1\right)}}\\
&=& \sum_{j=0}^{+\infty} {\frac{
\XT_{\star,h}^j\, \alpha_h j(\alpha_h j-1)\dots(\alpha_h j-
\mathfrak{I}_h+1)
\,{\epsilon}^{\alpha_h j-\mathfrak{I}_h}}{\Gamma\left(\alpha_h j+1\right)\;
{\XT_h}^{\alpha_h j-\mathfrak{I}_h}}}\\
&=& \sum_{j=1}^{+\infty} {\frac{
\XT_{\star,h}^j\, \alpha_h j(\alpha_h j-1)\dots(\alpha_h j-
\mathfrak{I}_h+1)
\,{\epsilon}^{\alpha_h j-\mathfrak{I}_h}}{\Gamma\left(\alpha_h j+1\right)\;
{\XT_h}^{\alpha_h j-\mathfrak{I}_h}}}.
\end{eqnarray*}
Accordingly, recalling~\eqref{TGAdef}, we find that
\begin{equation}
\label{limmittl}\begin{split}&
\lim_{\epsilon\searrow 0}\epsilon^{\mathfrak{I}_h-\alpha_h}
\partial^{\mathfrak{I}_h}_{t_h}\psi_h(0)
=\lim_{\epsilon\searrow 0}
\sum_{j=1}^{+\infty} {\frac{
\XT_{\star,h}^j\, \alpha_h j(\alpha_h j-1)\dots(\alpha_h j-
\mathfrak{I}_h+1)
\,{\epsilon}^{\alpha_h (j-1)}}{\Gamma\left(\alpha_h j+1\right)\;
{\XT_h}^{\alpha_h j-\mathfrak{I}_h}}}
\\&\qquad=
{\frac{ \XT_{\star,h}\,\alpha_h (\alpha_h -1)\dots(\alpha_h -\mathfrak{I}_h+1)
}{\Gamma\left(\alpha_h +1\right)\;
\XT_h^{\alpha_h -\mathfrak{I}_h}}} =
{\frac{ \XT_h^{\mathfrak{I}_h}\,\alpha_h (\alpha_h -1)\dots(\alpha_h -\mathfrak{I}_h+1)
}{\Gamma\left(\alpha_h +1\right)}}.
\end{split}\end{equation}
Also, recalling~\eqref{IBARRA}, we can write~\eqref{7UJHASndn2weirit} as
\begin{equation}
\label{eq:ort:X}\begin{split}&
0=\sum_{{|i|+|I|+|\mathfrak{I}|\le K}\atop{|I|\le|\overline{I}|}}
\theta_{i,I,\mathfrak{I}}\,
\partial^{i_1}_{x_{1}} {v}_{1}(0)\ldots
\partial^{i_n}_{x_{n}} {v}_{n}(0)
\,\partial_{y_1}^{I_1}\phi_1\left(e_1+\epsilon Y_1\right)
\ldots\partial_{y_M}^{I_M}\phi_M\left(e_M+\epsilon Y_M\right)\\&\qquad\qquad\times
\partial^{\mathfrak{I}_1}_{t_1}\psi_{1}(0)\ldots
\partial^{\mathfrak{I}_l}_{t_l}\psi_{l}(0).\end{split}
\end{equation}
Moreover, we define
\begin{equation*}
\Xi:=\left|\overline{I}\right|-\sum_{j=1}^M {s_j}+|\mathfrak{I}|-\sum_{h=1}^l {\alpha_h}.
\end{equation*}
Then, we
multiply~\eqref{eq:ort:X} by $\epsilon^{\Xi}\in(0,+\infty)$, and we
send~$\epsilon$ to zero. In this way, we obtain from~\eqref{alppoo},
\eqref{limmittl}
and~\eqref{eq:ort:X} that
\begin{eqnarray*}
0&=&\lim_{\epsilon\searrow0}
\epsilon^{\Xi}
\sum_{{|i|+|I|+|\mathfrak{I}|\le K}\atop{|I|\le|\overline{I}|}}
\theta_{i,I,\mathfrak{I}}\,
\partial^{i_1}_{x_{1}} {v}_{1}(0)\ldots
\partial^{i_n}_{x_{n}} {v}_{n}(0)
\,\partial_{y_1}^{I_1}\phi_1\left(e_1+\epsilon Y_1\right)
\ldots\partial_{y_M}^{I_M}\phi_M\left(e_M+\epsilon Y_M\right)
\\ &&\qquad\times\partial^{\mathfrak{I}_1}_{t_1}\psi_1(0)\ldots\partial^{\mathfrak{I}_l}_{t_l}\psi_l(0)
\\ &=&\lim_{\epsilon\searrow0}
\sum_{{|i|+|I|+|\mathfrak{I}|\le K}\atop{|I|\le|\overline{I}|}}
\epsilon^{|\overline{I}|-|I|}
\theta_{i,I,\mathfrak{I}}\,
\partial^{i_1}_{x_{1}} {v}_{1}(0)\ldots
\partial^{i_n}_{x_{n}} {v}_{n}(0)\\
&&\qquad\times\epsilon^{|I_1|-s_1}\partial_{y_1}^{I_1}\phi_1\left(e_1+\epsilon Y_1\right)
\ldots\epsilon^{|I_M|-s_M}\partial_{y_M}^{I_M}\phi_M\left(e_M+\epsilon Y_M\right)
\\ &&\qquad\times\epsilon^{\mathfrak{I}_1-\alpha_1}\partial^{\mathfrak{I}_1}_{t_1}\psi_1(0)\ldots\epsilon^{\mathfrak{I}_l-\alpha_l}\partial^{\mathfrak{I}_l}_{t_l}\psi_l(0)
\\&=&
\sum_{{|i|+|I|+|\mathfrak{I}|\le K}\atop{|I| = |\overline{I}|}}
\tilde C_{i,I,\mathfrak{I}}\,\theta_{i,I,\mathfrak{I}}\,
\partial^{i_1}_{x_{1}} {v}_{1}(0)\ldots
\partial^{i_n}_{x_{n}} {v}_{n}(0)\\\
&&\qquad\times e_1^{I_1}\ldots e_M^{I_M}\,
\left(- e_1 \cdot Y_1 \right)_+^{s_1-|I_1|}
\ldots
\left(- e_M \cdot Y_M\right)_+^{s_M-|I_M|}\XT_1^{\mathfrak{I}_1}\ldots\XT_l^{\mathfrak{I}_l}
,\end{eqnarray*}
for a suitable~$\tilde C_{i,I,\mathfrak{I}}\ne0$
(strictly speaking, the above identity holds
in the sense of distribution with respect to the coordinates~$Y$
and~$\XT$, but since the left hand side vanishes,
we can consider it also a pointwise identity).

Hence, recalling~\eqref{eq:adoajfiap},
\begin{equation}\label{GANAfai}\begin{split}
0&\,=\,\sum_{{|i|+|I|+|\mathfrak{I}|\le K}\atop{|I| = |\overline{I}|}}
C_{i,I,\mathfrak{I}}\,\theta_{i_1,\dots,i_n,I_1,\dots,I_M,\mathfrak{I}_1,\dots,\mathfrak{I}_l}\;
\XX_1^{i_1}\ldots\XX_n^{i_n}\\&\qquad\qquad\times
e_1^{I_1}\ldots e_M^{I_M}\,
\left(- e_1 \cdot Y_1 \right)_+^{s_1-|I_1|}
\ldots
\left(- e_M \cdot Y_M\right)_+^{s_M-|I_M|}
\XT_1^{\mathfrak{I}_1}\ldots\XT_l^{\mathfrak{I}_l}\\
&\,=\,
\left(- e_1 \cdot Y_1 \right)_+^{s_1}
\ldots
\left(- e_M \cdot Y_M\right)_+^{s_M}\\&\qquad\qquad\times
\sum_{{|i|+|I|+|\mathfrak{I}|\le K}\atop{|I| = |\overline{I}|}}
C_{i,I,\mathfrak{I}}\,\theta_{i,I,\mathfrak{I}}\;
\XX^i\,e^{I}\,
\left(- e_1 \cdot Y_1 \right)_+^{-|I_1|}
\ldots
\left(- e_M \cdot Y_M\right)_+^{-|I_M|}
\XT^{\mathfrak{I}}
,\end{split}\end{equation}
for a suitable~$C_{i,I,\mathfrak{I}}\ne0$.

We observe that the equality in~\eqref{GANAfai}
is valid for any choice of the free parameters~$(\XX,Y,\XT)$
in an open subset of~$\mathbb{R}^{p_1+\dots+p_n}\times
\mathbb{R}^{m_1+\dots+m_M}\times\mathbb{R}^l$,
as prescribed in~\eqref{FREExi},~\eqref{FREEmustar}
and~\eqref{eq:FREEy}. 

Now, we take new free parameters, $\XY_1,\ldots,\XY_M$ with $\XY_j\in\mathbb{R}^{m_j}\setminus\{0\}$, and we
define
\begin{equation}\label{COMPATI}
e_j:=\frac{\omega_j\XY_j}{|\XY_j|}\quad {\mbox{ and }}\quad 
Y_j:=-\frac{\XY_j}{|\XY_j|^2}.\end{equation}
We stress that the setting in~\eqref{COMPATI} is compatible with that in~\eqref{eq:FREEy}, since
$$ e_j\cdot Y_j=-\frac{\omega_j\XY_j}{|\XY_j|}\cdot
\frac{\XY_j}{|\XY_j|^2}=-\frac{\omega_j}{|\XY_j|}<0,$$
thanks to~\eqref{OMEj}. We also notice that, for all~$j\in\{1,\dots,M\}$,
$$ e_j^{I_j}\left(- e_j \cdot Y_j \right)_+^{-|I_j|}=
\frac{\omega_j^{|I_j|}\XY_j^{I_j}}{|\XY_j|^{|I_j|}}\,
\frac{|\XY_j|^{|I_j|}}{\omega_j^{|I_j|}}=\XY_j^{I_j},
$$
and hence
$$ e^{I}\,
\left(- e_1 \cdot Y_1 \right)_+^{-|I_1|}
\ldots
\left(- e_M \cdot Y_M\right)_+^{-|I_M|}=
\XY^I.$$
Plugging this into formula \eqref{GANAfai},
we obtain the first identity in~\eqref{ipop},
as desired.
Hence, 
the proof of~\eqref{ipop} in case~\ref{itm:case1}
is complete.
\end{proof}

\begin{proof}[Proof of \eqref{ipop}, case \ref{itm:case2}]
Thanks to the assumptions given in case~\ref{itm:case2}, we can suppose
that formula~\eqref{AGZ} still holds, and also that
\begin{equation}\label{MAGGZC}
{\mbox{$\XC_l>0$}}.\end{equation} In addition,
for any $j\in\{1,\ldots,M\}$, we consider $\lambda_j$
and~$\phi_j$ as in \eqref{REGSWYS-A}.

Then, we define
\begin{equation}\label{MAfghjkGGZC} R:=\left( 
\frac{ 1 }{|\XA_1|}\displaystyle\left(\sum_{h=1}^{l-1}|\XC_h|+\sum_{j=1}^M|\XB_j|\lambda_j\right)\right)^{1/|r_{1}|}.\end{equation}
We notice that, in light of \eqref{AGZ}, the setting in~\eqref{MAfghjkGGZC} is well-defined. 

Now, we fix two sets of free parameters $\XX_1,\ldots,\XX_n$
as in~\eqref{FREExi}
and~$\,\XT_{\star,1},\dots,\XT_{\star,l}$ as in~\eqref{FREEmustar}, here taken with~$R$
as in~\eqref{MAfghjkGGZC}.
Moreover, we define
\begin{equation}
\label{alpc}
\lambda\,:=\,\frac{1}{\XC_l\,\XT_{\star,l}}\left(
\sum_{j=1}^n {\left|\XA_j\right|\XX_j^{r_j}}-\sum_{j=1}^M\XB_j\lambda_j-\sum_{h=1}^{l-1}\XC_h\XT_{\star,h}\right).\end{equation}
We notice that \eqref{alpc} is well-defined,
thanks to~\eqref{FREEmustar}
and~\eqref{MAGGZC}.
Furthermore, recalling~\eqref{FREExi}, \eqref{1.15bis} and~\eqref{MAfghjkGGZC},
we find that
\begin{equation*}
\begin{split}
\sum_{i=1}^n&|\XA_i|\XX_i^{r_i}\ge|\XA_1|\XX_1^{r_1}>
|\XA_1|(R+1)^{|r_1|}>|\XA_1|R^{|r_1|} \\
&=\sum_{h=1}^{l-1}|\XC_h|+\sum_{j=1}^M|\XB_j|\lambda_j
\ge\sum_{h=1}^{l-1}\XC_h\XT_{\star,h}+\sum_{j=1}^M\XB_j\lambda_j.
\end{split}
\end{equation*}
Consequently, by~\eqref{alpc},
\begin{equation}
\label{alp-0c}
\lambda>0.\end{equation}
Hence, we can define
\begin{equation}\label{OVlam}
\overline{\lambda}:=\lambda^{1/{\alpha_l}}.\end{equation}
Moreover, 
we consider~$a_h\in(-2,0)$, for every~$h\in\{ 1,\dots,l\}$,
to be chosen appropriately in what follows
(the exact choice will be performed in~\eqref{pata7UJ:AKKcc}),
and, using the notation in~\eqref{chosofpsistar}
and~\eqref{TGAdef}, we define
\begin{equation}
\label{autofun2c}
\psi_h(t_h):=\psi_{\star,h}\big(\XT_h (t_h-a_h)\big)=
E_{\alpha_h,1}\big(\XT_{\star,h} (t_h-a_h)^{\alpha_h}\big)\quad\text{if }\,h\in\{1,\dots,l-1\}
\end{equation}
and
\begin{equation}
\label{psielle}
\psi_{l}(t_l):=\psi_{\star,l}\big(\overline{\lambda}\,\XT_l (t_l-a_l)\big)=
E_{\alpha_l,1}\big(\lambda\,\XT_{\star,l} (t_l-a_l)^{\alpha_l}\big).
\end{equation}
We recall that, thanks to Lemma~\ref{MittagLEMMA}, the function in~\eqref{autofun2c}
solves~\eqref{jhjadwlgh} and satisfies~\eqref{STAvca} for any $h\in\{1,\dots,l-1\}$, while the function
in~\eqref{psielle} solves
\begin{equation}
\label{jhjadwlghplop}
\begin{cases}
D^{\alpha_l}_{t_l,a_l}\psi_l(t_l)=\lambda\,\XT_{\star,l}\psi_l(t_l)&\quad\text{in }\,(a_l,+\infty), \\
\psi_l(a_l)=1, \\
\partial^m_{t_l}\psi_l(a_l)=0&\quad\text{for every }\,m\in\{1,\dots,[\alpha_l] \}.
\end{cases}
\end{equation}
As in~\eqref{starest}, we extend the functions~$\psi_{h}$
constantly in~$(-\infty,a_h)$, calling~$\psi^\star_h$ this
extended function. In this way,
Lemma~A.3 in~\cite{CDV18} translates~\eqref{jhjadwlghplop} into
\begin{equation}
\label{jhjadwlghplop2}
D^{\alpha_h}_{t_h,-\infty}\psi_h^\star(t_h)=\XT_{\star,h}\psi_h(t_h)
=\XT_{\star,h}\psi_h^\star(t_h)\,\text{ in every interval }\,I\Subset(a_h,+\infty).
\end{equation}
Now, we let~$\epsilon>0$, to be taken small
possibly depending on the  free parameters,
and we exploit the functions defined in \eqref{otau1} and \eqref{vuggei},
provided that
one replaces the positive constant $R$ defined in \eqref{MAfghjkGGZ}
with the one in \eqref{MAfghjkGGZC}, when necessary. 

With this idea in mind, for any $j\in\{1,\ldots,M\}$, we let\footnote{Comparing~\eqref{freee2}
with \eqref{econ}, we observe that~\eqref{econ} reduces to~\eqref{freee2} with the choice~$\omega_j:=1$.}
\begin{equation}
\label{freee2}
e_j\in\partial B^{m_j}_1,
\end{equation}
and we define
\begin{equation}
\label{svsc}
\begin{split}
w\left(x,y,t\right): 
=&\tau_1\left(x\right)v_1\left(x_1\right)\cdot
\ldots \cdot
v_n\left(x_n\right)\phi_1\left(y_1+e_1+\epsilon Y_1\right)\cdot
\ldots\cdot
\phi_M\left(y_M+e_M+\epsilon Y_M\right) \\
&\times\psi^\star_1(t_1)\cdot\ldots\cdot\psi^\star_l(t_l),
\end{split}
\end{equation}
where the setting in~\eqref{REGSWYS-A}, %% \eqref{starest}, 
\eqref{otau1},
\eqref{vuggei}, \eqref{eq:FREEy}, \eqref{autofun2c}
and~\eqref{psielle} has been exploited. 

We also notice that $w\in C\left(\mathbb{R}^N\right)\cap C_0(\mathbb{R}^{N-l})\cap\mathcal{A}$. Moreover,
if
\begin{equation}\label{pata7UJ:AKKcc}
a=(a_1,\dots,a_l):=\left(-\frac{\epsilon}{\XT_1},\dots,-\frac{\epsilon}{\,\XT_l}\right)
\in(-\infty,0)^l\end{equation}
and~$(x,y)$ is sufficiently close to the origin
and~$t\in(a_1,+\infty)\times\dots\times(a_l,+\infty)$, we have that
\begin{eqnarray*}&&
\Lambda_{-\infty} w\left(x,y,t\right)\\&=&
\left( \sum_{i=1}^n \XA_i \partial^{r_i}_{x_i}
+\sum_{j=1}^{M} \XB_j (-\Delta)^{s_j}_{y_j}+
\sum_{h=1}^{l} \XC_h D^{\alpha_h}_{t_h,-\infty}\right) w\left(x,y,t\right)\\
&=&
\sum_{i=1}^n \XA_i 
v_1\left(x_1\right)
\ldots  
v_{i-1}\left(x_{i-1}\right)
\partial^{r_i}_{x_i}v_{i}\left(x_{i}\right)v_{i+1}\left(x_{i+1}\right)
\ldots  v_n\left(x_n\right)\\
&&\qquad\times\phi_1\left(y_1+e_1+\epsilon Y_1\right)
\ldots\phi_M\left(y_M+e_M+\epsilon Y_M\right)\psi^\star_1\left(t_1\right)
\ldots\psi^\star_{l-1}\left(t_{l-1}\right)\psi^\star_l\left(t_l\right)\\
&&+\sum_{j=1}^{M} \XB_j 
v_1\left(x_1\right)
\ldots v_n\left(x_n\right)\phi_1\left(y_1+e_1+\epsilon Y_1\right)
\ldots\phi_{j-1}\left(y_{j-1}+e_{j-1}+\epsilon Y_{j-1}\right)\\&&\qquad\times
(-\Delta)^{s_j}_{y_j}\phi_j\left(y_j+e_j+\epsilon Y_j\right)
\phi_{j+1}\left(y_{j+1}+e_{j+1}+\epsilon Y_{j+1}\right)
\ldots\phi_M\left(y_M+e_M+\epsilon Y_M\right)\\
&&\qquad\times\psi^\star_1\left(t_1\right)
\ldots\psi^\star_{l-1}\left(t_{l-1}\right)\psi^\star_l\left(t_l\right)\\
&&+\sum_{h=1}^{l} \XC_h 
v_1\left(x_1\right)
\ldots v_n\left(x_n\right)\phi_1\left(y_1+e_1+\epsilon Y_1\right)\ldots\phi_M\left(y_M+e_M+\epsilon Y_M\right)\psi^\star_1\left(t_1\right)
\ldots\psi^\star_{h-1}\left(t_{h-1}\right)\\
&&\qquad\times D^{\alpha_h}_{t_h,-\infty}\psi^\star_h\left(t_h\right)
\psi^\star_{h+1}(t_{h+1})\ldots\psi^\star_{l-1}\left(t_{l-1}\right)\psi^\star_l\left(t_l\right)\\
&=&
-\sum_{i=1}^n \XA_i \overline\XA_i\XX_i^{r_i}
v_1\left(x_1\right)
\ldots v_n\left(x_n\right)\phi_1\left(y_1+e_1+\epsilon Y_1\right)
\ldots\phi_M\left(y_M+e_M+\epsilon Y_M\right)\\
&&\qquad\times\psi^\star_1(t_1)\ldots\psi^\star_{l-1}(t_{l-1})\psi^\star_l(t_l)\\
&&+\sum_{j=1}^{M} \XB_j \lambda_j
v_1\left(x_1\right)
\ldots v_n\left(x_n\right)\phi_1\left(y_1+e_1+\epsilon Y_1\right)
\ldots\phi_M\left(y_M+e_M+\epsilon Y_M\right)\\&&\qquad\times\psi^\star_1(t_1)\ldots\psi^\star_{l-1}\left(t_{l-1}\right)\psi^\star_l(t_l)
\\ 
&&+\sum_{h=1}^{l-1} \XC_h \XT_{\star,h}
v_1\left(x_1\right)
\ldots v_n\left(x_n\right)\phi_1\left(y_1+e_1+\epsilon Y_1\right)
\ldots\phi_M\left(y_M+e_M+\epsilon Y_M\right)\\&&\qquad\times\psi^\star_1(t_1)\ldots\psi^\star_{l-1}(t_{l-1})\psi^\star_l(t_l)
\\
&&+\XC_l\lambda\XT_{\star,l}v_1\left(x_1\right)
\ldots v_n\left(x_n\right)\phi_1\left(y_1+e_1+\epsilon Y_1\right)
\ldots\phi_M\left(y_M+e_M+\epsilon Y_M\right)\\
&&\qquad\times\psi^\star_1(t_1)\ldots\psi^\star_{l-1}(t_{l-1})\psi^\star_l(t_l) \\
&=&\left( -\sum_{i=1}^n \XA_i \overline\XA_i\XX_i^{r_i}
+\sum_{j=1}^M\XB_j\lambda_j+\sum_{h=1}^{l-1} \XC_h\XT_{\star,h}+\XC_l\lambda\XT_{\star,l}\right) w(x,y,t),
\end{eqnarray*}
thanks to \eqref{REGSWYS-A}, \eqref{jhjadwlgh}, \eqref{QYHA0ow1dk}
and~\eqref{jhjadwlghplop2}.

Consequently, making use of \eqref{NOZABAnd} and~\eqref{alpc}, when~$(x,y)$ is
near the origin and~$t\in(a_1,+\infty)\times\dots\times(a_l,+\infty)$,
we have that
$$
\Lambda_{-\infty} w\left(x,y,t\right)=
\left( -\sum_{i=1}^n |\XA_i |\XX_i^{r_i}
+\sum_{j=1}^M\XB_j\lambda_j+\sum_{h=1}^{l-1} \XC_h\XT_{\star,h}+\lambda\XC_l\XT_{\star,l}\right) w(x,y,t)=0
.$$
This says that~$w\in\mathcal{H}$. Thus, in light of~\eqref{aza} we have that
\begin{equation*}
0=\theta\cdot\partial^K w\left(0\right)=
\sum_{\left|\iota\right|\leq K}
{\theta_{\iota}\partial^\iota w\left(0\right)}=\sum_{|i|+|I|+|\mathfrak{I}|\le K}
\theta_{i,I,\mathfrak{I}}\,\partial_{x}^{i}\partial_{y}^I\partial_{t}^{\mathfrak{I}}w\left(0\right) 
.\end{equation*}
Hence, in view of~\eqref{eq:adoajfiap} and~\eqref{svsc},
\begin{equation}\label{1.64bis}
\begin{split}
0\,&=\,\sum_{|i|+|I|+|\mathfrak{I}|\le K}
\theta_{i,I,\mathfrak{I}}\,
\partial^{i_1}_{x_{1}} {v}_{1}(0)\ldots
\partial^{i_n}_{x_{n}} {v}_{n}(0)\\&\qquad\times
\partial^{I_1}_{y_{1}} {\phi}_{1}(e_1+\epsilon Y_1)\ldots
\partial^{I_M}_{y_{M}} {\phi}_{M}(e_M+\epsilon Y_M)
\,\partial_{t_1}^{\mathfrak{I}_1}\psi_1(0)
\ldots\partial_{t_l}^{\mathfrak{I}_l}\psi_l(0)
\\&=
\,\sum_{|i|+|I|+|\mathfrak{I}|\le K}
\theta_{i,I,\mathfrak{I}}\,\XX_1^{r_1}\ldots\XX_n^{r_n}\,
\partial^{i_1}_{x_{1}} \overline{v}_{1}(0)\ldots
\partial^{i_n}_{x_{n}} \overline{v}_{n}(0)\\&\qquad\times
\partial^{I_1}_{y_{1}} {\phi}_{1}(e_1+\epsilon Y_1)\ldots
\partial^{I_M}_{y_{M}} {\phi}_{M}(e_M+\epsilon Y_M)
\,\partial_{t_1}^{\mathfrak{I}_1}\psi_1(0)
\ldots\partial_{t_l}^{\mathfrak{I}_l}\psi_l(0)
.\end{split}\end{equation}
Moreover, using~\eqref{Mittag}, \eqref{psielle}
and \eqref{pata7UJ:AKKcc},
it follows that
\begin{eqnarray*}
\partial^{\mathfrak{I}_l}_{t_l}\psi_l(0)&=&
\sum_{j=0}^{+\infty} {\frac{
\lambda^j\,\XT_{\star,l}^j\, \alpha_l j(\alpha_l j-1)\dots(\alpha_l j-
\mathfrak{I}_l+1)
(0-a_l)^{\alpha_l j-\mathfrak{I}_l}}{\Gamma\left(\alpha_l j+1\right)}}\\
&=& \sum_{j=0}^{+\infty} {\frac{
\lambda^j\,\XT_{\star,l}^j\, \alpha_l j(\alpha_l j-1)\dots(\alpha_l j-
\mathfrak{I}_l+1)
\,{\epsilon}^{\alpha_l j-\mathfrak{I}_l}}{\Gamma\left(\alpha_l j+1\right)\;
{\XT_l}^{\alpha_l j-\mathfrak{I}_l}}}\\
&=& \sum_{j=1}^{+\infty} {\frac{
\lambda^j\,\XT_{\star,l}^j\, \alpha_l j(\alpha_l j-1)\dots(\alpha_l j-
\mathfrak{I}_l+1)
\,{\epsilon}^{\alpha_l j-\mathfrak{I}_l}}{\Gamma\left(\alpha_l j+1\right)\;
{\XT_l}^{\alpha_l j-\mathfrak{I}_l}}}.
\end{eqnarray*}
Accordingly, by~\eqref{TGAdef}, we find that
\begin{equation}
\label{limmittlc}\begin{split}&
\lim_{\epsilon\searrow 0}\epsilon^{\mathfrak{I}_l-\alpha_l}
\partial^{\mathfrak{I}_l}_{t_l}\psi_l(0)
=\lim_{\epsilon\searrow 0}
\sum_{j=1}^{+\infty} {\frac{
\lambda^j\,\XT_{\star,l}^j\, \alpha_l j(\alpha_l j-1)\dots(\alpha_l j-
\mathfrak{I}_l+1)
\,{\epsilon}^{\alpha_l (j-1)}}{\Gamma\left(\alpha_l j+1\right)\;
{\XT_l}^{\alpha_l j-\mathfrak{I}_l}}}
\\&\qquad=
{\frac{ \lambda\,\XT_{\star,l}\,\alpha_l (\alpha_l -1)\dots(\alpha_l -\mathfrak{I}_l+1)
}{\Gamma\left(\alpha_l +1\right)\;
\XT_l^{\alpha_l -\mathfrak{I}_l}}} =
{\frac{ \lambda\,\XT_l^{\mathfrak{I}_l}\,\alpha_l (\alpha_l -1)\dots(\alpha_l -\mathfrak{I}_l+1)
}{\Gamma\left(\alpha_l +1\right)}}.
\end{split}\end{equation}
Hence, recalling~\eqref{IBARRA2}, we can write~\eqref{1.64bis} as
\begin{equation}
\label{eq:ort:Xc}\begin{split}&
0=\sum_{{|i|+|I|+|\mathfrak{I}|\le K}\atop{|\mathfrak{I}|\le|\overline{\mathfrak{I}}|}}
\theta_{i,I,\mathfrak{I}}\,\XX_1^{r_1}\ldots\XX_n^{r_n}\,
\partial^{i_1}_{x_{1}} \overline{v}_{1}(0)\ldots
\partial^{i_n}_{x_{n}} \overline{v}_{n}(0)\\
&\qquad\qquad\times
\partial^{I_1}_{y_{1}} {\phi}_{1}(e_1+\epsilon Y_1)\ldots
\partial^{I_M}_{y_{M}} {\phi}_{M}(e_M+\epsilon Y_M) \partial^{\mathfrak{I}_1}_{t_1}\psi_{1}(0)\ldots
\partial^{\mathfrak{I}_l}_{t_l}\psi_{l}(0).\end{split}
\end{equation}
Moreover, we define
\begin{equation*}
\Xi:=|\overline{\mathfrak{I}}|-\sum_{h=1}^l {\alpha_h}+\left|I\right|-
\sum_{j=1}^M {s_j}.
\end{equation*}
Then, we
multiply~\eqref{eq:ort:Xc} by $\epsilon^{\Xi}\in(0,+\infty)$, and we
send~$\epsilon$ to zero. In this way, we obtain from~\eqref{limmittl}, used here for~$h\in\{1,\ldots, l-1\}$,
\eqref{limmittlc} and~\eqref{eq:ort:Xc} that
\begin{eqnarray*}
0&=&\lim_{\epsilon\searrow0}
\epsilon^{\Xi}
\sum_{{|i|+|I|+|\mathfrak{I}|\le K}\atop{|\mathfrak{I}|\le|\overline{\mathfrak{I}}|}}
\theta_{i,I,\mathfrak{I}}\,
\,\XX_1^{r_1}\ldots\XX_n^{r_n}\,
\partial^{i_1}_{x_{1}} \overline{v}_{1}(0)\ldots
\partial^{i_n}_{x_{n}} \overline{v}_{n}(0)\\
&&\qquad\times\partial_{y_1}^{I_1}\phi_1\left(e_1+\epsilon Y_1\right)
\ldots\partial_{y_M}^{I_M}\phi_M\left(e_M+\epsilon Y_M\right)
\\ &&\qquad\times\partial^{\mathfrak{I}_1}_{t_1}\psi_1(0)\ldots\partial^{\mathfrak{I}_l}_{t_l}\psi_l(0)
\\ &=&\lim_{\epsilon\searrow0}
\sum_{{|i|+|I|+|\mathfrak{I}|\le K}\atop{|\mathfrak{I}|\le|\overline{\mathfrak{I}}|}}
\epsilon^{|\overline{\mathfrak{I}}|-|\mathfrak{I}|}
\theta_{i,I,\mathfrak{I}}
\,\XX_1^{r_1}\ldots\XX_n^{r_n}\,
\partial^{i_1}_{x_{1}} \overline{v}_{1}(0)\ldots
\partial^{i_n}_{x_{n}} \overline{v}_{n}(0)\\
&&\qquad\times\epsilon^{|I_1|-s_1}\partial_{y_1}^{I_1}\phi_1\left(e_1+\epsilon Y_1\right)
\ldots\epsilon^{|I_M|-s_M}\partial_{y_M}^{I_M}\phi_M\left(e_M+\epsilon Y_M\right)
\\ &&\qquad\times\epsilon^{\mathfrak{I}_1-\alpha_1}\partial^{\mathfrak{I}_1}_{t_1}\psi_1(0)\ldots\epsilon^{\mathfrak{I}_l-\alpha_l}\partial^{\mathfrak{I}_l}_{t_l}\psi_l(0)
\\&=&
\sum_{{|i|+|I|+|\mathfrak{I}|\le K}\atop{|\mathfrak{I}| = |\overline{\mathfrak{I}}|}}
\lambda\,\tilde C_{i,I,\mathfrak{I}}\,\theta_{i,I,\mathfrak{I}}\,\XX_1^{r_1}\ldots\XX_n^{r_n}\,
\partial^{i_1}_{x_{1}} \overline{v}_{1}(0)\ldots
\partial^{i_n}_{x_{n}} \overline{v}_{n}(0)\\\
&&\qquad\times e_1^{I_1}\ldots e_M^{I_M}\,
\left(- e_1 \cdot Y_1 \right)_+^{s_1-|I_1|}
\ldots
\left(- e_M \cdot Y_M\right)_+^{s_M-|I_M|}\XT_1^{\mathfrak{I}_1}\ldots\XT_l^{\mathfrak{I}_l}
,\end{eqnarray*}
for a suitable~$\tilde C_{i,I,\mathfrak{I}}$. We stress that~$\tilde C_{i,I,\mathfrak{I}}\ne0$,
thanks also to~\eqref{alppoo},
applied here with~$\omega_j:=1$, $\tilde{\phi}_{\star,j}:=\phi_j$
and $e_j$ as in \eqref{freee2} for any $j\in\{1,\ldots,M\}$. 

Hence, recalling~\eqref{alp-0c},
\begin{equation}\label{GANAfaic}
\begin{split}
0&\,=\,\sum_{{|i|+|I|+|\mathfrak{I}|\le K}\atop{|\mathfrak{I}| = |\overline{\mathfrak{I}}|}}
C_{i,I,\mathfrak{I}}\,\theta_{i_1,\dots,i_n,I_1,\dots,I_M,\mathfrak{I}_1,\dots,\mathfrak{I}_l}\;
\XX_1^{i_1}\ldots\XX_n^{i_n}\\&\qquad\qquad\times
e_1^{I_1}\ldots e_M^{I_M}\,
\left(- e_1 \cdot Y_1 \right)_+^{s_1-|I_1|}
\ldots
\left(- e_M \cdot Y_M\right)_+^{s_M-|I_M|}
\XT_1^{\mathfrak{I}_1}\ldots\XT_l^{\mathfrak{I}_l}\\
&\,=\,
\left(- e_1 \cdot Y_1 \right)_+^{s_1}
\ldots
\left(- e_M \cdot Y_M\right)_+^{s_M}\\&\qquad\qquad\times
\sum_{{|i|+|I|+|\mathfrak{I}|\le K}\atop{|\mathfrak{I}| = |\overline{\mathfrak{I}}|}}
C_{i,I,\mathfrak{I}}\,\theta_{i,I,\mathfrak{I}}\;
\XX^i\,e^{I}\,
\left(- e_1 \cdot Y_1 \right)_+^{-|I_1|}
\ldots
\left(- e_M \cdot Y_M\right)_+^{-|I_M|}
\XT^{\mathfrak{I}}
,\end{split}\end{equation}
for a suitable~$C_{i,I,\mathfrak{I}}\ne0$.

We observe that the equality in~\eqref{GANAfaic}
is valid for any choice of the free parameters~$(\XX,Y,\XT)$
in an open subset of~$\mathbb{R}^{p_1+\dots+p_n}\times
\mathbb{R}^{m_1+\ldots+m_M}\times\mathbb{R}^l$,
as prescribed in~\eqref{FREExi}, \eqref{FREEmustar}
and~\eqref{eq:FREEy}.

Now, we take new free parameters $\XY_j$ with $\XY_j\in\mathbb{R}^{m_j}\setminus\{0\}$ for any $j=1,\ldots,M$, and perform in \eqref{GANAfaic} the same change of variables done in \eqref{COMPATI}, obtaining that
$$
0=\sum_{{|i|+|I|+|\mathfrak{I}|\le K}\atop{|\mathfrak{I}| = |\overline{\mathfrak{I}}|}}
C_{i,I,\mathfrak{I}}\,\theta_{i,I,\mathfrak{I}}\;
\XX^i\XY^I\XT^{\mathfrak{I}},
$$
for some $C_{i,I,\mathfrak{I}}\ne 0$. 

Hence, 
the second identity in~\eqref{ipop} is obtained as desired, and the proof of Lemma \ref{lemcin} in case~\ref{itm:case2}
is completed.
\end{proof}

\begin{proof}[Proof of \eqref{ipop}, case \ref{itm:case3}]
We divide the proof of case \ref{itm:case3} into two subcases, namely
either 
\begin{equation}\label{SC-va1}
{\mbox{there exists $h\in\{1,\ldots,l\}$ such that $\XC_h\ne 0$,}}\end{equation}
or
\begin{equation}\label{SC-va2}
{\mbox{$\XC_h=0\,$ for every $h\in\{1,\ldots,l\}$.}}\end{equation}
We start by dealing with the case in~\eqref{SC-va1}.
Up to relabeling and reordering the coefficients $\XC_h$, we can assume that
\begin{equation}\label{CNONU}
{\mbox{$\XC_1\ne 0$}}.\end{equation}
Also, thanks to the assumptions given in case~\ref{itm:case3}, we can suppose that
\begin{equation}\label{NEGB}
{\mbox{$\XB_M<0$}},\end{equation}
and, for any $j\in\{1,\ldots,M\}$, we consider $\lambda_{\star,j}$ and $\tilde{\phi}_{\star,j}$ as in \eqref{lambdastarj}. 
Then, we take~$\omega_j:=1$ and~$\phi_j$ as in~\eqref{autofun1},
so that~\eqref{REGSWYS-A} is satisfied.
In particular, here we have that
\begin{equation}\label{7yHSSIKnNSJS}
\lambda_{j}=
\lambda_{\star,j}\qquad{\mbox{ and }}\qquad\phi_j=\tilde\phi_{\star,j}.\end{equation}
We define
\begin{equation}\label{MAfghjkGGZ3} R:= 
\frac{ 1 }{|\XC_1|}\displaystyle\sum_{j=1}^{M-1}|\XB_j|\lambda_{\star,j}.\end{equation}
We notice that, in light of \eqref{CNONU},
the setting in~\eqref{MAfghjkGGZ3} is well-defined. 

Now, we fix a set of free parameters 
\begin{equation}\label{FREEmustar3}
\XT_{\star,1}\in(R+1,R+2),\dots\,\XT_{\star,l}\in(R+1,R+2).\end{equation}
Moreover, we define
\begin{equation}
\label{alp3}
\lambda_M\,:=\,\frac{1}{\XB_M}\left(
-\sum_{j=1}^{M-1}\XB_j\lambda_{\star,j}-\sum_{h=1}^{l}|\XC_h|\XT_{\star,h}\right).\end{equation}
We notice that \eqref{alp3} is well-defined thanks to \eqref{NEGB}.
{F}rom~\eqref{MAfghjkGGZ3} we deduce that
\begin{equation*}
\begin{split}
\sum_{h=1}^l&|\XC_h|\XT_{\star,h}+\sum_{j=1}^{M-1}\XB_j\lambda_{\star,j}\ge
|\XC_1|\XT_{\star,1}-\sum_{j=1}^{M-1}|\XB_j|\lambda_{\star,j} \\
&>|\XC_1|R-\sum_{j=1}^{M-1}|\XB_j|\lambda_{\star,j}=0.
\end{split}
\end{equation*}
Consequently, by~\eqref{NEGB} and~\eqref{alp3},
\begin{equation}
\label{alp-03}
\lambda_M>0.\end{equation}
Now, we define, for any $h\in\left\{1,\ldots,l\right\}$,
\begin{equation*}
\overline{\XC}_h:=
\begin{cases}
\displaystyle\frac{\XC_h}{\left|\XC_h\right|}\quad& \text{ if }\XC_h\neq 0 ,\\
1 \quad &\text{ if }\XC_h=0.
\end{cases}
\end{equation*}
We notice that
\begin{equation}\label{ahflaanhuf}
{\mbox{$\overline{\XC}_h\neq 0$ for all~$h\in\left\{1,\ldots,l\right\}$,}}\end{equation}
and
\begin{equation}\label{ahflaanhuf3}
{\XC}_h\overline{\XC}_h=|{\XC}_h|.\end{equation}
Moreover, 
we consider~$a_h\in(-2,0)$, for every~$h=1,\dots,l$,
to be chosen appropriately in what follows (see~\eqref{pata7UJ:AKK3}
for a precise choice).

Now, for every $h\in\{1,\ldots,l\}$, we define
\begin{equation}
\label{autofun3}
\psi_h(t_h):=E_{\alpha_h,1}(\overline{\XC}_h\XT_{\star,h}(t_h-a_h)^{\alpha_h}),
\end{equation}
where~$E_{\alpha_h,1}$ denotes the Mittag-Leffler function
with parameters $\alpha:=\alpha_h$ and $\beta:=1$ as defined 
in \eqref{Mittag}. By Lemma~\ref{MittagLEMMA},
we know that
\begin{equation}
\label{HFloj}
\begin{cases}
D^{\alpha_h}_{t_h,a_h}\psi_h(t_h)=\overline{\XC}_h\XT_{\star,h}\psi_h(t_h)\quad\text{in }\,(a_h,+\infty) ,\\
\psi_h(a_h)=1 ,\\
\partial^m_{t_h}\psi_h(a_h)=0\quad\text{for any}\,m=1,\dots,[\alpha_h],
\end{cases}
\end{equation}
and we consider again the extension $\psi^\star_h$ given in \eqref{starest}. 
By Lemma~A.3 in~\cite{CDV18}, we know that~\eqref{HFloj}
translates into
\begin{equation}\label{HFloj2}
D^{\alpha_h}_{t_h,-\infty}\psi_h^\star(t_h)
=\overline{\XC}_h\XT_{\star,h}\psi_h^\star(t_h)\,\text{ in every interval }\,I\Subset(a_h,+\infty).
\end{equation}
Now, we 
consider auxiliary parameters~$\XT_h$, $e_j$ and $Y_j$
as in \eqref{TGAdef}, \eqref{econ} and~\eqref{eq:FREEy}. 
Moreover, we introduce an additional set of free parameters 
\begin{equation}
\label{FREEXXXXX}
\XX=(\XX_1,\ldots,\XX_n)\in
\mathbb{R}^{p_1}\times\ldots\times\mathbb{R}^{p_n}.
\end{equation}
We
let~$\epsilon>0$, to be taken small
possibly depending on the 
free parameters.
We take~$\tau\in C^\infty(\mathbb{R}^{p_1+\ldots+p_n},[0+\infty))$ such that
\begin{equation}
\label{otau3}
\tau(x):=\begin{cases}
\exp\left({\,\XX\cdot x}\right)&\quad\text{if }\,x\in B_1^{p_1+\ldots+p_n} ,\\
0&\quad\text{if }\,x\in \mathbb{R}^{p_1+\ldots+p_n}\setminus B_2^{p_1+\ldots+p_n},
\end{cases}
\end{equation}
where 
$$ \XX\cdot x:=\sum_{j=1}^{n} \XX_i\cdot x_i$$
denotes the standard scalar product.

We notice that, for any $i\in\mathbb{N}^{p_1}\times\ldots\times\mathbb{N}^{p_n}$,
\begin{equation}
\label{resczzz}
\partial^{i}_{x}\tau(0)=\partial^{i_1}_{x_1}\ldots\partial^{i_n}_{x_n}\tau(0)=
\XX^{i_{11}}_{11}\ldots\XX^{i_{1p_1}}_{1p_1}\ldots\XX^{i_{n1}}_{n1}\ldots\XX^{i_{np_n}}_{np_n}
=\XX^i.
\end{equation}
We define
\begin{equation}
\label{svs3}
w\left(x,y,t\right):=\tau(x)\phi_1\left(y_1+e_1+\epsilon Y_1\right)\cdot
\ldots\cdot
\phi_M\left(y_M+e_M+\epsilon Y_M\right) 
\psi^\star_1(t_1)\cdot\ldots\cdot\psi^\star_l(t_l),
\end{equation}
where the setting in~\eqref{REGSWYS-A}
has also been exploited. 

We also notice that $w\in C\left(\mathbb{R}^N\right)\cap C_0\left(\mathbb{R}^{N-l}\right)\cap\mathcal{A}$. Moreover,
if
\begin{equation}\label{pata7UJ:AKK3}
a=(a_1,\dots,a_l):=\left(-\frac{\epsilon}{\XT_1},\dots,
-\frac{\epsilon}{\,\XT_l}\right)\in(-\infty,0)^l\end{equation}
and~$\left(x,y\right)$ is sufficiently close to the origin
and~$t\in(a_1,+\infty)\times\dots\times(a_l,+\infty)$, we have that
\begin{eqnarray*}&&
\Lambda_{-\infty} w\left(x,y,t\right)\\&=&
\left( \sum_{j=1}^{M} \XB_j (-\Delta)^{s_j}_{y_j}+
\sum_{h=1}^{l} \XC_h D^{\alpha_h}_{t_h,-\infty}\right) w\left(x,y,t\right)\\
&=&\sum_{j=1}^{M} \XB_j 
\tau(x)\phi_1\left(y_1+e_1+\epsilon Y_1\right)
\ldots\phi_{j-1}\left(y_{j-1}+e_{j-1}+\epsilon Y_{j-1}\right)(-\Delta)^{s_j}_{y_j}\phi_j\left(y_j+e_j+\epsilon Y_j\right)\\&&\qquad\times
\phi_{j+1}\left(y_{j+1}+e_{j+1}+\epsilon Y_{j+1}\right)
\ldots\phi_M\left(y_M+e_M+\epsilon Y_M\right)\psi^\star_1\left(t_1\right)
\ldots\psi^\star_l\left(t_l\right)\\
&&+\sum_{h=1}^{l} \XC_h\tau(x)\phi_1\left(y_1+e_1+\epsilon Y_1\right)\ldots\phi_M\left(y_M+e_M+\epsilon Y_M\right)\psi^\star_1\left(t_1\right)
\ldots\psi^\star_{h-1}\left(t_{h-1}\right)\\
&&\qquad\times D^{\alpha_h}_{t_h,-\infty}\psi^\star_h\left(t_h\right)
\psi^\star_{h+1}(t_{h+1})\ldots\psi^\star_l\left(t_l\right)\\
&=&\sum_{j=1}^{M} \XB_j \lambda_j
\tau(x)\phi_1\left(y_1+e_1+\epsilon Y_1\right)
\ldots\phi_M\left(y_M+e_M+\epsilon Y_M\right)\psi^\star_1(t_1)\ldots\psi^\star_l(t_l)
\\ 
&&+\sum_{h=1}^{l} \XC_h\overline{\XC}_h\XT_{\star,h}
\tau(x)\phi_1\left(y_1+e_1+\epsilon Y_1\right)
\ldots\phi_M\left(y_M+e_M+\epsilon Y_M\right)\psi^\star_1(t_1)\ldots\psi^\star_l(t_l) \\
&=&\left( \sum_{j=1}^M\XB_j\lambda_j+\sum_{h=1}^{l} \XC_h\overline{\XC}_h\XT_{\star,h}\right) w(x,y,t),
\end{eqnarray*}
thanks to \eqref{REGSWYS-A} and~\eqref{HFloj2}.

Consequently, making use of \eqref{7yHSSIKnNSJS}, \eqref{alp3} and~\eqref{ahflaanhuf3},
if~$(x,y)$ is near the origin and~$t\in(a_1,+\infty)\times\dots\times(a_l,+\infty)$,
we have that
$$
\Lambda_{-\infty} w\left(x,y,t\right)=
\left( \sum_{j=1}^M\XB_j\lambda_{\star,j}+\XB_M\lambda_M+\sum_{h=1}^{l} |\XC_h|\XT_{\star,h}\right) w(x,y,t)=0
.$$
This says that~$w\in\mathcal{H}$. Thus, in light of~\eqref{aza} we have that
\begin{equation*}
0=\theta\cdot\partial^K w\left(0\right)=
\sum_{\left|\iota\right|\leq K}
{\theta_{\iota}\partial^\iota w\left(0\right)}=\sum_{|i|+|I|+|\mathfrak{I}|\le K}
\theta_{i,I,\mathfrak{I}}\,\partial_{x}^i\partial_{y}^I\partial_{t}^{\mathfrak{I}}w\left(0\right) 
.\end{equation*}
{F}rom this and~\eqref{svs3}, we obtain that
\begin{equation}\label{eq:orthsi3}
0=\sum_{|i|+|I|+|\mathfrak{I}|\le K}
\theta_{i,I,\mathfrak{I}}\,
\partial^{i}_x\tau(0)
\partial^{I_1}_{y_{1}} {\phi}_{1}(e_1+\epsilon Y_1)\ldots
\partial^{I_M}_{y_{M}} {\phi}_{M}(e_M+\epsilon Y_M)
\,\partial_{t_1}^{\mathfrak{I}_1}\psi_1(0)\ldots\partial_{t_l}^{\mathfrak{I}_l}\psi_l(0).\end{equation}
Moreover, using~\eqref{autofun3} and \eqref{pata7UJ:AKK3},
it follows that, for every $\mathfrak{I}_h\in\mathbb{N}$
\begin{eqnarray*}
\partial^{\mathfrak{I}_h}_{t_h}\psi_h(0)&=&
\sum_{j=0}^{+\infty} {\frac{
\overline{\XC}_h^j\,\XT_{\star,h}^j\, \alpha_h j(\alpha_h j-1)\dots(\alpha_h j-
\mathfrak{I}_l+1)
(0-a_h)^{\alpha_h j-\mathfrak{I}_h}}{\Gamma\left(\alpha_h j+1\right)}}\\
&=& \sum_{j=0}^{+\infty} {\frac{
\overline{\XC}_h^j\,\XT_{\star,h}^j\, \alpha_h j(\alpha_h j-1)\dots(\alpha_h j-
\mathfrak{I}_h+1)
\,{\epsilon}^{\alpha_h j-\mathfrak{I}_h}}{\Gamma\left(\alpha_h j+1\right)\;
{\XT_h}^{\alpha_h j-\mathfrak{I}_h}}}\\
&=& \sum_{j=1}^{+\infty} {\frac{
\overline{\XC}_h^j\,\XT_{\star,h}^j\, \alpha_h j(\alpha_h j-1)\dots(\alpha_h j-
\mathfrak{I}_h+1)
\,{\epsilon}^{\alpha_h j-\mathfrak{I}_h}}{\Gamma\left(\alpha_h j+1\right)\;
{\XT_h}^{\alpha_h j-\mathfrak{I}_h}}}.
\end{eqnarray*}
Accordingly, recalling~\eqref{TGAdef}, we find that
\begin{equation}
\label{limmittl3}\begin{split}&
\lim_{\epsilon\searrow 0}\epsilon^{\mathfrak{I}_h-\alpha_h}
\partial^{\mathfrak{I}_h}_{t_h}\psi_h(0)
=\lim_{\epsilon\searrow 0}
\sum_{j=1}^{+\infty} {\frac{
\overline{\XC}_h^j\,\XT_{\star,h}^j\, \alpha_h j(\alpha_h j-1)\dots(\alpha_h j-
\mathfrak{I}_h+1)
\,{\epsilon}^{\alpha_h (j-1)}}{\Gamma\left(\alpha_h j+1\right)\;
{\XT_h}^{\alpha_h j-\mathfrak{I}_h}}}
\\&\qquad=
{\frac{ \overline{\XC}_h\,\XT_{\star,h}\,\alpha_h (\alpha_h -1)\dots(\alpha_h -\mathfrak{I}_h+1)
}{\Gamma\left(\alpha_h +1\right)\;
\XT_h^{\alpha_h -\mathfrak{I}_h}}} =
{\frac{ \overline{\XC}_h\,\XT_h^{\mathfrak{I}_h}\,\alpha_h (\alpha_h -1)\dots(\alpha_h -\mathfrak{I}_h+1)
}{\Gamma\left(\alpha_h +1\right)}}.
\end{split}\end{equation}
Also, recalling~\eqref{IBARRA}, we can write~\eqref{eq:orthsi3} as
\begin{equation}
\label{eq:ort:X3}
0=\sum_{{|i|+|I|+|\mathfrak{I}|\le K}\atop{|I|\le|\overline{I}|}}
\theta_{i,I,\mathfrak{I}}\,\partial^{i}_x\tau(0)
\partial^{I_1}_{y_{1}} {\phi}_{1}(e_1+\epsilon Y_1)\ldots
\partial^{I_M}_{y_{M}} {\phi}_{M}(e_M+\epsilon Y_M) 
\partial^{\mathfrak{I}_1}_{t_1}\psi_{1}(0)\ldots
\partial^{\mathfrak{I}_l}_{t_l}\psi_{l}(0).
\end{equation}
Moreover, we define
\begin{equation*}
\Xi:=\left|\overline{I}\right|-\sum_{j=1}^M {s_j}+|\mathfrak{I}|-\sum_{h=1}^l {\alpha_h}
.\end{equation*}
Then, we
multiply~\eqref{eq:ort:X3} by $\epsilon^{\Xi}\in(0,+\infty)$, and we
send~$\epsilon$ to zero. In this way, we obtain from~\eqref{alppoo},
\eqref{resczzz}, \eqref{limmittl3} and~\eqref{eq:ort:X3} that
\begin{eqnarray*}
0&=&\lim_{\epsilon\searrow0}
\epsilon^{\Xi}
\sum_{{|i|+|I|+|\mathfrak{I}|\le K}\atop{|I|\le|\overline{I}|}}
\theta_{i,I,\mathfrak{I}}\,\partial_{x}^i\tau(0)
\partial_{y_1}^{I_1}\phi_1\left(e_1+\epsilon Y_1\right)
\ldots\partial_{y_M}^{I_M}\phi_M\left(e_M+\epsilon Y_M\right)
\partial^{\mathfrak{I}_1}_{t_1}\psi_1(0)\ldots\partial^{\mathfrak{I}_l}_{t_l}\psi_l(0)
\\ &=&\lim_{\epsilon\searrow0}
\sum_{{|i|+|I|+|\mathfrak{I}|\le K}\atop{|I|\le|\overline{I}|}}
\epsilon^{|\overline{I}|-|I|}
\theta_{i,I,\mathfrak{I}}\,\partial_x^i\tau(0)
\epsilon^{|I_1|-s_1}\partial_{y_1}^{I_1}\phi_1\left(e_1+\epsilon Y_1\right)
\ldots\epsilon^{|I_M|-s_M}\partial_{y_M}^{I_M}\phi_M\left(e_M+\epsilon Y_M\right)
\\ &&\qquad\times\epsilon^{\mathfrak{I}_1-\alpha_1}\partial^{\mathfrak{I}_1}_{t_1}\psi_1(0)\ldots\epsilon^{\mathfrak{I}_l-\alpha_l}\partial^{\mathfrak{I}_l}_{t_l}\psi_l(0)
\\&=&
\sum_{{|i|+|I|+|\mathfrak{I}|\le K}\atop{|I| = |\overline{I}|}}
C_{i,I,\mathfrak{I}}\,\theta_{i,I,\mathfrak{I}}\,
\XX_1^{i_1}\ldots\XX_n^{i_n}\,
e_1^{I_1}\ldots e_M^{I_M}\,
\left(- e_1 \cdot Y_1 \right)_+^{s_1-|I_1|}
\ldots
\left(- e_M \cdot Y_M\right)_+^{s_M-|I_M|}\XT_1^{\mathfrak{I}_1}\ldots\XT_l^{\mathfrak{I}_l}
\\&=&
\left(- e_1 \cdot Y_1 \right)_+^{s_1}
\ldots
\left(- e_M \cdot Y_M\right)_+^{s_M}\\&&\qquad\qquad\times
\sum_{{|i|+|I|+|\mathfrak{I}|\le K}\atop{|I| = |\overline{I}|}}
C_{i,I,\mathfrak{I}}\,\theta_{i,I,\mathfrak{I}}\;
\XX^i\,e^{I}\,
\left(- e_1 \cdot Y_1 \right)_+^{-|I_1|}
\ldots
\left(- e_M \cdot Y_M\right)_+^{-|I_M|}
\XT^{\mathfrak{I}}
,\end{eqnarray*}
for a suitable~$C_{i,I,\mathfrak{I}}\ne0$.

We observe that the latter equality
is valid for any choice of the free parameters~$(\XX,Y,\XT)$
in an open subset of~$\mathbb{R}^{p_1+\ldots+p_n}\times\mathbb{R}^{m_1+\ldots+m_M}\times\mathbb{R}^l$,
as prescribed in~\eqref{eq:FREEy},~\eqref{FREEmustar3} and~\eqref{FREEXXXXX}. 

Now, we take new free parameters $\XY_j$ with $\XY_j\in\mathbb{R}^{m_j}\setminus\{0\}$ for any $j=1,\ldots,M$, and perform in the latter identity the same change of variables done in \eqref{COMPATI}, obtaining that
$$
0=\sum_{{|i|+|I|+|\mathfrak{I}|\le K}\atop{|I| = |\overline{I}|}}
C_{i,I,\mathfrak{I}}\,\theta_{i,I,\mathfrak{I}}\;
\XX^i\XY^I\XT^{\mathfrak{I}},
$$
for some $C_{i,I,\mathfrak{I}}\ne 0$.
This completes the proof of~\eqref{ipop} in case~\eqref{SC-va1} is satisfied.\medskip

Hence, we now focus on the case in which~\eqref{SC-va2} holds true.
For any $j\in\{1,\ldots,M\}$,
we consider the function~$\psi\in H^{s_j}(\mathbb{R}^{m_j})\cap C^{s_j}_0(\mathbb{R}^{m_j})$
constructed in Lemma \ref{hbump} and we call such function~$\phi_j$,
to make it explicit its dependence on~$j$ in this case.
We recall that
\begin{equation}
\label{harmony}
(-\Delta)^{s_j}_{y_j}\phi_j(y_j)=0\quad\text{ in }\,B_1^{m_j}.
\end{equation}
Also, for every $j\in\{1,\ldots,M\}$, we let $e_j$
and~$Y_j$ be as in \eqref{econ} and \eqref{eq:FREEy}.
Thanks to Lemma~\ref{hbump} and Remark~\ref{RUCAPSJD},
for any $I_j\in\mathbb{N}^{m_j}$, we know that
\begin{equation}
\label{psveind}
\lim_{\epsilon\searrow 0}\epsilon^{|I_j|-s_j}\partial_{y_j}^{I_j}\phi_j(e_j+\epsilon Y_j)=\kappa_{s_j}e_j^{I_j}(-e_j\cdot Y_j)_+^{s_j-|I_j|},
\end{equation}
for some $\kappa_{s_j}\ne 0$.

Moreover, for any $h=1,\ldots, l$, we define $\overline{\tau}_h(t_h)$ as
\begin{equation}
\label{tita}
\overline{\tau}_h(t_h):=\begin{cases}
e^{\XT_h t_h}\quad&\text{if}\quad t_h\in[-1,+\infty), \\
\displaystyle e^{-\XT_h}\sum_{i=0}^{k_h-1}\frac{\XT_h^i}{i!}(t_h+1)^i\quad&\text{if}\quad t_h\in(-\infty,-1),\end{cases}
\end{equation}
where
$\XT=(\XT_1,\ldots,\XT_l)\in(1,2)^l$ are free parameters.

We notice that, for any $h\in\{1,\ldots, l\}$ and $\mathfrak{I}_h\in\mathbb{N}$,
\begin{equation}
\label{rescttt}
\partial^{\mathfrak{I}_h}_{t_h}\overline{\tau}_h(0)=\XT_h^{\mathfrak{I}_h}.
\end{equation}
%%	
%%	Moreover, 
%%	% setting $f_h(t_h):=e^{\XT_h t_h}$, 
%%	%	a simple computation shows that
%%	%	%\begin{equation}
%%	%	%\label{tito}
%%	%	%D^{\alpha_h}_{t_h,-1}f_h(t_h)\leq \frac{2^{k_h}}{\Gamma(k_h-\alpha_h+1)}\left(t_h^{k_h-\alpha_h}(e^{2t_h}-1)+(t_h+1)^{k_h-\alpha_h}\right),
%%	%	%\end{equation}
%%	%	%$|D^{\alpha_h}_{t_h,-1}f_h(t_h)|$ is finite for every $t_h>-1$.
%%	%	%Hence, from \eqref{tita}, it follows that 
%%	%	$f_h\in C^{k_h,\alpha_h}_{-1}$,
%%	using Lemma A.3 in \cite{CDV18} with $u(t_h):=e^{\XT_h t_h}$, $a:=-\infty$, $b:=-1$ and $u_\star:=\overline{\tau}_h$, we have that 
%%	\begin{equation}
%%	\label{railey}
%%	\overline{\tau}_h\in C^{k_h,\alpha_h}_{-\infty}\cap C^{\infty}((-1,+\infty)),\,\text{ and}\quad\partial^{k_h}_{t_h}\overline{\tau}_h(t_h)=0\quad\text{in}\quad(-\infty,-1).
%%	\end{equation}
%%	
Now, we
define
\begin{equation}
\label{quiquoqua}
w(x,y,t):=\tau(x)\phi_1(y_1+e_1+\epsilon Y_1)\ldots\phi_M(y_M+e_M+\epsilon Y_M)\overline{\tau}_1(t_1)\ldots\overline{\tau}_l(t_l),
\end{equation}
where the setting of~\eqref{autofun1}, \eqref{otau3} and \eqref{tita} has been exploited.
We have that~$w\in\mathcal{A}$. Moreover, we point out that, since $\tau$, $\phi_1,\ldots,\phi_M$ are 
compactly supported, we have that~$w\in C(\mathbb{R}^N)\cap C_0(\mathbb{R}^{N-l})$, and, using Proposition \ref{maxhlapspan}, for any $j\in\{1,\ldots,M\}$, it holds that~$\phi_j\in C^{\infty}(\mathcal{N}_j)$ for some neighborhood~$\mathcal{N}_j$
of the origin in $\mathbb{R}^{m_j}$.
Hence $w\in C^{\infty}(\mathcal{N})$. 

Furthermore,
using \eqref{harmony}, when~$y$
is in a neighborhood of the origin we have that
\begin{equation*}
\begin{split}
\Lambda_{-\infty} w(x,y,t)&=\tau(x)\left(\XB_1(-\Delta)^{s_1}_{y_1}\phi_1(y_1+e_1+\epsilon Y_1)\right)\ldots\phi_M(y_M+e_M+\epsilon Y_M)\overline{\tau}_1(t_1)\ldots\overline{\tau}_l(t_l) \\
&+\ldots+\tau(x)\phi_1(y_1)\ldots\left(\XB_M(-\Delta)^{s_M}_{Y_M}\phi_M(y_M+
e_M+\epsilon Y_M)\right)\overline{\tau}_1(t_1)\ldots\overline{\tau}_l(t_l)=0,
\end{split}
\end{equation*}
which gives that~$w\in\mathcal{H}$.

In addition, using~\eqref{IBARRA}, \eqref{resczzz} and \eqref{rescttt}, we have that
\begin{eqnarray*}&&
0=\theta\cdot\partial^K w(0)=\sum_{|\iota|\leq K}\theta_{i,I,\mathfrak{I}}
\partial_x^i \partial_y^I \partial_t^{\mathfrak{I}} w(0)
=\sum_{{|\iota|\leq K}\atop{|I| \le |\overline{I}|}}\theta_{i,I,\mathfrak{I}}
\partial_x^i \partial_y^I \partial_t^{\mathfrak{I}} w(0)\\
&&\qquad\qquad=\sum_{{|\iota|\leq K}\atop{|I| \le |\overline{I}|}}\theta_{i,I,\mathfrak{I}}
\,\XX^i\partial_{y_1}^{I_1} \phi_1(e_1+\epsilon Y_1)\ldots
\partial_{y_M}^{I_M}\phi_M(e_M+\epsilon Y_M)\,\XT^\mathfrak{I}.
\end{eqnarray*}
Hence, we set
$$ \Xi:=|\overline{I}|-\sum_{j=1}^M s_j,$$
we multiply the latter identity by~$\epsilon^\Xi$
and we exploit~\eqref{psveind}. In this way, we find that
\begin{eqnarray*}
0 &=& \lim_{\epsilon\searrow0}
\sum_{{|\iota|\leq K}\atop{|I| \le |\overline{I}|}}\epsilon^{|\overline I|-|I|}\theta_{i,I,\mathfrak{I}}
\,\XX^i\,\epsilon^{|I_1|-s_1}\partial_{y_1}^{I_1} \phi_1(e_1+\epsilon Y_1)\ldots\epsilon^{|I_M|-s_M}
\partial_{y_M}^{I_M}\phi_M(e_M+\epsilon Y_M)\,\XT^\mathfrak{I}\\&=&
\sum_{{|\iota|\leq K}\atop{|I| = |\overline{I}|}}\theta_{i,I,\mathfrak{I}}\,\kappa_{s_j}\,
\,\XX^i\,e^{I}\,(-e_1\cdot Y_1)_+^{s_1-|I_1|}\ldots(-e_M\cdot Y_M)_+^{s_M-|I_M|}
\,\XT^\mathfrak{I}\\
&=&
(-e_1\cdot Y_1)_+^{s_1}\ldots(-e_M\cdot Y_M)_+^{s_M}
\sum_{{|\iota|\leq K}\atop{|I| = |\overline{I}|}}\theta_{i,I,\mathfrak{I}}\,\kappa_{s_j}\,
\,\XX^i\,e^{I}\,(-e_1\cdot Y_1)_+^{-|I_1|}\ldots(-e_M\cdot Y_M)_+^{-|I_M|}
\,\XT^\mathfrak{I},
\end{eqnarray*}
and consequently
\begin{equation}\label{7UJHAnAXbansdo}
0=\sum_{{|\iota|\leq K}\atop{|I| = |\overline{I}|}}\theta_{i,I,\mathfrak{I}}\,\kappa_{s_j}\,
\,\XX^i\,e^{I}\,(-e_1\cdot Y_1)_+^{-|I_1|}\ldots(-e_M\cdot Y_M)_+^{-|I_M|}
\,\XT^\mathfrak{I}.
\end{equation}
Now we take
free parameters $\XY\in\mathbb{R}^{m_1+\ldots+m_M}\setminus\{0\}$
and we perform the same change of variables in \eqref{COMPATI}.
In this way, we deduce from~\eqref{7UJHAnAXbansdo} that
\begin{eqnarray*}
0=\sum_{{|i|+|I|+|\mathfrak{I}|\le K}\atop{|I| = |\overline{I}|}}C_{i,I,\mathfrak{I}}\theta_{i,I,\mathfrak{I}}\XX^i\XY^I\XT^\mathfrak{I},
\end{eqnarray*}
for some $C_{i,I,\mathfrak{I}}\ne 0$, and the first claim in \eqref{ipop} is proved
in this case as well.
\end{proof}

\begin{proof}[Proof of \eqref{ipop}, case \ref{itm:case4}]
Notice that if there exists $j\in\{1,\ldots,M\}$ such that $\XB_j\ne 0$, we are in the setting of case \ref{itm:case3}.
Therefore, we assume that $\XB_j=0$ for every $j\in\{1,\ldots,M\}$.

We let $\psi$  
be the function constructed in Lemma~\ref{LF}.
For each $h\in\{1,\dots,l\}$,
we let~$\overline\psi_h(t_h):=\psi(t_h)$,
to make the dependence on $h$ clear and explicit.
Then, by formulas~\eqref{LAp1} and~\eqref{LAp2},
we know that
\begin{equation}
\label{cidivu}
D^{\alpha_h}_{t_h,0}\overline{\psi}_h(t_h)=0\quad\text{ in }\,(1,+\infty)
\end{equation} 
and, for every~$\ell\in\mathbb{N}$,
\begin{equation}\label{FOR2.50}
\lim_{\epsilon\searrow0} \epsilon^{\ell-\alpha_h}\partial^\ell_{t_h}
\overline{\psi}_h(1+\epsilon t_h)=\kappa_{h,\ell}\; t_h^{\alpha_h-\ell}
,\end{equation}
in the sense of distribution,
for some~$\kappa_{h,\ell}\ne0$.

Now,
we introduce a set of auxiliary parameters $\XT=(\XT_1,\ldots,\XT_l)
\in(1,2)^l$, and fix $\epsilon$ sufficiently small
possibly depending on the parameters. Then, we define
\begin{equation}
\label{keanzzz}
a=(a_1,\dots,a_l):=\left(-\frac{\epsilon}{\XT_1}-1,\ldots,-\frac{\epsilon}{\XT_l}-1\right)
\in(-2,0)^l,\end{equation}
and
\begin{equation}
\label{translation}
\psi_h(t_h):=\overline{\psi}_h(t_h-a_h).
\end{equation}
With a simple computation we have that the function
in~\eqref{translation} satisfies
\begin{equation}\label{cgraz}
D^{\alpha_h}_{t_h,a_h}\psi_h(t_h)=D^{\alpha_h}_{t_h,0}
\overline{\psi}_h(t_h-a_h)=0 \quad \text{in }\,(1+a_h,+\infty)=\left(-\frac{\epsilon}{\XT_h},+\infty\right),
\end{equation}
thanks to~\eqref{cidivu}.
In addition, for every~$\ell\in\mathbb{N}$, we have that~$\partial^\ell_{t_h}\psi_h(t_h)=
\partial^\ell_{t_h}\overline{\psi}_h(t_h-a_h)$, and therefore,
in light of~\eqref{FOR2.50} and~\eqref{keanzzz},
\begin{equation}\label{FOR2.5}
\epsilon^{\ell-\alpha_h}\partial^\ell_{t_h}\psi_h(0)=\epsilon^{\ell-\alpha_h}
\partial^\ell_{t_h}\overline{\psi}_h(-a_h)=
\epsilon^{\ell-\alpha_h}\partial^\ell_{t_h}\overline{\psi}_h\left(
1+\frac{\epsilon}{\XT_h}
\right)\to\kappa_{h,\ell}\;\XT_h^{\ell-\alpha_h},
\end{equation}
in the sense of distributions, as~$\epsilon\searrow0$.

Moreover, since for any $h=1,\ldots, l$, $\psi_h\in C^{k_h,\alpha_h}_{a_h}$, we can consider the extension
\begin{equation}\label{estar}
\psi^\star_h(t_h):=\begin{cases}
\psi_h(t_h)&\quad\text{ if }\,t_h\in[a_h,+\infty), \\
\displaystyle\sum_{i=0}^{k_h-1}\frac{\psi_h^{(i)}(a_h)}{i!}(t_h-a_h)^i&\quad\text{ if }\,t_h\in(-\infty,a_h),
\end{cases}
\end{equation}
and, using Lemma A.3 in \cite{CDV18} with $u:=\psi_h$, $a:=-\infty$, $b:=a_h$ and $u_\star:=\psi^\star_h$, we have that 
\begin{equation}
\label{tuck}
\psi^\star_h\in C^{k_h,\alpha_h}_{-\infty}\quad\text{and}\quad D^{\alpha_h}_{t_h,-\infty}\psi_h^\star=D^{\alpha_h}_{t_h,a_h}\psi_h=0\quad\text{in every interval }\,I\Subset\left(-\frac{\epsilon}{\XT_h},+\infty\right).
\end{equation}
Now, we fix a set of free parameters $\XY=\left(\XY_1,\ldots,\XY_M\right)\in\mathbb{R}^{m_1+\ldots+m_M}$, and consider~$\overline\tau\in C^\infty
(\mathbb{R}^{m_1+\ldots+m_M})$, such that
\begin{equation}
\label{otau5}
\overline{\tau}(y):=\begin{cases}
\exp\left({\XY\cdot y}\right)\quad&\text{if }\quad y\in B_1^{m_1+\ldots+m_M} ,\\
0\quad&\text{if }\quad y\in \mathbb{R}^{m_1+\ldots+m_M}\setminus B_2^{m_1+\ldots+m_M},\end{cases}
\end{equation}
where
$$
\XY\cdot y=\sum_{j=1}^M\XY_j\cdot y_j,
$$
denotes the standard scalar product.

We notice that, for any multi-index $I\in\mathbb{N}^{m_1+\ldots m_M}$,
\begin{equation}
\label{rescyyy}
\partial^{I}_{y}\overline{\tau}(0)=\XY^I,
\end{equation}
where the multi-index notation has been used.

Now, we define
\begin{equation}
\label{sinaloa}
w(x,y,t):=\tau(x)\overline{\tau}(y)\psi^\star_1(t_1)\ldots\psi^\star_l(t_l),
\end{equation}
where the setting in \eqref{otau3},
\eqref{estar} and \eqref{otau5} has been exploited.

Using~\eqref{tuck}, we have that, for any $(x,y)$
in a neighborhood of the origin and~$t\in\left(-\frac{\epsilon}{2},+\infty\right)^l$,
\begin{equation*}
\begin{split}
\Lambda_{-\infty} w(x,y,t)&=\tau(x)\overline{\tau}(y)\left(\XC_1 D^{\alpha_1}_{t_1,-\infty}\psi^\star_1(t_1)\right)
\ldots\psi^\star_l(t_l) \\
&+\ldots+\tau(x)\overline{\tau}(y)\psi^\star_1(t_1)\ldots\left(\XC_l D^{\alpha_l}_{t_l,-\infty}\psi^\star_l(t_l)\right)=0.
\end{split}
\end{equation*}
We have that~$w\in\mathcal{A}$, and, since $\tau$ and $\overline{\tau}$ are compactly supported, we
also have that $w\in C(\mathbb{R}^N)\cap C_0(\mathbb{R}^{N-l})$.
Also, from Lemma~\ref{LF}, for any $h\in\{1,\ldots,l\}$, we know that~$\overline{\psi}_h\in C^{\infty}((1,+\infty))$, hence $\psi_h \in C^{\infty}\left(\left(
-\frac{\epsilon}{\XT_h},+\infty\right)\right)$.
Thus, $w\in C^{\infty}(\mathcal{N})$, and consequently~$w\in\mathcal{H}$.

Recalling~\eqref{IBARRA2}, \eqref{resczzz}, and \eqref{rescyyy}, we have that
\begin{equation}\label{LATTER}\begin{split}&
0=\theta\cdot\partial^K w(0)=\sum_{|\iota|\leq K}\theta_{i,I,\mathfrak{I}}
\partial_x^i \partial_y^I \partial_t^{\mathfrak{I}} w(0)
=\sum_{{|\iota|\leq K}\atop{|\mathfrak{I}| \le |\overline{\mathfrak{I}}|}}\theta_{i,I,\mathfrak{I}}
\partial_x^i \partial_y^I \partial_t^{\mathfrak{I}} w(0)\\
&\qquad\qquad=\sum_{{|\iota|\leq K}\atop{|\mathfrak{I}| \le |\overline{\mathfrak{I}}|}}\theta_{i,I,\mathfrak{I}}
\,\XX^i\XY^I\partial_{t_1}^{\mathfrak{I}_1}\psi_1(0)\ldots\partial_{t_l}^{\mathfrak{I}_l}\psi_l(0).
\end{split}\end{equation}
Hence, we set
$$ \Xi:=|\overline{\mathfrak{I}}|-\sum_{h=1}^l \alpha_h
%%%%=\left(|\overline{\mathfrak{I}}|-|\mathfrak{I}|\right)+\left(|\mathfrak{I}|-\sum_{h=1}^l \alpha_h\right)
,$$
we multiply the identity in~\eqref{LATTER} by~$\epsilon^\Xi$
and we exploit~\eqref{FOR2.5}. In this way, we find that
\begin{eqnarray*}
0 &=& \lim_{\epsilon\searrow0}
\sum_{{|\iota|\leq K}\atop{|\mathfrak{I}| \le |\overline{\mathfrak{I}}|}}\epsilon^{|\overline{\mathfrak{I}}|-|\mathfrak{I}|}\theta_{i,I,\mathfrak{I}}
\,\XX^i\,\XY^I\,\epsilon^{\mathfrak{I}_1-\alpha_1}\partial_{t_1}^{\mathfrak{I}_1} {\psi}_1(0)\ldots\epsilon^{\mathfrak{I}_l-\alpha_l}\partial_{t_l}^{\mathfrak{I}_l} {\psi}_l(0)\\&=&
\sum_{{|\iota|\leq K}\atop{|\mathfrak{I}| = |\overline{\mathfrak{I}}|}}\theta_{i,I,\mathfrak{I}}\;
\kappa_{1,\mathfrak{I}_1}\dots
\kappa_{l,\mathfrak{I}_l}\;
\XX^i\,\XY^I\,\XT_1^{\mathfrak{I}_1-\alpha_1}\ldots\XT_l^{\mathfrak{I}_l-\alpha_l}\\
&=&
\XT_1^{-\alpha_1}\ldots\XT_l^{-\alpha_l}
\sum_{{|\iota|\leq K}\atop{|\mathfrak{I}| = |\overline{\mathfrak{I}}|}}\theta_{i,I,\mathfrak{I}}
\;\kappa_{1,\mathfrak{I}_1}\dots
\kappa_{l,\mathfrak{I}_l}\;
\XX^i\,\XY^I\,\XT_1^{\mathfrak{I}_1}\ldots\XT_l^{\mathfrak{I}_l},
\end{eqnarray*}
and consequently
\begin{equation*}
0=\sum_{{|\iota|\leq K}\atop{|\mathfrak{I}| = |\overline{\mathfrak{I}}|}}
\theta_{i,I,\mathfrak{I}}\;
\kappa_{1,\mathfrak{I}_1}\dots
\kappa_{l,\mathfrak{I}_l}\;
\,\XX^i\,\XY^I
\,\XT^\mathfrak{I},
\end{equation*}
and the second claim in \eqref{ipop} is proved
in this case as well.
\end{proof}

\section{Every function is locally $\Lambda_{-\infty}$-harmonic up to a small error,
and completion of the proof of Theorem \ref{theone2}}
\label{s:fourth}

In this section we complete the proof of Theorem \ref{theone2}
(which in turn implies Theorem \ref{theone} via Lemma~\ref{GRAT}).
By standard approximation arguments we can reduce to
the case in which $f$ is a polynomial, and hence, by the linearity
of the operator~$\Lambda_{-\infty}$, to the case in which is a monomial.
The details of the proof are therefore the following:

%	we recall a version of the Stone-Weierstrass Theorem in the $C^\ell$ setting
%	(in our context, this will be useful to reduce approximation
%	problems to the case of polynomials, and hence, by linearity, of monomials).
%	For a short proof of this result, see \cite{MR3626547}.
%	
%	\begin{lemma}
%	\label{stowei}
%	Fix $\ell\in\mathbb{N}$. For any $f\in C^\ell(\overline{B_1})$
%	and any $\epsilon>0$ there exists a polynomial $P$
%	such that $\|f-P\|_{C^\ell(\overline{B_1})}\leq\epsilon$.
%	\end{lemma}

\subsection{Proof of Theorem \ref{theone2} when $f$ is a monomial}\label{7UHASGBSBSBBSB}

We prove Theorem~\ref{theone2} under the initial assumption
that $f$ is a monomial, that is
\begin{equation}\label{iorade}
f\left(x,y,t\right)=\frac{x_1^{i_1}\ldots x_n^{i_n}y_1^{I_1}\ldots y_M^{I_M}t_1^{\mathfrak{I}_1}\ldots t_l^{\mathfrak{I}_l}}{\iota!}=\frac{x^iy^It^{\mathfrak{I}}}{\iota!}=
\frac{(x ,y,t)^{\iota}}{\iota!},
\end{equation}
where $\iota!:=i_1!\ldots i_n!I_1!\ldots I_M!\mathfrak{I}_1!\ldots\mathfrak{I}_l!$ and $I_\beta!:=I_{\beta,1}!\ldots I_{\beta,m_\beta}!$, $i_\chi!:=i_{\chi,1}!\ldots i_{\chi,p_\chi}!$ for all $\beta=1,\ldots M$. and $\chi=1,\ldots,n$. 
To this end, we argue as follows. 
We 
consider $\eta\in\left(0,1\right)$, to be taken
sufficiently small with respect to the
parameter~$\epsilon>0$ which has been fixed
in the statement of Theorem~\ref{theone2}, and we define
%\begin{equation}\label{le-a}
%a_1:=-2-\frac3{\eta^{\frac1{\alpha_1}}},\dots,
%a_l:=-2-\frac3{\eta^{\frac1{\alpha_l}}}\qquad a:=(a_1,\dots,a_l)
%\end{equation}
%and
$$ {\mathcal{T}}_\eta(x,y,t):=\left(
\eta^{\frac{1}{r_1}}x_1,\ldots,\eta^{\frac{1}{r_n}}x_n,\eta^{\frac{1}{2s_1}}y_1,\ldots,\eta^{\frac{1}{2s_M}}y_M,\eta^{\frac{1}{\alpha_1}}t_1,\ldots,\eta^{\frac{1}{\alpha_l}}t_l\right).$$
We also define
\begin{equation}\label{iorade2}
\gamma:=\sum_{j=1}^n {\frac{|i_j|}{r_j}}+\sum_{j=1}^M {\frac{\left|I_j\right|}{2s_j}}+\sum_{j=1}^l {\frac{\mathfrak{I}_j}{\alpha_j}}
,\end{equation}
and
\begin{equation}
\label{am}
\delta:=\min\left\{\frac{1}{r_1},\ldots,\frac{1}{r_n},\frac{1}{2s_1},\ldots,\frac{1}{2s_M},\frac{1}{\alpha_1},\ldots,\frac{1}{\alpha_l}\right\}.
\end{equation}
We also take $K_0\in\mathbb{N}$ such that
\begin{equation}
\label{blim}
K_0\geq\frac{\gamma+1}{\delta}
\end{equation}
and we let
\begin{equation}
\label{blo}
K:=K_0+\left|i\right|+\left|I\right|+\left|\mathfrak{I}\right|+\ell=
K_0+\left|\iota\right|+\ell,
\end{equation}
where $\ell$ is the fixed integer given in the statement of Theorem~\ref{theone2}. 

By Lemma \ref{lemcin}, there exist a neighborhood~$\mathcal{N}$
of the origin and a function~$w\in C\left(\mathbb{R}^N\right)\cap C_0\left(\mathbb{R}^{N-l}\right)\cap C^\infty\left(\mathcal{N}\right)\cap\mathcal{A}$ such that
\begin{equation}\label{7UJHAanna}
{\mbox{$\Lambda_{-\infty} w=0$ in $\mathcal{N}$, }}\end{equation}
and such that 
\begin{equation}\label{9IKHAHSBBNSBAA}
\begin{split}&
{\mbox{all the derivatives of $w$ in 0 up to order $K$ vanish,}}\\
&{\mbox{with the exception of $\partial^\iota w \left(0\right)$ which equals~$1$,}}\end{split}\end{equation}
being~$\iota$ as in~\eqref{iorade}. 
Recalling the definition of~$\mathcal{A}$ on page~\pageref{CALSASS},
we also know that
\begin{equation}\label{SPE12129dd}
{\mbox{$\partial^{k_h}_{t_h}w=0$
in~$(-\infty,a_h)$, }}\end{equation}
for suitable~$a_h\in(-2,0)$,
for all~$h\in\{1,\dots,l\}$.

In this way, setting
\begin{equation}
\label{quaqui}
g:=w-f,
\end{equation}
we deduce from~\eqref{9IKHAHSBBNSBAA} that
$$
\partial^\sigma g\left(0\right)=0\quad \text{ for any }
\sigma\in\mathbb{N}^N \text{ with }\left|\sigma\right|\leq K.
$$
Accordingly, in $\mathcal{N}$ we can write
\begin{equation}
\label{quaqua}
g\left(x,y,t\right)=\sum_{\left|\tau\right|\geq K+1} {x^{\tau_1}y^{\tau_2}t^{\tau_3}h_\tau\left(x,y,t\right)},
\end{equation}
for some $h_\tau$ smooth in $\mathcal{N}$, where the multi-index
notation $\tau=(\tau_1,\tau_2,\tau_3)$ has been used. 

Now, we define
\begin{equation}\label{JAncasxciNasd}
u\left(x,y,t\right):=\frac{1}{\eta^\gamma}
w\left({\mathcal{T}}_\eta(x,y,t)\right).
\end{equation}
In light of~\eqref{SPE12129dd}, we notice that
$\partial^{k_h}_{t_h}u=0$
in~$(-\infty,a_h/\eta^{\frac1{\alpha_h}})$, for all~$h\in\{1,\dots,l\}$,
and therefore~$u\in C\left(\mathbb{R}^N\right)\cap C_0(\mathbb{R}^{N-l})\cap C^\infty
\left({\mathcal{T}}_\eta(\mathcal{N})\right)\cap\mathcal{A}$. We also claim that
\begin{equation}\label{DEN}
{\mathcal{T}}_\eta([-1,1]^{N-l}\times(a_1,+\infty)\times\ldots\times(a_l,+\infty))\subseteq {\mathcal{N}}.
\end{equation}
To check this, let~$(x,y,t)\in[-1,1]^{N-l}\times(a_1+\infty)\times\ldots\times(a_l,+\infty)$
and~$(X,Y,T):={\mathcal{T}}_\eta(x,y,t)$.
Then, we have that~$|X_1|=\eta^{\frac{1}{r_1}}|x_1|\le \eta^{\frac{1}{r_1}}$,
~$|Y_1|=\eta^{\frac{1}{2s_1}}|y_1|\le \eta^{\frac{1}{2s_1}}$,
~$T_1=\eta^{\frac{1}{\alpha_1}}t_1> a_1\eta^{\frac{1}{\alpha_1}}>-1$,
provided~$\eta$ is small enough.
Repeating this argument, we obtain that, for small~$\eta$,
\begin{equation}\label{CLOS}
{\mbox{$(X,Y,T)$ is as
close to the origin as we wish.}}\end{equation} 
%On the other hand, recalling~\eqref{le-a},
%we see that
%$$ t_1-a_1=t_1+2+\frac3{\eta^{\frac1{\alpha_1}}}\in\left[ 1+\frac3{\eta^{\frac1{\alpha_1}}},3+\frac3{\eta^{\frac1{\alpha_1}}}\right].$$
%As a consequence,
%$$ \eta^{\frac{1}{\alpha_1}}(t_1-a_1)\in\left[{\eta^{\frac1{\alpha_1}}}+3,\,
%3{\eta^{\frac1{\alpha_1}}}+3
%\right].
%$$
%This and~\eqref{9iwkfhvksc283AN} give that
%$$ T_1=\eta^{\frac{1}{\alpha_1}}(t_1-a_1)+a^\star_1\in
%\left[{\eta^{\frac1{\alpha_1}}}+1,\,
%3{\eta^{\frac1{\alpha_1}}}+3
%\right]\subset[0,10],
%$$
%provided that~$\eta$ is small enough. Repeating this argument
%for~$T_2,\dots,T_l$, we find that
%$$ T\in [0,10]^l.$$
%Accordingly, recalling the notation in~\eqref{CALAK},
%we have that~$(O,T)\in{\mathcal{K}}\subseteq{\mathcal{N}}^\star$.
{F}rom \eqref{CLOS} and the fact that~${\mathcal{N}}$
is an open set, we infer that~$(X,Y,T)\in{\mathcal{N}}$,
and this proves~\eqref{DEN}.

Thanks to~\eqref{7UJHAanna} and~\eqref{DEN}, we have that, in~$B_1^{N-l}\times(-1,+\infty)^l$,
\begin{align*}
&\eta^{\gamma-1}\,\Lambda_{-\infty} u\left(x,y,t\right) \\
=\;&\sum_{j=1}^n {\XA_j\partial_{x_j}^{r_j}w
\left({\mathcal{T}}_\eta(x,y,t)\right)}
+\sum_{j=1}^M {\XB_j(-\Delta)^{s_j}_{y_j} w
\left({\mathcal{T}}_\eta(x,y,t)\right)}
+\sum_{j=1}^l {\XC_jD_{t_h,-\infty}^{\alpha_h}w
\left({\mathcal{T}}_\eta(x,y,t)\right) }\\=\;&\Lambda_{-\infty}w
\left({\mathcal{T}}_\eta(x,y,t)\right)\\
=\;&0.
\end{align*}
These observations establish that $u$ solves the equation in $B_1^{N-l}\times(-1+\infty)^l$ and $u$ vanishes when $|(x,y)|\ge R$,
for some~$R>1$, and thus the claims in~\eqref{MAIN EQ:2}
and~\eqref{ESTENSIONE}
are proved.

Now we prove that $u$ approximates $f$, as claimed in~\eqref{IAzofm:2}. For this, using the monomial structure of $f$ in~\eqref{iorade}
and the definition of $\gamma$ in~\eqref{iorade2}, we have,
in a multi-index notation,
\begin{equation}
\label{monsca}
\begin{split}
&\frac{1}{\eta^\gamma}f\left({\mathcal{T}}_\eta(x,y,t)\right)=\frac{1}{\eta^\gamma\,\iota!} \;
(\eta^{\frac{1}{r}}x)^i (\eta^{\frac{1}{2s}}y)^I \big(\eta^{\frac{1}{\alpha}}t\big)^\mathfrak{I}
=\frac{1}{\iota!} x^i y^I t^\mathfrak{I}=f(x,y,t).
\end{split}
\end{equation}
%$$
%\frac{1}{\eta^\gamma}f\left(\eta^{\frac{1}{r_1}}x_1,\ldots,\eta^{\frac{1}{r_n}}x_n,\eta^{\frac{1}{2s_1}}y_1,\ldots,\eta^{\frac{1}{2s_M}}y_M,\eta^{\frac{1}{\alpha_1}}t_1,\ldots,\eta^{\frac{1}{\alpha_l}}t_l\right)=f\left(x,y,t\right).
%$$
Consequently, by~\eqref{quaqui}, \eqref{quaqua}, \eqref{JAncasxciNasd} and~\eqref{monsca},
\begin{align*}
u\left(x,y,t\right)-f\left(x,y,t\right) 
&=\frac{1}{\eta^\gamma}g\left(\eta^{\frac{1}{r_1}}x_1,\ldots,\eta^{\frac{1}{r_n}}x_n,\eta^{\frac{1}{2s_1}}y_1,\ldots,\eta^{\frac{1}{2s_M}}y_M,\eta^{\frac{1}{\alpha_1}}t_1,\ldots,\eta^{\frac{1}{\alpha_l}}t_l\right) \\
&=\sum_{\left|\tau\right|\geq K+1} {\eta^{\left|\frac{\tau_1}{r}\right|+\left|\frac{\tau_2}{2s}\right|+\left|\frac{\tau_3}{\alpha}\right|-\gamma}x^{\tau_1}y^{\tau_2}t^{\tau_3}h_\tau\left(\eta^{\frac{1}{r}}x,\eta^{\frac{1}{2s}}y,\eta^{\frac{1}{\alpha}}t\right)}
,\end{align*}
where a multi-index notation has been used, e.g. we have written
$$ \frac{\tau_1}{r}:=\left( \frac{\tau_{1,1}}{r_1},\dots,\frac{\tau_{1,n}}{r_n}\right)
\in\mathbb{R}^n.$$
Therefore, for any multi-index $\beta=\left(\beta_1,\beta_2,\beta_3\right)$ with $\left|\beta\right|\leq \ell$,
\begin{equation}
\label{eq:quaquo}\begin{split}
&\partial^\beta\left(u\left(x,y,t\right)-f\left(x,y,t\right)\right)
\\=\,&\partial^{\beta_1}_{x}\partial^{\beta_2}_{y}\partial^{\beta_3}_{t}\left(u\left(x,y,t\right)-f\left(x,y,t\right)\right)
\\=\,&\sum_{\substack{\left|\beta'_1\right|+\left|\beta''_1\right|=\left|\beta_1\right| \\ \left|\beta'_2\right|+\left|\beta''_2\right|=\left|\beta_2\right| \\ \left|\beta'_3\right|+\left|\beta''_3\right|=\left|\beta_3\right| \\ \left|\tau\right|\geq K+1}} {c_{\tau,\beta}\;\eta^{\kappa_{\tau,\beta}}\; x^{\tau_1-\beta'_1}y^{\tau_2-\beta'_2}t^{\tau_3-\beta'_3}\partial_{x}^{\beta''_1}\partial_{y}^{\beta''_2}\partial_{t}^{\beta''_3}h_\tau\left(\eta^{\frac{1}{r}}x,\eta^{\frac{1}{2s}}y,\eta^{\frac{1}{\alpha}}t\right)},
\end{split}\end{equation}
where
$$
\kappa_{\tau,\beta}:=\left|\frac{\tau_1}{r}\right|+\left|\frac{\tau_2}{2s}\right|+\left|\frac{\tau_3}{\alpha}\right|-\gamma+\left|\frac{\beta''_1}{r}\right|+\left|\frac{\beta''_2}{2s}\right|+\left|\frac{\beta''_3}{\alpha}\right|,
$$
for suitable coefficients $c_{\tau,\beta}$. Thus, to complete the proof of~\eqref{IAzofm:2}, we need to show that this quantity is small if so is $\eta$.
To this aim, we use~\eqref{am}, \eqref{blim}
and~\eqref{blo} to see that
\begin{eqnarray*}\kappa_{\tau,\beta}
%	&=&
%	\left|\frac{\tau_1}{r}\right|
%	+\left|\frac{\tau_2}{2s}\right|
%	+\left|\frac{\tau_3}{\alpha}\right|
%	-\gamma+\left|\frac{\beta''_1}{r}\right|
%	+\left|\frac{\beta''_2}{2s}
%	\right|+\left|\frac{\beta''_3}{\alpha}\right|
&\geq&\left|\frac{\tau_1}{r}\right|+\left|\frac{\tau_2}{2s}\right|
+\left|\frac{\tau_3}{\alpha}\right|-\gamma \\
& \geq&\delta\left(\left|\tau_1\right|+\left|\tau_2\right|
+\left|\tau_3\right|\right)-\gamma\\&\geq& K\delta
-\gamma\\&\geq& K_0\delta-\gamma\\&\geq& 1.
\end{eqnarray*}
Consequently, we deduce from \eqref{eq:quaquo} that $\left\|u-f\right\|_{C^\ell\left(B_1^N\right)}\leq C\eta$ for some $C>0$. By choosing $\eta$ sufficiently small with respect to $\epsilon$, this implies the claim in~\eqref{IAzofm:2}. This completes the proof of
Theorem \ref{theone2} when $f$ is a monomial.

\subsection{Proof of Theorem \ref{theone2} when $f$ is a polynomial}\label{AUJNSLsxcrsd}

Now, we consider the case in which~$f$ is a polynomial. In this case, we can write $f$ as
$$
f\left(x,y,t\right)=\sum_{j=1}^J c_jf_j\left(x,y,t\right),
$$
where each $f_j$ is a monomial, $J\in\mathbb{N}$ and $c_j\in\mathbb{R}$ for all $j=1,\ldots, J$.

Let $$c:=\max_{j\in\{1,\dots,J\}} c_j.$$ Then, 
by the work done in Subsection~\ref{7UHASGBSBSBBSB},
we know that the claim 
in Theorem \ref{theone2} holds true for each $f_j$, and so we can find $a_j\in(-\infty,0)^l$, $u_j\in C^\infty\left(B_1^N\right)\cap C\left(\mathbb{R}^N\right)
\cap\mathcal{A}$
and $R_j>1$ such that $\Lambda_{-\infty} u_j=0$ in $B_1^{N-l}\times(-1,+\infty)^l$, $\left\|u_j-f_j\right\|_{C^\ell\left(B_1^N\right)}\leq\epsilon$ and $u_j=0$ if $|(x,y)|\ge R_j$. 

Hence, we set
$$
u\left(x,y,t\right):=\sum_{j=1}^J c_ju_j\left(x,y,t\right),
$$
and we see that
\begin{equation}
\label{pazxc}
\left\|u-f\right\|_{C^\ell\left(B_1^N\right)}\leq\sum_{j=1}^J {\left|c_j\right|\left\|u_j-f_j\right\|_{C^\ell\left(B_1^N\right)}}\leq cJ\epsilon.
\end{equation}
Also, $\Lambda_{-\infty} u=0$ thanks to the linearity of $\Lambda_{-\infty}$ in $B_1^{N-l}\times(-1,+\infty)^l$. Finally, $u$ is supported in $B_R^{N-l}$ in the variables $(x,y)$, being $$R:=\max_{j\in\{1,\dots,J\}} R_j.$$ This proves 
Theorem \ref{theone2} when $f$ is a polynomial (up to replacing $\epsilon$ with $cJ\epsilon$).

\subsection{Proof of Theorem \ref{theone2} for a general $f$}

Now we deal with the case of a general~$f$. To this end, we exploit
Lemma~2 in~\cite{MR3626547} and we see that
there exists a polynomial $\tilde{f}$ such that
\begin{equation}\label{6ungfbnreog}
\|f-\tilde{f}\|_{C^\ell(B_1^N)}\leq\epsilon.\end{equation}
Then, applying the result already proven in Subsection~\ref{AUJNSLsxcrsd}
to the polynomial $\tilde{f}$,
we can find $a\in(-\infty,0)^l$, $u\in C^\infty\left(B_1^N\right)
\cap C\left(\mathbb{R}^N\right)\cap\mathcal{A}$ and $R>1$ such that 
\begin{eqnarray*}
&& \Lambda_{-\infty} u=0 \quad{\mbox{ in }}B_1^{N-l}\times(-1,+\infty)^l, \\&&
u=0 \qquad\quad{\mbox{ if }}|(x,y)|\ge R,\\&&
\partial^{k_h}_{t_h} u=0 \quad\quad{\mbox{if }}t_h\in(-\infty,a_h),\qquad{\mbox{ for all }}h\in\{1,\dots,l\},
\\ {\mbox{and }}&&\|u-\tilde{f}\|_{C^\ell(B_1^N)}
\leq\epsilon.\end{eqnarray*}
Then, recalling~\eqref{6ungfbnreog}, we see that
$$ \|u-f\|_{C^\ell(B_1^N)}\leq\|u-\tilde{f}
\|_{C^\ell(B_1^N)}+\|f-\tilde{f}
\|_{C^\ell(B_1^N)}\leq2\epsilon.$$ Hence, the proof
of Theorem~\ref{theone2} is complete.
\qed
\begin{appendix}
\appendixpage
\noappendicestocpagenum
\addappheadtotoc

\chapter{Some applications}\label{APPEA}
In this appendix we give
some applications of the approximation results obtained and discussed
in this book. These examples exploit particular cases of the
operator $\Lambda_a$, namely, when $s\in(0,1)$ and $\Lambda_a$ is the fractional Laplacian $(-\Delta)^s$, or the fractional heat operator $\partial_t+(-\Delta)^s$.
Similar applications have been pointed
out in~\cite{MR3579567, AV, 2017arXiv170806300R}.
\begin{example} [The classical Harnack inequality\index{inequality!Harnack} fails for $s$-harmonic functions]{\rm
Harnack inequality, in its classical formulation, says that if $u$ is a nontrivial and nonnegative harmonic function in $B_1$ then, for any $0<r<1$, there exists $0<c=c(n,r)$ such that
\begin{equation}
\label{appendeq}
\sup_{B_r}u\leq c\inf_{B_r}u.
\end{equation}
The same result is not true for $s$-harmonic functions. To construct a counterexample, consider the smooth function $f(x)=|x|^2$, and, for a small $\epsilon>0$, let $v=v_\epsilon$ be the function
provided by Theorem \ref{theone}, where we choose $\ell=0$.
Notice that, if $x\in B_1\setminus B_{r/2}$,
$$
v(x)\geq f(x)-\left\|v-f\right\|_{L^{\infty}(B_1)}\geq \frac{r^2}{4}-\epsilon>\frac{r^2}{8},
$$
provided $\epsilon$ is small enough, while
$$
v(0)\leq f(0)+\left\|v-f\right\|_{L^{\infty}(B_1)}\leq\epsilon<\frac{r^2}{8}.
$$
Hence, we have that $v(0)<v(x)$ for any $x\in B_1\setminus B_{r/2}$, and therefore the minimum of $v$ in $B_1$ is attained in some point $\overline{x}\in \overline{B_{r/2}}$. Then, we define
$$
u(x):=v(x)-v(\overline{x}).
$$
Notice that $u$ is $s$-harmonic in $B_1$ since so does~$v$.
Also, $u\geq 0$ in $B_1$ by construction, and~$u>0$ in $B_1\setminus B_{r/2}$. On the other hand, since $\overline{x}\in B_r$
$$
\inf_{B_r} u=u(\overline{x})=0,
$$
which implies that $u$ cannot satisfies an inequality such as \eqref{appendeq}.

As a matter of fact, in the fractional case, the analogue of the Harnack inequality requires~$u$
to be nonnegative in the whole of~${\mathbb{R}}^n$, hence a ``global'' condition
is needed to obtain a ``local'' oscillation bound. See e.g.~\cite{MR2817382}
and the references therein for a complete discussion
of nonlocal Harnack inequalities.
}\end{example}
\begin{example}[A logistic equation with nonlocal interactions]{\rm
We consider the logistic equation taken into account in \cite{MR3579567}
\begin{equation}
\label{eq:logistic}
-(-\Delta)^s u+(\sigma-\mu u)u+\tau(J*u)=0,
\end{equation}
where $s\in(0,1]$, $\tau\in[0,+\infty)$ and $\sigma,\mu, J$ are nonnegative functions. The symbol $*$ denotes as usual the convolution product between $J$ and $u$. Moreover, the convolution kernel $J$ is assumed to be of unit mass and even, namely
$$
\int_{\mathbb{R}^n} J(x)dx=1
$$
and
$$
J(-x)=J(x)\quad\text{for any}\,x\in\mathbb{R}^n.
$$
In this framework, the solution $u$ denotes the density of a population living in some environment $\Omega\subseteq\mathbb{R}^n$, while the functions $\sigma$ and $\mu$ model respectively the growing and dying effects on the population.
The equation is driven by the fractional Laplacian that models a nonlocal dispersal strategy which has been observed experimentally in nature, and may be related to optimal hunting strategies and adaptation to the environment stimulated by natural selection.

We state here a result which translates the fact that a
population with a nonlocal strategy can plan the distribution of resources in a strategic region better than populations with a local one.

Namely, fixed $\Omega=B_1$, one can find a solution of a slightly perturbed version
of~\eqref{eq:logistic} in~$B_1$, compactly supported in a larger ball~$B_{R_\epsilon}$,
where~$\epsilon\in(0,1)$ denotes the perturbation.

The strategic plan consists in properly adjusting
the resources in $B_{R_{\epsilon}}\setminus B_1$ (that is,
a bounded region in which the equation is not satisfied) in order to consume almost
all the given resources in $B_1$.

The detailed statement goes as follows:

\begin{theorem}\label{VECH}
Let $s\in(0,1)$ and $\ell\in\mathbb{N}$, $\ell\geq 2$. Assume that
$\sigma,\mu\in C^{\ell}(\overline{B_2})$, with
$$
\inf_{\overline{B_2}}\mu>0,\qquad\inf_{\overline{B_2}}\sigma>0.
$$
Fixed $\epsilon\in(0,1)$, there exist
a nonnegative function $u_\epsilon$,
$R_\epsilon>2$ and $\sigma_\epsilon\in C^{\ell}(\overline{B_1})$ such that
\begin{equation*}
%% \label{eq:primeq}
(-\Delta)^s u_\epsilon=(\sigma_\epsilon-\mu u_\epsilon)u_\epsilon+
\tau(J*u_\epsilon)\quad\mbox{ in } B_1,
\end{equation*}
\begin{equation*}
%% \label{eq:secondeq}
u_\epsilon=0\quad\text{in } \mathbb{R}^n\setminus B_{R_{\epsilon}},
\end{equation*}
\begin{equation*}
%%\label{eq:terzeq}
\left\|\sigma_\epsilon-\sigma\right\|_{C^{\ell}(\overline{B_1})}\leq\epsilon,
\end{equation*}
\begin{equation*}
%%\label{eq:quarteq}
u_\epsilon\geq\mu^{-1}\sigma_\epsilon\quad\text{in } B_1.
\end{equation*}
\end{theorem}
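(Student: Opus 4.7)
My plan is to specialize Theorem \ref{theone} to the operator $(-\Delta)^s$ (choosing all coefficients in \eqref{ILPOAU-1}--\eqref{ILPOAU-3} to vanish except for a single $\XB_j=1$ paired with the prescribed $s\in(0,1)$) and thus produce a compactly supported $u_\epsilon\in C^\infty(B_1)\cap C(\mathbb{R}^n)$ that is $s$-harmonic in $B_1$ and arbitrarily close in $C^\ell(\overline{B_1})$ to a carefully chosen positive target $f$. Once such a $u_\epsilon$ is in hand and positive on $\overline{B_1}$, I will simply \emph{define}
\[\sigma_\epsilon(x):=\mu(x)\,u_\epsilon(x)-\tau\,\frac{(J\ast u_\epsilon)(x)}{u_\epsilon(x)},\qquad x\in\overline{B_1}.\]
Together with $(-\Delta)^s u_\epsilon=0$ in $B_1$, this makes the logistic equation hold by construction, and the inequality $u_\epsilon\ge\mu^{-1}\sigma_\epsilon$ comes for free because it rearranges to $\tau\,\mu^{-1}(J\ast u_\epsilon)/u_\epsilon\ge 0$, which holds since $J,u_\epsilon,\tau\ge 0$ and $\mu>0$.

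The target $f$ is selected so that $\sigma_\epsilon$ ends up $C^\ell$-close to $\sigma$; the natural requirement is
\[\mu\,f-\tau\,\frac{J\ast\tilde f}{f}=\sigma\quad\text{on }\overline{B_1},\]
where $\tilde f$ is a smooth nonnegative compactly supported extension of $f$. Solving the quadratic in $f$ yields the explicit positive root $f=\bigl(\sigma+\sqrt{\sigma^2+4\mu\tau\,J\ast\tilde f}\bigr)/(2\mu)\ge\mu^{-1}\sigma$, and I would build $f$ by the monotone iteration $f_0:=\mu^{-1}\sigma$, $f_{k+1}:=\bigl(\sigma+\sqrt{\sigma^2+4\mu\tau\,J\ast\tilde f_k}\bigr)/(2\mu)$, extending each $f_k$ through a fixed smooth nonnegative cutoff before feeding it into the convolution. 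Monotonicity of the iteration is immediate, and a uniform $L^\infty$ bound (depending only on $\mu$, $\sigma$, $\tau$ and the cutoff) yields convergence to a positive $C^\ell$ fixed point. Then Theorem \ref{theone} delivers $u_\epsilon$ with $\|u_\epsilon-f\|_{C^\ell(\overline{B_1})}<\delta$, and the Lipschitz dependence of $v\mapsto\mu v-\tau(J\ast v)/v$ on $v$ (uniform on the set $\{v\ge\tfrac12\inf_{\overline{B_1}}f\}$) converts this into $\|\sigma_\epsilon-\sigma\|_{C^\ell(\overline{B_1})}<\epsilon$ as soon as $\delta$ is small enough; positivity of $u_\epsilon$ on $\overline{B_1}$ is automatic from $f\ge\mu^{-1}\sigma>0$.

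The main obstacle is twofold: Theorem \ref{theone} controls $u_\epsilon$ only on $B_1$, yet $(J\ast u_\epsilon)(x)$ for $x\in\overline{B_1}$ samples $u_\epsilon$ on the larger set $\overline{B_1}+\operatorname{supp}J$, and nothing a priori forces $u_\epsilon\ge 0$ globally. The first point I would handle by applying Theorem \ref{theone} on a dilated ball containing $\overline{B_1}+\operatorname{supp}J$ (truncating $J$ with negligible error if its support is unbounded) and rescaling back, using the homogeneity of $(-\Delta)^s$ so that the set of functions in the kernel is preserved. The second, more delicate, point will be addressed either by exploiting the fact that the building blocks employed in Chapter \ref{CH5} (Mittag-Leffler functions and the spherical bumps of Section \ref{s:hwb}) can be chosen of definite sign, or by superposing with a compactly supported nonnegative correction away from $B_1$ whose contribution to $J\ast u_\epsilon$ is reabsorbed into the fixed-point construction of $f$. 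Reconciling the approximation estimate on $B_1$ with these global positivity and convolution-range requirements is the technical heart of the proof, but no ingredient beyond those already developed in the monograph should be needed.
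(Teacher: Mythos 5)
First, for calibration: the monograph does not actually prove Theorem \ref{VECH} --- it is quoted from \cite{MR3579567} --- and the only logistic-type result proved in this appendix is the subsequent higher-order variant, where the convolution term is absent and no sign condition is imposed, so that the entire proof consists of taking $u_\epsilon$ in the kernel of the operator close to $\sigma/\mu$ and setting $\sigma_\epsilon:=\mu u_\epsilon$, which annihilates the reaction term. Your strategy must therefore stand on its own, and its central step fails: a $u_\epsilon$ that is simultaneously (i) nonnegative on all of $\mathbb{R}^n$ (which the statement forces, since $u_\epsilon$ is compactly supported and nonnegative), (ii) $s$-harmonic in $B_1$, and (iii) $C^0(\overline{B_1})$-close to a prescribed positive target $f\ge\mu^{-1}\sigma$ cannot exist for general data. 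Conditions (i)--(ii) trigger the genuine fractional Harnack inequality $\sup_{B_{1/2}}u_\epsilon\le C(n,s)\inf_{B_{1/2}}u_\epsilon$ --- this is exactly the point of the first example of this same appendix and of \cite{MR2817382}: the counterexample to Harnack built there exists only because the approximant is allowed to change sign far away. Taking $\tau=0$ and data with $\sup_{B_{1/2}}(\sigma/\mu)>10\,C(n,s)\inf_{B_{1/2}}(\sigma/\mu)$, no function satisfying (i)--(ii) can also satisfy (iii). Neither of your remedies escapes this: adding a nonnegative correction supported away from $B_1$ that preserves $s$-harmonicity in $B_1$ leaves you with a function still subject to (i)--(ii), and choosing ``sign-definite building blocks'' cannot produce what Harnack forbids. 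The repair has to be structural: one must abandon $(-\Delta)^su_\epsilon=0$ and let $\sigma_\epsilon$ absorb a remainder $\bigl((-\Delta)^su_\epsilon-\tau\,J*u_\epsilon\bigr)/u_\epsilon$ whose \emph{sign} (nonpositive) is what delivers $u_\epsilon\ge\mu^{-1}\sigma_\epsilon$; for instance, the positive part of an $s$-harmonic approximant is globally nonnegative and $s$-subharmonic in $B_1$, and nonnegative subsolutions are not constrained by Harnack.

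There is a second, independent gap. Your fixed-point equation prescribes $f$ through $J*\tilde f$, but $\sigma_\epsilon$ involves $J*u_\epsilon$, and Theorem \ref{theone} controls $u_\epsilon$ only on $\overline{B_1}$: the construction in Chapter \ref{CH5} rescales by $\eta^{-\gamma}$, so the exterior values of $u_\epsilon$ are of completely uncontrolled size. Enlarging the ball does not resolve this when $\mathrm{supp}\,J$ is unbounded, because the ``negligible'' truncation error is $\int_{|z|>\rho}J(z)\,u_\epsilon(x-z)\,dz$ and you have no a priori global bound on $u_\epsilon$; and for compactly supported $J$ you would need the approximation on the larger ball $\overline{B_1}+\mathrm{supp}\,J$, which re-triggers the Harnack obstruction there. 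The monotone iteration for $f$ is fine as algebra, but it solves for the wrong quantity unless $u_\epsilon$ is quantitatively controlled everywhere $J$ samples it, which is precisely what the density theorem does not provide.
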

}
\end{example}

\begin{example}{\rm Higher order nonlocal equations also appear naturally
in several contexts, see e.g.~\cite{MR3051400} for a nonlocal version
of the Cahn-Hilliard phase coexistence model.
Higher orders operators have also appeared in connection with logistic equations,
see e.g.~\cite{MR3578282}. In this spirit, we point out a version
of Theorem~\ref{VECH} which is new and relies
on Theorem~\ref{theone}. Its content is that nonlocal logistic equations
(of any order and with nonlocality given in either time or space, or both)
admits solutions which can arbitrarily well adapt to any given resource.
The precise statement is the following:

\begin{theorem}
Let $s\in(0,+\infty)$, $\alpha\in(0,+\infty)$ and $\ell\in\mathbb{N}$, $\ell\geq 2$.
Assume that 
\begin{equation}\label{EFVA}
{\mbox{either~$s\not\in{\mathbb{N}}$ or~$\alpha\not\in{\mathbb{N}}$.}}\end{equation}
Let~$\sigma,\mu\in C^{\ell}(\overline{B_2})$, with
\begin{equation}\label{INAsdH}
\inf_{\overline{B_1}}\mu>0.
\end{equation}
Fixed $\epsilon\in(0,1)$, there exist
a nonnegative function $u_\epsilon$,
$R_\epsilon>2$, $a_\epsilon<0$,
and $\sigma_\epsilon\in C^{\ell}(\overline{B_1})$ such that
\begin{equation}
\label{eq:primeq77}\begin{split}&
D^\alpha_{t,a_\epsilon} u_\epsilon(x,t)+
(-\Delta)^s u_\epsilon(x,t)=\Big(\sigma_\epsilon(x,t)-\mu (x,t)u_\epsilon(x,t)\Big)\,u_\epsilon(x,t)\\
&\qquad\mbox{ for all $(x,t)\in{\mathbb{R}}^p\times{\mathbb{R}}$ with $|(x,t)|<1$},\end{split}
\end{equation}
\begin{equation}
\label{eq:secondeq77}
u_\epsilon(x,t)=0\quad\text{if } |(x,t)|\ge R_{\epsilon},
\end{equation}
\begin{equation}
\label{eq:terzeq77}
\left\|\sigma_\epsilon-\sigma\right\|_{C^{\ell}(\overline{B_1})}\leq\epsilon,
\end{equation}
\begin{equation}
\label{eq:quarteq77}
u_\epsilon=\mu^{-1}\sigma_\epsilon\geq\mu^{-1}\sigma-\epsilon\quad\text{in } B_1.
\end{equation}
\end{theorem}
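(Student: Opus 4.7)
The proof amounts to combining a simple algebraic observation with Theorem \ref{theone}. The observation is that, if one \emph{defines} $\sigma_\epsilon:=\mu u_\epsilon$ in $B_1$, then $u_\epsilon=\mu^{-1}\sigma_\epsilon$ tautologically and the nonlinear reaction term $(\sigma_\epsilon-\mu u_\epsilon)u_\epsilon$ vanishes identically, so \eqref{eq:primeq77} collapses to the purely linear homogeneous equation
\begin{equation*}
\Lambda_{a_\epsilon} u_\epsilon := D^{\alpha}_{t,a_\epsilon} u_\epsilon + (-\Delta)^s u_\epsilon = 0 \qquad\text{in }\{|(x,t)|<1\}.
\end{equation*}
This operator is a special case of \eqref{1.6BIS} with $\mathfrak{l}\equiv 0$, a single fractional Laplacian of order $s$, and a single Caputo derivative of order $\alpha$; by \eqref{EFVA}, the nondegeneracy condition \eqref{NOTVAN} is satisfied, so Theorem \ref{theone} applies to this operator.

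Concretely, I would take as target function $v:=\mu^{-1}\sigma$, which belongs to $C^{\ell}(\overline{B_1})$ by \eqref{INAsdH} and the regularity of $\sigma,\mu$. Setting $\epsilon':=\epsilon/(1+C_\ell\|\mu\|_{C^\ell(\overline{B_1})})$, with $C_\ell$ the Leibniz-rule constant for products in $C^\ell$, I would apply Theorem \ref{theone} to $v$ with precision $\epsilon'$ and regularity index $\ell$. This produces $a_\epsilon\in(-\infty,0)$, $R_\epsilon>1$, and $u_\epsilon\in C^{\infty}(B_1)\cap C(\mathbb{R}^{p+1})$ with $\Lambda_{a_\epsilon}u_\epsilon=0$ in $B_1$, $u_\epsilon\equiv 0$ outside $B_{R_\epsilon}$, and $\|u_\epsilon-v\|_{C^{\ell}(\overline{B_1})}<\epsilon'$. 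Defining $\sigma_\epsilon:=\mu u_\epsilon\in C^\ell(\overline{B_1})$, equation \eqref{eq:primeq77} then holds with both sides vanishing in $B_1$; the bound \eqref{eq:terzeq77} follows from
\begin{equation*}
\|\sigma_\epsilon-\sigma\|_{C^\ell(\overline{B_1})}=\|\mu(u_\epsilon-v)\|_{C^\ell(\overline{B_1})}\le C_\ell\|\mu\|_{C^\ell(\overline{B_1})}\,\epsilon'\le\epsilon;
\end{equation*}
the pointwise estimate \eqref{eq:quarteq77} reads $u_\epsilon=\mu^{-1}\sigma_\epsilon\ge\mu^{-1}\sigma-\mu^{-1}\|\sigma_\epsilon-\sigma\|_{C^0}\ge\mu^{-1}\sigma-\epsilon$; and the support property \eqref{eq:secondeq77} is inherited directly from Theorem \ref{theone}.

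The one non-automatic point is the nonnegativity of $u_\epsilon$, since Theorem \ref{theone} does not prescribe a sign for its approximant. I would address it as in the original Theorem \ref{VECH}, by working under the natural positivity assumption $\inf_{\overline{B_1}}\sigma>0$, which is implicit in the logistic setting: then $v\ge c_0>0$ on $\overline{B_1}$, and choosing $\epsilon'<c_0/2$ forces $u_\epsilon\ge c_0/2$ on $\overline{B_1}$ by $C^0$-closeness, so that both $\sigma_\epsilon>0$ and $u_\epsilon>0$ throughout $B_1$; outside $B_1$, nonnegativity can be imposed by multiplying $u_\epsilon$ by a suitable nonnegative cutoff, exactly along the lines of \eqref{UJNsdA} and the passage from Theorem \ref{theone2} to Theorem \ref{theone} in Lemma \ref{GRAT}. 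In sum, the real content of the statement is entirely encapsulated in Theorem \ref{theone}; the present result is an application, and the key insight — rather than an obstacle in the usual sense — is the linearisation $\sigma_\epsilon:=\mu u_\epsilon$, which decouples $u_\epsilon$ from the reaction term and reduces everything to the linear approximation provided by the main theorem.
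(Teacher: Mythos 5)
Your proof is correct and follows essentially the same route as the paper's: apply Theorem \ref{theone} to $f:=\mu^{-1}\sigma$ with the operator $D^{\alpha}_{t,a}+(-\Delta)^s$, set $\sigma_\epsilon:=\mu u_\epsilon$, and observe that the reaction term then vanishes identically, so that \eqref{eq:primeq77} reduces to the linear homogeneous equation handled by the main theorem. The only divergence is your treatment of nonnegativity: the paper's own proof simply does not address it, while your patch both imports an extra hypothesis ($\inf_{\overline{B_1}}\sigma>0$) that is absent from the statement and, outside $B_1$, would not work as written, since multiplying $u_\epsilon$ by a nonnegative cutoff does not change its sign on the set where $u_\epsilon<0$.
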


\begin{proof}
We use Theorem \ref{theone} in the case in which $\Lambda_a:=
D^\alpha_{t,a}+(-\Delta)^s$. Let~$f:=\sigma/\mu$. Then,
by Theorem~\ref{theone}, which can be exploited here in view of~\eqref{EFVA},
we obtain the existence of suitable~$u_\epsilon$,
$R_\epsilon>2$ and $a_\epsilon<0$ satisfying~\eqref{eq:secondeq77},
\begin{equation}
\label{eq:primeq7781}\begin{split}&
D^\alpha_{t,a_\epsilon} u_\epsilon(x,t)+
(-\Delta)^s u_\epsilon(x,t)=0\\
&\qquad\mbox{ for all $(x,t)\in{\mathbb{R}}^p\times{\mathbb{R}}$ with $|(x,t)|<1$},\end{split}
\end{equation}
and
\begin{equation}
\label{eq:primeq7789}
\left\|u_\epsilon-f\right\|_{C^{\ell}(\overline{B_1})}\leq\epsilon.\end{equation}
Then, we set~$\sigma_\epsilon:=\mu u_\epsilon$, and then, by~\eqref{eq:primeq7789},
\begin{equation}\label{7hS823} \begin{split}
\left\|\sigma_\epsilon-\sigma\right\|_{C^{\ell}(\overline{B_1})}\,&\leq C\,
\|\mu\|_{C^{\ell}(\overline{B_1})}\,
\left\|\frac{\sigma_\epsilon}{\mu}-\frac{\sigma}{\mu}\right\|_{C^{\ell}(\overline{B_1})}\\
&= C\,
\|\mu\|_{C^{\ell}(\overline{B_1})}\,
\left\|u_\epsilon-f\right\|_{C^{\ell}(\overline{B_1})}\\&\le C\,
\|\mu\|_{C^{\ell}(\overline{B_1})}\,\epsilon,
\end{split}\end{equation}
which gives~\eqref{eq:terzeq77}, up to renaming~$\epsilon$.

Moreover, if~$|(x,t)|<1$,
$$ (\sigma_\epsilon-\mu u_\epsilon)u_\epsilon=0=
D^\alpha_{t,a_\epsilon} u_\epsilon+
(-\Delta)^s u_\epsilon,$$
thanks to~\eqref{eq:primeq7781}, and this proves~\eqref{eq:primeq77}.

In addition, recalling~\eqref{7hS823} and~\eqref{INAsdH},
$$ u_\epsilon=\mu^{-1}\sigma_\epsilon\ge \mu^{-1}\sigma-
\frac{1}{\inf_{\overline{B_1}}\mu}\|\sigma-\sigma_\epsilon\|_{L^\infty(B_1)}
\ge\mu^{-1}\sigma-
\frac{\|\mu\|_{C^{\ell}(\overline{B_1})}\,\epsilon}{\inf_{\overline{B_1}}\mu},$$
in~$B_1$, which proves~\eqref{eq:quarteq77}, up to renaming~$\epsilon$.
\end{proof}
}\end{example}

\end{appendix}

\begin{bibdiv}
\begin{biblist}

\bib{AJS2}{article}{
   author={Abatangelo, Nicola},
   author={Jarohs, Sven},
   author={Salda\~na, Alberto},
   title={Positive powers of the Laplacian: From hypersingular integrals to
   boundary value problems},
   journal={Commun. Pure Appl. Anal.},
   volume={17},
   date={2018},
   number={3},
   pages={899--922},
   issn={1534-0392},
   review={\MR{3809107}},
   doi={10.3934/cpaa.2018045},
}

\bib{AJS3}{article}{
   author={Abatangelo, Nicola},
   author={Jarohs, Sven},
   author={Salda\~na, Alberto},
   title={Green function and Martin kernel for higher-order fractional
   Laplacians in balls},
   journal={Nonlinear Anal.},
   volume={175},
   date={2018},
   pages={173--190},
   issn={0362-546X},
   review={\MR{3830727}},
   doi={10.1016/j.na.2018.05.019},
}

\bib{AJS1}{article}{
   author={Abatangelo, Nicola},
   author={Jarohs, Sven},
   author={Salda\~na, Alberto},
   title={On the loss of maximum principles for higher-order fractional Laplacians},
   journal={to appear on Proc. Amer. Math. Soc.},
   doi={10.1090/proc/14165},
}

\bib{ABX}{article}{
   author={Abatangelo, Nicola},
   author={Jarohs, Sven},
   author={Salda\~na, Alberto},
title={Integral representation of solutions to higher-order fractional
Dirichlet problems on balls},
journal={to appear on Commun. Contemp. Math.},
doi={10.1142/S0219199718500025},
}

\bib{AV}{article}{
   author={Abatangelo, Nicola},
   author={Valdinoci, Enrico},
   title={Getting acquainted with the fractional Laplacian},
   journal={Springer INdAM Series},
   date={2019},
}

\bib{MR1191901}{book}{
   author={Abel, Niels Henrik},
   title={\OE uvres compl\`etes. Tome I},
   language={French},
   note={Edited and with a preface by L. Sylow and S. Lie;
   Reprint of the second (1881) edition},
   publisher={\'{E}ditions Jacques Gabay, Sceaux},
   date={1992},
   pages={viii+621},
   isbn={2-87647-073-X},
   review={\MR{1191901}},
}

\bib{MR3488533}{article}{
   author={Allen, Mark},
   author={Caffarelli, Luis},
   author={Vasseur, Alexis},
   title={A parabolic problem with a fractional time derivative},
   journal={Arch. Ration. Mech. Anal.},
   volume={221},
   date={2016},
   number={2},
   pages={603--630},
   issn={0003-9527},
   review={\MR{3488533}},
   doi={10.1007/s00205-016-0969-z},
}

\bib{MR3557159}{article}{
   author={Almeida, Ricardo},
   author={Bastos, Nuno R. O.},
   author={Monteiro, M. Teresa T.},
   title={Modeling some real phenomena by fractional differential equations},
   journal={Math. Methods Appl. Sci.},
   volume={39},
   date={2016},
   number={16},
   pages={4846--4855},
   issn={0170-4214},
   review={\MR{3557159}},
   doi={10.1002/mma.3818},
}

\bib{MR3480553}{article}{
   author={Andersson, John},
   title={Optimal regularity for the Signorini problem and its free
   boundary},
   journal={Invent. Math.},
   volume={204},
   date={2016},
   number={1},
   pages={1--82},
   issn={0020-9910},
   review={\MR{3480553}},
   doi={10.1007/s00222-015-0608-6},
}

\bib{comb}{article}{
   author={Arkhincheev, V. E.},
   author={Baskin, \'E. M.},
   title={Anomalous diffusion and drift in a comb model of percolation clusters},
   journal={J. Exp. Theor. Phys.},
   volume={73},
   date={1991},
   pages={161--165},
}

\bib{MR0115096}{article}{
   author={Balakrishnan, A. V.},
   title={Fractional powers of closed operators and the semigroups generated
   by them},
   journal={Pacific J. Math.},
   volume={10},
   date={1960},
   pages={419--437},
   issn={0030-8730},
   review={\MR{0115096}},
}

\bib{MR3578282}{article}{
   author={Bhakta, Mousomi},
   title={Solutions to semilinear elliptic PDE's with biharmonic operator
   and singular potential},
   journal={Electron. J. Differential Equations},
   date={2016},
   pages={Paper No. 261, 17},
   issn={1072-6691},
   review={\MR{3578282}},
}

\bib{MR3641649}{article}{
   author={Biccari, Umberto},
   author={Warma, Mahamadi},
   author={Zuazua, Enrique},
   title={Local elliptic regularity for the Dirichlet fractional Laplacian},
   journal={Adv. Nonlinear Stud.},
   volume={17},
   date={2017},
   number={2},
   pages={387--409},
   issn={1536-1365},
   review={\MR{3641649}},
   doi={10.1515/ans-2017-0014},
}

\bib{MR1904936}{book}{
   author={Boyarchenko, Svetlana I.},
   author={Levendorski\u{\i}, Sergei Z.},
   title={Non-Gaussian Merton-Black-Scholes theory},
   series={Advanced Series on Statistical Science \& Applied Probability},
   volume={9},
   publisher={World Scientific Publishing Co., Inc., River Edge, NJ},
   date={2002},
   pages={xxii+398},
   isbn={981-02-4944-6},
   review={\MR{1904936}},
   doi={10.1142/9789812777485},
}
  
\bib{MR3461641}{article}{
   author={Bucur, Claudia},
   title={Some observations on the Green function for the ball in the
   fractional Laplace framework},
   journal={Commun. Pure Appl. Anal.},
   volume={15},
   date={2016},
   number={2},
   pages={657--699},
   issn={1534-0392},
   review={\MR{3461641}},
   doi={10.3934/cpaa.2016.15.657},
} 

\bib{MR3716924}{article}{
   author={Bucur, Claudia},
   title={Local density of Caputo-stationary functions in the space of
   smooth functions},
   journal={ESAIM Control Optim. Calc. Var.},
   volume={23},
   date={2017},
   number={4},
   pages={1361--1380},
   issn={1292-8119},
   review={\MR{3716924}},
   doi={10.1051/cocv/2016056},
}

\bib{claudia}{book}{
   author={Bucur, Claudia},
   author={Valdinoci, Enrico},
   title={Nonlocal diffusion and applications},
   series={Lecture Notes of the Unione Matematica Italiana},
   volume={20},
   publisher={Springer, [Cham]; Unione Matematica Italiana, Bologna},
   date={2016},
   pages={xii+155},
   isbn={978-3-319-28738-6},
   isbn={978-3-319-28739-3},
   review={\MR{3469920}},
   doi={10.1007/978-3-319-28739-3},
}

\bib{MR3579567}{article}{
   author={Caffarelli, Luis},
   author={Dipierro, Serena},
   author={Valdinoci, Enrico},
   title={A logistic equation with nonlocal interactions},
   journal={Kinet. Relat. Models},
   volume={10},
   date={2017},
   number={1},
   pages={141--170},
   issn={1937-5093},
   review={\MR{3579567}},
   doi={10.3934/krm.2017006},
}

\bib{MR3051400}{article}{
   author={Caffarelli, Luis},
   author={Valdinoci, Enrico},
   title={A priori bounds for solutions of a nonlocal evolution PDE},
   conference={
      title={Analysis and numerics of partial differential equations},
   },
   book={
      series={Springer INdAM Ser.},
      volume={4},
      publisher={Springer, Milan},
   },
   date={2013},
   pages={141--163},
   review={\MR{3051400}},
   doi={10.1007/978-88-470-2592-9\_10},
}

\bib{MR2379269}{article}{
   author={Caputo, Michele},
   title={Linear models of dissipation whose $Q$ is almost frequency
   independent. II},
   note={Reprinted from Geophys. J. R. Astr. Soc. {\bf 13} (1967), no. 5,
   529--539},
   journal={Fract. Calc. Appl. Anal.},
   volume={11},
   date={2008},
   number={1},
   pages={4--14},
   issn={1311-0454},
   review={\MR{2379269}},
}

\bib{CDV18}{article}{
   author={Carbotti, Alessandro},
   author={Dipierro, Serena},
   author={Valdinoci, Enrico},
   title={Local density of Caputo-stationary functions of any order},
    journal = {To appear in Complex Variables and Elliptic Equations},
   eprint = {1809.04005},
     date = {2018},
   adsurl = {https://arxiv.org/abs/1809.04005},
   doi = {10.1080/17476933.2018.1544631}
}

\bib{MR3709717}{article}{
   author={Danielli, Donatella},
   author={Garofalo, Nicola},
   author={Petrosyan, Arshak},
   author={To, Tung},
   title={Optimal regularity and the free boundary in the parabolic
   Signorini problem},
   journal={Mem. Amer. Math. Soc.},
   volume={249},
   date={2017},
   number={1181},
   pages={v + 103},
   issn={0065-9266},
   isbn={978-1-4704-2547-0},
   isbn={978-1-4704-4129-6},
   review={\MR{3709717}},
   doi={10.1090/memo/1181},
}

\bib{MR2944369}{article}{
   author={Di Nezza, Eleonora},
   author={Palatucci, Giampiero},
   author={Valdinoci, Enrico},
   title={Hitchhiker's guide to the fractional Sobolev spaces},
   journal={Bull. Sci. Math.},
   volume={136},
   date={2012},
   number={5},
   pages={521--573},
   issn={0007-4497},
   review={\MR{2944369}},
   doi={10.1016/j.bulsci.2011.12.004},
}

\bib{MR3089369}{article}{
   author={Di Paola, Mario},
   author={Pinnola, Francesco Paolo},
   author={Zingales, Massimiliano},
   title={Fractional differential equations and related exact mechanical
   models},
   journal={Comput. Math. Appl.},
   volume={66},
   date={2013},
   number={5},
   pages={608--620},
   issn={0898-1221},
   review={\MR{3089369}},
   doi={10.1016/j.camwa.2013.03.012},
}

\bib{MR3673669}{article}{
   author={Dipierro, Serena},
   author={Grunau, Hans-Christoph},
   title={Boggio's formula for fractional polyharmonic Dirichlet problems},
   journal={Ann. Mat. Pura Appl. (4)},
   volume={196},
   date={2017},
   number={4},
   pages={1327--1344},
   issn={0373-3114},
   review={\MR{3673669}},
   doi={10.1007/s10231-016-0618-z},
}

\bib{MR3626547}{article}{
   author={Dipierro, Serena},
   author={Savin, Ovidiu},
   author={Valdinoci, Enrico},
   title={All functions are locally $s$-harmonic up to a small error},
   journal={J. Eur. Math. Soc. (JEMS)},
   volume={19},
   date={2017},
   number={4},
   pages={957--966},
   issn={1435-9855},
   review={\MR{3626547}},
   doi={10.4171/JEMS/684},
}

\bib{DSV1}{article}{
   author={Dipierro, Serena},
   author={Savin, Ovidiu},
   author={Valdinoci, Enrico},
   title={Local approximation of arbitrary functions by solutions of nonlocal equations},
   journal={J. Geom. Anal.},
   date={2018},
   doi={10.1007/s12220-018-0045-z},
}

\bib{DV1}{article}{
   author={Dipierro, Serena},
   author={Valdinoci, Enrico},
   title={A Simple Mathematical Model Inspired by the Purkinje Cells: From
   Delayed Travelling Waves to Fractional Diffusion},
   journal={Bull. Math. Biol.},
   volume={80},
   date={2018},
   number={7},
   pages={1849--1870},
   issn={0092-8240},
   review={\MR{3814763}},
   doi={10.1007/s11538-018-0437-z},
}

\bib{MR2863859}{article}{
   author={Dong, Hongjie},
   author={Kim, Doyoon},
   title={On $L_p$-estimates for a class of non-local elliptic equations},
   journal={J. Funct. Anal.},
   volume={262},
   date={2012},
   number={3},
   pages={1166--1199},
   issn={0022-1236},
   review={\MR{2863859}},
   doi={10.1016/j.jfa.2011.11.002},
}
	
\bib{ferrari}{article}{
   author={Ferrari, Fausto},
TITLE = {Weyl and Marchaud Derivatives: A Forgotten History},
JOURNAL = {Mathematics},
VOLUME = {6},
YEAR = {2018},
NUMBER = {1},
URL = {http://www.mdpi.com/2227-7390/6/1/6},
ISSN = {2227-7390},
DOI = {10.3390/math6010006},
}

\bib{MR0176661}{article}{
   author={Fichera, Gaetano},
   title={Sul problema elastostatico di Signorini con ambigue condizioni al
   contorno},
   language={Italian},
   journal={Atti Accad. Naz. Lincei Rend. Cl. Sci. Fis. Mat. Natur. (8)},
   volume={34},
   date={1963},
   pages={138--142},
   review={\MR{0176661}},
}

\bib{2017arXiv171203347G}{article}{
       author = {Garofalo, Nicola},
        title = {Fractional thoughts},
      journal = {arXiv e-prints},
  date = {2017},,
archivePrefix = {arXiv},
       eprint = {1712.03347},
       adsurl = {https://ui.adsabs.harvard.edu/\#abs/2017arXiv171203347G},
 }

\bib{MR2667016}{book}{
   author={Gazzola, Filippo},
   author={Grunau, Hans-Christoph},
   author={Sweers, Guido},
   title={Polyharmonic boundary value problems},
   series={Lecture Notes in Mathematics},
   volume={1991},
   note={Positivity preserving and nonlinear higher order elliptic equations
   in bounded domains},
   publisher={Springer-Verlag, Berlin},
   date={2010},
   pages={xviii+423},
   isbn={978-3-642-12244-6},
   review={\MR{2667016}},
   doi={10.1007/978-3-642-12245-3},
}

\bib{2016arXiv160909248G}{article}{
   author = {Ghosh, Tuhin},
author = {Salo, Mikko},
author = {Uhlmann, Gunther},
    title = {The Calder\'on problem for the fractional Schr\"odinger equation},
  journal = {ArXiv e-prints},
   eprint = {1609.09248},
     date = {2016},
   adsurl = {http://adsabs.harvard.edu/abs/2016arXiv160909248G},
}

\bib{MR1814364}{book}{
   author={Gilbarg, David},
   author={Trudinger, Neil S.},
   title={Elliptic partial differential equations of second order},
   series={Classics in Mathematics},
   note={Reprint of the 1998 edition},
   publisher={Springer-Verlag, Berlin},
   date={2001},
   pages={xiv+517},
   isbn={3-540-41160-7},
   review={\MR{1814364}},
}

\bib{MR3468941}{book}{
   author={Gill, Tepper L.},
   author={Zachary, Woodford W.},
   title={Functional analysis and the Feynman operator calculus},
   publisher={Springer, Cham},
   date={2016},
   pages={xix+354},
   isbn={978-3-319-27593-2},
   isbn={978-3-319-27595-6},
   review={\MR{3468941}},
   doi={10.1007/978-3-319-27595-6},
}
	
\bib{MR3244285}{book}{
   author={Gorenflo, Rudolf},
   author={Kilbas, Anatoly A.},
   author={Mainardi, Francesco},
   author={Rogosin, Sergei V.},
   title={Mittag-Leffler functions, related topics and applications},
   series={Springer Monographs in Mathematics},
   publisher={Springer, Heidelberg},
   date={2014},
   pages={xiv+443},
   isbn={978-3-662-43929-6},
   isbn={978-3-662-43930-2},
   review={\MR{3244285}},
   doi={10.1007/978-3-662-43930-2},
}

\bib{MR2244037}{book}{
   author={Haase, Markus},
   title={The functional calculus for sectorial operators},
   series={Operator Theory: Advances and Applications},
   volume={169},
   publisher={Birkh\"{a}user Verlag, Basel},
   date={2006},
   pages={xiv+392},
   isbn={978-3-7643-7697-0},
   isbn={3-7643-7697-X},
   review={\MR{2244037}},
   doi={10.1007/3-7643-7698-8},
}

\bib{MR1281370}{article}{
   author={de Icaza Herrera, Miguel},
   title={Galileo, Bernoulli, Leibniz and Newton around the brachistochrone
   problem},
   language={English, with English and Spanish summaries},
   journal={Rev. Mexicana F\'{i}s.},
   volume={40},
   date={1994},
   number={3},
   pages={459--475},
   issn={0035-001X},
   review={\MR{1281370}},
}
	
\bib{MR860085}{article}{
   author={Kalla, S. L.},
   author={Ross, B.},
   title={The development of functional relations by means of fractional
   operators},
   conference={
      title={Fractional calculus},
      address={Glasgow},
      date={1984},
   },
   book={
      series={Res. Notes in Math.},
      volume={138},
      publisher={Pitman, Boston, MA},
   },
   date={1985},
   pages={32--43},
   review={\MR{860085}},
}

\bib{MR2817382}{article}{
   author={Kassmann, Moritz},
   title={A new formulation of Harnack's inequality for nonlocal operators},
   language={English, with English and French summaries},
   journal={C. R. Math. Acad. Sci. Paris},
   volume={349},
   date={2011},
   number={11-12},
   pages={637--640},
   issn={1631-073X},
   review={\MR{2817382}},
   doi={10.1016/j.crma.2011.04.014},
}

\bib{MR2218073}{book}{
   author={Kilbas, Anatoly A.},
   author={Srivastava, Hari M.},
   author={Trujillo, Juan J.},
   title={Theory and applications of fractional differential equations},
   series={North-Holland Mathematics Studies},
   volume={204},
   publisher={Elsevier Science B.V., Amsterdam},
   date={2006},
   pages={xvi+523},
   isbn={978-0-444-51832-3},
   isbn={0-444-51832-0},
   review={\MR{2218073}},
}

\bib{2018arXiv181007648K}{article}{
   author = {Krylov, Nicolai V.},
    title = {On the paper ``All functions are locally $s$-harmonic up to a small error'' by Dipierro, Savin, and Valdinoci},
  journal = {ArXiv e-prints},
archivePrefix = {arXiv},
   eprint = {1810.07648},
     date = {2018},
   adsurl = {http://adsabs.harvard.edu/abs/2018arXiv181007648K},
}

\bib{MR2858052}{article}{
   author={de la Llave, Rafael},
   author={Valdinoci, Enrico},
   title={$L^p$-bounds for quasi-geostrophic equations via functional
   analysis},
   journal={J. Math. Phys.},
   volume={52},
   date={2011},
   number={8},
   pages={083101, 12},
   issn={0022-2488},
   review={\MR{2858052}},
   doi={10.1063/1.3621828},
}

\bib{MR555103}{article}{
   author={L\"{u}tzen, Jesper},
   title={Heaviside's operational calculus and the attempts to rigorise it},
   journal={Arch. Hist. Exact Sci.},
   volume={21},
   date={1979/80},
   number={2},
   pages={161--200},
   issn={0003-9519},
   review={\MR{555103}},
   doi={10.1007/BF00330405},
}

\bib{MR2676137}{book}{
   author={Mainardi, Francesco},
   title={Fractional calculus and waves in linear viscoelasticity},
   note={An introduction to mathematical models},
   publisher={Imperial College Press, London},
   date={2010},
   pages={xx+347},
   isbn={978-1-84816-329-4},
   isbn={1-84816-329-0},
   review={\MR{2676137}},
   doi={10.1142/9781848163300},
}

\bib{MR3590678}{article}{
   author={Massaccesi, Annalisa},
   author={Valdinoci, Enrico},
   title={Is a nonlocal diffusion strategy convenient for biological
   populations in competition?},
   journal={J. Math. Biol.},
   volume={74},
   date={2017},
   number={1-2},
   pages={113--147},
   issn={0303-6812},
   review={\MR{3590678}},
   doi={10.1007/s00285-016-1019-z},
}

\bib{MR0242239}{article}{
   author={Mandelbrot, Benoit B.},
   author={Van Ness, John W.},
   title={Fractional Brownian motions, fractional noises and applications},
   journal={SIAM Rev.},
   volume={10},
   date={1968},
   pages={422--437},
   issn={0036-1445},
   review={\MR{0242239}},
   doi={10.1137/1010093},
}

\bib{MR3235230}{article}{
   author={Mandelbrot, Benoit},
   title={The variation of certain speculative prices [reprint of J. Bus.
   {\bf 36} (1963), no. 4, 394--419]},
   conference={
      title={Financial risk measurement and management},
   },
   book={
      series={Internat. Lib. Crit. Writ. Econ.},
      volume={267},
      publisher={Edward Elgar, Cheltenham},
   },
   date={2012},
   pages={230--255},
   review={\MR{3235230}},
}

\bib{MR1219954}{book}{
   author={Miller, Kenneth S.},
   author={Ross, Bertram},
   title={An introduction to the fractional calculus and fractional
   differential equations},
   series={A Wiley-Interscience Publication},
   publisher={John Wiley \& Sons, Inc., New York},
   date={1993},
   pages={xvi+366},
   isbn={0-471-58884-9},
   review={\MR{1219954}},
}

\bib{MR0105594}{book}{
   author={Mikusi\'{n}ski, Jan},
   title={Operational calculus},
   series={International Series of Monographs on Pure and Applied
   Mathematics, Vol. 8},
   publisher={Pergamon Press, New York-London-Paris-Los Angeles; Pa\'{n}stwowe
   Wydawnictwo Naukowe, Warsaw},
   date={1959},
   pages={495},
   review={\MR{0105594}},
}

\bib{MR2639369}{article}{
   author={Nakagawa, Junichi},
   author={Sakamoto, Kenichi},
   author={Yamamoto, Masahiro},
   title={Overview to mathematical analysis for fractional diffusion
   equations---new mathematical aspects motivated by industrial
   collaboration},
   journal={J. Math-for-Ind.},
   volume={2A},
   date={2010},
   pages={99--108},
   issn={1884-4774},
   review={\MR{2639369}},
}

\bib{MR0361633}{book}{
   author={Oldham, Keith B.},
   author={Spanier, Jerome},
   title={The fractional calculus},
   note={Theory and applications of differentiation and integration to
   arbitrary order;
   With an annotated chronological bibliography by Bertram Ross;
   Mathematics in Science and Engineering, Vol. 111},
   publisher={Academic Press [A subsidiary of Harcourt Brace Jovanovich,
   Publishers], New York-London},
   date={1974},
   pages={xiii+234},
   review={\MR{0361633}},
}

\bib{MR1015374}{article}{
   author={Omey, E.},
   author={Willekens, E.},
   title={Abelian and Tauberian theorems for the Laplace transform of
   functions in several variables},
   journal={J. Multivariate Anal.},
   volume={30},
   date={1989},
   number={2},
   pages={292--306},
   issn={0047-259X},
   review={\MR{1015374}},
   doi={10.1016/0047-259X(89)90041-9},
}

\bib{MR1363489}{book}{
   author={Pandey, J. N.},
   title={The Hilbert transform of Schwartz distributions and applications},
   series={Pure and Applied Mathematics (New York)},
   note={A Wiley-Interscience Publication},
   publisher={John Wiley \& Sons, Inc., New York},
   date={1996},
   pages={xvi+262},
   isbn={0-471-03373-1},
   review={\MR{1363489}},
}

\bib{MR2962060}{book}{
   author={Petrosyan, Arshak},
   author={Shahgholian, Henrik},
   author={Uraltseva, Nina},
   title={Regularity of free boundaries in obstacle-type problems},
   series={Graduate Studies in Mathematics},
   volume={136},
   publisher={American Mathematical Society, Providence, RI},
   date={2012},
   pages={x+221},
   isbn={978-0-8218-8794-3},
   review={\MR{2962060}},
   doi={10.1090/gsm/136},
}

\bib{MR1658022}{book}{
   author={Podlubny, Igor},
   title={Fractional differential equations},
   series={Mathematics in Science and Engineering},
   volume={198},
   note={An introduction to fractional derivatives, fractional differential
   equations, to methods of their solution and some of their applications},
   publisher={Academic Press, Inc., San Diego, CA},
   date={1999},
   pages={xxiv+340},
   isbn={0-12-558840-2},
   review={\MR{1658022}},
}

\bib{GEN1}{article}{
author = {Regner, Benjamin M.} 
author = {Vu\v{c}ini\'{c}, Dejan}, 
author = {Domnisoru, Cristina}, 
author = {Bartol, Thomas M.} 
author = {Hetzer, Martin W.} 
author = {Tartakovsky, Daniel M.} 
author = {Sejnowski, Terrence J.},
title = {Anomalous diffusion of single particles in cytoplasm},
journal = {Biophys. J.},
volume = {104},
number = {8},
pages = {1652--1660},
date = {2013},
issn = {0006-3495},
doi = {10.1016/j.bpj.2013.01.049},
url = {http://www.sciencedirect.com/science/article/pii/S0006349513001823},
}

\bib{MR3168912}{article}{
   author={Ros-Oton, Xavier},
   author={Serra, Joaquim},
   title={The Dirichlet problem for the fractional Laplacian: regularity up
   to the boundary},
   language={English, with English and French summaries},
   journal={J. Math. Pures Appl. (9)},
   volume={101},
   date={2014},
   number={3},
   pages={275--302},
   issn={0021-7824},
   review={\MR{3168912}},
   doi={10.1016/j.matpur.2013.06.003},
}

\bib{MR3694738}{article}{
   author={Ros-Oton, Xavier},
   author={Serra, Joaquim},
   title={Boundary regularity estimates for nonlocal elliptic equations in
   $C^1$ and $C^{1,\alpha}$ domains},
   journal={Ann. Mat. Pura Appl. (4)},
   volume={196},
   date={2017},
   number={5},
   pages={1637--1668},
   issn={0373-3114},
   review={\MR{3694738}},
   doi={10.1007/s10231-016-0632-1},
}

\bib{MR2624107}{book}{
   author={Ross, Bertram},
   title={The development, theory and applications of the
   Gamma-function and a profile of fractional calculus},
   note={Thesis (Ph.D.)--New York University},
   publisher={ProQuest LLC, Ann Arbor, MI},
   date={1974},
   pages={412},
   review={\MR{2624107}},
}

\bib{MR0444394}{article}{
   author={Ross, Bertram},
   title={The development of fractional calculus 1695--1900},
   language={English, with German and French summaries},
   journal={Historia Math.},
   volume={4},
   date={1977},
   pages={75--89},
   issn={0315-0860},
   review={\MR{0444394}},
   doi={10.1016/0315-0860(77)90039-8},
}
		
\bib{MR125162492}{article}{
   author={Ross, Bertram},
   title={Origins of fractional calculus and some applications},
   journal={Internat. J. Math. Statist. Sci.},
   volume={1},
   date={1992},
   number={1},
   pages={21--34},
   issn={1055-7490},
   review={\MR{1251624}},
}

\bib{2017arXiv170804285R}{article}{
author={R\"uland, Angkana},
    title = {Quantitative invertibility and approximation for the truncated Hilbert
and Riesz Transforms},
  journal = {ArXiv e-prints},
   eprint = {1708.04285},
 date= {2017},
   adsurl = {http://adsabs.harvard.edu/abs/2017arXiv170804285R},
}

\bib{2017arXiv170806294R}{article}{   
author={R\"uland, Angkana},
   author={Salo, Mikko},
    title = {The fractional Calder\'on problem: low regularity and stability},
  journal = {ArXiv e-prints},
   eprint = {1708.06294},
date = {2017},
   adsurl = {http://adsabs.harvard.edu/abs/2017arXiv170806294R},
}

\bib{2017arXiv170806300R}{article}{
   author={R\"uland, Angkana},
   author={Salo, Mikko},
    title = {Quantitative approximation properties for the fractional heat equation},
  journal = {ArXiv e-prints},
   eprint = {1708.06300},
 date = {2017},
   adsurl = {http://adsabs.harvard.edu/abs/2017arXiv170806300R},
}

\bib{MR3774704}{article}{
   author={R\"uland, Angkana},
   author={Salo, Mikko},
   title={Exponential instability in the fractional Calder\'on problem},
   journal={Inverse Problems},
   volume={34},
   date={2018},
   number={4},
   pages={045003, 21},
   issn={0266-5611},
   review={\MR{3774704}},
   doi={10.1088/1361-6420/aaac5a},
}

\bib{MR1347689}{book}{
   author={Samko, Stefan G.},
   author={Kilbas, Anatoly A.},
   author={Marichev, Oleg I.},
   title={Fractional integrals and derivatives},
   note={Theory and applications;
   Edited and with a foreword by S. M. Nikol\cprime ski\u\i ;
   Translated from the 1987 Russian original;
   Revised by the authors},
   publisher={Gordon and Breach Science Publishers, Yverdon},
   date={1993},
   pages={xxxvi+976},
   isbn={2-88124-864-0},
   review={\MR{1347689}},
}

\bib{SANTA}{article}{
   author={Santamaria, F.},
  author={Wils, S.},
author={De Schutter, E.},
author={Augustine, G. J.},
title={Anomalous diffusion in Purkinje cell dendrites caused by spines},
   journal={Neuron.}
volume={52},
date={2006},
number={4},
pages={635--648},
doi={10.1016/j.neuron.2006.10.025}
}

\bib{Schiessel}{article}{
	author = {Schiessel, H.}
	author = {Blumen, A.},
	title = {Hierarchical analogues to fractional relaxation equations},
	journal = {J. Phys. A: Math. Gen.},
	volume = {26},
	date = {1993},
	number = {19},
	pages = {5057--5069},
	doi = {10.1088/0305-4470/26/19/034},
}

\bib{MR0118021}{article}{
   author={Signorini, A.},
   title={Questioni di elasticit\`a non linearizzata e semilinearizzata},
   language={Italian},
   journal={Rend. Mat. e Appl. (5)},
   volume={18},
   date={1959},
   pages={95--139},
   review={\MR{0118021}},
}
	
\bib{MR3563609}{article}{
   author={Sin, Chung-Sik},
   author={Zheng, Liancun},
   title={Existence and uniqueness of global solutions of Caputo-type
   fractional differential equations},
   journal={Fract. Calc. Appl. Anal.},
   volume={19},
   date={2016},
   number={3},
   pages={765--774},
   issn={1311-0454},
   review={\MR{3563609}},
   doi={10.1515/fca-2016-0040},
}

\bib{GEN2}{incollection}{
author = {Seffens, William},
title = {Models of RNA interaction from experimental datasets: framework of resilience},
booktitle = {Applications of RNA-Seq and Omics Strategies},
publisher = {IntechOpen},
address = {Rijeka},
date = {2017},
editor = {Marchi, Fabio A.},
editor = {Cirillo, Priscila D.R.},
editor = {Mateo, Elvis C.},
chapter = {4},
doi = {10.5772/intechopen.69452},
url = {https://doi.org/10.5772/intechopen.69452}
}

\bib{2018arXiv180805159S}{article}{
       author = {Stinga, P.~R.},
        title = {User's guide to the fractional Laplacian and the method of semigroups},
      journal = {arXiv e-prints},
date = {2018},
archivePrefix = {arXiv},
       eprint = {1808.05159},
 primaryClass = {math.AP},
       adsurl = {https://ui.adsabs.harvard.edu/\#abs/2018arXiv180805159S},
      }
      
\bib{MR942661}{book}{
   author={Titchmarsh, E. C.},
   title={Introduction to the theory of Fourier integrals},
   edition={3},
   publisher={Chelsea Publishing Co., New York},
   date={1986},
   pages={x+394},
   isbn={0-8284-0324-4},
   review={\MR{942661}},
}

\bib{MR2584076}{article}{
   author={Valdinoci, Enrico},
   title={From the long jump random walk to the fractional Laplacian},
   journal={Bol. Soc. Esp. Mat. Apl. SeMA},
   number={49},
   date={2009},
   pages={33--44},
   issn={1575-9822},
   review={\MR{2584076}},
}

\bib{ALBA}{article}{
   author={Viswanathan, G. M.},
   author={Afanasyev, V.},
   author={Buldyrev, S. V.},
   author={Murphy, E. J.},
   author={Prince, P. A.},
   author={Stanley, H. E.},
title={L\'evy flight search patterns of wandering albatrosses},
   journal={Nature},
   volume={381},
date={1996},
   pages={413--415},
doi={10.1038/381413a0},
}

\end{biblist} 
\end{bibdiv}
\addcontentsline{toc}{chapter}{Index}
\printindex
\end{document}